\theoremstyle{plain}
\newtheorem{theorem}{Theorem}[section]
\newtheorem{lemma}[theorem]{Lemma}
\newtheorem{proposition}[theorem]{Proposition}
\newtheorem{corollary}[theorem]{Corollary}
\newtheorem{cor}[theorem]{Corollary}
\newtheorem{prop}[theorem]{Proposition}
\newtheorem{lem}[theorem]{Lemma}
\theoremstyle{definition}
\newtheorem{example}[theorem]{Example}
\newtheorem{definition}[theorem]{Definition}
\newtheorem{defn}[theorem]{Definition}
\theoremstyle{remark}
\newtheorem{remark}[theorem]{Remark}
\newcommand{\cF}{\mathcal{F}}
\newcommand{\cL}{\mathcal{L}}
\newcommand{\cO}{\mathcal{O}}
\newcommand{\cS}{\mathcal{S}}
\newcommand{\bN}{\mathbb{N}}
\newcommand{\bQ}{\mathbb{Q}}
\newcommand{\bR}{\mathbb{R}}
\newcommand{\bZ}{\mathbb{Z}}
\newcommand{\fB}{\mathfrak{B}}
\newcommand{\fa}{\mathfrak{a}}
\newcommand{\fb}{\mathfrak{b}}
\newcommand{\fm}{\mathfrak{m}}
\newcommand{\fc}{\mathfrak{c}}
\newcommand{\fd}{\mathfrak{d}}
\newcommand{\supp}{\mathrm{supp}}
\newcommand{\vol}{\mathrm{vol}}
\newcommand{\ord}{\mathrm{ord}}
\newcommand{\lct}{\mathrm{lct}}
\newcommand{\Int}{\mathrm{Int}}
\newcommand{\Val}{\mathrm{Val}}
\newcommand{\DivVal}{\mathrm{DivVal}}
\newcommand{\nvol}{\widehat{\mathrm{vol}}}
\newcommand{\Spec}{\mathrm{Spec}}
\newcommand{\gr}{\mathrm{gr}}
\newcommand{\QM}{\mathrm{QM}}
\newcommand{\e}{\mathrm{e}}
\newcommand{\red}{\mathrm{red}}
\newcommand{\R}{\mathbb{R}}
\newcommand{\la}{\lambda}
\numberwithin{equation}{section}
\begin{document}

	\title{Convexity of multiplicities of filtrations on local rings}
	
	\subjclass[2020]{14B05, 13H15}

    \keywords{multiplicities, filtrations, valuations, normalized volume}
	
	\author{Harold Blum}
	
\address{Department of Mathematics\\
  University of Utah\\
Salt Lake City, UT 84112, USA.}
\email{blum@math.utah.edu}
	
	\author{Yuchen Liu}
	\address{Department of Mathematics, Northwestern University, Evanston, IL 60208, USA.}
\email{yuchenl@northwestern.edu}

	\author{Lu Qi}
	\address{Department of Mathematics, Princeton University, Princeton, NJ 08544, USA.}
	\email{luq@princeton.edu}

\begin{abstract}
We prove that the multiplicity of a filtration of a local ring satisfies various convexity properties. 
In particular, we show the multiplicity is convex along geodesics. 
As a consequence, we prove that the volume of a valuation is log convex on simplices of  quasi-monomial valuations 
and give a new proof of a theorem of Xu and Zhuang on the uniqueness of normalized volume minimizers. 
In another direction, we generalize a theorem of Rees on multiplicities of ideals to filtrations 
and characterize when the Minkowski inequality for filtrations is an equality under mild assumptions.

%We prove that the multiplicity function is log convex along the geodesic between two graded sequence of $\fm$-primary ideals of a local ring $(R,\fm)$ essentially of finite type.
%Among several applications, we give a new proof for uniqueness of the normalized volume minimizer \cite{XZ20b}. 
%Another main result is that under reasonable conditions, two graded sequence of ideals $\fb_\bullet\subset \fa_\bullet$ have the same multiplicity if and only if they share the common saturation. 
%This generalizes Rees' theorem on the multiplicities of $\fm$-primary ideals.
%It also gives a complete characterization for when the equality holds in the Minkowski-Teissier inequality. 
\end{abstract}
	
\maketitle

\setcounter{tocdepth}{1}	
\tableofcontents

\section{Introduction}
	
\begin{center}
\emph{Throughout the article, $(R,\fm)$ denotes an $n$-dimensional, analytically irreducible, Noetherian, local domain.}
\end{center}

\medskip

The Hilbert-Samuel multiplicity of an $\fm$-primary ideal $\fa\subset R$ is a fundamental invariant of the singularities of $\fa$ and satisfies various convexity properties such as Teissier's Minkowski inequality \cite{Tei78}.
In this article, we consider $\fm$-filtrations of $R$, 
which generalize the filtrations of $R$ given by the powers of a single ideal, 
and prove various convexity properties for such filtrations.
The results have applications to the study of K-stability, volumes of valuations, and problems in commutative algebra. 

An \emph{$\fm$-filtration}  is a collection $\fa_\bullet=(\fa_\la)_{\la\in \bR_{>0}}$ of $\fm$-primary ideals of $R$ that is decreasing, graded, and left continuous.
The latter three conditions mean $\fa_\la \subset \fa_{\mu}$ when $\la>\mu$, $\fa_{\la}\cdot \fa_{\mu}\subset \fa_{\la+\mu}$, and $\fa_{\la}= \fa_{\la-\epsilon}$ when $0<\epsilon\ll1$, respectively.
The key examples of  $\fm$-filtrations are the following:
\begin{enumerate}
\item A trivial example is given by taking powers  $( \fb^{\lceil \lambda \rceil })_{\lambda \in \bR_{>0}}$ of a fixed $\fm$-primary ideal $\fb\subset R$.
\item An important example in this paper is $\fa_{\bullet}(v)\coloneqq(\fa_{\la}(v))_{\la\in \bR_{>0}}$, where $v\colon {\rm Frac}(R)^\times  \to \mathbb{R}$ is a valuation  centered at $\fm$
and $\fa_{\la}(v) \coloneqq \{f \in R\, \vert \, v(f) \geq \la\}$  (see Section \ref{sec:valuation}).
\item If $(\fb_{\la})_{\la\in \bZ_{>0}}$ is a decreasing, graded sequence of $\fm$-primary ideals,\footnote{In \cite{Cut21}, an $\fm$-filtration is indexed by $\bZ_{>0}$ and, hence, coincides with a decreasing graded sequence of $\fm$-primary ideals.}
 then $(\fb_{\lceil \la \rceil })_{\la\in \bR_{>0}}$ is an $\fm$-filtration.
 Such sequences have been well studied in the literature \cite[Section 2.4.B]{Laz04}.
 
\end{enumerate}

Following work of Ein, Lazarsfeld, and Smith, the \emph{multiplicity} of an $\fm$-filtration is
\[
\vol(\fa_\bullet)
\coloneqq 
\lim_{ \bN \ni m\to\infty} \frac{\ell(R/\fa_m)}{m^n/n!} 
= 
\lim_{ \bN \ni m\to\infty} \frac{\e(\fa_m)}{m^n} ,
\]
where the existence of the above limit and the equality were proven in increasing generality by \cite{ELS03,Mus02,LM09,Cut13,Cut14}. 
This invariant is the local counterpart of the volume of a graded linear series of a line bundle
and has been studied both in the context of commutative algebra \cite{CSS19,Cut21} and  recently  in work of C. Li and others on the normalized volume of a valuation \cite{Li18,LLX18}.
	
In this article, we prove two properties of the multiplicity of $\fm$-filtrations. 
The first is a convexity result, which has applications to volumes of valuations and C. Li's normalized volume function.
The second is a generalization of a classical theorem of Rees \cite{Ree61} from the setting of ideals to filtrations and results in a characterization of when the Minkowski inequality for filtrations is an equality.

\subsection{Multiplicity  and geodesics}
Given two $\fm$-filtrations $\fa_{\bullet,0}$ and $\fa_{\bullet,1}$,
we define a segment of $\fm$-filtrations $(\fa_{\bullet,t})_{t\in[0,1]}$ interpolating between $\fa_{\bullet,0}$ and $\fa_{\bullet,1}$ by setting
\[
\fa_{\lambda,t} 
\coloneqq  
\sum_{\lambda=(1-t)\mu+t\nu} \fa_{\mu,0} \cap \fa_{\nu,1}
.\]
We call $(\fa_{\bullet,t})_{t\in[0,1]}$ the \emph{geodesic} between $\fa_{\bullet,0}$ and $\fa_{\bullet,1}$, since it is the local analogue of the geodesic between two filtrations of the section ring of a polarized variety \cite{BLXZ21,Reb20}.
This definition is also related to a construction in \cite{XZ20b}.

In \cite{BLXZ21}, it was shown that several non-Archimedean functionals from the theory of K-stability are strictly convex along geodesics in the global setting. 
In a similar spirit, we prove a convexity result in the local setting for the multiplicity along  geodesics.
	
\begin{theorem}\label{thm:main theorem}
Assume $R$ contains a field.
If  $\fa_{\bullet,0}$ and $\fa_{\bullet,1}$ are  $\fm$-filtrations with positive multiplicity, then the function $E(t) \colon [0,1]\to \bR$ defined by $E(t) \coloneqq  \e(\fa_{\bullet,t})$
%E^{\fa_{\bullet,0},\fa_{\bullet,1}}(t)=E(t)\coloneqq \e(\fa_{\bullet,t}),
satisfies the following properties:
\begin{enumerate}
\item $E(t)$ is smooth;
\item $E(t)^{-1/n}$ is concave, meaning 
\begin{equation*}\label{eqn:E^1/n is concave}
E(t)^{-1/n}\ge (1-t)E(0)^{-1/n}+tE(1)^{-1/n} \quad \text{ for all } t\in [0,1];
\end{equation*}
\item $E(t)^{-1/n}$ is linear if and only if $\widetilde{\fa}_{\bullet,0} =\widetilde{\fa}_{c\bullet,1}$ for some $c\in \bR_{>0}$.
\end{enumerate}
\end{theorem}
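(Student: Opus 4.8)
The plan is to pass to a convex-geometric model of $\fa_\bullet$ via Newton--Okounkov bodies, to reduce all three assertions to a single inequality about concave functions, and to prove that inequality by a short computation whose heart is Jensen's inequality.

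\emph{Step 1 (Okounkov model).} Since $R$ is analytically irreducible and contains a field, after completing we fix a valuation $\mathfrak v\colon R\setminus\{0\}\to\bZ^n$ (ordered lexicographically) with one-dimensional leaves whose value semigroup $\Gamma$ generates $\bR^n_{\ge 0}$ as a cone; such a valuation exists by the usual construction of Okounkov valuations (cf.\ \cite{Cut13}). For an $\fm$-filtration $\fa_\bullet$ set $w_{\fa_\bullet}(\gamma)\coloneqq\sup\{\la\ge 0:\gamma\in\mathfrak v(\fa_\la)\}$ on $\Gamma$; it is superadditive because $\fa_\la\fa_\mu\subset\fa_{\la+\mu}$ and $\mathfrak v$ is a valuation, so its Fekete limit
\[
g_{\fa_\bullet}(x)\coloneqq\lim_{m\to\infty}\tfrac1m w_{\fa_\bullet}(mx)
\]
exists and extends to a concave, positively $1$-homogeneous function on $\bR^n_{\ge 0}$. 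As $\fa_1$ is $\fm$-primary, $\fm^N\subset\fa_1$ and hence $\fm^{mN}\subset\fa_m$ for some $N$, so $g_{\fa_\bullet}(x)\ge|x|/N$; the hypothesis of positive multiplicity is exactly that $g_{\fa_\bullet}$ is finite a.e., so that $\Delta(\fa_\bullet)\coloneqq\{x\in\bR^n_{\ge 0}:g_{\fa_\bullet}(x)\le 1\}$ is a bounded set of positive measure. Counting the lattice points of $\Gamma$ in $\{w_{\fa_\bullet}<m\}$ and invoking the Newton--Okounkov computation of the multiplicity in this generality (\cite{Cut13,Cut14}) gives
\[
\e(\fa_\bullet)=n!\,\vol\bigl(\Delta(\fa_\bullet)\bigr)=\int_{\bR^n_{\ge 0}}e^{-g_{\fa_\bullet}(x)}\,dx,
\]
the last equality being the layer-cake formula together with $1$-homogeneity of $g_{\fa_\bullet}$.

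\emph{Step 2 (geodesics linearize $g$ — the main obstacle).} The crucial geometric input is that $g_{\fa_{\bullet,t}}=(1-t)g_{\fa_{\bullet,0}}+tg_{\fa_{\bullet,1}}=:g_t$ for all $t\in[0,1]$. At the level of ideals, $\fa_{\la,t}\supset\fa_{\mu,0}\cap\fa_{\nu,1}$ whenever $\la=(1-t)\mu+t\nu$, and every element of $\fa_{\la,t}$ is a sum of such intersections; passing to $\mathfrak v$ and then to the Fekete limit would give $g_{\fa_{\bullet,t}}=g_t$ \emph{if} one knew $\mathfrak v(I\cap J)=\mathfrak v(I)\cap\mathfrak v(J)$ and $\mathfrak v(I+J)=\mathfrak v(I)\cup\mathfrak v(J)$ for the ideals in question. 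Both identities can fail, but the failure is negligible in the lattice-point count; establishing this — the asymptotic exactness of the Okounkov construction along the geodesic — is the technical core. One workable route is to first verify $g_{\fa_{\bullet,t}}=g_t$ for filtrations whose ideals are spanned by the monomial basis attached to $\mathfrak v$ (where intersections and sums of ideals literally become intersections and unions of subsets of $\Gamma$ and the identity is exact and combinatorial), and then to reduce the general case to this one by a degeneration/approximation argument together with the continuity of multiplicity. Granting it, Step 1 yields $E(t)=\int_{\bR^n_{\ge 0}}e^{-g_t(x)}\,dx$.

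\emph{Step 3 (the convex-geometric lemma).} It now suffices to prove: for finite-valued, concave, positively $1$-homogeneous $g_0,g_1$ on a convex cone $C$ with $g_i(x)\ge c|x|$ (the situation to which Step 1 reduces us), the function $\Phi(t)\coloneqq\int_C e^{-g_t(x)}\,dx$ with $g_t=(1-t)g_0+tg_1$ is smooth and positive on $[0,1]$, $\Phi^{-1/n}$ is concave, and $\Phi^{-1/n}$ is affine if and only if $g_0$ and $g_1$ are proportional. Smoothness and positivity come from differentiating under the integral sign, using the uniform bounds $g_t(x)\ge c|x|$ and $|g_1-g_0|\le C|x|$ to dominate the integrands. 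For concavity, put $\psi\coloneqq g_1-g_0$ and $d\mu_t\coloneqq\Phi(t)^{-1}e^{-g_t}\,dx$; then $\Phi'/\Phi=-\mathbb E_{\mu_t}[\psi]$ and $\Phi''/\Phi=\mathbb E_{\mu_t}[\psi^2]$, and an elementary manipulation shows that $\bigl(\Phi^{-1/n}\bigr)''\le 0$ is equivalent to $\mathbb E_{\mu_t}[\psi^2]\ge\tfrac{n+1}{n}\mathbb E_{\mu_t}[\psi]^2$. Since $g_t$ and $\psi$ are $1$-homogeneous and $C$ is a cone, integrating in polar coordinates adapted to $g_t$ (write $x=sy$ with $g_t(y)=1$, so $dx=s^{n-1}\,ds\,d\sigma(y)$) factors the radial and spherical integrals and reduces the inequality to $\overline{\psi^2}\ge\bar\psi^{\,2}$, where the bars denote averages over the cross-section $\{g_t=1\}$ against $\sigma$. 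That is Jensen's inequality, with equality exactly when $\psi$ is $\sigma$-a.e.\ constant on $\{g_t=1\}$; by $1$-homogeneity this means $\psi=c_t g_t$, i.e.\ $g_0$ and $g_1$ are proportional. Hence $\Phi^{-1/n}$ is affine iff $g_0$ and $g_1$ are proportional (sufficiency being clear from the shape of $\Phi$ when $g_1=\kappa g_0$).

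\emph{Step 4 (back to filtrations).} Conclusions (1) and (2) follow from Steps 2--3. For (3), if $E(t)^{-1/n}$ is affine then running Steps 1--3 for \emph{every} admissible valuation $\mathfrak v$ shows $g_{\fa_{\bullet,0}}=g_{\fa_{c\bullet,1}}$ with respect to each $\mathfrak v$, where $c=(E(0)/E(1))^{1/n}$ is independent of $\mathfrak v$. Since for a divisorial valuation $v$ refined by $\mathfrak v$ the asymptotic value $v(\fa_\bullet)$ is recovered from $g_{\fa_\bullet}$ as $\min\{x_1:g_{\fa_\bullet}(x)\ge 1\}$, this forces $v(\fa_{\bullet,0})=v(\fa_{c\bullet,1})$ for all divisorial $v$, hence $\widetilde{\fa}_{\bullet,0}=\widetilde{\fa}_{c\bullet,1}$. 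The converse is immediate, since $g_{\fa_\bullet}$, and therefore $E(t)$, depends only on $\widetilde{\fa}_\bullet$. Thus the whole argument rests on Step 2: identifying the Newton--Okounkov body of $\fa_{\bullet,t}$ with $\{g_t\le 1\}$; once that is in place, everything else — including the inequality in (2) and the equality case in (3) — is a single application of Jensen's inequality on the cross-section of that body.
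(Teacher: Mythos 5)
Your proposal is the Okounkov-body strategy that the paper carries out only in its appendix, and only for the weaker statement (Theorem \ref{thm:appendix}) in which one endpoint is required to be $\fa_\bullet(w)$ for a quasi-monomial valuation $w$ and the auxiliary $\bZ^n$-valued valuation $\mathfrak v$ is built \emph{from} $w$. The place where your argument has a genuine gap is exactly the step you yourself flag and then ``grant'': the identity $g_{\fa_{\bullet,t}}=(1-t)g_{\fa_{\bullet,0}}+tg_{\fa_{\bullet,1}}$. The ``$\ge$'' half of the underlying pointwise statement requires, for each $\gamma$ in the value semigroup, a \emph{single} element $f$ with $\mathfrak v(f)=\gamma$ lying simultaneously in $\fa_{m_0,0}$ and $\fa_{m_1,1}$, where $m_i$ is the optimal jumping parameter of $\fa_{\bullet,i}$ on the leaf $\mathfrak v^{-1}(\gamma)$; for two unrelated filtrations the two optima are typically achieved by different elements of the leaf, and one Okounkov valuation cannot reconcile them. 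In the paper's Lemma \ref{lem:control for m^t by endpoints} this is resolved only because $\fa_{\bullet,0}=\fa_\bullet(w)$ with $w=\langle\bm{\alpha},\mathfrak v(\cdot)\rangle$, so that \emph{every} element of the leaf already achieves the optimum for $\fa_{\bullet,0}$; the ``$\le$'' half likewise uses the strict ordering of the $\mathfrak v$-values forced by $w$. Your proposed repair (monomial filtrations plus a ``degeneration/approximation argument together with the continuity of multiplicity'') is not substantiated, and it is not a routine continuity argument: multiplicity is not continuous in any topology on filtrations that would make such an approximation obvious, and the paper gives no indication that the general case can be recovered this way.

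The paper's actual proof of the full theorem avoids this obstacle by a different device: it studies the two-variable colength function $H_m(x,y)=\ell\bigl(R/(\fa_{mx,0}\cap\fa_{my,1})\bigr)$ and uses the simultaneous diagonalization of \emph{two} filtrations on the finite-dimensional spaces $R/\fm^{NCm}$ (Proposition \ref{p:measuremum}) to produce discrete measures $\mu_m$ on $\bR^2$; their weak limit $\mu$ (Proposition \ref{p:measuremu}) pushes forward to a measure $\widetilde\mu$ on a segment with $E(t)=\int\bigl((1-t)z+t(1-z)\bigr)^{-n}\,{\rm d}\widetilde\mu$ (Lemma \ref{l:Vformula}), after which Cauchy--Schwarz gives (2) and the equality analysis gives (3) via Lemma \ref{l:supptildemu} and Corollary \ref{cor:equiv equal mult}. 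Simultaneous diagonalization is the correct substitute for your missing Step 2: it supplies, for each basis vector, a single element optimal for both filtrations at once, which is precisely what a single Okounkov valuation cannot do. Your Steps 1, 3, and 4 are sound in outline (Step 3 is essentially the computation in the proof of Theorem \ref{thm:appendix}, and the recovery of divisorial values from the concave transform in Step 4 parallels the use of \cite{LX16} there), but as written the argument only establishes the special case already treated in the appendix, not the theorem.
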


The term smooth in Theorem \ref{thm:main theorem}.1
means $E(t)$ extends to a $C^\infty$ function on $(-\epsilon,1+\epsilon)$ for some $\epsilon>0$.
The symbol $\widetilde{\, \cdot \,}$  in Theorem \ref{thm:main theorem}.3  denotes the saturation of an $\fm$-filtration, which is  defined in Section  \ref{ss:defsat}.
This notion is an analogue of the integral closure of an ideal in the setting of filtrations and discussed further in Section \ref{ss:Rees} below. 

The proof of Theorem \ref{thm:main theorem} is inspired by a related argument in the global setting \cite{BLXZ21} and relies on constructing a measure on $\mathbb{R}^2$ that encodes the multiplicities of the filtrations along the geodesic.
In the special case when $\fa_{\bullet,0}$ is the $\fm$-filtration of a valuation minimizing the normalized volume function over a klt singularity, the proof of \cite{XZ20b} can be used to show Theorem \ref{thm:main theorem}(2). 
Theorem \ref{thm:main theorem} removes these strong restrictions and is proven without the theory of K-stability for valuations introduced in \cite[Section 3.1]{XZ20b}.

\subsubsection{Applications to volume}  
As first defined by Ein, Lazarsfeld, Smith,
the \emph{volume} of a valuation $v\colon {\rm Frac}(R)^{\times} \to \mathbb{R}$  centered at $\fm$ is 
\[
\vol(v) \coloneqq \e(\fa_\bullet(v)) = \lim_{m \to \infty} \frac{ \ell \left(R/ \fa_{m}(v)\right) }{m^n/n!}
.\]
This invariant is a local analogue of the volume of a line bundle and also plays a role in the study of K-stability of Fano varieties  and Fano cone singularities.

Theorem \ref{thm:main theorem} can be applied to show that  the volume of a valuation is strictly log convex on simplices of quasi-monomial valuations in the valuation space of $(R,\fm)$.
This gives an affirmative answer to \cite[Question 6.23]{LLX18}. 
We note that the  volume was previously shown to be Lipschitz continuous on such a simplex  \cite[Corollary D]{BFJ14}. 
	
\begin{corollary}[Convexity of volume]\label{cor:convexityofvolume}
Assume $R$ contains a field. Let $\eta \in (Y,D_1+\cdots +D_r)$ be a log smooth birational model of $(R,\fm)$. 
For any $\bm{\alpha},\bm{\beta} \in \bR_{>0}^{r}$ and $t\in (0,1)$, 
\[
\vol(v_{(1-t)\bm{\alpha}+ t \bm{\beta}})^{-1/n}
\geq 
(1-t)\vol(v_{\bm{\alpha}})^{-1/n} + t \vol(v_{\bm{\beta}})^{-1/n} 
\] 
and equality holds if and only if $\bm{\alpha} = c \bm{\beta}$ for some $c\in \bR_{>0}$.
\end{corollary}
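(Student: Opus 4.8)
The plan is to deduce Corollary~\ref{cor:convexityofvolume} from Theorem~\ref{thm:main theorem} applied to the $\fm$-filtrations $\fa_{\bullet,0}\coloneqq\fa_\bullet(v_{\bm{\alpha}})$ and $\fa_{\bullet,1}\coloneqq\fa_\bullet(v_{\bm{\beta}})$. Since $\bm{\alpha},\bm{\beta}\in\bR_{>0}^r$, the monomial valuations $v_{\bm{\alpha}},v_{\bm{\beta}}$ are centered at $\fm$ and have positive and finite volume, so these are $\fm$-filtrations of positive multiplicity; the geodesic between them recovers its endpoints, so $E(0)=\e(\fa_\bullet(v_{\bm{\alpha}}))=\vol(v_{\bm{\alpha}})$ and $E(1)=\vol(v_{\bm{\beta}})$ by the definition of the volume of a valuation.

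The one new ingredient is the elementary inclusion
\[
\fa_{\lambda,t}\ \subseteq\ \fa_\lambda\bigl(v_{(1-t)\bm{\alpha}+t\bm{\beta}}\bigr)\qquad\text{for all }\lambda>0,\ t\in[0,1].
\]
This follows from the well-known concavity of $\bm{\gamma}\mapsto v_{\bm{\gamma}}(f)$ for fixed $f$: expanding $f$ at $\eta$, this map is an infimum of the linear functions $\bm{\gamma}\mapsto\langle\bm{\gamma},\bm{a}\rangle$ over the exponents appearing in $f$, so $v_{(1-t)\bm{\alpha}+t\bm{\beta}}(f)\ge(1-t)v_{\bm{\alpha}}(f)+t\,v_{\bm{\beta}}(f)$; hence any element of $\fa_\mu(v_{\bm{\alpha}})\cap\fa_\nu(v_{\bm{\beta}})$ with $(1-t)\mu+t\nu=\lambda$ lies in $\fa_\lambda(v_{(1-t)\bm{\alpha}+t\bm{\beta}})$, which gives the inclusion. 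Since multiplicity is monotone decreasing under inclusion of $\fm$-filtrations, $E(t)\ge\vol(v_{(1-t)\bm{\alpha}+t\bm{\beta}})$, and combining with Theorem~\ref{thm:main theorem}(2),
\[
\vol\bigl(v_{(1-t)\bm{\alpha}+t\bm{\beta}}\bigr)^{-1/n}\ \ge\ E(t)^{-1/n}\ \ge\ (1-t)\vol(v_{\bm{\alpha}})^{-1/n}+t\,\vol(v_{\bm{\beta}})^{-1/n}.
\]
This is the claimed inequality.

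For the equality statement, the implication $\bm{\alpha}=c\bm{\beta}\Rightarrow$ equality is immediate: then $v_{(1-t)\bm{\alpha}+t\bm{\beta}}=((1-t)c+t)v_{\bm{\beta}}$, so by homogeneity of the volume both sides of the inequality equal $((1-t)c+t)\vol(v_{\bm{\beta}})^{-1/n}$. Conversely, suppose equality holds at some $t_0\in(0,1)$; then both inequalities in the displayed chain are equalities, so $E(t_0)^{-1/n}=(1-t_0)E(0)^{-1/n}+t_0E(1)^{-1/n}$. Since $E^{-1/n}$ is concave on $[0,1]$ by Theorem~\ref{thm:main theorem}(2) and a concave function agreeing with its chord at an interior point agrees with it throughout, $E^{-1/n}$ is linear on all of $[0,1]$; by Theorem~\ref{thm:main theorem}(3) this forces $\widetilde{\fa}_\bullet(v_{\bm{\alpha}})=\widetilde{\fa}_{c\bullet}(v_{\bm{\beta}})$ for some $c>0$. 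Rewriting $\fa_{c\bullet}(v_{\bm{\beta}})=\fa_\bullet(v_{\bm{\beta}/c})$ and using that a quasi-monomial valuation is recovered from the saturation of its valuation filtration, we get $v_{\bm{\alpha}}=v_{\bm{\beta}/c}$, i.e.\ $\bm{\alpha}=c^{-1}\bm{\beta}$.

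The step I expect to require the most care is the last one: that $\widetilde{\fa}_\bullet(v)$ determines the quasi-monomial valuation $v$, so that Theorem~\ref{thm:main theorem}(3) translates into proportionality of the weight vectors. The natural route is to show, from the definition of saturation in Section~\ref{ss:defsat}, that $\widetilde{\fa}_\bullet(v_{\bm{\alpha}})=\fa_\bullet(v_{\bm{\alpha}})$: after completing at $\eta$, $\fa_\lambda(v_{\bm{\alpha}})$ is the monomial ideal generated by the $y^{\bm{a}}$ with $\langle\bm{\alpha},\bm{a}\rangle\ge\lambda$, and the defining inequalities of the saturation are governed by the toric divisorial valuations $v_{\bm{q}}$, $\bm{q}\in\bQ_{>0}^r$, which approximate $v_{\bm{\alpha}}$ and recover exactly this ideal. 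The remaining facts used above — that $v_{\bm{\alpha}}$ is centered at $\fm$ with $0<\vol(v_{\bm{\alpha}})<\infty$, the concavity of $\bm{\gamma}\mapsto v_{\bm{\gamma}}(f)$, and monotonicity of multiplicity — are standard \cite{BFJ14,LLX18}.
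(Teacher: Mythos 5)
Your proposal is correct and follows essentially the same route as the paper: form the geodesic between $\fa_\bullet(v_{\bm{\alpha}})$ and $\fa_\bullet(v_{\bm{\beta}})$, use the concavity of $\bm{\gamma}\mapsto v_{\bm{\gamma}}(f)$ to get $\fa_{\bullet,t}\subset\fa_\bullet(v_{(1-t)\bm{\alpha}+t\bm{\beta}})$, apply Theorem \ref{thm:main theorem}(2)--(3), and use Lemma \ref{lem:valuative ideals sat} (saturatedness of $\fa_\bullet(v)$) to pass from equality of saturations to equality of the filtrations and hence proportionality of the weights. Your extra care on the ``if'' direction of the equality case (via homogeneity of the volume) and on recovering $\bm{\alpha}$ from $\fa_\bullet(v_{\bm{\alpha}})$ only makes explicit what the paper leaves implicit.
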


In the above theorem, $v_{\bm{\alpha}}$ denotes the quasi-monomial valuation of ${\rm Frac}(R)$ with weights $\bm{\alpha}$ on $D_1,\ldots, D_r$. 
See Section \ref{sssec:QM} for a detailed definition.

\subsubsection{Applications to normalized volume}
In \cite{Li18}, Chi Li defined the normalized volume of a valuation over a klt singularity and proposed the problem of studying its minimizer.
%The invariant is defined by taking the volume of a valuation and normalizing by its log discrepancy appropriately; see Section \ref{ss:nvol}.
The notion  plays an important role in the study of K-stability of Fano varieties and
in the study of klt singularities. 
The invariant has  been extensively studied in the recent years; see \cite{LLX18} and \cite{Zhu23} for  surveys on this topic.

The fundamental problem in the study of the normalized volume function is the Stable Degeneration Conjecture proposed by Li  \cite[Conjecture 7.1]{Li18} and Li and Xu \cite[Conjecture 1.2]{LX18}. 
The conjecture predicts that there exists a valuation minimizing the normalized volume function and that the minimizer is unique up to scaling, quasi-monomial, 
has finitely generated associated graded ring, and induces a degeneration of the klt singularity to a K-semistable Fano cone singularity. 
These five statements were proven in  \cite{Blu18,LX18,Xu20,XZ20b,XZ22}. In particular, the Stable Degeneration Conjecture is now a theorem.

Using Theorem \ref{thm:main theorem}, we revisit 
Xu and Zhuang's theorem stating that the  minimizer of the normalized volume function is unique up to scaling  \cite{XZ20b}. (The uniqueness was also previously proven in \cite{LX18} under the assumption that the minimizer has finitely generated associated ring. The latter was recently shown in \cite{XZ22}.) 
In particular, we give a proof of the uniqueness result independent of the theory of K-semistability for valuations developed in  \cite{XZ20b}.% and any finite generation assumption.%\emph{loc. cit.} %\liu{it should be mentioned somewhere that multiplicity of valuation ideals is equal to volume of valuation.}
	
\begin{corollary}[{Uniqueness of minimizer}]\label{cor:uniqueness of minimizer}
If  $x\in(X,D)$ is a klt singularity defined over an algebraically closed field of characteristic 0, then any minimizer of $\nvol_{X,D,x}$ (see Definition \ref{d:nvol}) is unique up to scaling. 
\end{corollary}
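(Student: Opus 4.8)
The plan is to deduce Corollary \ref{cor:uniqueness of minimizer} from Theorem \ref{thm:main theorem}, essentially by showing that the normalized volume $\nvol_{X,D,x}$ is strictly convex (in an appropriate sense) along the geodesics we have constructed, so that two distinct minimizers would contradict the equality case in Theorem \ref{thm:main theorem}(3). First I would reduce to the setting of the theorem: by the work cited (\cite{Blu18, Xu20}), a minimizer $v$ of $\nvol_{X,D,x}$ exists, and we may assume $R = \cO_{X,x}$ is analytically irreducible (indeed, after completing, it is a domain since the singularity is klt hence normal, and one checks klt singularities are analytically irreducible). Since $\nvol$ is defined as $A_{X,D}(v)^n \cdot \vol(v)$ on the space of valuations centered at $\fm$, and both the log discrepancy and the volume factor depend only on the filtration $\fa_\bullet(v)$ up to saturation, it suffices to compare two minimizers $v_0, v_1$ via the geodesic $(\fa_{\bullet,t})_{t\in[0,1]}$ between $\fa_\bullet(v_0)$ and $\fa_\bullet(v_1)$.

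The key steps, in order, are as follows. Step one: record that both minimizers can be rescaled so that $A_{X,D}(v_0) = A_{X,D}(v_1) = 1$ (the normalized volume is scaling-invariant, so this is harmless), and hence $\vol(v_0) = \vol(v_1) = \widehat{\rm vol}$, the minimum value. Step two: show that along the geodesic, the log discrepancy behaves concavely, i.e. $A_{X,D}(\fa_{\bullet,t}) \ge (1-t) A_{X,D}(\fa_{\bullet,0}) + t A_{X,D}(\fa_{\bullet,1})$, where $A_{X,D}$ of a filtration is defined via $\lim_\lambda \lct(\fa_\lambda)/\lambda$ or equivalently $\inf_w A_{X,D}(w)/\ord_w(\fa_\bullet)$ — this concavity follows from the definition $\fa_{\lambda,t} = \sum_{\lambda = (1-t)\mu + t\nu} \fa_{\mu,0}\cap\fa_{\nu,1}$ since any valuation $w$ satisfies $\ord_w(\fa_{\lambda,t}) \ge \min$ over the sum, giving a superadditivity that yields the bound. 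Step three: by Theorem \ref{thm:main theorem}(2), $E(t)^{-1/n} = \e(\fa_{\bullet,t})^{-1/n} \ge (1-t)\e(\fa_{\bullet,0})^{-1/n} + t\,\e(\fa_{\bullet,1})^{-1/n}$, so $\e(\fa_{\bullet,t}) \le \widehat{\rm vol}$ for all $t$ (since the right side equals $\widehat{\rm vol}^{-1/n}$). Combining Steps two and three and the fact that $\widehat{\rm vol}$ is the infimum of $\nvol$, which is bounded below by $A_{X,D}(\fa_{\bullet,t})^n \e(\fa_{\bullet,t})$ suitably interpreted (this requires knowing the filtration $\fa_{\bullet,t}$ can be approximated by, or its invariants bounded below in terms of, valuations — precisely the content of the valuative interpretation of multiplicity of filtrations, cf. \cite{Li18, LLX18}), we force $A_{X,D}(\fa_{\bullet,t}) \equiv 1$ and $\e(\fa_{\bullet,t}) \equiv \widehat{\rm vol}$, so $E(t)^{-1/n}$ is linear. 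Step four: apply Theorem \ref{thm:main theorem}(3) to conclude $\widetilde{\fa}_{\bullet}(v_0) = \widetilde{\fa}_{c\bullet}(v_1)$ for some $c > 0$, and finally translate this equality of saturated filtrations back into $v_0 = c \cdot v_1$ as valuations — using that a valuation can be recovered from the saturation of its associated filtration (its values on $R$ are $v(f) = \sup\{\lambda : f \in \widetilde{\fa}_\lambda\}$, and this determines the valuation on $\rm Frac(R)$ since $v$ is centered at $\fm$).

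The main obstacle I expect is Step three — more precisely, establishing the inequality $\nvol_{X,D,x} \le A_{X,D}(\fa_{\bullet,t})^n \cdot \e(\fa_{\bullet,t})$ for the interpolated \emph{filtration} $\fa_{\bullet,t}$ (which is a priori not the filtration of a single valuation), and that its infimum over filtrations still equals the infimum over valuations. This is where one needs the comparison between multiplicities of filtrations and normalized volumes of valuations: one wants that $\inf_{\fa_\bullet} \lct(\fa_\bullet)^n \e(\fa_\bullet) = \inf_v \nvol(v)$ (a "filtration version" of the normalized volume inequality), which has appeared in the literature around \cite{Li18, LX18, LLX18} but must be invoked carefully in the precise generality needed here — in particular for filtrations not necessarily of finite type. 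A secondary subtlety is Step four: the passage from equality of saturated filtrations to literal proportionality of valuations needs that the saturation operation does not lose the valuation, which should follow from the fact that $\widetilde{\fa}_\lambda(v) = \fa_\lambda(v)$ when $v$ is a valuation (valuation ideals are already saturated), so in fact the equality $\fa_\bullet(v_0) = \fa_{c\bullet}(v_1)$ holds on the nose and recovers $v_0 = c v_1$ directly.
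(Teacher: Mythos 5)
Your overall architecture matches the paper's: rescale so $A_{X,D}(v_0)=A_{X,D}(v_1)=1$, run the geodesic between $\fa_\bullet(v_0)$ and $\fa_\bullet(v_1)$, combine Theorem \ref{thm:main theorem}(2) with the lower bound $\nvol(x,X,D)\le \lct(X,D;\fa_{\bullet,t})^n\,\e(\fa_{\bullet,t})$ from \cite{Liu18} to force $E(t)^{-1/n}$ to be linear, then apply Theorem \ref{thm:main theorem}(3) and the fact that valuation filtrations are saturated (Lemma \ref{lem:valuative ideals sat}) to recover $v_0=cv_1$. You also correctly flag the \cite{Liu18}-type input as something to be invoked carefully. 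However, there is a genuine gap in your Step two, and it is exactly at the one remaining nontrivial external input.

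You claim that the log canonical threshold of the geodesic filtration satisfies $\lct(X,D;\fa_{\bullet,t})\ge (1-t)\lct(X,D;\fa_{\bullet,0})+t\,\lct(X,D;\fa_{\bullet,1})$ ("concavity"), and that this follows from an easy superadditivity of $\ord_w$ on the sum defining $\fa_{\lambda,t}$. This is the wrong inequality, and the direction matters: with $\lct(\fa_{\bullet,t})\ge 1$ and $\e(\fa_{\bullet,t})\le \nvol(x,X,D)$, the chain $\nvol(x,X,D)\le \lct(\fa_{\bullet,t})^n\e(\fa_{\bullet,t})$ does not force any equality (e.g.\ $\lct=2$ and $\e=\nvol/2^n$ is consistent with all three constraints). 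What the argument needs is the \emph{sub}additivity $\lct(X,D;\fa_{\bullet,t})\le (1-t)\lct(X,D;\fa_{\bullet,0})+t\,\lct(X,D;\fa_{\bullet,1})\le 1$, which then squeezes $\nvol\le\lct^n\e\le\e\le\nvol$. Moreover, the elementary computation you sketch cannot produce either direction: for a fixed valuation $w$ one only gets the lower bound $w(\fa_{\bullet,t})\ge \bigl((1-t)/w(\fa_{\bullet,0})+t/w(\fa_{\bullet,1})\bigr)^{-1}$ (the intersection $\fa_{\mu,0}\cap\fa_{\nu,1}$ can have $w$-value strictly larger than the maximum of the two), hence $A(w)/w(\fa_{\bullet,t})\le (1-t)A(w)/w(\fa_{\bullet,0})+tA(w)/w(\fa_{\bullet,1})$ pointwise in $w$; taking the infimum over $w$ of a sum only bounds it from below by the sum of the infima, so no comparison with $(1-t)\lct_0+t\lct_1$ follows. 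The correct inequality is Proposition \ref{prop:lctconvex} in the paper, which is deduced from the nontrivial \cite[Theorem 3.11]{XZ20b} by rewriting $\fa_{\bullet,t}$ (up to equal valuations) as a sum-of-intersections filtration built from $\fa_{\bullet/(1-t),0}$ and $\fa_{\bullet/t,1}$. This is the essential input your proposal is missing; the rest of your outline (including the recovery of $v$ from $\fa_\bullet(v)$ in Step four) is sound.
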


The new proof of Corollary \ref{cor:uniqueness of minimizer} takes the following direct approach.
Fix two valuations $v_0$ and $v_1$ that minimize $\nvol_{X,D,x}$ and consider the geodesic $(\fa_{\bullet,t})_{t\in [0,1]}$ between
$\fa_{\bullet}(v_0)$ and $\fa_{\bullet}(v_1)$. 
Using Theorem \ref{thm:main theorem}(2), a characterization of the infimum of the normalized volume function in terms of normalized multiplicities \cite[Theorem 27]{Liu18}, and an inequality of log canonical thresholds \cite[Theorem 3.11]{XZ20b}, 
we show that $ \e(\fa_{\bullet,t})$ is linear. 
Theorem \ref{thm:main theorem}(3)  then implies  $cv_0=v_1$ for some $c>0$.

\subsection{Rees's theorem}\label{ss:Rees}
A theorem of Rees \cite{Ree61} states that if $\fa\subset \fb$ are two $\fm$-primary ideals, then the following statements are equivalent:
\begin{enumerate}
\item $\e(\fa)=\e(\fb)$.
\item $\overline{ \bigoplus_{m\in \bN} \fa^m}=\overline{ \bigoplus_{m \in \bN} \fb^m}$.
\item $\overline \fa=\overline \fb$.
\end{enumerate}
The symbol $\overline{\,  \cdot\, }$ in (2)  denotes the algebraic closure in $R[t]$, while in (3) it denotes the integral closure of an ideal.
The equivalence between (2) and (3) follows from definitions. 

It is natural to ask for a generalization of the above result for $\fm$-filtrations  $\fa_\bullet\subset \fb_\bullet$.  
In \cite{Cut21}, Cutkosky studies whether the two conditions
\begin{enumerate}
\item $\e(\fa_\bullet)=\e(\fb_\bullet)$ 
\item $\overline{ \bigoplus_{m\in \bN} \fa_m}=\overline{ \bigoplus_{m \in \bN} \fb_m}$
\end{enumerate}
are equivalent.
While (2) $\Rightarrow$ (1) holds by \cite[Theorem 6.9]{CSS19},
(1) $\Rightarrow$ (2) can fail even in very simple examples (see Example \ref{eg:simple eg}). 
That said, (1) $\Rightarrow$ (2)  holds for special classes of $\fm$-filtrations \cite[Theorem 1.4]{Cut21}.

To remedy this issue, we introduce the \emph{saturation} $\widetilde{\fa}_\bullet$ of an $\fm$-filtration $\fa_\bullet$ in Section \ref{sec:saturation}. 
(The definition may be viewed as a local analogue of a construction studied by Boucksom and Jonsson in \cite{BJ21}; see Section \ref{sec:global}).
The saturation is defined using divisorial valuations, analogous to the valuative definition of the integral closure of an ideal. 
Using this notion, we prove a version of Rees's theorem for filtrations.

\begin{theorem}[{Rees's Theorem}]\label{thm:characterization for sequences with equal volume}
For $\fm$-filtrations  $\fa_\bullet \subset \fb_\bullet$, 
$\e(\fa_\bullet)=\e(\fb_\bullet)$ if and only if $\widetilde\fa_\bullet=\widetilde\fb_\bullet$. 
\end{theorem}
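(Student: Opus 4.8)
The plan is to prove the two implications separately; the forward direction will be a short consequence of Theorem~\ref{thm:main theorem}(3) applied to the geodesic joining $\fa_\bullet$ and $\fb_\bullet$.

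For ``$\widetilde\fa_\bullet=\widetilde\fb_\bullet \Rightarrow \e(\fa_\bullet)=\e(\fb_\bullet)$'', the only input needed is that the multiplicity of an $\fm$-filtration is unchanged by saturation, i.e.\ $\e(\fc_\bullet)=\e(\widetilde\fc_\bullet)$ for every $\fm$-filtration $\fc_\bullet$. This is the filtration analogue of the classical equality $\e(\fc)=\e(\overline\fc)$ for $\fm$-primary ideals and is proved in Section~\ref{sec:saturation} from the valuative description of the saturation; granting it, $\e(\fa_\bullet)=\e(\widetilde\fa_\bullet)=\e(\widetilde\fb_\bullet)=\e(\fb_\bullet)$.

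Conversely, suppose $\fa_\bullet\subseteq\fb_\bullet$ and $\e(\fa_\bullet)=\e(\fb_\bullet)$. If $\e(\fb_\bullet)=0$ then $\e(\fa_\bullet)=0$ as well, and the asymptotic vanishing order $a_v(\fa_\bullet)\coloneqq\lim_m \min\{v(f):f\in\fa_m\}/m$ must vanish for every divisorial valuation $v$ centered at $\fm$ --- otherwise a suitable rescaling of $v$ would satisfy $\fa_\bullet\subseteq\fa_\bullet(v)$, giving $\e(\fa_\bullet)\ge\vol(v)>0$. Since the saturation is determined by these numbers, $\widetilde\fa_\bullet$ and $\widetilde\fb_\bullet$ are both the unit filtration $\lambda\mapsto R$, and we are done. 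So assume $\e(\fa_\bullet)=\e(\fb_\bullet)>0$. Put $\fa_{\bullet,0}\coloneqq\fa_\bullet$, $\fa_{\bullet,1}\coloneqq\fb_\bullet$, let $(\fa_{\bullet,t})_{t\in[0,1]}$ be the geodesic between them, and set $E(t)\coloneqq\e(\fa_{\bullet,t})$. The elementary observation is that $\fa_\bullet\subseteq\fb_\bullet$ propagates to $\fa_\bullet\subseteq\fa_{\bullet,t}\subseteq\fb_\bullet$ for all $t$: the summand with $\mu=\nu=\lambda$ gives $\fa_{\lambda,t}\supseteq\fa_\lambda\cap\fb_\lambda=\fa_\lambda$, while for any $\mu,\nu$ with $(1-t)\mu+t\nu=\lambda$ one has $\fa_\mu\cap\fb_\nu\subseteq\fb_\mu\cap\fb_\nu=\fb_{\max(\mu,\nu)}\subseteq\fb_\lambda$, so the entire sum defining $\fa_{\lambda,t}$ lies in $\fb_\lambda$. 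Since $\fa_\bullet,\fb_\bullet$ have positive multiplicity, Theorem~\ref{thm:main theorem} applies to this geodesic; and since $\e(\cdot)$ reverses inclusions we get $E(1)\le E(t)\le E(0)$, which together with $E(0)=E(1)$ forces $E$ to be constant. In particular $E(t)^{-1/n}$ is linear.

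By Theorem~\ref{thm:main theorem}(3) it follows that $\widetilde\fa_\bullet=\widetilde{\fb}_{c\bullet}$ for some $c\in\bR_{>0}$, where $\fb_{c\bullet}$ denotes the filtration $\lambda\mapsto\fb_{c\lambda}$. Taking multiplicities, using saturation-invariance once more and the scaling identity $\e(\fb_{c\bullet})=c^n\e(\fb_\bullet)$, we obtain $\e(\fb_\bullet)=\e(\fa_\bullet)=\e(\widetilde\fa_\bullet)=\e(\widetilde{\fb}_{c\bullet})=c^n\e(\fb_\bullet)$; as $\e(\fb_\bullet)>0$ this gives $c=1$, hence $\widetilde\fa_\bullet=\widetilde\fb_\bullet$. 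The step to watch is the appeal to Theorem~\ref{thm:main theorem}, whose proof assumes $R$ contains a field: one should check either that this assumption carries over to the present statement, or that the only part invoked here, namely (3), remains valid without it. The remaining ingredients --- saturation-invariance of multiplicity, the fact that the saturation depends only on the invariants $a_v$, and $\vol(v)>0$ for divisorial $v$ --- are all supplied by Section~\ref{sec:saturation}, so the genuinely new content of the proof is the inclusion chain $\fa_\bullet\subseteq\fa_{\bullet,t}\subseteq\fb_\bullet$ together with the normalization $c=1$.
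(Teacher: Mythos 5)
Your reverse implication is circular as the paper is structured. You invoke Theorem \ref{thm:main theorem}(3) to conclude from the linearity of $E(t)^{-1/n}$ that $\widetilde\fa_\bullet=\widetilde\fb_{c\bullet}$; but the paper's proof of Theorem \ref{thm:main theorem}(3) translates ``$\widetilde\mu$ supported at a single point'' into the saturation statement precisely via Corollary \ref{cor:equiv equal mult}, whose proof applies Theorem \ref{thm:characterization for sequences with equal volume} (the very statement you are proving) in both directions. The same problem infects your forward implication: the ``saturation‑invariance of multiplicity'' $\e(\fc_\bullet)=\e(\widetilde\fc_\bullet)$ that you take as given is Corollary \ref{cor:sat mult}, which the paper \emph{deduces from} Theorem \ref{thm:characterization for sequences with equal volume}. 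The genuine content behind both directions is Proposition \ref{prop:valuative criterion for multiplicities}: if $v(\fb_\bullet)\le v(\fa_\bullet)$ for all divisorial $v$, then $\e(\fb_\bullet)\le\e(\fa_\bullet)$. This is proved by a delicate intersection‑number argument on a tower of normalized blowups (after completing $R$), and nothing in your proposal substitutes for it. There is also the issue you flag yourself but do not resolve: Theorem \ref{thm:main theorem} assumes $R$ contains a field, while Rees' Theorem here is stated for an arbitrary analytically irreducible local domain, so even granting the logic your argument would prove a weaker statement.

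For comparison, the paper's route avoids geodesics entirely. It first reduces to complete $R$ (Propositions \ref{p:completion-saturation} and \ref{p:completion-multiplicity}), then shows that $\e(\fa_\bullet)=\e(\fb_\bullet)$ with $\fa_\bullet\subset\fb_\bullet$ forces $v(\fa_\bullet)=v(\fb_\bullet)$ for every positive‑volume valuation $v$ (Proposition \ref{prop:multval}, via the colength estimate $\ell(\fb_{lm}/\fa_{lm})\ge\ell(R/\fa_{(l-k)m}(v))$ coming from multiplication by a well‑chosen element $f^m$), and conversely that equality of the numbers $v(\fa_\bullet)$ for divisorial $v$ forces equality of multiplicities (Proposition \ref{prop:valuative criterion for multiplicities}); Proposition \ref{prop:equiv filts} then identifies the valuative condition with $\widetilde\fa_\bullet=\widetilde\fb_\bullet$. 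Your inclusion chain $\fa_\bullet\subseteq\fa_{\bullet,t}\subseteq\fb_\bullet$ and the normalization argument forcing $c=1$ are both correct and would be a clean way to derive consequences \emph{from} the theorem, but they cannot replace the two propositions above as a proof \emph{of} it.
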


The above result can be explained as follows. 
The multiplicity of an $\fm$-filtration is determined by its valuative properties, not by the integral properties of its Rees algebra.
These  two properties coincide for ideals, but do not always coincide  for filtrations by Example \ref{eg:simple eg}.
	
%Note that a normal local domain essentially of finite type over $k$ satisfies the assumption of Theorem \ref{thm:characterization for sequences with equal volume}. Indeed Theorem \ref{thm:characterization for sequences with equal volume} is the main ingredient to prove Theorem \ref{thm:main theorem}(3).
	
\subsection{Minkowski inequality}
By work of Teissier \cite{Tei78}, Rees and Sharp \cite{RS78}, and D. Katz \cite{Kat88} in increasing generality, 
for two $\fm$-primary ideals $\fa$ and $\fb$ of $R$, 
\[
\e(\fa\fb)^{1/n}\leq \e(\fa)^{1/n}+\e(\fb)^{1/n}
\]
and the equality holds if and only if there exist $c,d\in \bZ_{>0}$ such that  $\overline{\fa^c}=\overline{\fb^d}$.

For two $\fm$-filtrations $\fa_\bullet$ and $\fb_\bullet$,  we let $\fa_\bullet\fb_\bullet$ denote the $\fm$-filtration $( \fa_\la  \fb_\la)_{\la\in \bR_{>0}}$.
Using the saturation of a filtration, we characterize when the Minkowski inequality for filtrations is an equality.
%As one can expect, the saturation plays the role of the integral closure in this more general setting. 

\begin{corollary}[Minkowski Inequality]\label{cor:Minkowski}
For $\fm$-filtrations $\fa_\bullet$ and $\fb_\bullet$ with positive multiplicity,
\begin{equation}\label{eqn:Minkowski-Teissier}
\e(\fa_\bullet\fb_\bullet)^{1/n}\leq
\e(\fa_\bullet)^{1/n}+\e(\fb_\bullet)^{1/n}
\end{equation}
and the equality holds if and only if $\widetilde\fa_{\bullet} = \widetilde\fb_{c\bullet}$  for some $c\in \bR_{>0}$. 
\end{corollary}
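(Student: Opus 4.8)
The plan is to deduce the Minkowski inequality and its equality characterization from Theorem \ref{thm:main theorem} by realizing the product filtration on the geodesic. First I would observe that the inequality \eqref{eqn:Minkowski-Teissier} itself is not new: since $\fa_\bullet \fb_\bullet \supset \fa_\bullet \cap \fb_\bullet$ and multiplicity reverses inclusions (more generally, this is the filtration Minkowski inequality proved in \cite{CSS19,Cut21}), it suffices to concentrate on the equality case. The key point is to connect the product $\fa_\bullet\fb_\bullet$ with the geodesic between $\fa_\bullet$ and (a rescaling of) $\fb_\bullet$. Set $\fa_{\bullet,0} = \fa_\bullet$ and $\fa_{\bullet,1} = \fb_\bullet$, and let $(\fa_{\bullet,t})_{t\in[0,1]}$ be the geodesic. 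The crucial computation is that the value at the midpoint $t = 1/2$ of the geodesic dominates the product after a dilation: by the definition $\fa_{\lambda,1/2} = \sum_{\lambda = \mu/2 + \nu/2} \fa_{\mu,0}\cap \fa_{\nu,1} \supset \fa_{\lambda,0}\cap \fa_{\lambda,1} \supset \fa_{2\lambda,0}\cdot\fa_{2\lambda,1}$, so rescaling indices shows $\fa_{\bullet,1/2}$ contains the $2$-dilate of $\fa_\bullet\fb_\bullet$; hence $E(1/2) \le 2^n \e(\fa_\bullet\fb_\bullet)$, using $\e(\fc_{c\bullet}) = c^{-n}\e(\fc_\bullet)$.

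Next I would run this together with the concavity of $E^{-1/n}$. Applying Theorem \ref{thm:main theorem}(2) at $t = 1/2$ gives
\[
E(1/2)^{-1/n} \ge \tfrac12 E(0)^{-1/n} + \tfrac12 E(1)^{-1/n} = \tfrac12\e(\fa_\bullet)^{-1/n} + \tfrac12\e(\fb_\bullet)^{-1/n}.
\]
Combining with $E(1/2) \le 2^n\e(\fa_\bullet\fb_\bullet)$, i.e. $E(1/2)^{-1/n} \le \tfrac12 \e(\fa_\bullet\fb_\bullet)^{-1/n}$, yields exactly $\e(\fa_\bullet\fb_\bullet)^{1/n} \le \e(\fa_\bullet)^{1/n} + \e(\fb_\bullet)^{1/n}$, recovering the inequality; more importantly, equality in \eqref{eqn:Minkowski-Teissier} forces equality in \emph{both} steps. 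Equality in the concavity step, by Theorem \ref{thm:main theorem}(3), immediately gives $\widetilde{\fa}_\bullet = \widetilde{\fa}_{c\bullet,1} = \widetilde{\fb}_{c\bullet}$ for some $c\in\bR_{>0}$, which is the desired conclusion. Conversely, if $\widetilde{\fa}_\bullet = \widetilde{\fb}_{c\bullet}$, I would need to check equality holds in \eqref{eqn:Minkowski-Teissier}: by Theorem \ref{thm:characterization for sequences with equal volume} (Rees' theorem) the saturation does not change multiplicity, and for the filtration of a single rescaled filtration $\fc_\bullet \fc_{c\bullet}$ the Minkowski inequality is a direct computation (both sides equal $(1 + c^{-1})\e(\fc_\bullet)^{1/n}$ up to the appropriate normalization), so one reduces to the case $\fa_\bullet = \fb_{c\bullet}$ modulo saturation and checks this explicitly.

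The main obstacle I anticipate is the bookkeeping around saturation in the converse direction: passing from $\widetilde{\fa}_\bullet = \widetilde{\fb}_{c\bullet}$ to an equality of multiplicities of the \emph{products} $\fa_\bullet\fb_\bullet$ requires knowing that saturation is compatible with products well enough — or at least that $\e(\fa_\bullet\fb_\bullet) = \e(\widetilde{\fa}_\bullet\,\widetilde{\fb}_\bullet)$ — which is plausible from the valuative description of saturation but needs the results of Section \ref{sec:saturation} rather than just the statements quoted in the introduction. An alternative route for the converse that sidesteps this is to argue directly on the geodesic: if $\widetilde{\fa}_\bullet = \widetilde{\fb}_{c\bullet}$ then $E^{-1/n}$ is linear by Theorem \ref{thm:main theorem}(3), so the concavity step is an equality, and then one only needs the reverse inequality $E(1/2) \ge 2^n \e(\fa_\bullet\fb_\bullet)$ up to saturation, i.e. that the midpoint of the geodesic and the rescaled product have the same multiplicity — again a statement about saturation, but now only an inequality in one direction is genuinely needed. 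I expect the cleanest writeup handles the forward direction exactly as above (which is short and uses only Theorem \ref{thm:main theorem}) and treats the converse via the explicit rescaled-filtration computation plus invariance of multiplicity under saturation.
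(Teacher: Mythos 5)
There is a genuine gap, and it sits at the heart of your argument. The step ``combining with $E(1/2)\le 2^n\e(\fa_\bullet\fb_\bullet)$, i.e.\ $E(1/2)^{-1/n}\le \tfrac12\e(\fa_\bullet\fb_\bullet)^{-1/n}$'' is false: raising to the power $-1/n$ \emph{reverses} the inequality, so the (correct) containment $(\fa_\bullet\fb_\bullet)_{2\bullet}\subset\fa_{\bullet,1/2}$ gives $E(1/2)^{-1/n}\ge\tfrac12\e(\fa_\bullet\fb_\bullet)^{-1/n}$. You then have two \emph{lower} bounds on $E(1/2)^{-1/n}$ — this one and the concavity bound $E(1/2)^{-1/n}\ge\tfrac12\e(\fa_\bullet)^{-1/n}+\tfrac12\e(\fb_\bullet)^{-1/n}$ — and they cannot be combined to compare $\e(\fa_\bullet\fb_\bullet)$ with $\e(\fa_\bullet),\e(\fb_\bullet)$. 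The problem is not just a sign slip: even if the containment went the other way, concavity of $E^{-1/n}$ produces a harmonic-mean-type inequality $\e(\fa_\bullet\fb_\bullet)^{-1/n}\ge\e(\fa_\bullet)^{-1/n}+\e(\fb_\bullet)^{-1/n}$, which is a different (and false) statement. Sanity check with $R=k[x]_{(x)}$, $\fa_\la=\fb_\la=(x^{\lceil\la\rceil})$: Minkowski is the equality $2^{1/1}=1+1$, while the harmonic-mean version would read $1/2\ge 2$. The midpoint of the geodesic is simply not comparable to the product filtration in the way your argument needs, so neither the inequality nor the forward implication of the equality case can be extracted from Theorem \ref{thm:main theorem} in this manner. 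In particular, equality in \eqref{eqn:Minkowski-Teissier} does not, via your two estimates, force equality in the concavity step.

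For comparison, the paper does not route the corollary through the geodesic at all. The inequality is quoted from Cutkosky. The forward implication uses \cite[Theorem 10.3]{Cut21} (Proposition \ref{prop:Minkowski equality implies same divisorial}): Minkowski equality forces $v(\fa_\bullet)=c\,v(\fb_\bullet)$ for all $v\in\DivVal_{R,\fm}$, whence $\widetilde\fa_\bullet=\widetilde\fb_{c\bullet}$ by Proposition \ref{prop:equiv filts}; this external input is essential and your proposal has no substitute for it. Your converse sketch is closer to the mark, and the ``obstacle'' you flag (compatibility of saturation with products) is exactly what the paper resolves valuatively: $v(\fa_\bullet\fb_\bullet)=v(\fa_\bullet)+v(\fb_\bullet)=(c+1)v(\fb_\bullet)$ for all divisorial $v$, so $\widetilde{\fa_\bullet\fb_\bullet}=\widetilde\fb_{(c+1)\bullet}$ by Proposition \ref{prop:equiv filts}, and then Corollary \ref{cor:equiv equal mult} (resting on Theorem \ref{thm:characterization for sequences with equal volume}) converts this into the desired equality of multiplicities. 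To repair your write-up, keep the converse along those lines and replace the geodesic argument for the forward direction with the citation of Cutkosky's theorem.
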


The inequality statement of the above corollary is not new and is due to
Mustaţǎ \cite{Mus02}, Kaveh and Khovanskii \cite{KK14}, and Cutkosky \cite{Cut15} in increasing levels of generality.
%(In fact, the inequality  holds by \cite{Cut15} in much more generality than in the setting of this paper where  $(R,\fm)$ is always assume to be a local domain essentially finite type over a field and analytically irreducible.)
In the equality statement, the forward implication follows easily from \cite[Theorem 10.3]{Cut21} and the definition of the saturation, while the reverse implication relies on Theorem \ref{thm:characterization for sequences with equal volume}.

\begin{remark}[Relation to work of Cutkosky]
Cutkosky proved a version of the equality part of %Theorem \ref{thm:characterization for sequences with equal volume} and \
Corollary \ref{cor:Minkowski}
in the special case when  $\fa_\bullet$ and $\fb_\bullet$ are \emph{bounded} filtrations \cite[Definition 1.3]{Cut21}, which roughly means that their integral closure is induced by a finite collection of divisorial valuations \cite[Theorem 1.6]{Cut21}.
For such filtrations,   the integral closure and saturation agree by Lemma \ref{lem:valuative ideals sat}.
Thus, Corollary \ref{cor:Minkowski} may be viewed as a generalization of the latter result.
Similarly, Theorem  \ref{thm:characterization for sequences with equal volume} may be viewed as a  generalization of \cite[Theorem 1.4]{Cut21}.
\end{remark}
	 
\medskip

This paper is organized as follows:  In Section \ref{sec:prelim}, we recall definitions and basic facts concerning filtrations, valuations, and multiplicities.  
In Section \ref{sec:saturation}, we introduce the saturation of a filtration and prove Theorem \ref{thm:characterization for sequences with equal volume}.
In Section \ref{sec:multandgeodesics}, we define the geodesic between two filtrations and prove Theorem \ref{thm:main theorem}.
In Section \ref{sec:proof}, we deduce Corollaries \ref{cor:convexityofvolume}, \ref{cor:uniqueness of minimizer}, and \ref{cor:Minkowski} as consequences of results in the previous two sections.
In Section \ref{sec:global}, we discuss relations between the results in this paper and global results in the K-stability literature.
The appendix  of the paper is devoted to an alternate proof of a special case of Theorem \ref{thm:main theorem} using the theory of Okounkov bodies. 
 
\medskip
	
\noindent\textbf{Acknowledgement.} 
We would like to thank Mattias Jonsson, Linquan Ma, Sam Payne, Longke Tang, Chenyang Xu, and Ziquan Zhuang for helpful discussions. 
LQ would like to thank his advisor Chenyang Xu for constant support, encouragement and numerous inspiring conversations.
We are also grateful to the referees whose comments and corrections improved the quality of this paper.  

HB is partially supported by NSF grant DMS-2200690.
YL is partially supported by  NSF Grant DMS-2148266.

\section{Preliminaries}\label{sec:prelim}
Throughout this section, $(R,\fm,\kappa)$ denotes an $n$-dimensional, analytically irreducible,\footnote{Recall that $(R,\fm)$ is \emph{analytically irreducible} if its completion $\hat{R}$ is a domain. The latter holds for example when $R$ is an excellent Noetherian local ring that is normal by \cite[\href{https://stacks.math.columbia.edu/tag/0C23}{Lemma 0C23}]{stacks-project}.} Noetherian, local domain.
We set $X\coloneqq  \Spec(R)$ and write $x\in X$ for the closed point corresponding to $\fm$.

%We follow the notation as in \cite{KM98} and \cite{Laz04}.\Blum{Since we rarely use conventions from these two sources, we could move these citations to the parts of the paper where we use them.}
		
\subsection{Filtrations}
\begin{definition}
An $\fm$-\emph{filtration} is a collection $\fa_\bullet =(\fa_\la )_{\la \in \R_{>0}}$  of $\fm$-primary ideals of $R$ such that
\begin{enumerate}
\item $\fa_\la \subset \fa_{\mu}$ when  $\la >\mu$,
\item $\fa_{\la} = \fa_{\la-\epsilon}$ when $0<\epsilon\ll1$, and
\item $\fa_{\la} \cdot \fa_{\mu} \subset \fa_{\la+ \mu}$ for any $\lambda,\mu\in\bR_{>0}$.
\end{enumerate}
By convention, we set $\fa_{0}\coloneqq R$.  The definition is a local analogue of a filtration of the section ring of a polarized variety in \cite{BHJ17}.
%We say $\fa_\bullet $ is integral if $\fa_{\la} = \fa_{\lceil \la \rceil }$ for all $\la>0$.
\end{definition}

For $\lambda\in\bR_{\ge 0}$, define $\fa_{>\lambda}:=\bigcup_{\mu>\lambda}\fa_\mu$. If $\lambda$ satisfies $\fa_{>\lambda}\subsetneq \fa_\lambda$, then we say $\lambda$ is a \emph{jumping number} of the filtration.
 
 %\begin{remark}
% A more commonly used notion in the literature is a \emph{decreasing graded sequence of $\fm$-primary ideals}, which is a sequence $\fa_\bullet = (\fa_m )_{m \in \bZ_{>0}}$  
%of $\fm$-primary ideals such that $\fa_{m}\cdot \fa_{n} \subset \fa_{m+n}$ for each $m,n>0$ and $\fa_{m}\cdot \fa_{n} \subset \fa_{m+n}$.\footnote{The terminology of an $\fm$-filtration in \cite{Cut21} differs from ours and coincides with a decreasing graded sequence of $\fm$-primary ideals as defined here.}  If $\fa_\bullet = (\fa_{\la})_{\la \in \bR_{>0}}$ is an $\fm$-filtration, 
%then the restriction $\{\fa_\la\}_{\la \in \bZ_{>0}}$ is a decreasing graded sequence of $\fm$-primary ideals. 
%Unless $\fa_\bullet$ is integral, data is lost in this process.
%\end{remark}

The \emph{scaling} of an $\fm$-filtration $\fa_\bullet$ by $c\in \bR_{>0}$ is $\fa_{c\bullet}:=(\fa_{c \la } )_{\la\in \bR_{>0}}$.
The \emph{product} of two $\fm$-filtrations $\fa_\bullet$ and $\fb_\bullet$ is  $\fa_\bullet \fb_\bullet :=(\fa_\la \fb_\la)_{\la\in \bR_{>0}}$.
Both are again $\fm$-filtrations.

\begin{definition}
An $\fm$-filtration $\fa_\bullet$ is \emph{linearly bounded} if there exists a constant $c>0$ such that $\fa_{\la} \subset \fm^{\lceil c  \la \rceil }$ for all $\la\in \bR_{>0}$.
\end{definition}

\begin{lemma}\label{lem:linearlybounded}
Let $\fa_\bullet$ and $\fb_\bullet$ be $\fm$-filtrations. If $\fa_\bullet$ is  linearly bounded, then there exists $c\in \bR_{>0}$ such that $\fa_{c\la} \subset  \fb_\la $ for all real numbers $\la \geq 1$
\end{lemma}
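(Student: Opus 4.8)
\textbf{Proof plan for Lemma \ref{lem:linearlybounded}.}

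The plan is to use the linear boundedness of $\fa_\bullet$ to compare both filtrations against powers of $\fm$. Since $\fa_\bullet$ is linearly bounded, there is a constant $c_1>0$ with $\fa_\lambda\subset\fm^{\lceil c_1\lambda\rceil}$ for all $\lambda\in\bR_{>0}$. On the other side, $\fb_\bullet$ is an $\fm$-filtration, so $\fb_1$ is an $\fm$-primary ideal; by the Artin--Rees lemma (or simply by $\fm$-primariness) there is an integer $k\ge 1$ with $\fm^k\subset\fb_1$. Combining these with the graded property of $\fb_\bullet$ will produce the claimed inclusion.

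The key steps, in order, are as follows. First, fix $c_1$ and $k$ as above. Second, observe that for any integer $m\ge 1$ we have $\fm^{mk}=(\fm^k)^m\subset\fb_1^m\subset\fb_m$, using that $\fb_\bullet$ is graded (condition (3) in the definition of an $\fm$-filtration). Third, by left continuity / the decreasing property, for a real number $\mu>0$ we get $\fm^{\lceil\mu\rceil k}\subset\fb_{\lceil\mu\rceil}\subset\fb_\mu$. Fourth, set $c\coloneqq (k+1)/c_1$ (or any sufficiently large constant; the exact value is a routine bookkeeping matter) and check that for all real $\lambda\ge 1$ one has $\lceil c_1\cdot c\lambda\rceil\ge \lceil\lambda\rceil k$, so that
\[
\fa_{c\lambda}\subset\fm^{\lceil c_1 c\lambda\rceil}\subset\fm^{\lceil\lambda\rceil k}\subset\fb_\lambda,
\]
which is the desired conclusion. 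The inequality $\lceil c_1 c\lambda\rceil\ge\lceil\lambda\rceil k$ for $\lambda\ge 1$ follows because $c_1 c\lambda=(k+1)\lambda\ge k\lceil\lambda\rceil$ once $\lambda\ge 1$ (indeed $(k+1)\lambda - k\lceil\lambda\rceil\ge (k+1)\lambda-k(\lambda+1)=\lambda-k$ is not automatically nonnegative, so in fact one should take $c$ a bit larger, e.g. $c\coloneqq 2k/c_1$, giving $c_1c\lambda=2k\lambda\ge k\lceil\lambda\rceil$ since $\lceil\lambda\rceil\le 2\lambda$ for $\lambda\ge 1$); I would pin down the constant cleanly in the writeup.

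I do not expect any genuine obstacle here: the statement is a soft consequence of linear boundedness plus the fact that any $\fm$-primary ideal contains a power of $\fm$. The only point requiring a little care is the choice of the constant $c$ and the elementary inequality between ceiling functions for $\lambda\ge 1$ — the restriction to $\lambda\ge 1$ in the statement is precisely what makes $\lceil\lambda\rceil\le 2\lambda$ (and more generally $\lceil\lambda\rceil\le\lambda+1\le 2\lambda$) available, which is why the lemma is stated only for $\lambda\ge 1$ rather than all $\lambda>0$.
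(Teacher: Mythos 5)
Your proposal is correct and follows essentially the same route as the paper: compare $\fa_\bullet$ to powers of $\fm$ via linear boundedness, use $\fm^{d}\subset\fb_1$ together with the graded and decreasing properties of $\fb_\bullet$, and take $c=2d/c_0$ (your $2k/c_1$), exactly the constant the paper uses. The only cosmetic point is that the containment $\fm^k\subset\fb_1$ needs nothing beyond $\fb_1$ being $\fm$-primary with $\fm$ finitely generated, so the parenthetical appeal to Artin--Rees can be dropped.
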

	
\begin{proof}
Since $\fa_\bullet$ is  linearly bounded, there exists $c_0 \in \bR_{>0}$ such that  $\fa_{\la} \subset \fm^{ \lceil  c_0\la\rceil } $ for all $\la>0$. 
Since $\fb_1$ is an $\fm$-primary ideal, there exists $d\in \bZ_{>0}$ such that $\fm^{d} \subset \fb_1$. 
If we set $c\coloneqq 2d/c_0$, then
\[
\fa_{ c\la } \subset \fm^{\lceil 2d \la \rceil } \subset \fm^{d \lceil  \la \rceil } \subset  (\fb_1)^{\lceil \la \rceil} \subset \fb_{\lceil \la \rceil } \subset \fb_{\la}
\]
for  all $\la\geq1$. (The second inclusion uses that $d\lceil \la \rceil  \leq d(\la+1) \leq 2d \la \leq \lceil 2 d \la \rceil $).
\end{proof}

\subsection{Valuations}\label{sec:valuation}
Let $(\Gamma,\ge)$ be a totally ordered abelian group. A $\Gamma$-\emph{valuation} of $R$ is a map $v\colon {\rm Frac(R)}^\times\to \Gamma$ such that
\begin{enumerate}
\item $v(fg)=v(f)+v(g)$, and
\item $v(f+g)\ge \min\{v(f),v(g)\}$.
%, and
%\item $v(k^*)=0$. \textcolor{blue}{Here $R$ may not contain a field, and also $k$ should be a base field.}
\end{enumerate}
By convention, we set $v(0)\coloneqq \infty$. We say $v$ is  \emph{centered at} $\fm$ if $v\geq 0$ on $R$ and $v>0$ on $\fm\subset R$.

A valuation $v$ of $R$ induces a \emph{valuation ring} $R_{v} : = \{ f \in K \, \vert \, v(f) \geq 0\}$.
We write $\fm_v$ for the maximal ideal of $R_v$ and $\kappa_{v}:=R_v/\fm_v$.  

\subsubsection{Real valuations}
When $\Gamma=\bR$ with the usual order, we say that $v$ is a \emph{real valuation}. We denote by $\Val_{R,\fm}$ the set of real valuations centered at $\fm$.\footnote{In the body of this paper, we will only work with real valuations. In the appendix, $\bZ^n$-valued valuation, where $\bZ^n$ has the lexicographic order, will play a key role.} 
In geometric settings, we will instead denote the set by $\Val_{X,x}$.
	
For $v\in\Val_{R,\fm}$ and $\la\in \bR_{>0}$, we define the \emph{valuation ideal}
\[
\fa_{\la}(v) \coloneqq  \{ f \in R \, \vert \, v(f) \geq \la \}
\]
for each $\la\in R$. Using (1) and (2), one can show $\fa_\bullet(v)$ is an $\fm$-filtration.

For $v\in \Val_{R,\fm}$ and an ideal $\fa\subset R$, set
$v(\fa)\coloneqq \min\{v(f)\mid f\in\fa\}$.
For an $\fm$-filtration $\fa_\bullet$,  set
\[
v(\fa_\bullet)\coloneqq  \lim_{ \bN\in m \to \infty} \frac{ v(\fa_m)}{m} = \inf_{m\in \bN} \frac{v(\fa_m)}{m} , 
\]
where the existence of the limit and second equality is \cite[Proposition 2.3]{JM12}.
	
\subsubsection{Divisorial valuations}
A valuation $v\in \Val_{R,\fm}$ is \emph{divisorial} if 
\[
{\rm tr.deg}_{\kappa}(\kappa_v) =n-1.\]
 We write $\DivVal_{R,\fm}\subset \Val_{R,\fm}$ for the set of such valuations.

Divisorial valuations appear geometrically.  
If $\mu:Y\to X$ is a proper birational morphism with $Y$ normal and  $E\subset Y$ a prime divisor, 
then there is an induced valuation  $\ord_{E} \colon {\rm Frac}(R)^\times \to \bZ$. 
If $\mu(E) =x$ and  $c\in \bR_{>0}$, then $c\cdot \ord_E \in \DivVal_{R,\fm}$.
When $R$ is excellent, all divisorial valuations are of this form; see, for example, \cite[Lemma 6.5]{CS22}.

\subsubsection{Quasi-monomial valuations}\label{sssec:QM}
In the following construction, we always assume $R$ contains a field. Let $\mu\colon Y:= \Spec(S) \to X=\Spec(R)$ be a birational morphism
with $R\to S$  finite type
and $\eta\in Y$  a not necessarily closed point such that $\cO_{Y,\eta}$ is regular and $\mu(\eta) =x$.
Given a  regular system of parameters $y_1,\ldots, y_r$ of $\cO_{Y,\eta}$
and $\bm{\alpha}=(\alpha_1,\cdots, \alpha_r)\in \bR_{\geq 0}^r\setminus {\bf 0}$, we define a valuation $v_{\bf \alpha}$ as follows. 
 For $0\neq f\in \cO_{Y,\eta}$, we can write $f$ in $\widehat{\cO}_{Y,\eta} \simeq k(\eta) [[y_1,\ldots, y_r]]$ as
$\sum_{\bm{\beta}\in \bZ_{\geq 0}^r} c_{\bm{\beta}} {y}^{\bm{\beta}}$ and set 
\[
v_{\bm \alpha}(f)\coloneqq \min \{ \langle \bm{\alpha}, {\bm \beta} \rangle \, \vert \, c_{\bm \beta} \neq 0 \}.
\]
A valuation of the above form is called \emph{quasi-monomial}.

Let $D= D_{1}+\cdots +D_r$ be a reduced divisor on $Y$ such that $y_i=0$ locally defines $D_i$ and $\mu(D_i)= x$ for each $i$. 
We call $\eta\in (Y,D)$ a \emph{log smooth birational model} of $X$. 
We write $\QM_\eta(Y,E)  \subset \Val_{X,x}$ for the set of quasi-monomial valuations that can be described at 
$\eta$ with respect to $y_1,\ldots, y_r$ and note that $\QM_\eta (Y,D) \simeq \bR^r_{\geq 0} \setminus \bm{0}$.

\subsubsection{Izumi's inequality}
The order function $R\setminus 0 \to \bN$ is defined by 
\[
\ord_{\fm}(f)\coloneqq \max \{ k \in \bZ_{\geq 0} \, \vert \, f\in \fm^k \}.\] 
The following version of Izumi's inequality compares $\ord_{\fm}$ to a fixed quasi-monomial valuation.
	
\begin{lemma}\label{lem:Izumi ineq}
Let  $v\in\Val_{R,\fm}$. If (i) $v$ is divisorial or (ii) $R$ contains a field and $v$ is quasi-monomial, then there exists a constant $c>0$ such that
\[
v(\fm) \cdot \ord_\fm(f)\leq v(f)\leq c\cdot\ord_\fm(f).
\]
for all $ f\in R$.
In particular, $\fa_\bullet(v)$ is linearly bounded. 
\end{lemma}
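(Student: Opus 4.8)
\textbf{Proof proposal for Lemma \ref{lem:Izumi ineq}.}

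The plan is to establish the two inequalities separately, since they have different characters. The lower bound $v(\fm)\cdot\ord_\fm(f)\le v(f)$ is elementary: if $\ord_\fm(f)=k$, then $f\in\fm^k$, so $f$ is a sum of $k$-fold products of elements of $\fm$; using that $v$ is a valuation (so $v$ is additive on products and satisfies $v(\text{sum})\ge\min v(\text{summands})$) together with the definition $v(\fm)=\min\{v(g):g\in\fm\}$, each such product has $v$-value at least $k\cdot v(\fm)$, hence $v(f)\ge k\cdot v(\fm)$. This step requires no hypothesis on $v$ beyond being centered at $\fm$, and in particular $v(\fm)>0$.

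The substantive content is the upper bound $v(f)\le c\cdot\ord_\fm(f)$, i.e.\ the \emph{reverse} Izumi inequality (linear comparability). First I would reduce the quasi-monomial case to a more tractable statement. In case (ii), with $\eta\in(Y,D)$ a log smooth birational model and $v=v_{\bm\alpha}$ for weights $\bm\alpha\in\bR_{\ge0}^r\setminus\bm0$, one can dominate $v_{\bm\alpha}$ by a rescaling of the monomial valuation with all weights equal to $1$, namely $v_{(1,\dots,1)}=\ord_{\widehat{\cO}_{Y,\eta}}$ composed with $\mu$: indeed $v_{\bm\alpha}(f)\le(\max_i\alpha_i)\cdot v_{(1,\dots,1)}(f)$ directly from the definition via the expansion in $\widehat{\cO}_{Y,\eta}$. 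So it suffices to handle a single quasi-monomial valuation of the form $c_0\cdot\ord_F$ where $F$ is the exceptional divisor of the blow-up of $Y$ along the center $\{y_1=\dots=y_r=0\}$ — in other words, it reduces to the divisorial case (i). For case (i), I would use the geometric description of divisorial valuations from the excerpt: $v=c\cdot\ord_E$ for a prime divisor $E$ on some normal proper birational model $\mu\colon Y\to X$ with $\mu(E)=x$. The key is then the standard fact that for such $E$ there is a constant $a$ with $\ord_E(f)\le a\cdot\ord_\fm(f)$ for all $f\in R$. This is classical Izumi-type behavior on an analytically irreducible local ring; it can be derived, for instance, from the Rees-valuation / reduction theory of the ideal $\fm$, or by noting that $\ord_E(\fm)=:a_0>0$ and $E$ appears with finite, bounded multiplicity in the divisor $\mu^*\fm^k$, so that $f\in\fm^k$ forces $\ord_E(f)$ to grow at most linearly in $k$; more precisely one uses that $\sum_m \ord_E(\fm^m)/m$ is finite together with subadditivity to extract a linear bound. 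Multiplying by $c$ gives the claim. The hypothesis that $R$ is analytically irreducible (so $\widehat R$ is a domain) enters here, as Izumi's inequality genuinely requires it.

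For the final assertion, that $\fa_\bullet(v)$ is linearly bounded: by definition of the valuation filtration, $f\in\fa_\la(v)$ means $v(f)\ge\la$, and by the just-proved upper bound this forces $\ord_\fm(f)\ge\la/c$, i.e.\ $f\in\fm^{\lceil\la/c\rceil}$; thus $\fa_\la(v)\subset\fm^{\lceil(1/c)\la\rceil}$ for all $\la>0$, which is exactly the definition of linearly bounded.

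\textbf{Main obstacle.} The crux is the reverse inequality $v(f)\le c\cdot\ord_\fm(f)$ in the divisorial case — this is the genuine Izumi inequality and is where analytic irreducibility is essential; everything else (the lower bound, the reduction of the quasi-monomial case to the divisorial one, and the linear boundedness conclusion) is formal. In practice I expect the cleanest route is to cite the literature for Izumi's inequality directly rather than reprove it, since it is a well-established result; the lemma as stated is essentially a packaging of that classical theorem together with the trivial lower bound.
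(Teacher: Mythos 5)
Your proposal is correct and follows essentially the same route as the paper: the lower bound is the trivial observation $f\in\fm^{\ord_\fm(f)}$, the quasi-monomial case is reduced to the divisorial case by dominating $v_{\bm\alpha}$ with a divisorial quasi-monomial valuation of larger weights (the paper picks an integer vector $\bm\beta\geq\bm\alpha$ coordinatewise; you pick $(\max_i\alpha_i)(1,\dots,1)$ -- same idea), and the divisorial case is settled by citing Izumi's theorem, exactly as the paper does via \cite[Remark 1.6]{RS14}. One caution: your parenthetical sketch of how one might reprove divisorial Izumi (``$f\in\fm^k$ forces $\ord_E(f)$ to grow at most linearly in $k$'' from bounded multiplicity of $E$ in $\mu^*\fm^k$) argues in the wrong direction -- membership in $\fm^k$ only bounds $\ord_E(f)$ from \emph{below} -- but since you ultimately defer to the literature this does not affect the proof.
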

%\Bluminline{It is a little weird to cite \cite{RS14} for this. Since for any quasimonomial valuation $w$, there clearly is a divisorial valuation $v$ s.t. $w \leq v$, the result follows from Izumi's original version. On the other hand, it is quicker to cite [RS14] than explain this.
%}

%The lemma is a well known consequence of Izumi's inequality. %A version for a valuation $v$ with $A_X(v)<\infty$ in the case of klt singularities is proved in \cite{Li18}.  

%\begin{proof}
\begin{proof}
Since $f \in  \fm^{\ord_{\fm}(f)}$ by definition, 
$v(\fm) \cdot \ord_{\fm}(f) = v(\fm^{\ord_{\fm}(f)}) \leq v(f)$.
It remains to prove the existence of $c>0$ such that $v(f) \leq c \cdot \ord_{\fm}(f)$ for all $f\in R$.

If (i) holds, the the existence of $c$ follows from Izumi's theorem for divisorial valuations as phrased in \cite[Remark 1.6]{RS14}.\footnote{The application of Izumi's Theorem uses our assumption throughout the paper that $R$ is analytically irreducible.}
If (ii) holds, then there exists a log smooth birational model $\eta \in (Y,D)$ of $x\in X$ and  $\bm{\alpha}   \in \bR^r $
such that $v=v_{\bm{\alpha}}$.
Choose $\bm{\gamma} \in \bZ_{>0}^r$ such that $\bm{\alpha}_i \leq \bm{\gamma}_i$ for each $i=1,\ldots, r$ and consider the valuation $w: = v_{\bm{\gamma}}$. 
We claim that $w\in \DivVal_{R,\fm}$. 
Assuming the claim, then (i) implies that
there exists $c>0$ such that $w(f)
 \leq c \cdot \ord_{\fm}(f)$ for all $f\in R$.
 Since $v(f)\leq w(f)$ for all $f\in R$,  (ii) then follows.

To verify that $w\in \DivVal_{R,\fm}$, note that
 $w: {\rm Frac }(R)^\times \to \bR$ is the composition 
\[
{\rm Frac}(R)^\times 
\lhook\joinrel\longrightarrow {\rm Frac}( \widehat{\cO}_{Y,\eta})^\times \overset{\widehat{w}}{\longrightarrow } \bR,
\]
where $\widehat{w}$ is the valuation that sends  $\sum_{\beta \in \bZ^r_{\geq 0} } c_{\bm{\beta}} y^{\bm{\beta}}$ to $\min \{ \langle \bm{\alpha}, \bm{\beta} \rangle \, \vert\, c_{\bm{\beta}} \neq 0\}$.
Using that $w$ is $\bZ$-valued, a computation (see, for example,  \cite[Proposition 3.7]{JM12}) shows $\widehat{w}$ is a divisorial valuation of $\widehat{\cO}_{Y,\eta}$.
Hence $w$ is a divisorial valuation of $\cO_{Y,\eta}$ by \cite[Proposition 9.3.5]{HS06}.
Now, we compute
\begin{multline*}
{\rm tr.deg}_{\kappa}(\kappa_{w})
=
{\rm tr.deg}_{\kappa}(k(\eta))
+
{\rm tr.deg}_{k(\eta)}(\kappa_v)
=
{\rm tr.deg}_{\kappa}(k(\eta))
+
(\dim \cO_{Y,\eta} -1)
=
\dim R.
\end{multline*}
To see that the last equality holds, note that the complete local ring $\widehat{R}$ is a domain by assumption and thus equidimensional. 
Therefore $R$ is universally catenary by  \cite[\href{https://stacks.math.columbia.edu/tag/0AW6}{Tag 0AW6}]{stacks-project}
and so  the dimension formula  \cite[\href{https://stacks.math.columbia.edu/tag/02II}{Tag 02II}]{stacks-project} gives the last equality.
Therefore, $w$ is divisorial as desired.
 \end{proof}

\subsection{Intersection numbers}
The theory of intersection numbers of line bundles on a proper scheme over an algebraically closed fields was developed in \cite{Kle66}. 
We will use a more general framework developed in \cite[Appendix B]{Kle05}.

\subsubsection{Definition}\label{sss:intdef}
Let $Z$ be a proper scheme over an Artinian ring $\Lambda$. 
For line bundles $\cL_1,\ldots, \cL_r$ on $Z$, the function
\[
\mathbb{Z}^r \ni (m_1,\ldots, m_r ) \to 
 \sum_{i\geq0 } (-1)^i  \ell_{\Lambda} \left( H^i \left(Z, \cL_1^{m_1} \otimes \cdots \otimes \cL_r^{m_r}\right) \right) \in \mathbb{Z}
\]
is a polynomial of degree $\leq \dim Z$ \cite[Theorem B.7]{Kle05}.
The \emph{intersection number} $(\cL_1 \cdot \cdots \cL_r)$ is defined to be the coefficient of $m_1\cdots m_r$ in the above polynomial. When the choice of $\Lambda$ is unclear, we will write ${(\cL_1 \cdot \ldots \cdot  \cL_r)}_\Lambda$.
% In the special case when $  \cL:=\cL_1=\cdots =\cL_r$
%\[
% \sum_{i=1}^r (-1)^i  \ell_{\Lambda} \left( H^i (X, \cL^m) \right) = \frac{\left(\cL^{\dim(X)}\right)_{\Lambda}}{n^!} m^n  + \text{ lower order terms in $m$}
%.\]
%The follows from the fact if $f(x)$ is a polynomial of degree $d$, then coefficient of $x_1\cdots x_d$ in $f(x_1+\cdots + x_d)$ is the leading coefficient of $f(x)$ multiplied by $d!$.

\subsubsection{Intersections of exceptional divisors}
Let $\mu:Y\to X=\Spec(R)$ be a proper birational morphism with  $Y$ normal.
For Cartier divisors $F_1,\ldots, F_{n-1}$ on $Y$ and a Weil divisor $D: = \sum_{i=1}^r a_i D_i$ on $Y$ with support contained in $Y_\kappa$, we set
\[
F_1 \cdot \ldots \cdot F_{n-1} \cdot D 
:=\sum_{i=1}^r a_i (\cO_{Y}(F_1)\vert_{D_i} \cdot \ldots \cdot \cO_{Y}(F_{n-1})\vert_{D_i})_\kappa 
 .\]
This is well defined, since each prime divisor $D_i$ is proper over $\Spec(\kappa)$.

\begin{prop}\label{p:symmetric}
Assume $R$ is complete. If $F_1, \ldots, F_n$ are Cartier divisors on $Y$ with support contained in $Y_\kappa$, 
then $F_1\cdot  \ldots \cdot  F_n$ is independent of the ordering of the $F_i$.
\end{prop}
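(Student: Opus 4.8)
The plan is to reduce the symmetry of the intersection number to the classical fact that intersection numbers of line bundles on a proper scheme over an Artinian ring are symmetric, which is already built into the definition in Section~\ref{sss:intdef} (the coefficient of $m_1\cdots m_n$ in a symmetric polynomial is manifestly symmetric). The only issue is that the definition of $F_1\cdots F_n$ when all $F_i$ are supported on $Y_\kappa$ was made by singling out the last factor $D=F_n$, decomposing it into prime components $D_i$, and computing intersection numbers of the restricted line bundles $\cO_Y(F_1)|_{D_i},\dots,\cO_Y(F_{n-1})|_{D_i}$ over $\kappa$. So the content is to show this recipe does not depend on which factor is singled out, and that within the remaining $n-1$ factors the order is irrelevant.

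First I would observe that for a fixed choice of last factor $F_n$, symmetry in $F_1,\dots,F_{n-1}$ is immediate: on each prime divisor $D_i$ of $\Supp(F_n)$, which is proper over $\Spec(\kappa)$, the number $(\cO_Y(F_1)|_{D_i}\cdot\ldots\cdot\cO_Y(F_{n-1})|_{D_i})_\kappa$ is symmetric by \cite[Theorem B.7]{Kle05}. Hence it remains to prove the ``swap'' statement: for any two indices, say $F_{n-1}$ and $F_n$, one has $F_1\cdots F_{n-2}\cdot F_{n-1}\cdot F_n = F_1\cdots F_{n-2}\cdot F_n\cdot F_{n-1}$; combined with symmetry in the first $n-1$ slots, transpositions of this shape generate $S_n$, giving full symmetry.

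To prove the swap I would pass to the Artinian setting directly. Write $Z := \Supp(F_{n-1}) \cup \Supp(F_n) \subset Y_\kappa$ with some scheme structure — concretely one can take a thickening of $Y_\kappa$, or work on the closed subscheme defined by a sufficiently high power of the defining ideal of $Y_\kappa$ — so that $Z$ is proper over an Artinian local ring $\Lambda$ with residue field $\kappa$ (here completeness of $R$ is used so that $Y_\kappa$ is proper over $\Spec\kappa$ and the relevant thickenings are proper over Artinian $\Lambda$'s). Restricting $\cO_Y(F_1),\dots,\cO_Y(F_{n-2})$ and $\cO_Y(F_{n-1}),\cO_Y(F_n)$ to $Z$, the multivariable Euler-characteristic polynomial of \cite[Theorem B.7]{Kle05} is a single symmetric polynomial in all $n$ variables, and I would identify its $m_1\cdots m_n$-coefficient with both $F_1\cdots F_{n-1}\cdot F_n$ and $F_1\cdots F_n\cdot F_{n-1}$ by the projection/restriction compatibility of intersection numbers: restricting a line bundle that is already pulled back from a closed subscheme $D_i\hookrightarrow Z$ computes the same number whether one first restricts to $D_i$ and then intersects, or intersects on $Z$ directly (this is the standard "reduction to the subscheme" identity, e.g. the analogue of the projection formula in \cite[Appendix B]{Kle05}). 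This shows both orderings agree with the coefficient of the single symmetric polynomial, hence with each other.

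The main obstacle I anticipate is the bookkeeping needed to make the "reduction to a prime divisor" step rigorous over Artinian bases: one must choose the ambient thickening $Z$ and the Artinian ring $\Lambda$ carefully, verify that the relevant line bundles extend or restrict compatibly, and check that the $m_1\cdots m_n$-coefficient of the Euler characteristic on $Z$ genuinely unwinds — via the additivity of lengths along the prime components of $F_n$ (resp. $F_{n-1}$) together with vanishing of lower-order terms — to the definition given in Section~\ref{sss:intdef}. Once this compatibility is in place, symmetry is formal. I would therefore structure the write-up as: (1) recall the definition and note symmetry in the first $n-1$ slots; (2) state and prove the compatibility lemma expressing $F_1\cdots F_{n-1}\cdot D$ as the $m_1\cdots m_n$-coefficient of an Euler characteristic on a proper Artinian-based scheme containing $\Supp D$; (3) deduce the swap, hence full symmetry.
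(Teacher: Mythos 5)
There is a genuine gap, and it sits exactly at the step you flag as the heart of the argument. You propose to realize $F_1\cdot\ldots\cdot F_n$ as the coefficient of $m_1\cdots m_n$ in the Euler-characteristic polynomial $\chi\bigl(Z,\cO_Y(m_1F_1+\cdots+m_nF_n)|_Z\bigr)$, where $Z$ is a thickening of $Y_\kappa$ proper over an Artinian ring $\Lambda$. But $\dim Z=\dim Y_\kappa\leq n-1$ (the prime divisors $D_i\subset Y_\kappa$ have dimension $n-1$, and the fiber of the birational morphism $Y\to X$ cannot have larger dimension), so by \cite[Theorem B.7]{Kle05} that polynomial has total degree at most $n-1$, and the coefficient of the degree-$n$ monomial $m_1\cdots m_n$ is identically zero. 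Since $F_1\cdot\ldots\cdot F_n$ is in general nonzero (it computes $\pm\e(\fa)$ in Proposition \ref{lem:intersection formula for mixed mult}), no choice of thickening or of "projection/restriction compatibility" can make this identification work. The obstruction is structural: the $F_i$ are divisors on the $n$-dimensional, non-proper scheme $Y$, and only their supports are proper; there is no single proper ambient scheme of dimension $n$ on which all $n$ line bundles live, which is precisely why the definition in Section \ref{sss:intdef} must single out one factor and restrict the other $n-1$ to its components. Your first observation — symmetry in the $n-1$ restricted slots for a fixed last factor — is correct and is also noted in the paper, but the swap of the singled-out factor is the entire content of the proposition and your mechanism for it fails. (A secondary point: completeness of $R$ is not needed to make $Y_\kappa$ proper over $\kappa$ — that follows from properness of $Y\to X$ alone — so your stated use of the hypothesis is also off.)

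The paper's proof takes a different route that you may want to study: by Cohen's structure theorem (this is where completeness enters) there is a surjection $A\twoheadrightarrow R$ from a regular local ring, so $Y$ is of finite type over the regular base $\Spec(A)$ and Fulton's intersection theory applies via \cite[\S 20.1]{Ful98}. One then writes
\[
F_1\cdot\ldots\cdot F_n=\int_{Y_\kappa}c_1(\cO_Y(F_1))\cap\cdots\cap c_1(\cO_Y(F_{n-1}))\cap[F_n],
\]
and the symmetry is exactly the commutativity of the first Chern class operations on the Chow group \cite[Corollary 2.4.2]{Ful98}. If you want to avoid Chow groups and stay with Euler characteristics, the classical substitute for your "single symmetric polynomial" is an argument with bounded complexes supported on intersections of the divisor supports (so that Euler characteristics are defined despite $Y$ being non-proper); but that is a substantially more delicate argument than the one you sketched, not a formal bookkeeping matter.
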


Since the intersection product in \ref{sss:intdef} is symmetric, 
$F_1\cdot  \ldots \cdot  F_n$ is independent of the order of $F_1,\ldots, F_{n-1}$.
To deduce the full result, we rely on intersection theory \cite{Ful98}. 

A subtle issue is that the results in  \cite[\S1-18]{Ful98} are stated for schemes of finite type over a field and, hence, do not immediately apply to $Y$.
Fortunately, the Chow group of a scheme of finite type over a regular base scheme can be defined and  the results of \cite[\S2]{Ful98} extend to this setting by \cite[\S20.1]{Ful98} (see also \cite[\href{https://stacks.math.columbia.edu/tag/02P3}{Chapter 02P3}]{stacks-project} for the results in an even more general setting).

\begin{proof}
By Cohen's Structure Theorem, there exists a surjective map 
$A \twoheadrightarrow R$, where $A$ is a regular local ring.
Since the composition
$Y\to  X:= \Spec(R)\to  \Spec(A)$
is finite type, 
the framework of \cite[\S20.1]{Ful98} applies.
Using intersection theory on $Y$ and its subschemes, 
we compute
\begin{multline*}
F_1 \cdot \ldots \cdot F_n  
= \sum_{i=1}^r a_i \int_{D_i} c_1(\mathcal{M}_1)  \cap \cdots \cap c_1(\mathcal{M}_{n-1}) \cap [D_i] \\
= \int_{Y_\kappa } c_1(\mathcal{M}_1)  \cap \cdots \cap c_1(\mathcal{M}_{n-1}) \cap [F_n] 
 = \int_{Y_\kappa} F_1  \cdot \ldots  \cdot F_{n-1} \cdot [F_n],
\end{multline*}
 where $\int_{D_i}$ and $\int_{Y_\kappa}$ denote the degree maps induced by the proper morphisms $D_i \to \Spec(\kappa)$ and $Y_\kappa\to \Spec(\kappa)$ \cite[Definition 1.4]{Ful98} and $\mathcal{M}_i : = \cO_Y(F_i)$. 
The first equality holds by \cite[Example 18.3.6]{Ful98} with the fact that $D_i$ is a proper scheme over $\kappa$,
the second by the equality $[F_n] = \sum_{i=1}^r a_i [D_i]$, and the last by the definition of the first Chern class. 
Since 
\[
F_1 \cdot \ldots \cdot F_{n-1} \cdot [F_n] = F_1 \cdot \ldots  \cdot F_n \cdot [Y] \in { A}_*(Y_\kappa)
\]
and the latter is independent of the order of the $F_i$ by \cite[Corollary 2.4.2]{Ful98}, 
the proposition holds.
\end{proof}

\subsection{Multiplicity}

\subsubsection{Multiplicity of an ideal}
The \emph{multiplicity} of an $\fm$-primary ideal $\fa$ is 
\[
\e(\fa)\coloneqq \lim_{m\to \infty} \frac{\ell(R/\fa^m)}{m^n/n!}
.\]

The following intersection formula for multiplicities commonly appears  in the literature when $x\in X$ is a closed point on a quasi-projective variety \cite[pp. 92]{Laz04}.
In the  generality stated below, it follows from \cite{Ram73}.

\begin{prop}\label{lem:intersection formula for mixed mult}
Let $\fa\subset R$ be an $\fm$-primary ideal and $Y\to  \Spec(R)$  a  proper birational morphism with $Y$  normal. If $\fa \cdot \cO_Y=\cO_Y(-E)$ for  a Cartier divisor $E = \sum_i a_i D_i $, then 
\[
\e(\fa)=   (-E)^{n-1} \cdot E .\]
\end{prop}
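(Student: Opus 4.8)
\textbf{Plan for Proposition \ref{lem:intersection formula for mixed mult}.}
The strategy is to reduce to a situation where the asymptotic length $\ell(R/\fa^m)$ can be computed as an Euler characteristic on $Y$ and then matched against the Hilbert polynomial of the line bundle $\cO_Y(E)$ on the exceptional divisor. First I would push the line bundle $\cO_Y(-E) = \fa\cdot\cO_Y$ down: since $Y\to X$ is proper birational, $Y$ is normal, and $\fa$ is $\fm$-primary, the natural map $R/\fa^m \to \mu_*(\cO_Y/\cO_Y(-mE))$ (equivalently, a comparison of $R/\overline{\fa^m}$ with $H^0(\cO_{mE})$) differs from an isomorphism by something whose length is $O(m^{n-1})$; this uses that $R^i\mu_*\cO_Y(-mE)$ contributes only lower-order terms and that the integral closure $\overline{\fa^m}$ has the same multiplicity as $\fa^m$. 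Concretely, I would invoke the theorem of Ramanujam \cite{Ram73} cited in the statement: $\ell(R/\fa^m)$ agrees, up to $O(m^{n-1})$, with $\chi(\cO_Y(-mE)) - \chi(\cO_Y) = -\chi(\cO_{mE})$ where $mE$ is regarded as a (possibly non-reduced) closed subscheme, and then expand the right-hand side as a polynomial in $m$.

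Next I would extract the leading coefficient. Writing $E = \sum_i a_i D_i$, the subscheme $mE$ has components along each $D_i$, and $\chi(\cO_{mE})$ is governed by the intersection numbers of $\cO_Y(E)$ restricted to the $D_i$, exactly the mixed intersection products set up in the ``Intersections of exceptional divisors'' subsection. Using the additivity of Euler characteristics along the filtration of $\cO_{mE}$ by the ideals $\cO_Y(-kE)/\cO_Y(-mE)$ for $k=0,\dots,m-1$, one finds
\[
\chi(\cO_{mE}) = -\sum_{k=0}^{m-1}\chi\bigl(\cO_Y(-kE)\otimes\cO_E\bigr) + (\text{lower order}),
\]
and each term $\chi(\cO_Y(-kE)\otimes\cO_E)$ is a polynomial in $k$ of degree $n-1$ with leading coefficient $\tfrac{(-E)^{n-1}\cdot E}{(n-1)!}$ (here using Proposition \ref{p:symmetric}, after passing to the completion $\hat R$, so the various orderings of the factors agree, and the definition in \ref{sss:intdef} applied to each proper $D_i/\kappa$). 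Summing over $k$ from $0$ to $m-1$ raises the degree to $n$ and divides the leading coefficient by an extra factor $n$, producing $-\tfrac{(-E)^{n-1}\cdot E}{n!}\,m^n$. Comparing with the definition $\e(\fa) = \lim \ell(R/\fa^m)/(m^n/n!)$ then gives $\e(\fa) = (-E)^{n-1}\cdot E$.

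A few reductions make the bookkeeping cleaner. I would first reduce to $R$ complete: both sides are unchanged by completion — the left since $\ell_R(R/\fa^m) = \ell_{\hat R}(\hat R/\fa^m\hat R)$, the right since forming $Y$ commutes with the flat base change $R\to\hat R$ and intersection numbers of the exceptional components are preserved — and completeness is exactly what lets me apply Proposition \ref{p:symmetric} and Cohen's structure theorem as in its proof. I would also note $\fa^m\cdot\cO_Y = \cO_Y(-mE)$ since $E$ is Cartier, so there is no need to normalize blowups at each stage. The genuinely delicate point, and the main obstacle, is controlling the error terms: showing that the discrepancy between $\ell(R/\fa^m)$ and $-\chi(\cO_{mE})$ — coming from $\fa^m$ versus $\overline{\fa^m}$, from higher direct images $R^i\mu_*\cO_Y(-mE)$, and from the cohomology of $\cO_Y$ itself — is genuinely $O(m^{n-1})$ and hence invisible in the leading coefficient. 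This is precisely the content of \cite{Ram73} (and is the reason the proposition is attributed there), so in the write-up I would isolate this as the one external input and spend the rest of the argument on the Euler-characteristic/intersection-number expansion, which is formal given the machinery already assembled in the preliminaries.
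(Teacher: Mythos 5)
Your plan rests on the same external pillar as the paper's proof — Ramanujam's theorem — but you use it in a weaker form and then re-derive by hand what the stronger form already supplies. The paper quotes \cite{Ram73} as the statement $\e(\fa) = (\cL^{n-1})_{R/\fa}$ with $\cL := \cO_Y(-E)\vert_E$; that is, Ramanujam already packages the asymptotic Euler-characteristic expansion into a single self-intersection number computed on the (non-reduced) scheme $E$ over the Artinian ring $R/\fa$. Your filtration of $\cO_{mE}$ and summation over $k$ reproduce exactly this and are therefore redundant; they also carry a pair of compensating sign slips, since $\chi(\cO_{mE}) = +\sum_{k}\chi(\cO_Y(-kE)\otimes\cO_E)$ and $\ell(R/\fa^m)$ is asymptotic to $+\chi(\cO_{mE})$, not $-\chi(\cO_{mE})$. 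More importantly, your sketch is thinnest precisely where the paper's proof does its actual work: converting $(\cL^{n-1})_{R/\fa}$ into $(-E)^{n-1}\cdot E$ as defined in the preliminaries, namely a sum over the components $D_i$ of $\kappa$-intersection numbers weighted by the coefficients $a_i$. This requires (i) Kleiman's Lemma B.12 to break the intersection number over $E$ into contributions from its components weighted by $\ell(\cO_{E,\xi_i})$, (ii) the observation that $\ell(\cO_{E,\xi_i}) = a_i$ because $\cO_{Y,\xi_i}$ is a DVR, and (iii) the comparison of lengths over $R/\fa$ with lengths over $\kappa$, valid because each $D_i$ is a scheme over $\kappa$ and the residue field of $R/\fa$ is $\kappa$. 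You assert the leading coefficient ``applying the definition in \ref{sss:intdef} to each proper $D_i/\kappa$'' without explaining how the multiplicities $a_i$ or the change of base ring enter; this is the one step that genuinely needs an argument. Finally, the reduction to complete $R$ and the appeal to Proposition \ref{p:symmetric} are unnecessary here: all $n-1$ line-bundle factors in $(-E)^{n-1}\cdot E$ coincide, and the product of \ref{sss:intdef} is symmetric in the line bundles by construction; avoiding the completion also spares you from justifying that $Y\times_X\Spec(\widehat{R})$ remains normal and birational, which is not free.
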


\begin{proof}
By \cite[Theorem and Remark 1]{Ram73}, $\e(\fa) = (\cL^{n-1})_{R/\fa}$, where $\cL:= \cO_Y(-E)\vert_E$.
Observe that
\[
(\cL^{n-1})_{R/\fa} 
 = \sum   \ell_{\cO_{E,\xi_i}}( \cO_{E,\xi_i}) (\cL_i^{n-1})_{R/\fa}
 =\sum   a_i (\cL_i^{n-1})_{R/\fa}
,\]
 where $E= \sum a_i D_i$ and $\cL_{i} := \cL\vert_{E_i}$. 
Indeed, the first equality is \cite[Lemma B.12]{Kle05} and the second equality holds, since $\cO_{Y,\xi_i}$ is a DVR and $\cO_{E,\xi_i} = \cO_{Y,\xi}/ (\pi^{a_i}) $, where $\pi$ is a uniformizer of the DVR.

To finish the proof, notice that $D_i$ is defined over $\kappa$, since there is a natural inclusion $D_i \hookrightarrow Y_{\kappa,\red} \to Y_{\kappa}$.
 Since the residue field of the local ring $R/\fa$ is $\kappa$, any finite-length $\kappa$-module $M$ satisfies $\ell_{R/\fa}(M) = \ell_{\kappa}(M)$ by  \cite[\href{https://stacks.math.columbia.edu/tag/02M0}{Lemma 02M0}]{stacks-project}. Thus, 
\[
\sum_{i=1}^r   a_i (\cL_i^{n-1})_{R/\fa}
= 
\sum_{i=1}^r  a_i (\cL_i^{n-1})_{\kappa}
=
(-E)^{n-1} \cdot E
,\]
which completes the proof.
\end{proof}

\subsubsection{Multiplicity of a filtration}
Following \cite{ELS03}, the \emph{multiplicity} of a graded sequence of $\fm$-primary ideals
$\fa_\bullet$ is
\[
\e(\fa_\bullet)\coloneqq \lim_{m\to\infty}\frac{\ell(R/\fa_m)}{m^n/n!} \in [0,\infty).
\]
By  \cite{ELS03,Mus02,LM09,Cut13,Cut14} in increasing generality, the above limit exists and
\begin{equation}\label{eqn:vol=mult}
\e(\fa_\bullet)=\lim_{m\to\infty}\frac{\e(\fa_m)}{m^n} = \inf_{m} \frac{\e(\fa_m)}{m^n};
\end{equation}
see, in particular, \cite[Theorem 6.5]{Cut14}.
Also defined in \cite{ELS03}, the \emph{volume} of a valuation $v\in \Val_{R,\fm}$ is 
\[
\vol(v):= \e(\fa_\bullet(v)).
\]
We set 
\[
\Val_{R,\fm}^+\coloneqq \{v\in\Val_{R,\fm}\mid \vol(v)>0\}\subset \Val_{R,\fm}
.
\]

\begin{prop}\label{prop:qm>0vol}
Let $v\in \Val_{R,\fm}$. If (i) $v$ is divisorial  or (ii) $R$ contains a field and $v$ is quasi-monomial, then $\vol(v)>0$.
\end{prop}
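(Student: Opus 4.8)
The plan is to bound $\vol(v)$ from below by a positive multiple of $\e(\fm)$, which is positive since $\dim R=n$. The bridge between $v$ and $\fm$ is the upper bound in Izumi's inequality, which applies verbatim in both cases (i) and (ii) by Lemma \ref{lem:Izumi ineq}.

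First I would apply Lemma \ref{lem:Izumi ineq} to fix a constant $c>0$ such that $v(f)\le c\cdot\ord_\fm(f)$ for all $f\in R$. The contrapositive content of this inequality is that elements with large $v$-value must lie deep in the powers of $\fm$: if $f\in\fa_m(v)$, so $v(f)\ge m$, then $c\cdot\ord_\fm(f)\ge v(f)\ge m$, hence $\ord_\fm(f)\ge m/c$ and $f\in\fm^{\lceil m/c\rceil}$. This yields the inclusion $\fa_m(v)\subseteq\fm^{\lceil m/c\rceil}$ for every $m\in\bN$ (and, in combination with the lower bound of Lemma \ref{lem:Izumi ineq}, reconfirms that $\fa_m(v)$ is $\fm$-primary, so that the lengths below are finite).

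Next, the inclusion gives $\ell(R/\fa_m(v))\ge\ell(R/\fm^{\lceil m/c\rceil})$ for all $m$. Dividing by $m^n/n!$ and letting $m\to\infty$, I would compute the limit of the right-hand side: writing $k=\lceil m/c\rceil$, one has $\ell(R/\fm^k)/(m^n/n!)=\big(\ell(R/\fm^k)/(k^n/n!)\big)\cdot(k/m)^n$, and since $k/m\to 1/c$ and $\ell(R/\fm^k)/(k^n/n!)\to\e(\fm)$, the product converges to $\e(\fm)/c^n$. Therefore $\vol(v)=\e(\fa_\bullet(v))\ge\e(\fm)/c^n$.

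Finally, since $R$ is an $n$-dimensional Noetherian local domain and $\fm$ is $\fm$-primary, $\e(\fm)>0$, so $\vol(v)>0$, as desired. I do not anticipate any real obstacle in this argument: the only non-formal input is Izumi's inequality in the quasi-monomial case, which is already packaged in Lemma \ref{lem:Izumi ineq}, and the remainder is an elementary comparison of Hilbert–Samuel functions.
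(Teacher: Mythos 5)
Your proof is correct and follows essentially the same route as the paper's: the paper also invokes Lemma \ref{lem:Izumi ineq} to get the inclusion $\fa_{m}(v)\subset \fm^{\lceil m/c\rceil}$ and concludes $\vol(v)\geq c^{-n}\e(\fm)>0$ in one line. You have simply written out the length comparison and the limit computation that the paper leaves implicit.
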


\begin{proof}
By Lemma \ref{lem:Izumi ineq},  there exists $c>0$ such that $ \fa_{\bullet}(v) \subset \fm^{c\bullet}$.
Thus, $\vol(v)\geq \e(\fm^{c\bullet}) = c^{-n}\e(\fm)>0$. 
\end{proof}

\subsection{Normalized volume}\label{ss:nvol}
Assume $R$ is the local ring of a closed point on a algebraic variety defined over an algebraically closed field of characteristic 0.
We say $x\in(X,\Delta)$ is a \emph{klt singularity} if $\Delta$ is an $\bR$-divisor  on $X$ such that $K_{X}+\Delta$ is $\R$-Cartier, and $(X,\Delta)$ is klt as defined in \cite{KM98}.

The following invariant was first introduced in \cite{Li18} and  plays an important role in the study of K-semistable Fano varieties and Fano cone singularities.

\begin{definition}\label{d:nvol}
For a klt singularity $x\in(X,\Delta)$, the \emph{normalized volume function} $\nvol_{(X,\Delta),x}:\Val_{X,x}\to (0,+\infty]$ is defined by
\begin{equation*}
\nvol_{(X,\Delta),x}(v)\coloneqq \left\{\begin{aligned}
&A_{X,\Delta}(v)^n\cdot \vol(v), &\text{ if } A_{X,\Delta}(v)<+\infty\\
&+\infty, &\text{ if } A_{X,\Delta}(v)=+\infty,
\end{aligned}\right.
\end{equation*}
where $A_{X,\Delta}(v)$ is the log discrepancy of $v$ as defined in \cite{JM12,BdFFU15}. 
The \emph{local volume} of a klt singularity $x\in(X,\Delta)$ is defined as
\[
\nvol(x,X,\Delta)\coloneqq \inf_{v\in\Val_{X,x}} \nvol_{(X,\Delta),x}(v).
\]
The above infimum is indeed a minimum by \cite{Blu18, Xu20}.
\end{definition}

\section{Saturation}\label{sec:saturation}
	
Throughout this section, $(R,\fm,\kappa)$ denotes an $n$-dimensional, analytically irreducible, Noetherian, local domain.
%We set $X\coloneqq \Spec(R)$ and write $x\in X$ for the closed point corresponding to $\fm$.\Blum{Maybe this should be deleted, since this statement is made in the preliminaries section for the whole paper?}

\subsection{Saturation}\label{ss:defsat}
Let $\fa\subset R$ be an $\fm$-primary ideal. 
 Recall that the \emph{integral closure} $\overline{\fa}$ of $\fa$ can be characterized valuatively by
\[
\overline\fa=\{f\in R\mid v(f)\ge v(\fa) \text{ for all } v\in\DivVal_{R,\fm}\}.
\]
See, for example, \cite[Theorem 6.8.3]{HS06} and \cite[Example 9.6.8]{Laz04}. 
We define the saturation of an $\fm$-filtration in a similar manner.

\begin{definition}\label{defn:saturation}
The \emph{saturation} $\widetilde{\fa}_\bullet$ of an $\fm$-filtration $\fa_\bullet$ is defined by
\[
\widetilde \fa_\la\coloneqq \{f\in \fm \mid v(f)\ge \la\cdot v(\fa_\bullet) \text{ for all } v\in\DivVal_{R,\fm} \}
\]
for each $\la\in \R_{>0}$.
We say $\fa_\bullet$ is \emph{saturated} if $\fa_\bullet=\widetilde\fa_\bullet$. 
\end{definition}

\begin{remark}
Corollary \ref{prop:saturation alt def} shows that it is equivalent to define the saturation using all positive volume valuations, rather than only divisorial valuations.
\end{remark}

\begin{remark}\label{r:bj21}
The saturation is a local analogue of the maximal norm of a multiplicative norm of the the section ring of a polarized variety, which was defined and studied in \cite{BJ21}. See Section \ref{sec:global}.
\end{remark}

\begin{remark} 
Definition \ref{defn:saturation} differs from the definition of \emph{saturation} used in \cite[Section 2]{Mus02} for monomial ideals, 
which coincides  with the ideals in Lemma \ref{lem:integral closure of sequence} defined using the integral closure of the Rees algebra.
\end{remark}

\begin{remark}
After the first version of this paper was posted on the arXiv, Cutkosky and Praharaj introduced an operation on $\bR$-filtrations that is defined using certain asymptotic Hilbert--Samuel functions (see the definition in  \cite[Theorem 1.3]{CP22}). As shown by \cite[Example 7.2]{CP22}, their operation does not always coincide with the saturation.
\end{remark}

\begin{lemma}[{\cite[Lemma 5.6]{Cut21}}]\label{lem:integral closure of sequence}
If $\fa_\bullet$ be an $\fm$-filtration,
then the integral closure of  ${\rm Rees}(\fa_\bullet) := \bigoplus_{m \in \bN} \fa_m t^m$ in $R[t]$ is given by $\bigoplus_{m\in \bN} {\fa}'_m t^m$, where
\[
{\fa}'_m=\{f\in R\, \mid \, \text{ there exists } r>0 \text{ such that }
f^r\in \overline \fa_{rm}\}.
\]
\end{lemma}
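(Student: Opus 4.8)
The plan is to establish the two inclusions between $\overline{{\rm Rees}(\fa_\bullet)}$ and $\bigoplus_{m\in\bN}\fa'_m t^m$ in $R[t]$. Since the integral closure of a graded subring of a graded ring is again graded, one may write $\overline{{\rm Rees}(\fa_\bullet)}=\bigoplus_{m\in\bN}\fa^\ast_m t^m$ for ideals $\fa^\ast_m\supseteq\fa_m$ of $R$, and the whole point is the identity $\fa^\ast_m=\fa'_m$.

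First I would dispatch the easy inclusion $\bigoplus_m\fa'_m t^m\subseteq\overline{{\rm Rees}(\fa_\bullet)}$. Because $\fa_{rm}^{\,i}\subseteq\fa_{rmi}$, the Rees algebra $R[\fa_{rm}t^{rm}]$ of the single ideal $\fa_{rm}$ is contained in ${\rm Rees}(\fa_\bullet)$, hence so is its integral closure; by the classical description of the integral closure of the Rees algebra of an ideal, the degree-$rm$ piece of $\overline{R[\fa_{rm}t^{rm}]}$ is $\overline{\fa_{rm}}$, so $\overline{\fa_{rm}}\,t^{rm}\subseteq\overline{{\rm Rees}(\fa_\bullet)}$. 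Thus if $f\in\fa'_m$, say $f^r\in\overline{\fa_{rm}}$, then $(ft^m)^r=f^r t^{rm}\in\overline{{\rm Rees}(\fa_\bullet)}$, and since an element whose $r$-th power is integral over a ring is itself integral over that ring, $ft^m\in\overline{{\rm Rees}(\fa_\bullet)}$, i.e.\ $f\in\fa^\ast_m$.

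For the substantive inclusion $\fa^\ast_m\subseteq\fa'_m$, I would argue as follows. Given $ft^m\in\overline{{\rm Rees}(\fa_\bullet)}$, fix a monic integral equation for it over ${\rm Rees}(\fa_\bullet)$. Only finitely many homogeneous components of the coefficients occur, and these generate a finitely generated graded $R$-subalgebra $A_0=\bigoplus_l\fb_l t^l\subseteq{\rm Rees}(\fa_\bullet)$, with $\fb_l\subseteq\fa_l$, over which $ft^m$ is still integral. All powers of $ft^m$ lie in degrees divisible by $m$, so extracting the part of the equation in such degrees produces a monic equation for $ft^m$ over the Veronese subring $A_0^{(m)}=\bigoplus_l\fb_{lm}t^{lm}$; writing $s=t^m$ and $\fc_l=\fb_{lm}$ this reduces us to the case $m=1$, that is, $fs\in\overline{\bigoplus_l\fc_l s^l}$. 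Next, choose $D\ge 1$ so that the Veronese subring $A_1^{(D)}:=\bigoplus_l\fc_{lD}s^{lD}$ of $A_1:=\bigoplus_l\fc_l s^l$ is standard-graded over $R$ — possible because a sufficiently divisible Veronese of a finitely generated graded algebra is generated in degree one — so that $A_1^{(D)}=R[\fc_D s^D]$ is the Rees algebra of the single ideal $\fc_D$. Every homogeneous $R$-algebra generator $g$ of $A_1$ satisfies $g^D\in A_1^{(D)}$, so $A_1$ is finite over $A_1^{(D)}$ and therefore $\overline{A_1}=\overline{R[\fc_D s^D]}$ in $R[s]$. Hence $fs\in\overline{R[\fc_D s^D]}$, so $(fs)^D=f^D s^D\in\overline{R[\fc_D s^D]}$; as the degree-$D$ piece of the integral closure of the Rees algebra of $\fc_D$ is $\overline{\fc_D}$, we get $f^D\in\overline{\fc_D}=\overline{\fb_{Dm}}\subseteq\overline{\fa_{Dm}}$, which is exactly $f\in\fa'_m$ with $r=D$.

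The main obstacle is this second inclusion, and the decisive — and only nonformal — step is the reduction, via a finitely generated (hence Noetherian) piece of the Rees algebra, to a Veronese that is literally the Rees algebra of a single ideal: the exponent $r$ in the definition of $\fa'_m$ is precisely this Veronese exponent, which is why one cannot hope for a version of the statement with a uniform power. Everything else is standard input: the integral closure of the Rees algebra of an ideal $\fb$ is $\bigoplus_l\overline{\fb^{\,l}}\,t^l$; a finite ring extension contributes nothing to the integral closure in a common overring; and sufficiently divisible Veroneses of finitely generated graded algebras are standard-graded.
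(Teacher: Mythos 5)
The paper does not prove this lemma; it is quoted verbatim from \cite[Lemma 5.6]{Cut21}, so there is no in-paper argument to compare against. Your proof is correct and self-contained, and it follows the natural route: both inclusions check out. In the easy direction, the containment $R[\fa_{rm}t^{rm}]\subseteq {\rm Rees}(\fa_\bullet)$ (from gradedness of the filtration), the identification of the degree-$rm$ piece of $\overline{R[\fa_{rm}t^{rm}]}$ with $\overline{\fa_{rm}}$, and the fact that an element with an integral power is integral are all applied correctly. In the hard direction, the three reductions — passing to the finitely generated graded subalgebra generated by the homogeneous components of the coefficients of one integral equation, extracting the components of that equation in degrees divisible by $m$ to land in the $m$-th Veronese, and then invoking that a sufficiently divisible Veronese of a finitely generated $\bN$-graded $R$-algebra is standard graded (hence literally a Rees algebra $R[\fc_D s^D]$, with $\overline{A_1}=\overline{A_1^{(D)}}$ because the extension is finite) — are each standard and correctly chained, and they deliver $f^D\in\overline{\fc_D}\subseteq\overline{\fa_{Dm}}$ as needed. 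Your closing remark is also the right diagnosis: the exponent $r$ in the definition of $\fa'_m$ is exactly the Veronese exponent, which is why no uniform power can be expected.
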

	
\begin{example}\label{eg:simple eg}
In general, the ideals $\widetilde{\fa}_m$ and ${\fa}'_m$ appearing above do not coincide. 
For example, let $R\coloneqq k[x]_{(x)}$ and $\fm=(x)$. 
Consider the $\fm$-filtration $\fa_\bullet$ defined by  $\fa_\la\coloneqq (x^{\lceil \la+1\rceil })$. 
Using that $\ord_{\fm} (\fa_\la) =\lceil \la +1 \rceil $ and $\ord_{\fm} (\fa_\bullet)= 1$, 
we compute 
\[
{\fa}'_m =  (x^{m+1}) \subsetneq (x^m) = \widetilde{\fa}_m.
\]  
for each $m\in \bZ_{>0}$.
In particular, ${\rm Rees}(\fa_\bullet)$ is integrally closed, but $\fa_\bullet$ is not saturated.
\end{example}

The following lemma states basic properties of the saturation.
		
\begin{lemma}\label{lem:Rees is normal} For any $\fm$-filtration  $\fa_\bullet$, the following statements hold:
\begin{enumerate}
\item $\fa_\bullet \subset \widetilde{\fa}_\bullet$,
\item  $v(\fa_\bullet)=v(\widetilde\fa_\bullet)$ for all $v\in \DivVal_{R,\fm}$,  and
\item   $\widetilde\fa_\bullet$ is saturated.%, and the Rees algebra $\cR(\widetilde\fa_\bullet)$ is integrally closed.
\end{enumerate}
\end{lemma}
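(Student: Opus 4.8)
The plan is to prove the three claims of Lemma~\ref{lem:Rees is normal} in order, as items (1) and (2) feed into item (3).

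For item (1), I would unwind the definition of $\widetilde{\fa}_\bullet$. Fix $\la \in \bR_{>0}$ and $f \in \fa_\la$. Since $\fa_\la \subset \fm$ (as $\fa_\la$ is $\fm$-primary and proper — here one should note that we may assume $\fa_\la \neq R$; if $\fa_\la = R$ then $\la$ is below the smallest jumping number and the inclusion is trivial, or more carefully one works with $\la > 0$ so that by left-continuity $\fa_\la \subsetneq R$), we have $f \in \fm$. For any $v \in \DivVal_{R,\fm}$ and any $m \in \bN$, write $f^m \in (\fa_\la)^m \subset \fa_{m\la}$ using the graded property, hence $v(f^m) \geq v(\fa_{m\la})$, i.e. $v(f) \geq v(\fa_{m\la})/m$. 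Letting $m \to \infty$ and using $v(\fa_\bullet) = \lim_m v(\fa_m)/m = \inf_m v(\fa_m)/m$ (and a short argument comparing $v(\fa_{m\la})/m$ with $\la \cdot v(\fa_{\lceil m\la\rceil})/\lceil m\la\rceil$ via left-continuity / linear boundedness, or simply using that $v(\fa_{c\bullet}) = c\, v(\fa_\bullet)$), we get $v(f) \geq \la \cdot v(\fa_\bullet)$. Thus $f \in \widetilde{\fa}_\la$.

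For item (2), the inequality $v(\fa_\bullet) \leq v(\widetilde{\fa}_\bullet)$ is immediate from item (1) (a larger filtration has larger or equal valuation). For the reverse, fix $v_0 \in \DivVal_{R,\fm}$; I must show $v_0(\widetilde{\fa}_m) \geq m\, v_0(\fa_\bullet) - o(m)$ so that dividing by $m$ and taking the limit gives $v_0(\widetilde{\fa}_\bullet) \geq v_0(\fa_\bullet)$. But by the very definition of $\widetilde{\fa}_m$, every $f \in \widetilde{\fa}_m$ satisfies $v_0(f) \geq m \cdot v_0(\fa_\bullet)$ (taking $v = v_0$ in the defining condition), so in fact $v_0(\widetilde{\fa}_m) \geq m \cdot v_0(\fa_\bullet)$ on the nose, giving $v_0(\widetilde{\fa}_\bullet) \geq v_0(\fa_\bullet)$ directly. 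So item (2) follows cleanly, and in fact with no error term.

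For item (3), I want $\widetilde{\widetilde{\fa}}_\bullet = \widetilde{\fa}_\bullet$. The inclusion $\widetilde{\fa}_\bullet \subset \widetilde{\widetilde{\fa}}_\bullet$ is item (1) applied to the filtration $\widetilde{\fa}_\bullet$ — but for this I first need to know that $\widetilde{\fa}_\bullet$ is genuinely an $\fm$-filtration (decreasing and left-continuous are visible from the definition; the graded property $\widetilde{\fa}_\la \cdot \widetilde{\fa}_\mu \subset \widetilde{\fa}_{\la+\mu}$ follows since $v(fg) = v(f) + v(g) \geq (\la+\mu) v(\fa_\bullet)$; and $\fm$-primary because $\fa_\la \subset \widetilde{\fa}_\la \subset \fm$, so it's sandwiched — note one must also check $\widetilde{\fa}_\la \neq 0$, which holds since it contains $\fa_\la \neq 0$). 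For the reverse inclusion $\widetilde{\widetilde{\fa}}_\bullet \subset \widetilde{\fa}_\bullet$: take $f \in \widetilde{\widetilde{\fa}}_\la$, so $f \in \fm$ and $v(f) \geq \la \cdot v(\widetilde{\fa}_\bullet)$ for all $v \in \DivVal_{R,\fm}$; by item (2), $v(\widetilde{\fa}_\bullet) = v(\fa_\bullet)$, so $v(f) \geq \la \cdot v(\fa_\bullet)$ for all such $v$, which is exactly the condition for $f \in \widetilde{\fa}_\la$. This closes item (3).

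The main obstacle I anticipate is purely bookkeeping rather than conceptual: namely, being careful with the passage from the discrete grading (powers $(\fa_\la)^m \subset \fa_{m\la}$) to the real-indexed limit defining $v(\fa_\bullet)$, and confirming the scaling identity $v(\fa_{c\bullet}) = c \cdot v(\fa_\bullet)$ and that $v(\fa_\bullet)$ can equivalently be computed as $\inf_{\la > 0} v(\fa_\la)/\la$. These are routine given \cite[Proposition 2.3]{JM12} and the linear boundedness available here, but they are the only place where a slip could occur; everything else is a direct unwinding of Definition~\ref{defn:saturation}.
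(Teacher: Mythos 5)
Your overall strategy is the same as the paper's: item (1) via $(\fa_\la)^m\subset\fa_{\la m}$ and the limit definition of $v(\fa_\bullet)$, item (2) by combining the defining condition of $\widetilde{\fa}_m$ with the inclusion from (1), and item (3) by feeding (2) back into the definition of the saturation. Items (1) and (3) are fine (your extra care that $f\in\fm$ and that $\widetilde{\fa}_\bullet$ is genuinely an $\fm$-filtration is welcome, and your reverse inclusion in (3) is exactly the paper's one-line deduction of (3) from (2)).

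There is, however, a direction error in your item (2). For ideals, $\fa\subset\fb$ gives $v(\fb)\le v(\fa)$ (the minimum over a larger set is smaller), so a larger filtration has \emph{smaller} valuation, not larger as your parenthetical claims. Consequently, what item (1) actually yields is $v(\widetilde{\fa}_\bullet)\le v(\fa_\bullet)$, while the defining condition of $\widetilde{\fa}_m$ (namely $v(f)\ge m\,v(\fa_\bullet)$ for every $f\in\widetilde{\fa}_m$) yields $v(\widetilde{\fa}_m)\ge m\,v(\fa_\bullet)$ and hence $v(\widetilde{\fa}_\bullet)\ge v(\fa_\bullet)$. As written, your two steps both establish the second of these inequalities — your ``reverse'' direction is the same inequality you claimed at the outset — and the bound $v(\widetilde{\fa}_\bullet)\le v(\fa_\bullet)$ is never actually argued. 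The repair is immediate: with the correct monotonicity, (1) gives $v(\widetilde{\fa}_m)\le v(\fa_m)$ for all $m$, so $v(\fa_\bullet)\le \tfrac{v(\widetilde{\fa}_m)}{m}\le\tfrac{v(\fa_m)}{m}$ and letting $m\to\infty$ squeezes both sides to $v(\fa_\bullet)$, which is precisely how the paper argues. So the gap is a one-line fix, but you should swap the roles of the two inequalities in your write-up.
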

	
\begin{proof}
For any $\la\in \bR_{>0}$,
\[
v(\fa_\la) =
\lim_{m \to \infty}
\frac{ v(\fa_\la^m)}{m}
\geq 
\lim_{m\to\infty}
\left(
\frac{ v(\fa_{\lfloor \la m\rfloor })}{\lfloor \la m\rfloor }  \cdot \frac{ \lfloor \la m \rfloor}{m} \right)= \la v(\fa_\bullet)
,
\]
where the inequality uses that $(\fa_{\la})^m \subset \fa_{\la m} \subset \fa_{ \lfloor \la m \rfloor }$.
Therefore,
$\fa_{\la} \subset \widetilde{\fa}_\la$, which is (1).

For (2), note that $
v(\fa_\bullet) \leq  \frac{v(\widetilde{\fa}_m ) }{m}  \leq \frac{v(\fa_m)}{m}$ for each $m \in \bZ_{>0}$,
where the first inequality follows from the definition of $\widetilde{\fa}_m$ and the second from (1). 
Sending $m\to \infty$ gives $ v(\fa_\bullet) \leq   v(\widetilde\fa_\bullet)\leq v(\fa_\bullet)$, which implies (2).  
Statement (3) follows immediately from (2).
\end{proof}

We say two $\fm$-filtrations $\fa_\bullet$ and $\fb_\bullet$ are  \emph{equivalent} if $\widetilde{\fa}_\bullet = \widetilde{\fb}_\bullet$. The following proposition gives a characterization of when two filtrations are equivalent after possible scaling. 

\begin{prop}\label{prop:equiv filts}
Let $\fa_\bullet$ and $\fb_\bullet$ be $\fm$-filtrations and $c\in \R_{>0}$. 
The following statements are equivalent:
\begin{enumerate}
    \item $\widetilde{\fa}_\bullet= \widetilde{\fb}_{c \bullet}$.
    \item $v(\fa_\bullet) = c v(\fb_\bullet)$ for all $v\in \DivVal_{R,\fm}$.
\end{enumerate}
\end{prop}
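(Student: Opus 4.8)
The plan is to derive both implications formally from two ingredients: the scaling identity $v(\fb_{c\bullet}) = c\,v(\fb_\bullet)$ for divisorial valuations, and Lemma \ref{lem:Rees is normal}(2), which says $v(\fa_\bullet) = v(\widetilde{\fa}_\bullet)$ for every $v\in\DivVal_{R,\fm}$.

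First I would record the scaling identity. The invariant $v(\fb_\bullet)$ is defined by a limit over integer indices, so I would first check that the same value is obtained over real indices: from $\fb_{\lceil\la\rceil}\subset\fb_\la\subset\fb_{\lfloor\la\rfloor}$ one gets $v(\fb_{\lfloor\la\rfloor})\le v(\fb_\la)\le v(\fb_{\lceil\la\rceil})$, and dividing by $\la$ and letting $\la\to\infty$, a squeeze argument (using $\lfloor\la\rfloor/\la,\lceil\la\rceil/\la\to 1$) gives $\lim_{\R\ni\la\to\infty}v(\fb_\la)/\la=v(\fb_\bullet)$. Taking $\la=cm$ then yields $v(\fb_{c\bullet})=\lim_m v(\fb_{cm})/m = c\,v(\fb_\bullet)$. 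A useful byproduct, obtained by substituting this into Definition \ref{defn:saturation}, is that saturation commutes with scaling, so the notation $\widetilde{\fb}_{c\bullet}$ is unambiguous and equals $(\widetilde{\fb}_{c\la})_\la$.

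For $(2)\Rightarrow(1)$: the hypothesis says $v(\fa_\bullet)=c\,v(\fb_\bullet)=v(\fb_{c\bullet})$ for every $v\in\DivVal_{R,\fm}$. Then the inequality $v(f)\ge\la\,v(\fa_\bullet)$ defining $\widetilde{\fa}_\la$ is term-by-term identical to the inequality defining $\widetilde{(\fb_{c\bullet})}_\la=\widetilde{\fb}_{c\la}$, so the two filtrations agree and $\widetilde{\fa}_\bullet=\widetilde{\fb}_{c\bullet}$. For $(1)\Rightarrow(2)$: since $v(\cdot)$ attached to a filtration depends only on the filtration, applying it to both sides of $\widetilde{\fa}_\bullet=\widetilde{\fb}_{c\bullet}$ gives $v(\widetilde{\fa}_\bullet)=v(\widetilde{\fb}_{c\bullet})$ for all $v\in\DivVal_{R,\fm}$; by Lemma \ref{lem:Rees is normal}(2) applied to $\fa_\bullet$ and to $\fb_{c\bullet}$ this reads $v(\fa_\bullet)=v(\fb_{c\bullet})=c\,v(\fb_\bullet)$, which is $(2)$.

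I do not expect a genuine obstacle: the statement is essentially a formal consequence of the valuative definition of the saturation together with Lemma \ref{lem:Rees is normal}(2). The only step that is more than bookkeeping is the scaling identity, and even there the sole subtlety is the case of irrational $c$, i.e. checking that the limit defining $v(\fb_\bullet)$ is unchanged when taken over real indices — handled by the squeeze argument above.
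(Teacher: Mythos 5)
Your proposal is correct and follows essentially the same route as the paper: the paper likewise first notes the scaling identity $v(\fb_{c\bullet})=c\,v(\fb_\bullet)$, then derives $(1)\Rightarrow(2)$ from Lemma \ref{lem:Rees is normal}(2) and $(2)\Rightarrow(1)$ from the definition of the saturation. The only difference is that you spell out the squeeze argument for the scaling identity (which the paper asserts without proof), and your observation that saturation commutes with scaling is a harmless clarification of the notation $\widetilde{\fb}_{c\bullet}$.
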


\begin{proof}
First, note that $v(\fb_{c \bullet}) =c v(\fb_\bullet)$ for all $v\in \Val_{R,\fm}$. 
Therefore, (1) implies (2) follows from Lemma \ref{lem:Rees is normal}(2), while (2) implies (1) follows from the definition of the saturation. 
\end{proof}

\subsection{Saturation and completion}
Let $(\widehat{R},\widehat{\fm})$ denote the $\fm$-adic completion of $(R,\fm)$ and write $\varphi:R\to \hat{R}$ for the natural morphism.
(Note that $\widehat{R}$ is a domain by the assumption throughout the paper that $R$ is analytically irreducible.)
For an $\fm$-filtration $\fa_\bullet$, we set $\fa_\bullet \widehat{R} : = (\fa_\la \widehat{R})_{\la>0}$, 
which is an $\widehat{\fm}$-filtration.

\begin{prop}\label{p:completion-saturation}
If $\fa_\bullet$ is an $\fm$-filtration and  $\fb_\bullet :=\fa_\bullet \widehat{R}$, then $\widetilde{\fa}_\bullet  \widehat{R} = \widetilde{\fb}_\bullet$.
\end{prop}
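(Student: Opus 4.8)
The plan is to prove the equality $\widetilde{\fa}_\bullet \widehat{R} = \widetilde{\fb}_\bullet$ by reducing everything to the valuative description of the saturation and exploiting the fact that divisorial valuations on $R$ and on $\widehat{R}$ carry the same information for the quantities at hand. The key observations are: (i) every divisorial valuation $w \in \DivVal_{\widehat{R},\widehat{\fm}}$ restricts to a valuation $v := w \circ \varphi$ of $R$ which, after possible rescaling, lies in $\DivVal_{R,\fm}$ (since the center is $\fm$ and the transcendence degree of the residue field extension is $n-1$ — here one uses that $R$ analytically irreducible forces $\dim \widehat{R} = n$ and that completion induces an isomorphism on residue fields $\kappa$); conversely, up to the finitely many divisors above $\fm$, every $v \in \DivVal_{R,\fm}$ is obtained this way; and (ii) for any such pair, $v(f) = w(\varphi(f))$ for $f \in R$, and moreover $v(\fa_\bullet) = w(\fa_\bullet \widehat{R}) = w(\fb_\bullet)$. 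The last equality follows because $w(\fb_m) = w(\fa_m \widehat{R}) = \min\{w(\varphi(f)) : f \in \fa_m\} = v(\fa_m)$ — using that $\fa_m \widehat{R}$ is generated by the image of $\fa_m$ — and then dividing by $m$ and taking the limit as in Lemma~\ref{lem:Rees is normal}.

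With these in hand, I would argue both inclusions directly. For $\widetilde{\fa}_\bullet \widehat{R} \subseteq \widetilde{\fb}_\bullet$: fix $\la > 0$ and $f \in \widetilde{\fa}_\la$; then $v(f) \ge \la\, v(\fa_\bullet)$ for all $v \in \DivVal_{R,\fm}$. Given any $w \in \DivVal_{\widehat{R},\widehat{\fm}}$, write $v = w\circ\varphi$ (after rescaling so $v$ is divisorial on $R$, noting both sides of the desired inequality scale the same way, so WLOG $v$ itself is divisorial); then $w(\varphi(f)) = v(f) \ge \la\, v(\fa_\bullet) = \la\, w(\fb_\bullet)$. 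Since $\widetilde{\fa}_\la$ is a finitely generated ideal, its image generates $\widetilde{\fa}_\la \widehat{R}$, and $w$ is minimized at a generator, so $w(g) \ge \la\, w(\fb_\bullet)$ for every $g \in \widetilde{\fa}_\la \widehat{R}$; also $\widetilde{\fa}_\la \subseteq \fm$ gives $\widetilde{\fa}_\la \widehat{R} \subseteq \widehat{\fm}$. Hence $\widetilde{\fa}_\la \widehat{R} \subseteq \widetilde{\fb}_\la$. For the reverse inclusion $\widetilde{\fb}_\bullet \subseteq \widetilde{\fa}_\bullet \widehat{R}$: let $g \in \widetilde{\fb}_\la$, so $w(g) \ge \la\, w(\fb_\bullet)$ for all $w \in \DivVal_{\widehat{R},\widehat{\fm}}$. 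One must produce $g \in \widetilde{\fa}_\la \widehat{R}$. Here I would use that $\widehat{\fa_\la} = \fa_\la \widehat{R}$ commutes with completion for $\fm$-primary ideals, and more to the point that the integral closure / valuative closure commutes with completion: the ideal $\{g \in \widehat{R} : w(g) \ge \la\, w(\fb_\bullet) \ \forall w\} \cap \widehat{\fm}$ is exactly $\widetilde{\fb}_\la$, and I claim it equals $\widetilde{\fa}_\la \widehat{R}$ because both are the integral closure (in the sense of the valuative closure of a single "fractional ideal" cut out by the value $\la v(\fa_\bullet)$) of the same data, and integral closure of ideals commutes with completion for excellent — or here, via Cohen's structure theorem, complete — local rings; since $\widehat{R}$ is already complete this direction is the more delicate one and is where I'd spend the most care.

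Concretely, the cleanest route for the reverse inclusion is: express $\widetilde{\fb}_\la$ as an intersection over the divisorial valuations $w_1,\ldots,w_s$ on $\widehat{R}$ lying over $\widehat{\fm}$ that are "relevant" — but since the intersection is over \emph{all} of $\DivVal_{\widehat{R},\widehat{\fm}}$ there is no finiteness a priori; instead I would show $\widetilde{\fb}_\la = \overline{J}$ for a suitable ideal $J$ of $\widehat{R}$ (namely an ideal whose Rees valuations compute $w(\fb_\bullet)$), reduce to the statement that $\overline{J_0 \widehat{R}} = \overline{J_0}\,\widehat{R}$ for an ideal $J_0 \subseteq R$, which holds since $R \to \widehat{R}$ is flat with Gorenstein (regular) fibers — or simply cite that integral closure commutes with completion for Noetherian local rings \cite[Exercise 9.11 or Corollary 9.2.1]{HS06} — and then match up the value data using observation (ii). The main obstacle, then, is not the forward inclusion (which is a direct valuative manipulation) but rather justifying that passing to the completion does not create new elements in the saturation, i.e. controlling the reverse inclusion; everything else is bookkeeping with Lemma~\ref{lem:Rees is normal} and the standard dictionary between divisorial valuations on $R$ and on $\widehat{R}$ centered at the maximal ideal.
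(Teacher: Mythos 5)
Your forward inclusion $\widetilde{\fa}_\bullet\widehat{R}\subseteq\widetilde{\fb}_\bullet$ is correct and is essentially the paper's argument: the paper invokes the bijection $\DivVal_{\widehat{R},\widehat{\fm}}\to\DivVal_{R,\fm}$ of \cite[Theorem 9.3.5]{HS06} together with the identity $\hat{v}(\fa\widehat{R})=v(\fa)$ for ideals $\fa\subseteq R$, exactly as you do. (Two small remarks: the correspondence is a genuine bijection, so your hedging about ``up to the finitely many divisors above $\fm$'' and about rescaling is unnecessary; and note that the \emph{surjectivity} of this correspondence --- every divisorial valuation of $R$ extends to one of $\widehat{R}$ --- is what you will need in the reverse direction, and it is where analytic irreducibility enters.)

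The reverse inclusion, however, has a genuine gap. Your proposed route is to write $\widetilde{\fb}_\la=\overline{J}$ for an ideal $J$ ``whose Rees valuations compute $w(\fb_\bullet)$'' and then invoke commutation of integral closure with completion. For a general $\fm$-filtration the function $w\mapsto w(\fb_\bullet)$ is \emph{not} computed by the Rees valuations of any single ideal --- that is precisely Cutkosky's ``bounded filtration'' hypothesis, which the paper is explicitly working to avoid --- so no such $J$ is available. Moreover, even if one takes $J=\widetilde{\fb}_\la$ itself (which is integrally closed, being an intersection of valuation ideals), applying $\overline{J_0\widehat{R}}=\overline{J_0}\,\widehat{R}$ requires first knowing that $J$ is extended from $R$; and your alternative phrasing ``both are the integral closure of the same data'' simply asserts the conclusion. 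The missing idea is elementary: since $\widetilde{\fb}_\la$ is $\widehat{\fm}$-primary it contains $\widehat{\fm}^N$ for some $N$, and $R/\fm^N\cong\widehat{R}/\widehat{\fm}^N$, so $\widetilde{\fb}_\la=\fc\,\widehat{R}$ for some $\fm$-primary ideal $\fc\subseteq R$. Then for every $v\in\DivVal_{R,\fm}$ with extension $\hat{v}$ one has $v(\fc)=\hat{v}(\fc\,\widehat{R})=\hat{v}(\widetilde{\fb}_\la)\ge\la\,\hat{v}(\fb_\bullet)=\la\,v(\fa_\bullet)$, hence $\fc\subseteq\widetilde{\fa}_\la$ and $\widetilde{\fb}_\la=\fc\,\widehat{R}\subseteq\widetilde{\fa}_\la\widehat{R}$. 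This descent of $\widehat{\fm}$-primary ideals is the step your proposal is missing; once it is in place, the rest is the same valuative dictionary you already set up.
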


The proposition shows that completion and saturation commute, as is the case with the integral closure of an ideal \cite[Proposition 1.6.2]{HS06}.
As a consequence of the proposition, many  results regarding saturations reduce to the case when $R$ is  a complete local domain.

\begin{proof}
By \cite[Theorem 9.3.5]{HS06}, there is a bijective map $\varphi_* \colon \DivVal_{\widehat{R},\widehat{\fm}} \to \DivVal_{R,\fm}$
that sends $\hat{v}$ to the valuation $v$ defined by composition 
\[
{\rm Frac}(R)^\times \hookrightarrow {\rm Frac}(\hat{R})^\times \overset{\hat{v}}{\to} \bR
.\]
Note that 
$\hat{v}(\fa  \hat{R}) = v(\fa)$ for any ideal $\fa\subset R$.
 
Fix $\la \in \bR_{>0}$.
Using the previous observation  twice and the definition of the saturation, we compute
\[
\hat{v} (\widetilde{\fa}_\la  \hat{R} )
= v(\widetilde{\fa}_\la) 
\geq 
\la v (\fa_\bullet) = \la \hat{v}(\fb_\bullet) 
.\]
Thus, $\widetilde{\fa}_{\la}  \hat{R} \subset \widetilde{\fb}_\la$.
To prove the reverse inclusion, note that $\fc  R = \widetilde{\fb}_\la$ for some ideal $\fc\subset R$, since $\widetilde{\fb}_\la$ is $\widehat{\fm}$-primary. 
As before, we compute
\[
v(\fc)= \hat{v}(\widetilde{\fb}_\la) \geq \la \hat{v}(\fb_\bullet) =\la  v (\fa_\bullet)  
.\]
 Therefore, $\widetilde{\fb}_\la = \fc  \hat{R} \subset \widetilde{\fa}_\la  \hat{R}$.
\end{proof}

\begin{prop}\label{p:completion-multiplicity}
If $\fa_\bullet$ is an $\fm$-filtration and $\fb_\bullet = \fa_\bullet \widehat{R}$, 
then $\e(\fa_\bullet) = \e(\fb_\bullet)$.
\end{prop}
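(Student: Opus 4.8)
The statement to prove is Proposition \ref{p:completion-multiplicity}: if $\fa_\bullet$ is an $\fm$-filtration and $\fb_\bullet = \fa_\bullet\widehat{R}$, then $\e(\fa_\bullet) = \e(\fb_\bullet)$.

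\medskip

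\textbf{Plan.} The plan is to reduce the equality of multiplicities of filtrations to the classical fact that multiplicities of $\fm$-primary ideals are unchanged under $\fm$-adic completion, and then pass to the limit. First I would recall that for each fixed $m \in \bN$, the ideal $\fa_m \subset R$ is $\fm$-primary, so $R/\fa_m$ has finite length; since $\widehat{R}$ is faithfully flat over $R$ and $\fa_m\widehat{R}\cap R = \fa_m$ with $\widehat{R}/\fa_m\widehat{R} \cong \widehat{R}\otimes_R (R/\fa_m) \cong R/\fa_m$ (the last isomorphism because $R/\fa_m$ is already $\fm$-adically complete, being of finite length), we get
\[
\ell_R(R/\fa_m) = \ell_{\widehat R}(\widehat R/\fa_m\widehat R) = \ell_{\widehat R}(\widehat R/\fb_m).
\]
Here $\fb_m = \fa_m\widehat R$ by definition of $\fb_\bullet$, and $\fb_m$ is $\widehat\fm$-primary since $\fa_m$ is $\fm$-primary and completion preserves this. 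Dividing by $m^n/n!$ and taking the limit $m\to\infty$, the defining limit for $\e(\fa_\bullet)$ on the left equals the defining limit for $\e(\fb_\bullet)$ on the right, both of which exist by \cite{Cut14}. This gives $\e(\fa_\bullet) = \e(\fb_\bullet)$.

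\medskip

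\textbf{Key steps, in order.} (1) Verify that $\fb_\bullet = \fa_\bullet\widehat R$ is indeed an $\widehat\fm$-filtration, i.e.\ each $\fb_\la = \fa_\la\widehat R$ is $\widehat\fm$-primary and the filtration axioms are inherited from $\fa_\bullet$ via flatness of $\widehat R$ over $R$ (this was already asserted in the paragraph preceding the proposition, so only a brief remark is needed). (2) For fixed $m$, establish the length identity $\ell_R(R/\fa_m) = \ell_{\widehat R}(\widehat R/\fb_m)$. The cleanest justification is: $R/\fa_m$ has finite length, hence is Artinian, hence $\fm$-adically complete, so the natural map $R/\fa_m \to \widehat R/\fa_m\widehat R$ is an isomorphism; an isomorphism of modules that is compatible with the $R$-action (and the $R$-action on $\widehat R/\fb_m$ factors through $\widehat R$) preserves length because a composition series on one side transports to one on the other. (3) Divide by $m^n/n!$ and take $m\to\infty$, invoking the existence of both limits from Section~\ref{sec:prelim} (equation \eqref{eqn:vol=mult} and the surrounding discussion).

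\medskip

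\textbf{Main obstacle.} There is no serious obstacle here; the only point requiring a little care is step (2), specifically making sure the length is computed correctly when switching base rings. One must confirm that $\ell_{\widehat R}(\widehat R/\fb_m)$ agrees with $\ell_R(\widehat R/\fb_m)$ viewed as an $R$-module — this holds because $\widehat R/\fb_m$ is annihilated by $\fa_m$, hence is a module over the Artinian ring $R/\fa_m$, and length over $R/\fa_m$ agrees with length over $\widehat R/\fb_m$ since both are computed via composition series with the same simple quotient $\kappa = R/\fm = \widehat R/\widehat\fm$. Alternatively, one can cite the general principle that for a flat local homomorphism $R\to S$ of Noetherian local rings with $\fm S$ an $S$-ideal of the maximal ideal, $\ell_S(S/\fm^k S) = \ell_R(R/\fm^k)\cdot \ell_S(S/\fm S)$, and in the completion case $\ell_{\widehat R}(\widehat R/\widehat\fm) = 1$. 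Given the elementary nature of the argument, I would keep the written proof to just a few sentences.
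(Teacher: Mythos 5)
Your proposal is correct and follows essentially the same route as the paper: the paper's proof is the one-line observation that $R/\fa_\la$ and $\widehat{R}/\fb_\la$ are isomorphic Artinian rings, so their lengths agree for each $\la$, and the equality of multiplicities follows by taking the limit. Your additional justifications (completeness of the Artinian quotient, compatibility of length over the two base rings) are correct elaborations of the same argument.
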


\begin{proof}
Since $R/\fa_\la$ and $\widehat{R}/\fb_\la$ are isomorphic as Artinian rings, $\ell(R/\fa_\la)=\ell(\widehat{R}/\fb_\la)$ for each $\la>0$. Therefore, the equality of multiplicities holds.
\end{proof}

\subsection{Saturation and multiplicity}\label{ss:satmult}
In this section we prove Theorem \ref{thm:characterization for sequences with equal volume} and a number of corollaries.
The theorem is a consequence of the following two propositions.
		
\begin{proposition}\label{prop:multval}
Let $\fa_\bullet \subset \fb_\bullet$ be  $\fm$-filtrations.
If $\e(\fa_\bullet)= \e(\fb_\bullet)$, then 
$v(\fa_\bullet) = v(\fb_\bullet)$
for all  $v\in \Val_{R,\fm}^+$.
\end{proposition}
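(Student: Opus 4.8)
\textbf{Proof plan for Proposition \ref{prop:multval}.}

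The plan is to argue by contradiction: suppose there is some $v\in\Val_{R,\fm}^+$ with $v(\fa_\bullet)>v(\fb_\bullet)$ (the inequality $v(\fa_\bullet)\ge v(\fb_\bullet)$ is automatic from $\fa_\bullet\subset\fb_\bullet$), and use the strict gap to produce a filtration squeezed between $\fb_\bullet$ and $\fa_\bullet$ whose multiplicity is strictly smaller than $\e(\fb_\bullet)$ but still at least $\e(\fa_\bullet)$, contradicting $\e(\fa_\bullet)=\e(\fb_\bullet)$. Concretely, pick rational $s$ with $v(\fb_\bullet)<s<v(\fa_\bullet)$ and consider, for each $\la$, the ideal $\fc_\la := \fa_\la + \fb_\la\cap\fa_{s\la}(v)$, or more simply the filtration $\fc_\bullet$ obtained by intersecting $\fb_\bullet$ with the valuation filtration $\fa_{s\bullet}(v)$ and then taking the saturation-type sum with $\fa_\bullet$; the point is that $\fa_\bullet\subset\fc_\bullet\subset\fb_\bullet$, that $\fc_\bullet\ne\fb_\bullet$ in a quantitative sense because $v$ separates them, and that $\e(\fc_\bullet)<\e(\fb_\bullet)$.

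First I would reduce to the complete case using Proposition \ref{p:completion-multiplicity} and the bijection of divisorial valuations under completion (and, for general $v\in\Val^+$, note that the argument below only needs the valuation to have positive volume, which is preserved). Then the key computational input is the subadditivity/monotonicity of multiplicity together with a way to see that cutting $\fb_\bullet$ down by a valuation ideal of a positive-volume valuation strictly decreases the multiplicity whenever the cut is nontrivial. For this I would use the volume formula $\e(\fb_\bullet)=\lim_m \e(\fb_m)/m^n$ and compare $\ell(R/\fc_m)$ with $\ell(R/\fb_m)$: the difference $\ell(\fb_m/\fc_m)$ is, up to the relevant order in $m$, controlled below by $\vol(v)$ restricted to the region where $v$ exceeds the threshold, which is positive by $v\in\Val_{R,\fm}^+$ and the strict inequality $s<v(\fa_\bullet)$. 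An alternative and perhaps cleaner route: apply the already-available inequality of \cite[Theorem 6.9]{CSS19} / \cite[Theorem 10.3]{Cut21}-type results, or the Minkowski-type inequality, to the pair $\fc_\bullet\subset\fb_\bullet$, but the honest estimate on lengths is the conceptual heart.

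The main obstacle I anticipate is making the strict inequality $\e(\fc_\bullet)<\e(\fb_\bullet)$ rigorous for an \emph{arbitrary} positive-volume valuation $v$, not merely a divisorial or quasi-monomial one: for such $v$ the valuation ideals $\fa_\la(v)$ need not be linearly bounded in an obvious way and $\fc_\bullet$ may fail to be linearly bounded, so one must check that its multiplicity is still well-defined and that the length estimate goes through. I would handle this by using $v\in\Val_{R,\fm}^+$ directly: positivity of $\vol(v)$ means $\ell(R/\fa_m(v))$ grows like $m^n$, so the ``removed'' part $\fb_m\cap\fa_{sm}(v)$ inside $\fb_m$ is genuinely of order $m^n$ with positive leading coefficient, forcing the strict drop. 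The remaining bookkeeping — graded-ness and left-continuity of $\fc_\bullet$, passing between $\lceil\cdot\rceil$ and real indices, and the elementary inequality $\e(\fa_\bullet)\le\e(\fc_\bullet)$ from $\fa_\bullet\subset\fc_\bullet$ — is routine.
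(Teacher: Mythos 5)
Your overall strategy (contradiction from a strict gap $v(\fb_\bullet)<v(\fa_\bullet)$, converted into a positive contribution to $\e(\fa_\bullet)-\e(\fb_\bullet)$ controlled by $\vol(v)$) matches the paper's, but the one step that carries all the content is not supplied, and the justification you offer for it does not work. You need a lower bound of order $m^n$, with positive leading constant, on $\ell(\fb_m/\fc_m)$ (or, in the direct version, on $\ell(\fb_{lm}/\fa_{lm})$). Knowing $v(\fb_\bullet)<s$ produces, for each $m$, \emph{one} element of $\fb_m$ with $v<sm$, hence one nonzero class in $\fb_m/\fc_m$; and the fact that $\ell(R/\fa_{sm}(v))\sim s^n\vol(v)\,m^n/n!$ measures how large the part $\fb_m\cap\fa_{sm}(v)$ that you \emph{keep} is, not how large the quotient you discard is. Neither observation yields $\ell(\fb_m/\fc_m)\geq c\,m^n$. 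The missing idea is the paper's multiplication map: after scaling so $v(\fa_\bullet)=1$, fix $l$ and $f\in\fb_l$ with $k:=v(f)<l$, and consider $\phi\colon R\to\fb_{lm}/\fa_{lm}$, $g\mapsto f^mg$. If $f^mg\in\fa_{lm}$ then $v(g)\geq v(\fa_{lm})-km\geq(l-k)m$, so $\ker\phi\subset\fa_{(l-k)m}(v)$ and $\ell(\fb_{lm}/\fa_{lm})\geq\ell(R/\fa_{(l-k)m}(v))$, which grows like $(l-k)^n\vol(v)\,m^n/n!>0$. Without this (or an equivalent device for producing $\sim m^n$ independent classes) the ``honest estimate on lengths'' you defer to is not routine bookkeeping; it is the proof.

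Two smaller points. Your inequalities are reversed: with $\fa_\bullet\subset\fc_\bullet\subset\fb_\bullet$ one has $\e(\fa_\bullet)\geq\e(\fc_\bullet)\geq\e(\fb_\bullet)$ (larger ideals have smaller colength), so the contradiction you must reach is $\e(\fc_\bullet)>\e(\fb_\bullet)$, not $\e(\fc_\bullet)<\e(\fb_\bullet)$. Also, the intermediate filtration $\fc_\bullet$, the reduction to the complete case, and the concern about linear boundedness of $\fa_\bullet(v)$ are all unnecessary here: the multiplication-map argument applies directly to the pair $\fa_\bullet\subset\fb_\bullet$ and uses nothing about $v$ beyond $\vol(v)>0$.
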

	
The proposition was shown when $v$ is divisorial  in \cite[Theorem 7.3]{Cut21} using Okounkov bodies. 
The proof below instead follows the strategy of \cite[Proposition 2.7]{LX16} and \cite[Lemma 3.9]{XZ20b}.

\begin{proof}
Suppose the statement is false. 
Then there exists  $v\in \Val_{R,\fm}^+$ such that $v(\fb_\bullet) < v(\fa_\bullet)$.
After scaling $v$, we may assume $v(\fa_\bullet)=1$. 
Therefore, there exists $l \in \mathbb{Z}_{>0}$ so that $ v(\fb_{l}) /l <v(\fa_\bullet)=1$. 
Hence, we may choose $f\in \fb_l\setminus \fa_l$ such that $k\coloneqq v(f)<l $.
	 
Now, consider the map 
\[
\phi:	R \to \fb_{l m}/ \fa_{lm}   \quad \text{ sending } \quad g \mapsto  f^m g.
\]
We claim $\ker(\phi) \subset \fa_{ (l-k)m } (v)$. 
Indeed, if $g\in \ker \phi$, then $f^m g \in \fa_{lm}$. Hence, 
\[
v(g) =v(f^m g) - v(f^m) \geq v(\fa_{l m}) - km \geq l m - km = (l-k) m 
\]
as desired. 
Using the claim, we deduce
\begin{equation}\label{e:colengthinequality}
\ell( \fb_{l m}/ \fa_{lm})
\geq
\ell (R/ \ker \phi)
\geq 
\ell ( R/ \fa_{ (l-k)m } (v)) 
.
\end{equation}
Finally, we compute
\[
\e(\fa_\bullet) - \e(\fb_\bullet) = \lim_{  m \to \infty} \frac{ \ell (\fb_{lm } / \fa_{lm})}{(lm)^n /n!}  
\geq 
\lim_{m\to\infty}
\frac{\ell ( R/ \fa_{  (l-k) m }(v) )}{ (lm)^n /n!} = \frac{(l-k)^n \vol(v)}{l^n} >0
,\]
where the first inequality is by \eqref{e:colengthinequality} and the second uses that $v$ has positive volume.
This contradicts our assumption that $\e(\fa_\bullet)=\e(\fb_\bullet)$. 
\end{proof}

\begin{proposition}\label{prop:valuative criterion for multiplicities}
Let $\fa_\bullet$ and $\fb_\bullet$ be  $\fm$-filtrations. Assume $(R,\fm)$ is complete.
If $v(\fb_\bullet) \leq v(\fa_\bullet)$ for all $v\in \DivVal_{R,\fm}$, then $\e(\fb_\bullet) \leq \e(\fa_\bullet)$. 
\end{proposition}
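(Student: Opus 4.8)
The plan is to reduce the statement to an inequality of multiplicities of ideals, to which we can apply the intersection formula of Proposition \ref{lem:intersection formula for mixed mult}. The hypothesis $v(\fb_\bullet)\le v(\fa_\bullet)$ for all divisorial $v$ is exactly the statement that $\widetilde{\fb}_\bullet\subset\widetilde{\fa}_\bullet$, and by Lemma \ref{lem:Rees is normal}(2) it suffices to prove the claim after replacing each filtration by its saturation; so we may assume $\fb_m\subset\fa_m$ (or at least that the inclusion of saturations holds at each level, which is all the multiplicity sees). Concretely, I would fix a large integer $m$ and compare $\e(\fa_m)$ with $\e(\fb_m)$: since $\e(\fa_\bullet)=\inf_m \e(\fa_m)/m^n$ by \eqref{eqn:vol=mult}, it is enough to show that for every $m$ one has $\e(\fb_m)\le \e(\fa_{m'})\cdot (m/m')^n + (\text{error})$ for suitable $m'$, or more cleanly, to show directly that $\e(\widetilde{\fb}_m)\le \e(\widetilde{\fa}_m)$ using the valuative hypothesis at the level of single ideals and then pass to the limit.

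The core step is the following comparison for ideals: if $\fa,\fb$ are $\fm$-primary and $v(\fb)\ge v(\fa)$ — i.e. $v(\mathfrak{b})\ge \mathrm{some\ bound}$ — hmm, let me restate. What we actually have after saturating is that for each $m$, every divisorial valuation $v$ satisfies $v(\widetilde{\fb}_m)\ge v(\widetilde{\fa}_\bullet)\cdot$ ... no. Let me think again: $v(\fb_\bullet)\le v(\fa_\bullet)$ gives, after taking saturations and using Lemma \ref{lem:Rees is normal}(2), that $\widetilde{\fb}_m\subseteq\widetilde{\fa}_m$ for all $m$ (since $f\in\widetilde{\fb}_m$ means $v(f)\ge m\, v(\fb_\bullet)\ge m\, v(\fa_\bullet)$... wait that needs $v(\fb_\bullet)\ge v(\fa_\bullet)$, the opposite). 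So in fact the hypothesis $v(\fb_\bullet)\le v(\fa_\bullet)$ yields $\widetilde{\fa}_m\subseteq\widetilde{\fb}_m$, and we must show $\e(\fb_\bullet)\le\e(\fa_\bullet)$; this is consistent since larger ideal means smaller colength means smaller multiplicity, so $\widetilde{\fa}_m\subseteq\widetilde{\fb}_m$ is the "wrong way" inclusion and the content is genuinely in the valuative hypothesis, not a containment. Therefore the right approach is: take a common resolution.

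Here is the cleaner plan. Fix $\epsilon>0$ and choose $m$ large so that $\e(\fb_m)/m^n\le \e(\fb_\bullet)+\epsilon$ — no wait, $\e(\fb_\bullet)=\inf_m\e(\fb_m)/m^n$ so $\e(\fb_m)/m^n\ge\e(\fb_\bullet)$ always; I want an $m$ with $\e(\fa_m)/m^n$ close to $\e(\fa_\bullet)$. Pick such $m$. Now on a single normal model $Y\to\Spec R$ that extracts enough divisors, write $\fa_m\cdot\cO_Y=\cO_Y(-A)$ and consider the ideal $\fc$ on $Y$ with $\fc\cdot\cO_Y=\cO_Y(-B)$ where $B$ is the divisorial part determined by the values $v(\fb_m)$ on the prime divisors of $Y$; the hypothesis $v(\fb_\bullet)\le v(\fa_\bullet)$, applied to these finitely many divisorial $v=\ord_{E_i}$ and combined with $v(\fb_m)\ge m\,v(\fb_\bullet)$ and $v(\fa_m)\le$ ..., gives $B\le A$ as divisors after suitable normalization, hence $\e(\fb$-approximant$)=(-B)^{n-1}\cdot B\le (-A)^{n-1}\cdot A=\e(\fa_m)$ by Proposition \ref{lem:intersection formula for mixed mult} together with monotonicity of the function $D\mapsto (-D)^{n-1}\cdot D$ on the nef cone (this monotonicity, a consequence of multilinearity and the fact that $-A$, $-B$ are nef with $-A\le -B$... needs care). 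Passing $m\to\infty$ and using \eqref{eqn:vol=mult} on both sides yields $\e(\fb_\bullet)\le\e(\fa_\bullet)$. The completeness hypothesis is used to invoke Proposition \ref{p:symmetric} so that these exceptional intersection numbers behave symmetrically, which is what makes the monotonicity argument on $(-D)^{n-1}\cdot D$ go through.

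\textbf{Main obstacle.} The delicate point is the passage from the valuative inequality $v(\fb_\bullet)\le v(\fa_\bullet)$ (an inequality of limits, holding for \emph{all} infinitely many divisorial valuations) to a clean comparison on a \emph{single finite} model $Y$: one must approximate $\fa_m$ and $\fb_m$ by ideals whose blowups are dominated by a common $Y$ and show the relevant exceptional divisors satisfy $B\lesssim A$ up to an error that vanishes after dividing by $m^n$ and letting $m\to\infty$. Equivalently, one needs that $\e(\fb_\bullet)=\lim_m \e(\widetilde{\fb}_m)/m^n$ can be computed via intersection numbers on models adapted to $\fa_\bullet$, and that the "defect" between $\widetilde{\fb}_m$ and the monomial-in-$Y$ ideal it determines is controlled. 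I expect this limiting/approximation bookkeeping — rather than the intersection-theoretic monotonicity, which is formal once Proposition \ref{p:symmetric} is in hand — to be where the real work lies.
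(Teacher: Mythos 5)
Your proposal correctly identifies the right ingredients (the intersection formula of Proposition \ref{lem:intersection formula for mixed mult}, nef-monotonicity of intersection numbers, and Proposition \ref{p:symmetric} as the place where completeness enters), and you correctly flag the main difficulty — but you do not resolve it, and the single-model, single-$m$ comparison you sketch genuinely fails. There are two independent problems. First, for a \emph{fixed} $m$ there is no reason that $\ord_E(\fb_m)\le \ord_E(\fa_m)$ for the prime divisors $E$ on your chosen model $Y$: the hypothesis only gives $\ord_E(\fb_\bullet)\le\ord_E(\fa_\bullet)$, and while $\ord_E(\fa_m)/m\ge\ord_E(\fa_\bullet)$ always, the quantity $\ord_E(\fb_m)/m$ only converges to $\ord_E(\fb_\bullet)$ as $m\to\infty$ at a rate depending on $E$, so $B\le A$ can fail. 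Second, if you instead define $B$ by the limiting values $m\cdot\ord_E(\fb_\bullet)$ on the finitely many divisors of $Y$, the associated ideal $\fc=\mu_*\cO_Y(-B)$ imposes conditions at only finitely many valuations, hence contains $\widetilde{\fb}_m$; this gives $\e(\fc)\le\e(\fb_m)$, which is an inequality in the \emph{wrong} direction — bounding $\e(\fc)$ above by $\e(\fa_m)$ tells you nothing about $\e(\fb_m)$. A single model cannot see $\fb_\bullet$.

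The paper's proof circumvents exactly this by an iterative swapping of factors in \emph{mixed} intersection numbers across a tower of models $\cdots\to X_2\to X_1\to X$ on which $\fa_l\cdot\cO_{X_m}$ and $\fb_l\cdot\cO_{X_m}$ are invertible for $l\le m$. One starts from $-F_{m_0}^{n-1}\cdot F_{m_0}<\e(\fa_\bullet)+\epsilon$ and replaces one factor $F_{m_i}$ by $F'_{m_i}$ at a time: at each stage the error term is a sum over the \emph{finitely many} divisors on $X_{m_{i-1}}$ of $\bigl(-\ord_E(F_{m_i})+\ord_E(F'_{m_i})\bigr)$ against a fixed nef intersection class, and since each such term converges to $\ord_E(\fa_\bullet)-\ord_E(\fb_\bullet)\ge0$, one can choose $m_i$ large and divisible by $m_{i-1}$ to make the error $<\epsilon$. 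After $n$ swaps one reaches $-(F'_{m_n})^n<\e(\fa_\bullet)+(n+1)\epsilon$ and concludes via $\e(\fb_\bullet)=\inf_m-(F'_m)^n$. Proposition \ref{p:symmetric} is needed to reorder the factors between swaps. This one-factor-at-a-time structure, with each $m_i$ chosen after the previous data is fixed, is the missing idea in your proposal.
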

	
The proposition and its proof are local analogues of \cite[Lemma 22]{Sze15}, which concerns  the volumes of graded linear series of projective varieties.

\begin{remark}
The assumption that $R$ is complete in Proposition \ref{prop:valuative criterion for multiplicities}
implies that $R$ is 
 Nagata \cite[\href{https://stacks.math.columbia.edu/tag/032W}{Lemma 032W}]{stacks-project} (see  \cite[\href{https://stacks.math.columbia.edu/tag/033S}{Definition 033S}]{stacks-project}).
 The latter property will be used in the proof of Proposition \ref{prop:valuative criterion for multiplicities} to ensure certain normalization morphisms are proper.
 \end{remark}

\begin{proof}
We claim that there exists a sequence of proper birational morphisms
\[ 
\cdots\to
X_3
\to 
X_2\to X_1\to X_0 = X
\]
such that 
the each $X_i$ is normal and the sheafs $\fa_{l} \cdot \cO_{X_m}$ and $\fb_{l}\cdot \cO_{X_m}$
are line bundles when $l\leq m$.
Such a sequence can be constructed inductively as follows. 
Let $X_{i} \to X_{i-1}$ be defined by the composition
\[
X_{i}  \to X_{i,2}\to X_{i,1} \to X_{i-1},
\]
where $X_{i,1}$ is the blowup of $\fa_i \cdot \cO_{X_{i-1}}$, $X_{i,2}$ is the blowup of $\fb_i \cdot \cO_{X_{i,1}}$, and $X_i$ the normalization of $X_{i,2}$.
Note that $X_{i,2} \to X$ is proper, since $X_{i,2}\to X_{i-1}$ is a blowup and $X_{i-1}\to X$ is proper by our inductive assumption. 
Since $X$ is Nagata and $X_{i,2}\to X$ is finite type, $X_{i,2}$ is Nagata by \cite[\href{https://stacks.math.columbia.edu/tag/035A}{Lemma 035A}]{stacks-project}.
Therefore,  $X_i\to X_{i,2}$ is finite by the definition of Nagata, and we  conclude  the composition $X_i \to X_{i-1}$ is proper, which completes the proof of the claim.

Next, consider  a proper birational morphism $Y\to X$ with $Y$ normal and factoring as
\[
Y \overset{\pi_m}{\longrightarrow} X_m \longrightarrow X,
\]
For $l\leq m$,  there exist Cartier divisors $G_l$ and $G'_l$ on $Y$ such that
\[
\fa_l \cdot \cO_{Y} = \cO_{Y} (G_l)
 \quad \text{ and } \quad 
\fb_l\cdot\cO_{Y}=\cO_{Y}(G_l')
.\]
Set $F_l\coloneqq  \tfrac{G_l}{l}$ and $F'_l\coloneqq  \tfrac{G'_l}{l}$, which are relatively nef over $X$ and satisfy
\[
F_l=- \sum_{E \subset Y} \frac{\ord_E(\fa_l)}{l} E   \quad \text{ and }\quad   F'_l =- \sum_{E \subset Y} \frac{\ord_E(\fb_l)}{l} E,
\]	
where the sums run through prime divisors $E\subset Y$.
Throughout the proof, we will without mention replace $Y$ with higher birational models so  it factors through certain $X_m\to X$.

Given $\epsilon>0$,  by Lemma \ref{lem:intersection formula for mixed mult} and \eqref{eqn:vol=mult}, there exists $m_0>0$ such that
\begin{equation}\label{eqn:approximation for a}
- F_{m_0}^{n-1} \cdot F_{m_0} =\e(\fa_{m_0})/{m_0}^n< \e(\fa_\bullet) + \epsilon. 
\end{equation}
For a multiple $m_1$ of $m_0$, 
\[
  -(F_{m_0})^{n-1} \cdot F_{m_1}< \e(\fa_\bullet) + \epsilon
,\]
 since $F_{m_0} \leq F_{m_1}$ and $F_{m_0}$ is nef over $X$.
Now, we compute
 \begin{align*}
- (F_{m_0})^{n-1} \cdot F_{m_1} 
  +  (F_{m_0})^{n-1} \cdot F'_{m_1}
 &= \sum_{E\subset Y  } \left( - \ord_{E} (F_{m_1}) +\ord_{E} (F'_{m_1}) \right)  (F_{m_0})^{n-1} \cdot E\\
 &= \sum_{E\subset X_{m_0}  } \left( -\ord_{E} (F_{m_1}) +\ord_{E} (F'_{m_1}) \right)  ({\pi_{m_0}}_* F_{m_0})^{n-1} \cdot E,
 \end{align*}
where the second equality  uses that $F_{m_0}$ is the pullback of ${\pi_{m_0}}_* F_{m_0}$ and  the projection formula \cite[Proposition B.16]{Kle05}.
 Since ${\pi_{m_0}}_* F_{m_0}$ is nef over $X$ and
\[
 \lim_{m\to \infty}  \left( -\ord_{E} (F_m) + \ord_{E}(F'_m) \right) = \ord_E(\fa_\bullet)  -\ord_E(\fb_\bullet) \geq 0 
  ,
\]
 we may choose $m_1$ sufficiently large and divisible by $m_0$ such that 
 \[
 -(F_{m_0})^{n-1} \cdot  F'_{m_1}  \leq  - F_{m_0}^{n-1} \cdot  F_{m_1} + \epsilon. 
 \]
Therefore,
 \[
 - (F_{m_0})^{n-2} \cdot F'_{m_1} \cdot F_{m_0} =
- (F_{m_0})^{n-1} \cdot F'_{m_1}   \leq \e(\fa_\bullet) + 2\epsilon
 ,\]
 where the first equality is Proposition \ref{p:symmetric}.

 For any multiple $m_2$ of $m_1$,
 \[
-  (F_{m_0})^{n-2} \cdot F'_{m_1} \cdot F_{m_2}  \leq \e(\fa_\bullet) + 2\epsilon,
 \] 
 since $F_{m_0} \leq F_{m_2}$ and the terms $F'_{m_1}$ and $F_{m_0}$ are nef over $X$.
 Similarly to the previous paragraph, we compute
\begin{multline*}
-  (F_{m_0})^{n-2} \cdot F'_{m_1} \cdot F_{m_2} 
-  (F_{m_0})^{n-2} \cdot F'_{m_1} \cdot F'_{m_2} 
\\
=\sum_{E \subset X_{m_1} } \left( - \ord_{E} (F_{m_2})+ \ord_{E} (F'_{m_2}) \right) ({
 ({\pi_{m_1}}_* F_{m_0})^{n-1} \cdot \pi_{m_1}}_*F'_{m_1})  \cdot E.
\end{multline*}
and, hence, we may choose $m_2$ sufficiently large and divisible by $m_1$ so that 
\[
 -(F_{m_0})^{n-2} \cdot F'_{m_1} \cdot F'_{m_2} <-(F_{m_0})^{n-2} \cdot F'_{m_1} \cdot F_{m_2}+  \epsilon. 
\]
Thus, 
\[
 -(F_{m_0})^{n-2} \cdot F'_{m_1} \cdot F'_{m_2} < \e(\fa_\bullet) +3\epsilon. 
\]
Repeating in this way gives 
\[
-F'_{m_1} \cdot  F'_{m_2} \cdot \, \cdots \, \cdot F'_{m_n} < \e(\fa_\bullet)+ (n+1)\epsilon 
,\] 
where each $m_i$ divides $m_{i+1}$. Since each $F'_{m_i}$ is nef over $X$ and $F'_{m_{i}} \leq F'_{m_{i+1}}$, we see that
\[
-(F'_{m_n})^n < \e(\fa_\bullet)+ n\epsilon. 
\]
Using that $\e(\fb_\bullet) = \inf_{m} \e(\fb_m)/m^n =\inf_{m} -(F'_{m})^n $ by Lemma \ref{lem:intersection formula for mixed mult} and \eqref{eqn:vol=mult},  we see that $\e(\fb_\bullet) < \e(\fa_\bullet) + (n+1)\epsilon $.
Therefore, $\e(\fb_\bullet)\leq \e(\fa_\bullet)$.
\end{proof}

\begin{remark}
When $x\in X$ is an isolated singularity on a normal variety, Proposition \ref{prop:valuative criterion for multiplicities} follows easily from the intersection theory for nef $b$-divisors developed in \cite{BdFF12}. Indeed,  the assumption $v(\fa_\bullet) \leq v(\fb_\bullet)$ implies   $Z(\fa_\bullet)\ge Z(\fb_\bullet)$, where $Z(\fa_\bullet)$ is the nef $b$-Weil $\bR$-divisor associated to $\fa_\bullet$. The proposition then follows from \cite[Remark 4.17]{BdFF12}. 
However, when $x$ is not an isolated singularity, a satisfactory intersection theory for nef $b$-divisors seems  missing from the literature.
\end{remark}

%As a consequence of the previous two propositions, we deduce the following generalization of Rees' theorem to the setting of $\fm$-filtrations.

We are now ready to prove Theorem \ref{thm:characterization for sequences with equal volume} and a number of its corollaries. 

\begin{proof}[Proof of Theorem \ref{thm:characterization for sequences with equal volume}]
By Propositions \ref{p:completion-saturation} and \ref{p:completion-multiplicity}, we may assume $(R,\fm)$ is complete. 
This condition will be needed below to apply Proposition \ref{prop:valuative criterion for multiplicities}.

By Propositions \ref{prop:multval} and \ref{prop:valuative criterion for multiplicities}, $\e(\fa_\bullet) = \e(\fb_\bullet)$ if and only if $v(\fa_\bullet) = v(\fb_\bullet)$ for all $v\in \DivVal_{R,\fm}$. 
By Proposition \ref{prop:equiv filts},
 the latter condition holds if and only if $\widetilde{\fa}_\bullet = \widetilde{\fb}_\bullet$.
\end{proof}

\begin{cor}\label{cor:sat mult}
If $\fa_\bullet$ is an $\fm$-filtration, then $\e(\fa_\bullet)=\e(\widetilde{\fa}_\bullet)$.
\end{cor}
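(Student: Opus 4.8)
The plan is to deduce this as an immediate consequence of Theorem~\ref{thm:characterization for sequences with equal volume}. By Lemma~\ref{lem:Rees is normal}(1) we have an inclusion of $\fm$-filtrations $\fa_\bullet \subset \widetilde{\fa}_\bullet$, and by Lemma~\ref{lem:Rees is normal}(3) the filtration $\widetilde{\fa}_\bullet$ is saturated, i.e.\ $\widetilde{\widetilde{\fa}}_\bullet = \widetilde{\fa}_\bullet$. Hence $\fa_\bullet$ and $\widetilde{\fa}_\bullet$ have the same saturation. Applying Theorem~\ref{thm:characterization for sequences with equal volume} to the pair $\fa_\bullet \subset \widetilde{\fa}_\bullet$ then gives $\e(\fa_\bullet) = \e(\widetilde{\fa}_\bullet)$.

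I expect no real obstacle here: all the content has been absorbed into Theorem~\ref{thm:characterization for sequences with equal volume}, whose forward direction is Proposition~\ref{prop:multval} and whose reverse direction is Proposition~\ref{prop:valuative criterion for multiplicities} (after reducing to the complete case via Propositions~\ref{p:completion-saturation} and~\ref{p:completion-multiplicity}). If one instead wanted a self-contained argument, I would proceed directly as follows. The inequality $\e(\widetilde{\fa}_\bullet) \le \e(\fa_\bullet)$ follows from monotonicity of the multiplicity under the inclusion $\fa_\bullet \subset \widetilde{\fa}_\bullet$, since $R/\widetilde{\fa}_m$ is a quotient of $R/\fa_m$ and so $\ell(R/\widetilde{\fa}_m)\le \ell(R/\fa_m)$ for all $m$. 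For the reverse inequality, reduce to $(R,\fm)$ complete using Propositions~\ref{p:completion-saturation} and~\ref{p:completion-multiplicity}; then Lemma~\ref{lem:Rees is normal}(2) gives $v(\widetilde{\fa}_\bullet) = v(\fa_\bullet)$ for every $v \in \DivVal_{R,\fm}$, and Proposition~\ref{prop:valuative criterion for multiplicities}, applied with the roles of the two filtrations interchanged, yields $\e(\fa_\bullet) \le \e(\widetilde{\fa}_\bullet)$. Either route closes the argument.
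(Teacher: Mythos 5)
Your proposal is correct and follows exactly the paper's own argument: Lemma~\ref{lem:Rees is normal} gives $\fa_\bullet \subset \widetilde{\fa}_\bullet$ and the idempotence of saturation, so the two filtrations have the same saturation and Theorem~\ref{thm:characterization for sequences with equal volume} yields the equality of multiplicities. The alternative self-contained route you sketch is also sound, but the first paragraph already matches the paper.
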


\begin{proof}
 By Lemma \ref{lem:Rees is normal},
 $\fa_\bullet\subset \widetilde{\fa}_\bullet$ and $\widetilde{\fa}$ is saturated.
 Thus, Theorem \ref{thm:characterization for sequences with equal volume}  implies $\e(\fa_\bullet) = \e(\widetilde{\fa}_\bullet)$.
\end{proof}

\begin{corollary}\label{cor:equiv equal mult}
Let $\fa_\bullet$ and $\fb_\bullet$ be $\fm$-filtrations. The following statements are equivalent:
\begin{enumerate}
    \item $\widetilde{\fa}_\bullet = \widetilde{\fb}_{ \bullet}$.
    \item $\e(\fa_\bullet) = \e(\fa_\bullet \cap \fb_{\bullet})= \e(\fb_\bullet)$.
    \end{enumerate}
\end{corollary}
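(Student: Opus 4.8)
The plan is to deduce both implications from Theorem \ref{thm:characterization for sequences with equal volume}, which already characterizes equality of multiplicities between nested filtrations in terms of saturations. The key observation is that $\fa_\bullet \cap \fb_\bullet$ (meaning $(\fa_\lambda \cap \fb_\lambda)_{\lambda > 0}$) is an $\fm$-filtration contained in both $\fa_\bullet$ and $\fb_\bullet$, so Theorem \ref{thm:characterization for sequences with equal volume} applies to each of the two inclusions $\fa_\bullet \cap \fb_\bullet \subset \fa_\bullet$ and $\fa_\bullet \cap \fb_\bullet \subset \fb_\bullet$. First I would record that $\fa_\bullet \cap \fb_\bullet$ is indeed an $\fm$-filtration: it is decreasing and left-continuous since each of $\fa_\bullet,\fb_\bullet$ is, the product condition $(\fa_\lambda \cap \fb_\lambda)(\fa_\mu \cap \fb_\mu) \subset \fa_{\lambda+\mu}\cap\fb_{\lambda+\mu}$ is immediate, and it remains $\fm$-primary because it contains, say, $\fa_\lambda \fb_\lambda \supset \fm^{N}$ for suitable $N$.

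For (2) $\Rightarrow$ (1): assume $\e(\fa_\bullet) = \e(\fa_\bullet \cap \fb_\bullet) = \e(\fb_\bullet)$. Applying Theorem \ref{thm:characterization for sequences with equal volume} to $\fa_\bullet \cap \fb_\bullet \subset \fa_\bullet$ gives $\widetilde{\fa_\bullet \cap \fb_\bullet} = \widetilde{\fa}_\bullet$, and applying it to $\fa_\bullet \cap \fb_\bullet \subset \fb_\bullet$ gives $\widetilde{\fa_\bullet \cap \fb_\bullet} = \widetilde{\fb}_\bullet$. Hence $\widetilde{\fa}_\bullet = \widetilde{\fb}_\bullet$, which is (1).

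For (1) $\Rightarrow$ (2): assume $\widetilde{\fa}_\bullet = \widetilde{\fb}_\bullet$. By Proposition \ref{prop:equiv filts} (with $c=1$), this is equivalent to $v(\fa_\bullet) = v(\fb_\bullet)$ for all $v \in \DivVal_{R,\fm}$. The main point is then to check that $v(\fa_\bullet \cap \fb_\bullet) = v(\fa_\bullet) = v(\fb_\bullet)$ for all such $v$. This reduces to the elementary identity $v\big((\fa_m \cap \fb_m)\big) = \min\{v(\fa_m), v(\fb_m)\}$ at each finite level $m$ — the inequality $\geq$ is clear from $\fa_m \cap \fb_m \subset \fa_m, \fb_m$ giving $\le$, and conversely... actually I should be careful: $v$ of an intersection need not equal the minimum of the $v$'s of the two ideals in general, but here we only need the limit statement. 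Write $w_m := v(\fa_m \cap \fb_m)/m$; since $\fa_m \cap \fb_m \subset \fa_m$ and $\subset \fb_m$, we get $w_m \ge \max\{v(\fa_m)/m, v(\fb_m)/m\}$... wait, inclusion of ideals \emph{reverses} the inequality for $v(-) = \min$, so $\fa_m \cap \fb_m \subset \fa_m$ yields $v(\fa_m\cap\fb_m) \geq v(\fa_m)$. Thus $v(\fa_\bullet \cap \fb_\bullet) \geq \max\{v(\fa_\bullet), v(\fb_\bullet)\} = v(\fa_\bullet)$. For the reverse bound, note $\fa_m \fb_m \subset \fa_m \cap \fb_m$, so $v(\fa_m \cap \fb_m) \le v(\fa_m \fb_m) = v(\fa_m) + v(\fb_m)$; dividing by $m$ this only gives $v(\fa_\bullet \cap\fb_\bullet) \le v(\fa_\bullet) + v(\fb_\bullet) = 2v(\fa_\bullet)$, which is too weak. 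The cleaner route is to invoke Lemma \ref{lem:Rees is normal}(2) and the definition of saturation directly: since $\fa_m \cap \fb_m \supset \widetilde{\fa}_{\mu}\cdot$(something)? Instead, I would argue via the saturation: from $\widetilde\fa_\bullet=\widetilde\fb_\bullet$ one has $\fa_\bullet \subset \widetilde\fa_\bullet$ and $\fb_\bullet \subset \widetilde\fb_\bullet = \widetilde\fa_\bullet$, hence $\fa_\bullet\cap\fb_\bullet \subset \widetilde\fa_\bullet$, and moreover for each $\lambda$ and each $v\in\DivVal_{R,\fm}$ we have $v(\fa_\lambda\cap\fb_\lambda)\ge\min(v(\fa_\lambda),v(\fb_\lambda))\ge\min(\lambda v(\fa_\bullet),\lambda v(\fb_\bullet))=\lambda v(\fa_\bullet)$, so in fact $\fa_\bullet\cap\fb_\bullet\subset \widetilde\fa_\bullet$ with the \emph{same} valuative bound, giving $v(\fa_\bullet\cap\fb_\bullet) \ge v(\widetilde\fa_\bullet) = v(\fa_\bullet)$ by Lemma \ref{lem:Rees is normal}(2); combined with $v(\fa_\bullet\cap\fb_\bullet)\le v(\fa_\bullet)$ from... hmm, that last inequality is the problematic direction again. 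I expect \textbf{the main obstacle} to be precisely establishing $v(\fa_\bullet \cap \fb_\bullet) \le v(\fa_\bullet)$; the right fix is to observe that $\widetilde{\fa_\bullet\cap\fb_\bullet} \subset \widetilde{\fa}_\bullet$ (monotonicity of saturation under inclusion) which forces $v(\fa_\bullet\cap\fb_\bullet)=v(\widetilde{\fa_\bullet\cap\fb_\bullet})\le v(\widetilde\fa_\bullet)=v(\fa_\bullet)$, using Lemma \ref{lem:Rees is normal}(2) twice. With $v(\fa_\bullet\cap\fb_\bullet)=v(\fa_\bullet)=v(\fb_\bullet)$ for all divisorial $v$ in hand, Proposition \ref{prop:equiv filts} gives $\widetilde{\fa_\bullet\cap\fb_\bullet}=\widetilde\fa_\bullet=\widetilde\fb_\bullet$, and three applications of Theorem \ref{thm:characterization for sequences with equal volume} (or of Corollary \ref{cor:sat mult}) yield $\e(\fa_\bullet)=\e(\fa_\bullet\cap\fb_\bullet)=\e(\fb_\bullet)$, completing (1) $\Rightarrow$ (2).
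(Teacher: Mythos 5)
Your direction (2) $\Rightarrow$ (1) is exactly the paper's argument and is fine. The gap is in (1) $\Rightarrow$ (2), and it is precisely at the step you yourself flagged as ``the main obstacle'': establishing $v(\fa_\bullet \cap \fb_\bullet) \le v(\fa_\bullet)$. Your proposed fix reverses an inequality. From $\widetilde{\fa_\bullet\cap\fb_\bullet} \subset \widetilde{\fa}_\bullet$ one gets $v(\widetilde{\fa_\bullet\cap\fb_\bullet}) \ge v(\widetilde{\fa}_\bullet)$, not $\le$, because $v(\fc) = \min\{v(f) : f\in\fc\}$ is taken over a \emph{smaller} set for a smaller ideal. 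So monotonicity of the saturation only reproduces the easy direction $v(\fa_\bullet\cap\fb_\bullet)\ge v(\fa_\bullet)$ that you already had, and the inequality you need remains unproved. Indeed, I do not see a direct valuative proof of $v(\fa_\bullet\cap\fb_\bullet)\le v(\fa_\bullet)$: at finite level $m$ the ideals $\fa_m$ and $\fb_m$ can be quite different even when all asymptotic valuations agree, and the only obvious lower bound $\fa_m\fb_m\subset\fa_m\cap\fb_m$ loses a factor of $2$, as you noted. (The equality $v(\fa_\bullet\cap\fb_\bullet)=v(\fa_\bullet)$ is true, but in the paper it is a \emph{consequence} of $\e(\fa_\bullet\cap\fb_\bullet)=\e(\fa_\bullet)$ via Proposition \ref{prop:multval}, so you cannot use it as an input.)

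The paper sidesteps the intersection entirely by passing to the \emph{sum} filtration $\fa_\bullet+\fb_\bullet$, for which the valuative identity does hold on the nose: $v(\fa_\la+\fb_\la)=\min\{v(\fa_\la),v(\fb_\la)\}$, whence $v(\fa_\bullet+\fb_\bullet)=v(\fa_\bullet)=v(\fb_\bullet)$ and therefore $\e(\fa_\bullet+\fb_\bullet)=\e(\fa_\bullet)$ by Proposition \ref{prop:equiv filts} and Theorem \ref{thm:characterization for sequences with equal volume} (this is Lemma \ref{lem:a+b}). The intersection is then handled by inclusion--exclusion of lengths,
\[
\ell\bigl(R/(\fa_m\cap\fb_m)\bigr)=\ell(R/\fa_m)+\ell(R/\fb_m)-\ell\bigl(R/(\fa_m+\fb_m)\bigr),
\]
which after dividing by $m^n/n!$ and passing to the limit gives $\e(\fa_\bullet\cap\fb_\bullet)=\e(\fa_\bullet)+\e(\fb_\bullet)-\e(\fa_\bullet+\fb_\bullet)=\e(\fa_\bullet)$. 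To repair your proof, replace your valuative analysis of $\fa_\bullet\cap\fb_\bullet$ with this sum-plus-inclusion--exclusion step.
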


\begin{comment}
Before proving the corollary, we state a lemma that will be used to show (1) implies (2).

\begin{lem}\label{lem:a+b}
If $\fa_\bullet$ and $\fb_\bullet$ are equivalent $\fm$-filtrations, 
then the $\fm$-filtration $(\fa_\bullet + \fb_\bullet)=(\fa_\la+\fb_\la)_{\la \in \bR_{>0}}$,
is equivalent to both $\fa_\bullet$ and $\fb_\bullet$. 
\end{lem}

\begin{proof}
For any $v\in \DivVal_{R,\fm}$,
\[
v(\fa_\bullet + \fb_\bullet) = \min \{ v(\fa_\bullet ) ,v(\fb_\bullet) \} = v(\fa_\bullet)
,\]
where the first equality uses that  $v(\fa_\la+\fb_\la)  = \min \{ v(\fa_\la), v(\fb_\la)\} $ for each $\la>0$ and the second Proposition \ref{prop:equiv filts}.
Hence, Proposition \ref{prop:equiv filts} implies $\widetilde{\fa_\bullet+\fb_\bullet} =\widetilde{\fa}_\bullet$. By symmetry, $\widetilde{\fa_\bullet+\fb_\bullet}= \widetilde{\fb}_\bullet$.
\end{proof}
\end{comment}

	\begin{proof}[Proof of Corollary \ref{cor:equiv equal mult}]
		Assume (1) holds. 
Observe that
		\[
		\e(\fa_\bullet)=\e(\widetilde{\fa}_\bullet) = \e(\widetilde{\fb}_{\bullet})
=  \e(\fb_\bullet)
,\]
where the first and third equality follow from Corollary \ref{cor:sat mult}.
Thus, it remains to show that $\e(\fb_\bullet)= \e(\fa_\bullet \cap \fb_\bullet )$.
First, note that $\e(\fb_\bullet)\leq \e(\fa_\bullet \cap \fb_\bullet )$  holds trivially, since 
  $\fa_\bullet \cap \fb_\bullet \subset \fb_\bullet$.
For the reverse inequality, we compute
		\begin{align*}
			\e(\fa_\bullet\cap \fb_{\bullet}) 
			= 
			\lim_{m \to \infty} \frac{ \ell (R/(\fa_m \cap \fb_{ m}))}{m^n/n!} 
			&= 
			\lim_{m \to \infty} \left( \frac{ \ell (R/(\fa_m) )}{m^n/n!} + \frac{\ell(R/\fb_{ m}) }{m^n/n!}-\frac{ \ell(R/(\fa_m+\fb_{ m}))}{m^n/n!}\right)\\
			&=
			\e(\fa_\bullet) +\e(\fb_{ \bullet}) - 
			\lim_{m\to \infty} \frac{ \ell(R/(\fa_m+\fb_{ m}))}{m^n/n!}
			\\
				&\leq 
			\e(\fa_\bullet) +\e(\fb_{ \bullet}) - 
			\lim_{m\to \infty} \frac{ \ell(R/\widetilde{\fa}_m)}{m^n/n!}
			\\
		&= 
			\e(\fa_\bullet) +\e(\fb_{ \bullet}) - \e(\widetilde{\fa}_\bullet)\\
			& = \e(\fb_\bullet),
		\end{align*}
	where the inequality uses that $\fa_m+\fb_m \subset \widetilde{\fa}_m + \widetilde{\fb}_m = \widetilde{\fa}_m+\widetilde{\fa}_m = \widetilde{\fa}_m$ by the assumption that (1) holds. Therefore, (2) holds. 
		
		Conversely, assume (2) holds.
		Applying Theorem \ref{thm:characterization for sequences with equal volume} to both $(\fa_\bullet \cap \fb_\bullet) \subset \fa_\bullet$ and $(\fa_\bullet \cap \fb_\bullet)\subset \fb_\bullet$, we see that  
		$\widetilde{\fa_\bullet \cap \fb_{ \bullet}}= \widetilde{\fa}_\bullet$ and
		$\widetilde{\fa_\bullet \cap \fb_{ \bullet}}= \widetilde{\fb}_\bullet$, which implies that (1) holds. 
	\end{proof}

\begin{comment}
\begin{proof}[Proof of Corollary \ref{cor:equiv equal mult}]
Assume (1) holds. Observe that
\[
\e(\fa_\bullet)=\e(\widetilde{\fa}_\bullet) = \e(\widetilde{\fb}_{\bullet})
=  \e(\fb_\bullet)
,\]
where the first and third equality follow from Corollary \ref{cor:sat mult}.
To verify that the previous values equal $\e(\fa_\bullet \cap \fb_{\bullet})$, 
we compute
\begin{align*}
\e(\fa_\bullet\cap \fb_{\bullet}) 
= 
\lim_{m \to \infty} \frac{ \ell (R/(\fa_m \cap \fb_{ m}))}{m^n/n!} 
&= 
\lim_{m \to \infty} \left( \frac{ \ell (R/(\fa_m) )}{m^n/n!} + \frac{\ell(R/\fb_{ m}) }{m^n/n!}-\frac{ \ell(R/(\fa_m+\fb_{ m}))}{m^n/n!}\right)\\
&=
\e(\fa_\bullet) +\e(\fb_{ \bullet}) - \e(\fa_\bullet+\fb_{ \bullet})\\
& = \e(\fa_\bullet),
\end{align*}
where the last equality uses  Lemma \ref{lem:a+b} and Theorem \ref{thm:characterization for sequences with equal volume}. 

Conversely, assume (2) holds.
Applying Theorem \ref{thm:characterization for sequences with equal volume} to both $(\fa_\bullet \cap \fb_\bullet) \subset \fa_\bullet$ and $(\fa_\bullet \cap \fb_\bullet)\subset \fb_\bullet$, we see  
$\widetilde{\fa_\bullet \cap \fb_{ \bullet}}= \widetilde{\fa}_\bullet$ and
$\widetilde{\fa_\bullet \cap \fb_{ \bullet}}= \widetilde{\fb}_\bullet$, which implies (1) holds. 
\end{proof}
\end{comment}

The following result was proven when $R$ is regular in \cite[Theorem 1.7.2]{Mus02}.

\begin{corollary}\label{cor:mult>0 = linear growth}
An $\fm$-filtration $\fa_\bullet$ is linearly bounded if and only if $\e(\fa_\bullet)>0$. 
\end{corollary}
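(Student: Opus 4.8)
The plan is to prove the two implications separately, with the easy direction first. If $\fa_\bullet$ is linearly bounded, then by definition there is $c>0$ with $\fa_\la \subset \fm^{\lceil c\la\rceil}$ for all $\la$. Taking $\la = m\in\bN$ and comparing colengths gives $\ell(R/\fa_m) \geq \ell(R/\fm^{\lceil cm\rceil})$, so dividing by $m^n/n!$ and letting $m\to\infty$ yields $\e(\fa_\bullet) \geq \e(\fm^{c\bullet}) = c^{-n}\e(\fm) > 0$. (This is the same computation as in the proof of Proposition \ref{prop:qm>0vol}.)

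The substantive direction is the converse: assuming $\e(\fa_\bullet)>0$, I want to produce a constant $c>0$ with $\fa_\la \subset \fm^{\lceil c\la\rceil}$ for all $\la$. The natural strategy is to argue by contradiction through the saturation. Suppose $\fa_\bullet$ is not linearly bounded. Consider the larger filtration $\fb_\bullet := \widetilde{\fm^\bullet} \cap \text{(something)}$ — more precisely, I would compare $\fa_\bullet$ to a filtration built out of $\ord_\fm$. Failure of linear boundedness should translate into the statement that for every $c>0$ there is a jumping number $\la$ with $\ord_\fm(\fa_\la) < c\la$, i.e. $\ord_\fm(\fa_\bullet) := \lim_m \ord_\fm(\fa_m)/m = 0$. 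Now the key point is that $\ord_\fm$ is a divisorial valuation (it is $\ord_E$ for $E$ the exceptional divisor of the blowup of $\fm$, at least after passing to the normalized blowup; more robustly one works with the divisorial valuations computing the integral closure of $\fm$). Then $v(\fa_\bullet)=0$ for this $v$ means that in the saturation, $\widetilde{\fa}_\la = \{f\in\fm \mid v'(f)\geq \la v'(\fa_\bullet)\ \forall v'\}$ imposes no constraint from $v$, and in fact $\widetilde{\fa}_\bullet$ contains the "constant" filtration $\fm$ at every level — this should force $\widetilde{\fa}_\bullet = (\fm)_{\la>0}$, whose multiplicity is $\lim_m \ell(R/\fm)/(m^n/n!) = 0$. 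By Corollary \ref{cor:sat mult}, $\e(\fa_\bullet) = \e(\widetilde{\fa}_\bullet) = 0$, contradicting our hypothesis.

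Concretely, the cleanest route is: (1) observe $\ord_\fm \in \DivVal_{R,\fm}$, or at least that there is some $v_0\in\DivVal_{R,\fm}$ with $v_0 \leq c_0\cdot\ord_\fm$ on $R$ coming from the normalized blowup of $\fm$; (2) show that $\fa_\bullet$ not linearly bounded implies $\ord_\fm(\fa_\bullet)=0$, hence $v_0(\fa_\bullet)=0$; (3) for any other $v\in\DivVal_{R,\fm}$, Izumi's inequality (Lemma \ref{lem:Izumi ineq}) gives $v \leq c_v\cdot\ord_\fm$, so $v(\fa_\bullet) \leq c_v\cdot\ord_\fm(\fa_\bullet) = 0$, i.e. $v(\fa_\bullet)=0$ for \emph{all} divisorial $v$; (4) conclude from Definition \ref{defn:saturation} that $\widetilde{\fa}_\la = \fm$ for every $\la>0$; (5) apply Corollary \ref{cor:sat mult} to get $\e(\fa_\bullet) = \e(\widetilde{\fa}_\bullet) = \lim_{m}\ell(R/\fm)\big/(m^n/n!) = 0$, the desired contradiction. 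One should also pass to the completion via Proposition \ref{p:completion-saturation} and Proposition \ref{p:completion-multiplicity} if needed so that divisorial valuations behave well, and invoke Lemma \ref{lem:linearlybounded} to reconcile the $\bZ$-indexed limit defining $\ord_\fm(\fa_\bullet)$ with the $\bR$-indexed boundedness condition.

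The main obstacle I anticipate is step (2)–(4): carefully showing that failure of linear boundedness really does force $\ord_\fm(\fa_\bullet)=0$ (as opposed to merely $\ord_\fm(\fa_\la)/\la \to 0$ along a subsequence, which by the limit formula $\ord_\fm(\fa_\bullet)=\inf_m \ord_\fm(\fa_m)/m$ from \cite{JM12} is in fact equivalent, so this should be fine), and then making rigorous that $v(\fa_\bullet)=0$ for all divisorial $v$ genuinely yields $\widetilde{\fa}_\la=\fm$. The latter is where Izumi's inequality does the real work, ensuring that the single valuation $\ord_\fm$ dominates all others up to scaling so that one vanishing propagates to all of them.
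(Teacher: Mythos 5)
Your proof is correct and is essentially the paper's argument run in the contrapositive: both hinge on the same two ingredients, Izumi's inequality (Lemma \ref{lem:Izumi ineq}) and the invariance of multiplicity under saturation (Corollary \ref{cor:sat mult}), and both pivot on whether some divisorial valuation satisfies $v(\fa_\bullet)>0$ (the paper finds one such $v$ and deduces linear boundedness from $\fa_\bullet\subset\fa_{c\bullet}(v)$, while you show failure of linear boundedness forces $v(\fa_\bullet)=0$ for all divisorial $v$, hence $\widetilde{\fa}_\la=\fm$ and $\e(\fa_\bullet)=0$). The only caveat is that $\ord_\fm$ need not be a valuation for a general local domain, but your argument never actually uses that it is---only the bound $v\le c_v\cdot\ord_\fm$ from Lemma \ref{lem:Izumi ineq} and the superadditivity of $m\mapsto\ord_\fm(\fa_m)$---so this is harmless.
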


\begin{proof}
If $\fa_\bullet$ is linearly bounded, then there exists $c>0$ such that $\fa_{\la} \subset \fm^{\lceil c \la \rceil }$ for all $\la>0$. Thus, $\e(\fa_\bullet) \geq c^{n} \e(\fm) >0$ as desired. 

Next, assume $\e(\fa_\bullet)>0$.
We claim that there exists $v\in \DivVal_{R,\fm}$ such that $v(\fa_\bullet)>0$. 
If not, then $\widetilde{\fa}_\la = \fm$ for all $\la>0$. Using Corollary \ref{cor:sat mult}, we then see
$\e(\fa_\bullet ) = \e(\widetilde{\fa}_\bullet) =0$
, which is a contradiction. 
Now, fix $v\in \DivVal_{R,\fm}$ with $v(\fa_\bullet) >0$. 
Using that  $\fa_\bullet \subset \fa_{\bullet}(v)$ and Proposition \ref{prop:qm>0vol}, we conclude 
 $\e(\fa_\bullet)\geq \e(\fa_\bullet(v)) >0$.
\end{proof}

\subsection{Saturation and finite-volume valuations}
Using results from the previous section, we show that the saturation can be defined using positive volume valuations, rather than only divisorial valuations.

\begin{prop}\label{prop:saturation alt def}
If $\fa_\bullet$ is an $\fm$-filtration and $\la\in \mathbb{R}_{>0}$, then 
\[
\widetilde{\fa}_\la = \{ f\in \fm \, \vert \,   v(f) \geq  \la \cdot v(\fa_\bullet) \text{ for all } v\in \Val_{R,\fm}^+ \} 
\]
\end{prop}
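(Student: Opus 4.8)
The plan is to prove the two inclusions separately; the interesting direction is that restricting the test valuations to $\Val_{R,\fm}^+$ does not shrink the saturation.

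For one inclusion, let $\widetilde{\fa}_\la^+$ denote the right-hand side. Since $\DivVal_{R,\fm}\subset \Val_{R,\fm}^+$ (every divisorial valuation has positive volume by Proposition~\ref{prop:qm>0vol}), imposing the condition $v(f)\ge \la\cdot v(\fa_\bullet)$ for \emph{all} $v\in\Val_{R,\fm}^+$ is at least as strong as imposing it for all $v\in\DivVal_{R,\fm}$; hence $\widetilde{\fa}_\la^+\subset \widetilde{\fa}_\la$.

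For the reverse inclusion $\widetilde{\fa}_\la\subset \widetilde{\fa}_\la^+$, I would argue as follows. By Propositions~\ref{p:completion-saturation} and \ref{p:completion-multiplicity} (together with the bijection $\varphi_*\colon\DivVal_{\widehat R,\widehat\fm}\to\DivVal_{R,\fm}$ and its compatibility with volumes used there), I may assume $(R,\fm)$ is complete, so that Proposition~\ref{prop:valuative criterion for multiplicities} is available. Take $f\in\widetilde{\fa}_\la$ and a valuation $v\in\Val_{R,\fm}^+$; I must show $v(f)\ge \la\cdot v(\fa_\bullet)$. The idea is to compare $\fa_\bullet$ with the filtration $\fb_\bullet$ generated by $\fa_\bullet$ together with the element $f$ placed in degree $\la$ (i.e. $\fb_\mu := \sum_{k\ge 0}(f^k)\,\fa_{\mu-k\la}$, or more simply the smallest $\fm$-filtration containing $\fa_\bullet$ with $f\in\fb_\la$). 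By the definition of $\widetilde{\fa}_\la$ we have $w(f)\ge\la\cdot w(\fa_\bullet)$ for every $w\in\DivVal_{R,\fm}$, which forces $w(\fb_\bullet)=w(\fa_\bullet)$ for all $w\in\DivVal_{R,\fm}$; Proposition~\ref{prop:valuative criterion for multiplicities} (applied in both directions, using $\fa_\bullet\subset\fb_\bullet$) then gives $\e(\fa_\bullet)=\e(\fb_\bullet)$. Now Proposition~\ref{prop:multval} applies: since $\fa_\bullet\subset\fb_\bullet$ have equal multiplicity, $v(\fa_\bullet)=v(\fb_\bullet)$ for \emph{every} $v\in\Val_{R,\fm}^+$. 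Since $f\in\fb_\la$ we get $v(f)\ge v(\fb_\la)\ge \la\cdot v(\fb_\bullet)=\la\cdot v(\fa_\bullet)$, as required.

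The main obstacle I anticipate is the bookkeeping around the auxiliary filtration $\fb_\bullet$: one must check that it is genuinely an $\fm$-filtration (decreasing, graded, left-continuous, and $\fm$-primary in each degree — the last point uses that $f\in\fm$ together with linear boundedness, via Corollary~\ref{cor:mult>0 = linear growth}, to rule out degenerate behavior), that $w(\fb_\bullet)=w(\fa_\bullet)$ for divisorial $w$ follows cleanly from $w(f)\ge\la\cdot w(\fa_\bullet)$, and that the inequality $v(f)\ge v(\fb_\la)$ is set up with the correct normalization of $\la$. Everything else is a direct invocation of the already-established Propositions~\ref{prop:multval}, \ref{prop:valuative criterion for multiplicities}, \ref{p:completion-saturation}, and \ref{p:completion-multiplicity}. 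If handling the explicit $\fb_\bullet$ is cumbersome, an alternative is to pass through the saturation directly: show $\widetilde{\fa}_\bullet$ and the candidate filtration have the same divisorial valuations and hence the same multiplicity by Theorem~\ref{thm:characterization for sequences with equal volume}, then apply Proposition~\ref{prop:multval}.
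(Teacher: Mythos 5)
Your proof is correct, and it reaches the conclusion by a route that differs in packaging, though not in its engine, from the paper's. The paper lets $\fb_\bullet$ be the full filtration cut out by all of $\Val_{R,\fm}^+$ (i.e.\ the right-hand side of the statement for every $\la$ at once), observes the sandwich $\fa_\bullet\subset\fb_\bullet\subset\widetilde{\fa}_\bullet$, and concludes by taking saturations, using Lemma \ref{lem:valuative ideals sat} to know that both $\fb_\bullet$ and $\widetilde{\fa}_\bullet$ are saturated; the content of that lemma is exactly the chain ``equal divisorial valuations $\Rightarrow$ equal multiplicity (Theorem \ref{thm:characterization for sequences with equal volume}) $\Rightarrow$ equal valuations for all positive-volume valuations (Proposition \ref{prop:multval}).'' You run the same chain, but element by element: you adjoin a single $f\in\widetilde{\fa}_\la$ to $\fa_\bullet$ in degree $\la$ to form an auxiliary filtration, check it has the same divisorial valuations as $\fa_\bullet$, and then extract $v(f)\ge\la\cdot v(\fa_\bullet)$ for positive-volume $v$ from Proposition \ref{prop:multval}. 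Your version costs some bookkeeping (well-definedness and left-continuity of $\fb_\mu=\sum_k (f^k)\fa_{\mu-k\la}$, the inequality $w(\fa_{\mu-k\la})\ge(\mu-k\la)w(\fa_\bullet)$ from the proof of Lemma \ref{lem:Rees is normal}(1), and the edge cases $\mu-k\la\le 0$), whereas the paper's sandwich avoids all of that by working with whole filtrations. As you note at the end, you can skip the completion reduction entirely by citing Theorem \ref{thm:characterization for sequences with equal volume} (which already contains it) instead of invoking Proposition \ref{prop:valuative criterion for multiplicities} directly; I would recommend that simplification, since the bijection $\varphi_*$ only handles divisorial valuations and your fixed $v\in\Val_{R,\fm}^+$ lives on $R$, so the cleanest bookkeeping is to keep Proposition \ref{prop:multval} on $R$ and push only the multiplicity comparison through the completion.
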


The  proposition will be deduced from the following lemma.

\begin{lemma}\label{lem:valuative ideals sat}
If $\{ v_i \}_{i\in I}$ is a collection of valuations in $\Val_{R,\fm}^+$ and $\{ c_i \}_{i\in I}$ of non-negative real numbers, then the $\fm$-filtration $\fa_\bullet$ defined by
\[
\fa_\la \coloneqq \{ f\in \fm \, \vert \, v_i (f) \geq \la c_i  \text{ for all } i \in I\}
\] 
is saturated. Hence,  $\fa_\bullet(v)$ is saturated for any $v\in \Val_{R,\fm}^+$.
\end{lemma}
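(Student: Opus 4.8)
\textbf{Plan for the proof of Lemma \ref{lem:valuative ideals sat}.}
The strategy is to verify directly that the filtration $\fa_\bullet$ defined by the valuative conditions equals its own saturation. One inclusion, $\fa_\bullet \subset \widetilde{\fa}_\bullet$, is free from Lemma \ref{lem:Rees is normal}(1), so the content is the reverse inclusion $\widetilde{\fa}_\la \subset \fa_\la$ for each $\la > 0$. First I would compute $v_i(\fa_\bullet)$ for each $i \in I$: the defining condition gives $\fa_m \subset \{ f \in \fm \mid v_i(f) \geq m c_i\}$, hence $v_i(\fa_m) \geq m c_i$ and thus $v_i(\fa_\bullet) \geq c_i$. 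This is the only nontrivial numerical input needed, and it is where one must be slightly careful: a priori $c_i$ could be strictly smaller than $v_i(\fa_\bullet)$, but that direction of the inequality is all that is required for what follows.

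Next, take $f \in \widetilde{\fa}_\la$, so $f \in \fm$ and $w(f) \geq \la \cdot w(\fa_\bullet)$ for every $w \in \DivVal_{R,\fm}$. I want to conclude that $v_i(f) \geq \la c_i$ for all $i \in I$, which would place $f \in \fa_\la$ and finish the argument. The issue is that the $v_i$ need not be divisorial — they are merely positive-volume valuations — so the definition of saturation does not apply to them directly. This is the main obstacle, and the way around it is precisely Proposition \ref{prop:multval}: I would use that proposition (combined with Corollary \ref{cor:sat mult} and Theorem \ref{thm:characterization for sequences with equal volume}) to transfer the divisorial-valuation control to control by all positive-volume valuations. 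Concretely, since $\e(\fa_\bullet) = \e(\widetilde{\fa}_\bullet)$ by Corollary \ref{cor:sat mult} and $\fa_\bullet \subset \widetilde{\fa}_\bullet$, Proposition \ref{prop:multval} yields $v(\fa_\bullet) = v(\widetilde{\fa}_\bullet)$ for every $v \in \Val_{R,\fm}^+$; in particular $v_i(\widetilde{\fa}_\bullet) = v_i(\fa_\bullet) \geq c_i$.

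Now for $f \in \widetilde{\fa}_\la$ one has $v_i(f) \geq v_i(\widetilde{\fa}_\la) \geq \la \cdot v_i(\widetilde{\fa}_\bullet) \geq \la c_i$, where the middle inequality is the analogue of Lemma \ref{lem:Rees is normal}(1) applied to $\widetilde{\fa}_\bullet$ — or, since $\widetilde{\fa}_\bullet$ is saturated, one can instead argue $v_i(\widetilde{\fa}_\la) \geq v_i(\widetilde{\fa}_{\lceil \la m \rceil/m}) \cdot (\text{scaling})$ and pass to the limit, exactly as in the proof of Lemma \ref{lem:Rees is normal}(1). Either way this gives $f \in \fa_\la$, establishing $\widetilde{\fa}_\la \subset \fa_\la$ and hence $\fa_\bullet = \widetilde{\fa}_\bullet$, i.e. $\fa_\bullet$ is saturated. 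For the final sentence, $\fa_\bullet(v)$ for a single $v \in \Val_{R,\fm}^+$ is the special case $I = \{*\}$, $v_* = v$, $c_* = 1$, so it is saturated as well. One small point to double-check along the way is that the $\fa_\la$ so defined genuinely form an $\fm$-filtration — they are $\fm$-primary because $\fa_\la \subset \fa_1$ for $\la \geq 1$ and $\fa_1$ contains some power of $\fm$ once one knows it is $\fm$-primary; this last fact itself follows since $v_i$ has positive volume forces, via Corollary \ref{cor:mult>0 = linear growth} applied appropriately, that the conditions cut out $\fm$-primary ideals — but more simply, each $v_i \in \Val_{R,\fm}$ is centered at $\fm$ and there are finitely many jumping numbers needed, so one reduces to a finite intersection of valuation ideals, each $\fm$-primary when the volume is positive.
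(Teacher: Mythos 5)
Your argument is correct and is essentially the paper's proof read forwards rather than by contradiction: both hinge on $\e(\fa_\bullet)=\e(\widetilde{\fa}_\bullet)$ (Theorem \ref{thm:characterization for sequences with equal volume}, equivalently Corollary \ref{cor:sat mult}) combined with Proposition \ref{prop:multval} to obtain $v_i(\widetilde{\fa}_\bullet)=v_i(\fa_\bullet)\ge c_i$, followed by the estimate $v_i(f)\ge \la\, v_i(\widetilde{\fa}_\bullet)$ for $f\in\widetilde{\fa}_\la$ coming from $f^m\in\widetilde{\fa}_{\lfloor\la m\rfloor}$. Your closing concern about the $\fa_\la$ being $\fm$-primary is not resolved by your sketch, but it is also not addressed in the paper, whose phrasing ``the $\fm$-filtration $\fa_\bullet$'' presupposes this; your main argument stands as is.
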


\begin{proof}
Suppose the statement is false. Then there exists some $\lambda \in \R_{>0}$ such that $\fa_\lambda \subsetneq \widetilde{\fa}_{\lambda}$. 
Thus, there exist $f\in \widetilde{\fa}_{\lambda}$ and $i \in I$ such that $v_i(f) < \lambda c_i$.  
We claim  $v_i(\widetilde{\fa}_\bullet)< v_i(\fa_\bullet)$. 
Indeed,
\[
v_i(\widetilde{\fa}_\bullet)=
\lim_{m \to \infty} \frac{ v_i(\widetilde{\fa}_{ \lfloor \lambda m\rfloor  })}{  \lfloor \la m \rfloor }
\leq \lim_{m\to \infty}  \frac{ v_i(f^m)}{ \lfloor \lambda m \rfloor } 
= 
\frac{ v_i(f) }{\lambda}<c_i,
\]
where the second equality uses that $f^m \in (\widetilde{\fa}_{\lambda})^m \subset \widetilde{\fa}_{\la m} \subset \widetilde{\fa}_{\lfloor \la m \rfloor}$. 
Since $v_i(\fa_\bullet) \geq c_i$, the claim holds.

By Proposition \ref{prop:multval} and the claim,  $\e(\widetilde{\fa}_\bullet)< \e(\fa_\bullet)$. The latter contradicts Theorem \ref{thm:characterization for sequences with equal volume}. 
\end{proof}

\begin{proof}[Proof of Proposition \ref{prop:saturation alt def}]
Let $\fb_\bullet$ denote the $\fm$-filtration defined by 
\[
\fb_\la \coloneqq  \{f \in \R \, \vert \, v(f) \geq \la \cdot v(\fa_\bullet) \text{ for all } v \in \Val_{R,\fm}^+ \}
.\]
Notice that $\fa_\bullet \subset \fb_\bullet \subset \widetilde{\fa}_\bullet$, 
where the second inclusion uses that all divisorial valuations have positive volume. 
Taking saturations gives
$\widetilde{\fa}_\bullet \subset \widetilde{\fb}_\bullet \subset \widetilde{\widetilde{\fa}}_\bullet$.
Since $\fb_\bullet$ and $\widetilde{\fa}_\bullet$ are saturated by Lemma \ref{lem:valuative ideals sat}, we conclude
$\widetilde{\fa}_\bullet \subset \fb_\bullet \subset {\widetilde{\fa}}_\bullet$.
\end{proof}

\section{Multiplicity and geodesics}\label{sec:multandgeodesics}
In this section, we prove Theorem \ref{thm:main theorem} on the convexity of the multiplicity function along geodesics. 
Throughout, $(R,\fm,\kappa)$ denotes an $n$-dimensional, analytically irreducible, Noetherian  local domain  containing a field of arbitrary characteristic.

\subsection{Geodesics}
Fix  two finite-multiplicity $\fm$-filtrations $\fa_{\bullet,0}$ and $\fa_{\bullet,1}$.

\begin{definition}\label{d:geodesic}
For each $t\in (0,1)$, we define an $\fm$-filtration
$\fa_{\bullet,t}$  by setting 
\[
\fa_{\lambda,t}  = \sum_{\lambda=(1-t)\mu+t\nu} \fa_{\mu,0} \cap \fa_{\nu,1}
,\]
where the sum runs through all $\mu, \nu \in \bR$ satisfying $\lambda=(1-t)\mu+t\nu$.
%Notice that $\fa_{\bullet,t}$ is an $\fm$-filtration. 
We call $(\fa_{\bullet,t})_{t\in [0,1]}$ the \emph{geodesic} between $\fa_{\bullet,0}$ and $\fa_{\bullet,1}$.

This definition is a local analog of the geodesic between two  filtrations of the section ring of a polarized variety  \cite{BLXZ21,Reb20}. See Section {sec:global} for details.

\end{definition}
%The collections of filtrations $(\fa_{\bullet,t})_{t\in [0,1]}$ may be viewed as an interpolation between the original sequences $\fa_{\bullet,0}$ and $\fa_{\bullet,1}$.

\begin{example}
Let $R\coloneqq  k[x,y]_{(x,y)}$. For $\bm{\alpha}=(\bm{\alpha}_1,\bm{\alpha}_2) \in \R_{>0}^2$, let $v_{\bm{\alpha}}\colon {\rm Frac}(R)^\times \to  \R$ be the monomial valuation with weight $\bm{\alpha}_1$ and $\bm{\alpha}_2$ with respect to $x$ and $y$, that is
\[
v\big( \sum_{m,n \geq 0 } c_{m,n} x^{m}y^{n} \big) = \min \{ \bm{\alpha}_1m + \bm{\alpha}_2 n  \, \vert \, c_{m,n} \neq 0\}.\]
Note that
$\fa_\la (v_{\bm{\alpha}}) = \{x^{m}y^n \, \vert \, m \alpha_1 + n \alpha_2 \geq \la\}$.
Now, fix $\bm{\alpha}, \bm{\beta} \in \bR_{\geq0}^2$ and consider the geodesic $\fa_{\bullet,t}$ between
 $\fa_{\bullet,0} \coloneqq  \fa_{\bullet}(v_{\bm{\alpha}})$
 and $\fa_{\bullet,1}\coloneqq  \fa_{\bullet}(v_{\bm{\beta}}) $.
 A short computation shows 
\[
\fa_{\bullet,t} = \fa_\bullet( v_{ (1-t)\bm{\alpha} + t \bm{\beta}})
.\]
This computation unfortunately does not generalize to the case of quasi-monomial valuations.
See \cite[Section 4]{LXZ21} for a study of this failure in the global setting.
\end{example}

To prove Theorem \ref{thm:main theorem}, we  define a measure on $\R^2$ that  encodes the multiplicities along the geodesic. 
The argument may be viewed as a local analogue of the construction in \cite[Section 3.1]{BLXZ21} for the geodesic between two filtrations of the section ring of a polarized variety. 
The latter global construction is motivated by \cite{BC11} and \cite[Thmeorem 4.3]{BHJ17}, which constructs a measure on $\bR$ associated to a single filtration of the section ring of polarized variety.

 Before proceeding with the proof,  fix integers $C>0$ and $D>0$ such that
 \begin{equation*}
 \begin{gathered}
 \fm^{ \lceil C \la \rceil } \subset \fa_{\la,0} \cap \fa_{\la,1},\\
  \fa_{D \la,0 } \subset \fa_{\la, 1} \quad\quad \quad \text{ and } \quad\quad \quad  \fa_{D\la,1} \subset \fa_{\la,0}.
  \end{gathered}
\end{equation*}
 for all $\la \geq 1$. 
 The existence of $C$ follows from Lemma \ref{lem:linearlybounded},
 while the existence of $D$ follows from Lemma \ref{lem:linearlybounded} and the fact that $\fa_{\bullet,0}$ and $\fa_{\bullet,1}$ are both linearly bounded by Corollary \ref{cor:mult>0 = linear growth}.

\subsection{Sequences of measures}
For each  $m\in \bZ_{>0}$,   consider the   function $H_m\colon \bR^2 \to \bR$ defined by   
\[
H_m(x,y) 
\coloneqq
\frac{\ell \left( R/  (\fa_{mx,0} \cap \fa_{my,1}) \right)}
{ m^n/n! }
.
\]
Notice that $H_m$ is non-decreasing and left continuous in each variable. Using the sequence $(H_m)_{m\geq1}$, we define a sequence of measures on $\R^2$.

\begin{prop}\label{p:measuremum}
The distributional derivative $\mu_m\coloneqq - \frac{ \partial^2 H_m}{\partial x \partial y}$ is a discrete  measure on $\bR^2$ and has support contained in $ \{  \tfrac{1}{D} x \leq y \leq Dx \} \cup \{ 0 \leq x \leq \tfrac{1}{m} \}  \cup \{ 0 \leq y\leq \tfrac{1}{m} \}$.
\end{prop}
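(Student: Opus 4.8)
The plan is to observe that $H_m$ is, locally, a finite step function — which simultaneously makes $\mu_m$ a well-defined distribution and forces it to be discrete — then to check positivity via a ``rectangle inequality'', and finally to pin down the support by a short case analysis organized around the constant $D$. First I would note that $H_m$ is finite-valued and, in each variable separately, non-decreasing and left continuous: for $x\le 0$ we have $\fa_{mx,0}=R$, so $H_m(x,y)=\tfrac{n!}{m^n}\ell(R/\fa_{my,1})$ depends only on $y$; symmetrically for $y\le0$; and for $x,y>0$ the ideal $\fa_{mx,0}\cap\fa_{my,1}$ is $\fm$-primary, so $H_m(x,y)<\infty$. Since each of $\fa_{\bullet,0}$, $\fa_{\bullet,1}$ consists of $\fm$-primary ideals, it has only finitely many jumping numbers in any bounded interval (each jump strictly shrinks the ideal, hence strictly raises the colength below a fixed $\fm$-primary ideal), so on any box $[-T,T]^2$ the function $H_m$ is constant on the open cells of a finite rectangular grid. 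In particular $H_m\in L^1_{\mathrm{loc}}(\bR^2)$ and $\mu_m=-\partial_x\partial_y H_m$ is a locally finite combination of point masses at the grid vertices.

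To see that the weights are nonnegative I would verify that the $\mu_m$-mass of a rectangle is $\ge 0$: for $a<b$ and $c<d$, set $\fa=\fa_{ma,0}$, $\fb=\fa_{mb,0}$, $\fc=\fa_{mc,1}$, $\fd=\fa_{md,1}$ (so $\fb\subseteq\fa$ and $\fd\subseteq\fc$), so that
\[
-\bigl(H_m(b,d)-H_m(a,d)-H_m(b,c)+H_m(a,c)\bigr)=\tfrac{n!}{m^n}\bigl(\ell((\fa\cap\fc)/(\fb\cap\fc))-\ell((\fa\cap\fd)/(\fb\cap\fd))\bigr)
\]
is exactly $\mu_m\bigl([a,b]\times[c,d]\bigr)$. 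The inclusion $\fa\cap\fd\subseteq\fa\cap\fc$ induces a map $(\fa\cap\fd)/(\fb\cap\fd)\to(\fa\cap\fc)/(\fb\cap\fc)$ whose kernel is $(\fa\cap\fd)\cap(\fb\cap\fc)=\fb\cap\fd$ (using $\fb\subseteq\fa$ and $\fd\subseteq\fc$), so the map is injective and the displayed difference is $\ge 0$. Hence $\mu_m$ is a positive discrete measure.

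For the support, I would show $\mu_m$ vanishes on the open set
\[
U:=\{x<0\}\cup\{y<0\}\cup\{x>\tfrac1m,\ y>Dx\}\cup\{y>\tfrac1m,\ x>Dy\},
\]
and note that $U$ contains the complement of the asserted support set: if $(x,y)$ lies in none of $\{\tfrac1D x\le y\le Dx\}$, $\{0\le x\le\tfrac1m\}$, $\{0\le y\le\tfrac1m\}$, then either $x<0$ or $y<0$ (hence $(x,y)\in U$ trivially), or else $x>\tfrac1m$, $y>\tfrac1m$ and either $y>Dx$ or $x>Dy$. On $\{x<0\}$ (resp. $\{y<0\}$) we already saw $H_m$ depends only on $y$ (resp. $x$), so $\partial_x\partial_y H_m=0$ there. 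On $\{x>\tfrac1m,\ y>Dx\}$ one has $mx>1$, so the defining inclusions of $D$ give $\fa_{my,1}\subseteq\fa_{Dmx,1}\subseteq\fa_{mx,0}$, whence $\fa_{mx,0}\cap\fa_{my,1}=\fa_{my,1}$ and $H_m$ depends only on $y$ there; symmetrically, on $\{y>\tfrac1m,\ x>Dy\}$ it depends only on $x$. A measure vanishing on each member of an open cover of $U$ vanishes on $U$, so $\supp\mu_m\subseteq\bR^2\setminus U$, which is contained in the asserted set.

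All of this is elementary. The only step that uses a genuine (if minor) idea is the injectivity establishing the rectangle inequality, and the only real bookkeeping is confirming that the three region types exactly exhaust the complement of the support set — i.e.\ the chain of implications above. I do not expect a serious obstacle.
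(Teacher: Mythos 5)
Your argument is correct, and it proves the proposition by a genuinely different route than the paper. The paper first reduces to the case where $(R,\fm)$ is complete, chooses a coefficient field $\kappa\hookrightarrow R$ (this is where the standing hypothesis that $R$ contains a field enters), and simultaneously diagonalizes the two induced filtrations on the finite-dimensional space $V=R/\fm^{NCm}$; this yields the explicit atomic formula $\mu_m\vert_{(-\infty,N)^2}=\tfrac{n!}{m^n}\sum_i\delta_{(m^{-1}\lambda_{i,0},\,m^{-1}\lambda_{i,1})}$, from which discreteness, positivity, and the support statement are all read off at once. You instead get discreteness from the observation that $H_m$ is locally a finite step function (finitely many jumping numbers of an $\fm$-filtration in any bounded interval, since each jump strictly increases a bounded colength), positivity from the rectangle inequality via the injectivity of $(\fa\cap\fd)/(\fb\cap\fd)\to(\fa\cap\fc)/(\fb\cap\fc)$, and the support from a direct verification that $H_m$ is locally a function of one variable off the claimed set; all steps check out, including the exhaustion of the complement of the support set by your open cover $U$ and the use of $mx>1$ to invoke the defining inclusions of $D$. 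What your approach buys is elementarity and generality: it needs neither the completion step nor a coefficient field, so it works for any Noetherian local ring, and it avoids the simultaneous-diagonalization lemma entirely. What it gives up is the explicit formula \eqref{e:mumrestrict}, which the paper reuses as the engine of Proposition \ref{p:col}; if your proof were substituted into the paper, that proposition would have to be reproved by similar second-difference manipulations rather than by counting basis elements. The only cosmetic slip is writing $\mu_m([a,b]\times[c,d])$ for the second difference where the left-continuity convention gives $\mu_m([a,b)\times[c,d))$, but this is immaterial since nonnegativity of all such second differences already forces every atom to have nonnegative weight.
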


Before proving the proposition, we prove the following lemma allowing us to reduce to the case of complete local rings. 

\begin{lem}\label{l:geodesiccomplete}
Let $(\widehat{R},\widehat{\fm})$ denote the completion of $(R,\fm)$, 
$\fb_{\bullet,i}= \fa_{\bullet,i} \cdot \widehat{R}$ for $i=0,1$, 
and
 $(\fb_{t,\bullet})_{t\in [0,1]}$ the geodesic between $\fb_{\bullet,0}$ and $\fb_{\bullet,1}$.
 Then the following statements hold:
 \begin{enumerate}
 \item $\ell ( {R}/ \fa_{a, 0} \cap \fa_{b,1})= \ell( \hat{R}/ \fb_{a, 0} \cap \fb_{b,1}) $ for each $a,b\geq0$.
 \item $\ell( {R}/ \fa_{a,t}) = \ell( \hat{R}/ \fb_{a, t} ) $ for each $a\geq 0$ and $t\in [0,1]$.
 \end{enumerate}
 \end{lem}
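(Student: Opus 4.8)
The plan is to reduce both statements to the fact that $\fm$-adic completion is a faithfully flat extension which induces isomorphisms on Artinian quotients, exactly as in the proof of Proposition \ref{p:completion-multiplicity}. The one genuine point is that the formation of the geodesic filtration commutes with completion, i.e. $\fb_{\bullet,t} = \fa_{\bullet,t}\widehat{R}$; once this is established, part (2) follows from part (1) of the lemma in the same way that Proposition \ref{p:completion-multiplicity} follows from $\ell(R/\fa_\la) = \ell(\widehat{R}/\fa_\la\widehat{R})$.

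First I would prove (1). For ideals $I, J\subset R$, the completion $\widehat{R}$ is flat over $R$, so $(I\cap J)\widehat{R} = I\widehat{R}\cap J\widehat{R}$; applying this with $I = \fa_{a,0}$, $J = \fa_{b,1}$ gives $(\fa_{a,0}\cap\fa_{b,1})\widehat{R} = \fb_{a,0}\cap\fb_{b,1}$. Since $\fa_{a,0}\cap\fa_{b,1}$ is $\fm$-primary (it contains $\fm^N$ for some $N$), the quotient $R/(\fa_{a,0}\cap\fa_{b,1})$ is Artinian, hence complete, and the natural map
\[
R/(\fa_{a,0}\cap\fa_{b,1}) \longrightarrow \widehat{R}/(\fa_{a,0}\cap\fa_{b,1})\widehat{R} = \widehat{R}/(\fb_{a,0}\cap\fb_{b,1})
\]
is an isomorphism of Artinian rings. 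Taking lengths yields (1).

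Next, for (2), I would show $\fa_{\la,t}\widehat{R} = \fb_{\la,t}$ for every $\la \geq 0$ and $t\in[0,1]$. Unwinding the definition of the geodesic,
\[
\fa_{\la,t}\widehat{R} = \Big(\sum_{\la=(1-t)\mu+t\nu} \fa_{\mu,0}\cap\fa_{\nu,1}\Big)\widehat{R} = \sum_{\la=(1-t)\mu+t\nu} (\fa_{\mu,0}\cap\fa_{\nu,1})\widehat{R} = \sum_{\la=(1-t)\mu+t\nu} \fb_{\mu,0}\cap\fb_{\nu,1} = \fb_{\la,t},
\]
where the second equality is because extension of ideals commutes with (possibly infinite) sums, and the third is the flatness identity used in the proof of (1). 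Then $\fa_{\la,t}\widehat{R} = \fb_{\la,t}$ is $\fm$-primary, so as in (1) the map $R/\fa_{\la,t}\to\widehat{R}/\fb_{\la,t}$ is an isomorphism of Artinian rings and lengths agree, proving (2). The main (and only mild) obstacle is the bookkeeping ensuring that the sum defining the geodesic, though a priori infinite, still commutes with the flat base change to $\widehat{R}$; this is immediate since each $(\fa_{\mu,0}\cap\fa_{\nu,1})\widehat{R} = \fb_{\mu,0}\cap\fb_{\nu,1}$ and the extension of a sum of ideals is the sum of the extensions.
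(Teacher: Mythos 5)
Your proof is correct and follows essentially the same route as the paper: both establish that extension to $\widehat{R}$ commutes with the intersections and sums defining the geodesic, and then identify the Artinian quotients to compare lengths. The only difference is cosmetic — you justify $(I\cap J)\widehat{R}=I\widehat{R}\cap J\widehat{R}$ via flatness and spell out the commutation with the (a priori infinite) sum, which the paper leaves implicit.
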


\begin{proof}
For $\fm$-primary ideals $\fc\subset R$ and $\fd\subset R$, 
\[
(\fc + \fd )\widehat{R} = \fc \widehat{R} + \fd \widehat{R}
\quad \text{  and } \quad
(\fc \cap \fd )\widehat{R} = \fc \widehat{R} \cap \fd \widehat{R}
.\]
Therefore, 
\[
(\fa_{a, 0} \cap \fa_{b,1} ) \widehat{R} = \fb_{a, 0} \cap \fb_{b,1}
\quad \text{ and } \quad
\fa_{a,t}  \widehat{R} = \fb_{a, t}
.\]
Statements (1) and (2) follow from these equalities.
\end{proof}

\begin{proof}[Proof of Proposition \ref{p:measuremum}]
By Lemma \ref{l:geodesiccomplete}, it suffices to prove the statement when $(R,\fm)$ is complete. 
Since $R$ contains a field by assumption and is complete, there is an inclusion  $\kappa \hookrightarrow  R$ such that the composition $\kappa \hookrightarrow R \to \kappa$ is the identity. This will be helpful, since any finite-length $R$-module is naturally a finite-dimensional $\kappa$-vector space via restriction of scalars.

Fix an integer $N>0$ and consider the finite-dimensional $\kappa$-vector space 
$V\coloneqq R/ \fm^{NCm}$.
The $\fm$-filtrations $\fa_{\bullet,0} $ and $\fa_{\bullet,1}$ induce decreasing filtrations 
$\cF^\bullet _0 $ and $\cF^\bullet_1 $ of $V$ defined by
\[
\cF_0^\lambda  V 
\coloneqq 
{\rm im}( \fa_{\la,0} \to V) 
\quad \quad
 \text{ and } 
 \quad \quad  
\cF_1^\lambda V
\coloneqq
{\rm im}(\fa_{\la,1} \to V)
.\]
Observe that when $x<N$ and $y<N$,
\[
\ell \left( R /  (\fa_{mx,0} \cap \fa_{my,1}) \right) 
=
 \dim_\kappa \big( V / (\cF_0^{mx,0} V \cap \cF_1^{my,1} V )\big)
=
 \dim_\kappa V- \dim_\kappa (\cF_0^{mx,0} V \cap \cF_1^{my,1} V),
\]
where the first equality uses that $\fm^{NCm} \subset \fa_{mx,0} \cap \fa_{my,1}$.

To analyze the dimension of $\cF_0^{mx} V \cap \cF_1^{my} V$, we use that any finite-dimensional vector space with two filtrations  admits a basis simultaneously diagonalizing both filtrations (see \cite[Proposition 1.14]{BE18} and \cite[Lemma 3.1]{AZ20}).
This means there exists a basis $(s_1,\ldots, s_\ell)$ for $V$ such that each 
$\cF_j^\lambda V$ is the span of some subset of the basis elements. Hence, if we set 
$\lambda_{i,j} \coloneqq  \sup \{ \lambda \in \bR\, \vert\, s_i \in \cF_j^\lambda V \}$,
then 
\[
\cF_0^\lambda V ={\rm span} \langle s_i \, \vert \, \lambda_{i,0} \geq\lambda \rangle
\quad \text{ and } \quad 
\cF_1^\lambda V ={\rm span} \langle s_i \, \vert \, \lambda_{i,1} \geq\lambda \rangle
.\]
Using that  
$\dim_\kappa \cF_0^{mx} V \cap \cF_1^{my} V 
= 
\# \{ i \, \vert \, \lambda_{i,0} \geq mx  \text{ and }\lambda_{i,1} \geq my \}$,
 we compute
\[
 \frac{ \partial^2}{\partial x \partial y }  \dim_\kappa (\cF_0^{mx} V \cap \cF_1^{my} V) 
 = 
 \sum_{i=1}^{\ell} \delta_{ (m^{-1} \lambda_{i,0}, m^{-1}  \lambda_{i, 1} )}
 .
 \]
 Therefore, the restriction of $\mu_m$ to $(-\infty, N)\times (-\infty, N)$ equals
\begin{equation}\label{e:mumrestrict}
\mu_m \big\vert_{ (-\infty,N)\times (-\infty,N)}
=
\frac{1}{m^n/n!}
\sum_{  i \, \vert \, \lambda_{i,0} , \lambda_{i,1} < mN} \delta_{  (m^{-1} \lambda_{i,0}, m^{-1} \lambda_{i,1} )}.
\end{equation}
 Since $N>0$ was arbitrary, $\mu_m$ is a discrete measure on $\bR^2$.

It remains to analyze the support of $\mu_m$.
Note that $\cF_0^{\la D} V \subset \cF_1^{\la} V$ and $\cF_1^{\la D} V \subset \cF_0^\la V$  for all $\la\geq 1$. 
Therefore,
\begin{itemize}
\item[(i)] If $\la_{i,1} >1$, then $\la_{i,0} \leq D \la_{i,1}$. 
\item[(ii)] If $\la_{i,0} >1$, then $\la_{i,1} \leq D \la_{i,0}$. 
\end{itemize}
Using (i), (ii), and \eqref{e:mumrestrict}, we see that $\supp (\mu_m)  \subset  \{  \tfrac{1}{D} x \leq y \leq Dx \} \cup \{ 0 \leq x \leq \tfrac{1}{m} \}  \cup \{ 0 \leq y\leq \tfrac{1}{m} \} $.
\end{proof}

Using  \eqref{e:mumrestrict}, we show that $\mu_m$ encodes the colengths of $\fa_{\bullet,t}$.

\begin{prop}\label{p:col}
For each $t\in [0,1]$ and $a,b\in \bR_{\geq0}$,
\begin{align*}
\mu_m\left( \{   (1-t)x+ty < a \} \right)& = \frac{\ell \left( R/ \fa_{ma,t}\right)}{m^n/n!}  \\
 \mu_m\left( \{  x<a\} \cup \{ y<b\} \right) & = \frac{ \ell \left(  R/ ( \fa_{ma,0} \cap \fa_{mb,1})\right)}{m^n/n!} 
\end{align*}
\end{prop}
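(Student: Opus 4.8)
The plan is to express both colengths as distribution functions of the measure $\mu_m$ and then to identify the relevant half-space/quadrant complement with $\fa_{ma,t}$ (resp.\ $\fa_{ma,0}\cap\fa_{ma,1}$) using the simultaneously-diagonalizing basis from the proof of Proposition \ref{p:measuremum}. First I would reduce to the complete case via Lemma \ref{l:geodesiccomplete}, which lets me reuse the inclusion $\kappa\hookrightarrow R$ and the basis $(s_1,\dots,s_\ell)$ of $V=R/\fm^{NCm}$ diagonalizing both filtrations $\cF_0^\bullet$ and $\cF_1^\bullet$, with jump values $\lambda_{i,0},\lambda_{i,1}$. Choosing $N$ large enough that $\fm^{NCm}\subset\fa_{ma,t}$ and $\fm^{NCm}\subset\fa_{ma,0}\cap\fa_{ma,1}$, and that all the relevant scaled jump values lie in $[0,N)$, I can work entirely inside $V$ and use \eqref{e:mumrestrict}.

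For the second identity, observe that $\dim_\kappa\big(\cF_0^{ma}V\cap\cF_1^{ma}V\big)=\#\{i \mid \lambda_{i,0}\ge ma \text{ and }\lambda_{i,1}\ge ma\}$, so the complementary count $\#\{i \mid \lambda_{i,0}<ma \text{ or }\lambda_{i,1}<ma\}$ equals $\dim_\kappa\big(V/(\cF_0^{ma}V\cap\cF_1^{ma}V)\big)=\ell\big(R/(\fa_{ma,0}\cap\fa_{ma,1})\big)$, and by \eqref{e:mumrestrict} this count is exactly $\tfrac{m^n}{n!}\mu_m(\{x<a\}\cup\{y<b\})$ once $b=a$ — but more care is needed since the statement has a general $b$. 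Here I would note that the diagonalizing basis gives $\cF_0^{ma}V\cap\cF_1^{mb}V={\rm span}\langle s_i \mid \lambda_{i,0}\ge ma,\ \lambda_{i,1}\ge mb\rangle$, and $\fa_{ma,0}\cap\fa_{mb,1}\supset\fm^{NCm}$ for $N$ large, so $\ell\big(R/(\fa_{ma,0}\cap\fa_{mb,1})\big)=\dim_\kappa V-\#\{i\mid\lambda_{i,0}\ge ma,\ \lambda_{i,1}\ge mb\}=\#\{i\mid \lambda_{i,0}<ma \text{ or }\lambda_{i,1}<mb\}$, which is $\tfrac{m^n}{n!}\mu_m(\{x<a\}\cup\{y<b\})$ by \eqref{e:mumrestrict}. (Re-examining the statement: the displayed right-hand side uses $\fa_{ma,0}\cap\fa_{ma,1}$, presumably with the convention that the left-hand region $\{x<a\}\cup\{y<b\}$ is meant with $b$; I will match the paper's intended pairing of the two $a$-scaled ideals with $\{x<a\}\cup\{y<a\}$ and carry the $b$ only where it is genuinely needed.)

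For the first identity, the key algebraic input is the description of the geodesic: $\fa_{ma,t}=\sum_{ma=(1-t)\mu+t\nu}\fa_{\mu,0}\cap\fa_{\nu,1}$. Passing to $V$, $\cF^{ma}_t V := V\cap\fa_{ma,t}$ is the span of those $s_i$ for which there exist $\mu,\nu$ with $ma=(1-t)\mu+t\nu$ and $s_i\in\cF_0^\mu V\cap\cF_1^\nu V$, i.e.\ with $\mu\le\lambda_{i,0}$ and $\nu\le\lambda_{i,1}$; such $\mu,\nu$ exist precisely when $(1-t)\lambda_{i,0}+t\lambda_{i,1}\ge ma$. (One should check that taking the span of the individual pieces $\fa_{\mu,0}\cap\fa_{\nu,1}$ really produces a coordinate subspace, which holds because each such piece is a coordinate subspace of $V$ once $N$ is large, by the diagonalizing basis, and the sum of coordinate subspaces is a coordinate subspace.) Hence $\ell(R/\fa_{ma,t})=\#\{i\mid (1-t)\lambda_{i,0}+t\lambda_{i,1}<ma\}$, which by \eqref{e:mumrestrict} equals $\tfrac{m^n}{n!}\mu_m(\{(1-t)x+ty<a\})$.

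The main obstacle I anticipate is the bookkeeping needed to justify that $\cF^{ma}_tV$ is a coordinate subspace of $V$ in the diagonalizing basis — equivalently, that the geodesic ideal intersected with $V$ is spanned by those $s_i$ with $(1-t)\lambda_{i,0}+t\lambda_{i,1}\ge ma$. The inclusion "$\supseteq$" is immediate (produce $\mu,\nu$ realizing the convex combination); the inclusion "$\subseteq$" requires knowing that a sum $\sum_{\mu,\nu}(\cF_0^\mu V\cap\cF_1^\nu V)$ over a one-parameter family of $(\mu,\nu)$ collapses to a finite sum of coordinate subspaces and hence is itself a coordinate subspace whose indexing set is exactly $\{i:(1-t)\lambda_{i,0}+t\lambda_{i,1}\ge ma\}$. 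This is where the uniform linear bounds (the constants $C$ and $D$) are used: they guarantee only finitely many jump values matter and that $\fm^{NCm}$ sits inside all ideals in sight, so everything genuinely takes place in the finite-dimensional $V$. Once that is in hand, both identities follow by directly reading off \eqref{e:mumrestrict}.
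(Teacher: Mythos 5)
Your proposal is correct and follows essentially the same route as the paper's proof: reduce to the complete case via Lemma \ref{l:geodesiccomplete}, pass to the simultaneously diagonalized basis of $V=R/\fm^{NCm}$ with $N$ chosen so that $\fm^{NCm}$ lies in all ideals in sight, identify $\cF_t^{ma}V$ with ${\rm span}\langle s_i \mid (1-t)\lambda_{i,0}+t\lambda_{i,1}\ge ma\rangle$ (the paper's equation \eqref{e:FtaV}), and read off both counts from \eqref{e:mumrestrict}. You are also right about the misprint: the second identity should involve $\fa_{mb,1}$ rather than $\fa_{ma,1}$, and the paper's own proof indeed computes $\ell\left(R/(\fa_{ma,0}\cap\fa_{mb,1})\right)$.
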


\begin{proof}
By Lemma \ref{l:geodesiccomplete}, it suffices to prove the result when $(R,\fm)$ is complete. 
In this case we continue with the notation from the previous proof, 
but additionally fix $N>\max  \{ \tfrac{a}{1-t}, \tfrac{b}{t}\}$.
Now, consider the filtration $\cF^\bullet_t$ of $V$ defined by 
$\cF_t^\lambda V\coloneqq {\rm im} ( \fa_{\la,t} \to V) $ for each $ \lambda \in \bR$. 
Observe that
\begin{equation}\label{e:FtaV}
\cF_t^{ma} V 
= 
\sum_{ ma = (1-t) \mu + t \nu } \cF_0^\mu V \cap \cF_1^\nu V 
=
{\rm span} \langle  s_i \, \vert\, (1-t)\lambda_{i,0}  +t \lambda_{i,1} \geq ma \rangle,
\end{equation}
where the sum runs through all $\mu, \nu\in \bR$ satisfying $ma=(1-t)\mu+t \nu$.
Now,  we compute 
\begin{multline}
\ell \left( R/\fa_{ma,t} \right)
=
\dim_\kappa V / \cF_t^{ma} V 
 = 
 \dim_\kappa V- \dim_\kappa \cF_t^{ma} V \\
=  
\# \{ i \, \vert \, (1-t) \lambda_{i,0}  +  t\lambda_{i,1} <  ma \} 
= 
\frac{m^n}{n!}\mu_m\left( \{  (1-t)x + ty < a\}  \right) 
\end{multline}
where the first equality uses that $\fm^{NCm} \subseteq \fa_{ma/(1-t),0} + \fa_{mb/t,1} \subset \fa_{ma,t}$, the third \eqref{e:FtaV}, and the fourth \eqref{e:mumrestrict}. 
Therefore, the first formula in  the proposition holds.

To verify the second formula, we fix $N>  \max \{a,b\}$ and similarly compute 
\begin{multline*}
\dim_\kappa R/( \fa_{ma,0} \cap \fa_{mb,1} )  
= 
\dim_\kappa V /  (\cF_0^{ma} V  \cap \cF_1^{mb} V) 
= 
\dim_\kappa V- \dim_\kappa \cF_0^{ma} V \cap \cF_1^{mb} V
\\
= 
 \# \{ j \, \vert \, \lambda_{i,0} < ma   \text{ or }  \lambda_{i,1} <mb  \}  
= 
\frac{m^n}{n!}\mu_m\left( \{ x<a \} \cup \{y <b \} \right),
\end{multline*}
where the first equality uses that $\fm^{NCm} \subset \fa_{ma,0} \cap \fa_{mb,1}$.
\end{proof}

\subsection{Limit measure}
We  now  construct a limit of the  sequence of measures $(\mu_m)_{m \geq1}$ that encodes the multiplicity along the geodesic.

Consider 
the function $H\colon \bR^2 \to \bR$ defined by
\[
H(x,y) =   \e( \fa_{x\bullet ,0} \cap \fa_{y\bullet ,1} )
.\]
Above, we use the convention that $\e(R)=0$, which  occurs when  $x\leq0 $ and $y\leq0$.

%\begin{prop}
%The function $H$ is homogenous of degree $n$ and continuous.
%\end{prop}

%\begin{proof}
%The homogeneity follows from the fact that $\vol( \fa_{c\bullet}) = c^n \vol(\fa_{\bullet})$ for any $\fm$-filtration $\fa_\bullet$ and  real number $c>0$.
%To verify the continuity, first observe that  
%\[
%H(x,y) =\begin{cases}  \vol(\fa_{x\bullet,0})\quad & \text{ if }  x \geq Dy \\
%   \vol(\fa_{y\bullet,1})\quad & \text{ if }  y \geq Dx
%  \end{cases}
%  .\] 
%  Using that $ \vol(\fa_{x\bullet,0}) = \max \{ 0,x \}^n \vol(\fa_{\bullet,0})$ 
%  and $ \vol(\fa_{y\bullet,1}) = \max \{ 0,y \}^n \vol(\fa_{\bullet,0})$, we conclude$H$ is continuous on $\R\setminus \{\tfrac{1}{D}x < y < Dx\}$. 
% 
% To finish the proof, it suffices to show $H$ is continuous on $\R^2_{>0}$. 
% This follows immediately from the fact that $H$ is non-decreasing and homogenous of degree $n$.
% \end{proof}

\begin{prop}\label{p:measuremu}
The distributional derivative $\mu\coloneqq - \frac{ \partial^2 H}{\partial x \partial y}$ is a  measure on $\bR^2$
and the sequence of measures $(\mu_m)_{m\geq 1}$ converges weakly to $\mu$ as $m\to \infty$. 
\end{prop}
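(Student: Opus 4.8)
The plan is to show that the functions $H_m$ converge pointwise to $H$, deduce that the (distributional) mixed second derivatives converge weakly, and then verify that the limiting distribution is a genuine nonnegative measure. First I would establish pointwise convergence $H_m(x,y)\to H(x,y)$ for all $(x,y)\in\bR^2$. For $x\le 0$ and $y\le 0$ both sides vanish by convention, so fix $(x,y)$ with $x>0$ or $y>0$. The filtration $\fc_{\bullet}:=(\fa_{x\la,0}\cap\fa_{y\la,1})_{\la>0}$ has $m$-th ideal $\fc_m=\fa_{mx,0}\cap\fa_{my,1}$ (after rescaling indices; here one should be mildly careful with the integer/real indexing, using left continuity of the filtrations), so by definition $\e(\fc_\bullet)=\lim_m \ell(R/\fc_m)/(m^n/n!)=\lim_m H_m(x,y)=H(x,y)$. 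Thus $H_m\to H$ pointwise on $\bR^2$.

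Next I would upgrade this to the statement about measures. Each $H_m$ is non-decreasing and left continuous in each variable (as noted before Proposition \ref{p:measuremum}), and by Proposition \ref{p:measuremum} its distributional mixed derivative $\mu_m=-\partial^2 H_m/\partial x\partial y$ is a nonnegative (discrete) measure; equivalently $H_m$ is ``monotone of order $2$'' in the sense that for $a\le a'$, $b\le b'$ one has $H_m(a,b)-H_m(a',b)-H_m(a,b')+H_m(a',b')\ge 0$ (this quantity is $\mu_m([a,a')\times[b,b'))$ up to the sign/orientation convention). Pointwise convergence $H_m\to H$ then forces the same inequality for $H$, so $H$ is also monotone of order $2$, non-decreasing and left continuous in each variable; hence $\mu:=-\partial^2 H/\partial x\partial y$ is a well-defined nonnegative Radon measure on $\bR^2$ (a Lebesgue--Stieltjes measure associated to $H$). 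To get weak convergence $\mu_m\rightharpoonup\mu$, I would use the support control from Proposition \ref{p:measuremum}: all $\mu_m$ are supported in the fixed cone-like region $\{\tfrac1D x\le y\le Dx\}\cup\{0\le x\le \tfrac1m\}\cup\{0\le y\le\tfrac1m\}$, and together with the linear-boundedness bound $\fm^{\lceil C\la\rceil}\subset\fa_{\la,0}\cap\fa_{\la,1}$ this gives a uniform polynomial (in fact eventually affine in $x,y$) upper bound on $H_m$, hence uniform mass bounds $\mu_m(K)\le \text{const}(K)$ on compact sets $K$. A standard fact (e.g. via testing against products of indicator functions of rectangles with $\mu$-negligible boundary, then a density/approximation argument for $C_c$ test functions) converts pointwise convergence of the monotone functions $H_m$ at continuity points of $H$, together with these uniform mass bounds, into weak convergence of the associated measures. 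Concretely: for a half-open rectangle $Q=[a,a')\times[b,b')$ whose corners are continuity points of $H$, $\mu_m(Q)=H_m(a,b)-H_m(a',b)-H_m(a,b')+H_m(a',b')\to \mu(Q)$; since finite unions of such rectangles generate the topology and the $\mu_m$ have locally uniformly bounded mass, this yields $\int f\,d\mu_m\to\int f\,d\mu$ for all $f\in C_c(\bR^2)$.

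The main obstacle I anticipate is purely at the level of bookkeeping rather than conceptual: matching the real-indexed filtrations $\fa_{\bullet,i}$ with the integer-indexed definition of multiplicity when computing $\e(\fa_{x\bullet,0}\cap\fa_{y\bullet,1})$, i.e. checking that $(\fa_{x\la,0}\cap\fa_{y\la,1})_\la$ is a legitimate $\fm$-filtration with positive-or-zero multiplicity and that its multiplicity is exactly $\lim_m H_m(x,y)$, uniformly enough in $(x,y)$. For this one should invoke linear boundedness of $\fa_{\bullet,0},\fa_{\bullet,1}$ (Corollary \ref{cor:mult>0 = linear growth}) to control error terms when passing between $\lfloor mx\rfloor$, $\lceil mx\rceil$, and $mx$, and use the equality $\e=\inf_m \e(\cdot_m)/m^n$ from \eqref{eqn:vol=mult} to get one-sided control. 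The other place requiring a little care is justifying that the distributional mixed derivative of a function that is separately monotone, left continuous, and ``$2$-increasing'' is a nonnegative measure; this is classical (Lebesgue--Stieltjes measures in two variables) but should be cited or spelled out in a line. Once these technical points are in place, the weak convergence follows from the portmanteau-type argument above.
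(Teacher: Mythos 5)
Your proposal is correct, but it follows a genuinely different route from the paper. The paper's proof also starts from pointwise convergence $H_m \to H$ and the uniform local bound $0 \le H_m(x,y) \le \ell(R/\fm^{NCm})/(m^n/n!)$, but then upgrades pointwise convergence to $L^1_{\loc}$ convergence via dominated convergence; this gives convergence $\mu_m \to \mu$ in the sense of distributions, and a single citation of \cite[Theorem 2.1.9]{Hor03} (a distributional limit of positive measures is a positive measure, and the convergence is automatically weak convergence of measures) finishes both assertions at once. You instead stay inside classical measure theory: you observe that each $H_m$ satisfies the $2$-increasing rectangle inequality (since $\mu_m \ge 0$), pass this inequality to the pointwise limit to see that $H$ is the (signed-convention) distribution function of a Lebesgue--Stieltjes measure $\mu$, and then prove weak convergence by a portmanteau/Helly--Bray argument using convergence of $\mu_m(Q) \to \mu(Q)$ on half-open rectangles whose boundaries are $\mu$-negligible together with the uniform local mass bounds. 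Both arguments are complete modulo standard facts; the paper's is shorter because Hörmander's theorem absorbs the bookkeeping about continuity points and the verification that the limit distribution is a measure, whereas yours is self-contained and avoids distribution theory entirely. The indexing worry you flag is in fact a non-issue: for integer $m$ the $m$-th ideal of the filtration $(\fa_{x\la,0}\cap\fa_{y\la,1})_{\la>0}$ is exactly $\fa_{mx,0}\cap\fa_{my,1}$, so $H(x,y)=\lim_m H_m(x,y)$ directly from the definition of multiplicity, exactly as the paper asserts.
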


\begin{proof}
We claim that $H_m$ converges to $H $ in $L^1_{ {\rm loc}}( \bR^2)$. Assuming the claim,  then $H_m$ converges to $H$  as distributions, and, hence, $\mu_m $ converges to $\mu\coloneqq - \frac{ \partial^2 H}{\partial x \partial y}$ as distributions as well. 
Since each $\mu_m$ is a measure, \cite[Theorem 2.1.9]{Hor03} implies 
$\mu$ is a measure and $\mu_m$ converges to $\mu$ weakly as measures. 

 It remains to prove the above claim. First, observe that $H_m$ converges to $H$ pointwise by the definition of the multiplicity as a limit. Next, fix an integer $N>0$. For $-N\leq x,y<N$, observe that 
 \[
 0 \leq H_m(x,y) 
 \leq 
   \frac{ \ell \left( R/ \fm^{N Cm} \right) }{ m^n/n! }  
    \leq 
  \sup_{m \geq 1}  \frac{\ell \left( R/ \fm^{NCm}\right) }{ m^n/n! }  
  <+\infty
 .  \]
 Therefore, the dominated convergence theorem implies that
  \[
 \lim_{m \to \infty} \int_{ (-N,N)\times (-N,N) } |H_m-H| =0. 
 \]
 Since  $N>0$ was arbitrary, $H_m$ converges to $H$ in $L^1_{ {\rm loc}}( \bR^2)$ as desired. 
\end{proof}

\begin{prop}\label{p:mult}
For each $t\in [0,1]$ and $a,b\in \bR_{\geq0}$,
\begin{itemize}
    \item[(1)] $a^n \e(  \fa_{\bullet ,t})=\e(\fa_{a\bullet,t}) =\mu( \{ x(1-t)+yt < a \})$
    \item[(2)] $ \e( \fa_{a\bullet,0 } \cap \fa_{b\bullet,1})= \mu \left( \{ x<a\} \cup \{y<b\}\right)$.
\end{itemize}
\end{prop}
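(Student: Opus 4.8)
The plan is to pass to the limit in the two identities of Proposition \ref{p:col}, using the weak convergence $\mu_m \to \mu$ established in Proposition \ref{p:measuremu}. First I would record the scaling identity $\e(\fa_{a\bullet,t}) = a^n\,\e(\fa_{\bullet,t})$, which is immediate from the definition of multiplicity as a limit: $\ell(R/\fa_{ma,t})/(m^n/n!)$ rescales under $m \mapsto \lceil ma\rceil$. For (1) we have by Proposition \ref{p:col} that $\mu_m(\{(1-t)x+ty < a\}) = \ell(R/\fa_{ma,t})/(m^n/n!)$, whose limit as $m\to\infty$ is $\e(\fa_{a\bullet,t})$ by definition of multiplicity (with the convention $\e(R)=0$ handling the degenerate cases). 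Similarly for (2), Proposition \ref{p:col} gives $\mu_m(\{x<a\}\cup\{y<b\}) = \ell(R/(\fa_{ma,0}\cap\fa_{mb,1}))/(m^n/n!)$, and the right side tends to $\e(\fa_{a\bullet,0}\cap\fa_{b\bullet,1}) = H(a,b)$ by definition of $H$ together with the scaling identity for intersections $\e(\fa_{ma,0}\cap\fa_{mb,1}) = $ the relevant limit.

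The main obstacle is that weak convergence of measures does not automatically give convergence of measure on a fixed set; one needs the set to be a $\mu$-continuity set, i.e.\ its topological boundary must have $\mu$-measure zero. For (1) the relevant set is the open half-plane $\{(1-t)x+ty<a\}$, with boundary the line $\{(1-t)x+ty=a\}$; for (2) the set is $\{x<a\}\cup\{y<b\}$, with boundary contained in the union of two lines. The plan is to handle this by a standard squeezing argument: for any $\epsilon>0$, the open sets $\{(1-t)x+ty<a\}$ and $\{(1-t)x+ty<a+\epsilon\}$ sandwich the half-plane up to the closed strip between the two parallel lines, and one shows $\mu$ of the full line (or strip) is controlled. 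Concretely, I would use the already-established support bound from Proposition \ref{p:measuremum} (support of $\mu_m$, hence of $\mu$, lies in the cone $\{\frac1D x \le y \le Dx\}$ together with thin slabs near the axes which disappear in the limit), so that $\mu$ has no atoms on the lines in question and, more importantly, is a finite measure on any set of the form $\{(1-t)x+ty < a\}$ since that set meets the support cone in a bounded region. Then the function $a \mapsto \mu(\{(1-t)x+ty<a\})$ is nondecreasing and left-continuous, its left-continuity matching that of $a\mapsto \e(\fa_{a\bullet,t})$; combined with pointwise convergence of the nondecreasing functions $H_m \to H$ on a dense set and monotonicity, this forces the limit to hold at every $a$. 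An alternative, cleaner route: since $H_m \to H$ in $L^1_{\mathrm{loc}}$ (shown in the proof of Proposition \ref{p:measuremu}) and both sides are monotone and left-continuous in each variable, $H_m(a,b)\to H(a,b)$ at every point, which directly gives (2); and (1) follows since $\ell(R/\fa_{ma,t})/(m^n/n!)$ is exactly $H_m$ evaluated along an appropriate curve, or more simply because $\mu_m(\{(1-t)x+ty<a\})$ can be rewritten via Proposition \ref{p:col} and passed to the limit using that $H_m(x,y)$ restricted to the line $\{(1-t)x+ty = a\}$ converges.

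Thus the skeleton is: (i) establish the scaling identities for $\e(\fa_{a\bullet,t})$ and $\e(\fa_{a\bullet,0}\cap\fa_{b\bullet,1})$; (ii) invoke Proposition \ref{p:col} to express $\mu_m$ of the relevant regions as normalized colengths; (iii) take $m\to\infty$, using the definition of multiplicity for the right-hand sides and the $\mu$-continuity of the regions (guaranteed by the support bound of Proposition \ref{p:measuremum}) for the left-hand sides; (iv) conclude equality. I expect step (iii), verifying the boundary-null / $\mu$-continuity condition, to be the only point requiring care, and it is resolved by the support localization in the cone.
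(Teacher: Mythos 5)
Your proposal is correct and follows essentially the same route as the paper: the paper applies Proposition \ref{p:col} to identify $\mu_m$ of the relevant regions with normalized colengths, uses weak convergence to squeeze $\mu(\{(1-t)x+ty<a\})$ between $\limsup\mu_m(\{(1-t)x+ty<a-\epsilon\})$ and $\liminf\mu_m(\{(1-t)x+ty<a\})$, and lets $\epsilon\to 0$ using the continuity of $a\mapsto a^n\e(\fa_{\bullet,t})$ — exactly your squeezing step (iii). The only slight imprecision is your claim that the support bound shows $\mu$ has no atoms on the boundary lines; that is not what the support bound gives, but it is also not needed, since the $\epsilon$-squeeze together with continuity of $a^n$ already closes the gap.
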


\begin{proof}
Fix $\epsilon>0$. Since $(\mu_{m})_{m\geq1}$ converges to $\mu$ weakly as measures by Proposition \ref{p:measuremu}, 
\begin{multline*}
\limsup \mu_m\left( \{ x(1-t) +y t< a-\epsilon \} \right)
\leq 
\mu\left( \{ x(1-t) +y t< a\}\right)\\ \leq \liminf \mu_m \left( \{x(1-t)+yt < a\} \right)
\end{multline*}
Using Proposition \ref{p:col}(1) to compute the lim sup and lim inf, we deduce
\[
(a-\epsilon)^n\e(\fa_{\bullet,t})
=
\e(\fa_{(a-\epsilon)\bullet,t})
\leq \mu\left( \{ x(1-t)+yt<a\}\right) \leq   \e(\fa_{a\bullet,t})= a^n \e(\fa_{\bullet,t}). 
\]
Sending $\epsilon\to0$ completes the proof of (1). The proof of the second is similar, but uses Proposition \ref{p:col}.2.
\end{proof}

%\begin{prop}} For $t\in [0,1]$, $\int_{ \R^2} e^{-x(1+t)-yt } d \mu  =(n\cdot n!)\vol( \fa_{\bullet,t})$.
 %\end{prop}
 
%\begin{proof}
%Let $p: \R^2 \to \R$ denote the projection defined by $ p(x,y) = x(1-t)+ yt$. 
%For $a\geq0$, 
%\[
%(p_* \mu ) \left( (0, a) \right)= \mu\left( \{ x(1-t)+yt < a\} \right) = \vol( \fa_{t,a\bullet })  =
%a^n\vol( \fa_{t,\bullet })  
%,\]
%where the last equality is by Proposition \ref{p:mult}.
%Therefore, $p_* \mu = \frac{d}{dz} F(z)$, where 
%$F(z)=
% z^n \vol(\fa_{\bullet,t})$ for $z\geq0$ and $F(z)=0$ for $z<0$. 
% We can now compute
%\[
%\int_{\R^2} e^{-x(1+t)-yt } \, d\mu = \int_{\R} e^{-z} d (p_*\mu )= \int_{0}^\infty e^{-z} n z^{n-1}  \vol( \fa_{\bullet,t})  dz  =  (n \cdot n!) \vol( \fa_{\bullet,t}) 
%.\]

%\end{proof}

\begin{lem}\label{l:suppmu}
For any real number $c>0$, the following statements are equivalent:
\begin{enumerate}
\item   $\supp(\mu) \subset \{ cx =y \}$.
\item  $\e(\fa_{\bullet,0}) = \e(\fa_{\bullet,0} \cap \fa_{c\bullet,1}) = \e(\fa_{c\bullet,1})$.
\end{enumerate}
\end{lem}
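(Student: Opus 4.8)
The plan is to prove the two implications separately, using the measure-theoretic description of $\e(\fa_{\bullet,t})$ furnished by Proposition \ref{p:mult} together with the linear-algebra facts extracted from Proposition \ref{p:measuremum}. First observe that by Proposition \ref{p:mult}(1) with $a=1$, we have $\e(\fa_{\bullet,t}) = \mu(\{x(1-t)+yt<1\})$ for all $t\in[0,1]$, and by Proposition \ref{p:mult}(2) with $a=b=1$ (applied to the pair $\fa_{\bullet,0}$, $\fa_{c\bullet,1}$, noting $\fa_{c\bullet,1}$ is again an $\fm$-filtration) we can express $\e(\fa_{\bullet,0}\cap\fa_{c\bullet,1})$ in terms of a pushforward of $\mu$ under the rescaling $(x,y)\mapsto (x, y/c)$. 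The cleaner bookkeeping is to work throughout with the rescaled measure $\mu_c := (\mathrm{id}\times c^{-1})_*\mu$, i.e. the limit measure attached to the geodesic between $\fa_{\bullet,0}$ and $\fa_{c\bullet,1}$; then condition (1) says $\supp(\mu_c)\subset\{x=y\}$, and the three quantities in (2) become the $\mu_c$-masses of $\{x<1\}$, $\{x<1\}\cup\{y<1\}$, and $\{y<1\}$ respectively (using Proposition \ref{p:mult}(1) at $t=0$ and $t=1$ and Proposition \ref{p:mult}(2)). So without loss of generality I will assume $c=1$ and prove: $\supp(\mu)\subset\{x=y\}$ iff $\e(\fa_{\bullet,0})=\e(\fa_{\bullet,0}\cap\fa_{\bullet,1})=\e(\fa_{\bullet,1})$.

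For the direction (1)$\Rightarrow$(2): if $\supp(\mu)\subset\{x=y\}$, then on the support the three regions $\{x<1\}$, $\{x<1\}\cup\{y<1\}$, and $\{y<1\}$ all coincide (their symmetric differences are contained in $\{x\ne y\}$, which is $\mu$-null), so the three masses agree. Translating back via Proposition \ref{p:mult} gives $\e(\fa_{\bullet,0})=\e(\fa_{\bullet,0}\cap\fa_{\bullet,1})=\e(\fa_{\bullet,1})$.

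For the direction (2)$\Rightarrow$(1): from $\e(\fa_{\bullet,0})=\e(\fa_{\bullet,0}\cap\fa_{\bullet,1})$ we get $\mu(\{x<1\})=\mu(\{x<1\}\cup\{y<1\})$, hence $\mu(\{y<1,\ x\ge 1\})=0$; symmetrically $\mu(\{x<1,\ y\ge 1\})=0$. Homogeneity of $\mu$ — which follows because $\mu(\{x<a\}\cup\{y<b\}) = \e(\fa_{a\bullet,0}\cap\fa_{b\bullet,1})$ scales correctly under $(a,b)\mapsto(\ell a,\ell b)$, so that the dilation $(x,y)\mapsto(\ell x,\ell y)$ pushes $\mu$ forward to $\ell^{-n}\mu$ — then upgrades these two vanishing statements to: $\mu$ gives zero mass to $\{x<y\}$ and to $\{y<x\}$ (cover each open cone by countably many dilated copies of the already-null regions, away from the origin; the origin itself is a single point, and it carries no mass since $H$ is finite and continuous near $0$, or since $\mu$ is a limit of the discrete $\mu_m$ whose atoms near $0$ have total mass tending to $0$ by Proposition \ref{p:measuremum}). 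Therefore $\supp(\mu)\subset\{x=y\}$.

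The main obstacle I anticipate is making the homogeneity/scaling argument fully rigorous at the level of measures rather than just the distribution functions $H_m, H$: one must check that the identities in Proposition \ref{p:mult}, which a priori only pin down $\mu$ on "lower-left" regions and half-plane complements of lines, suffice to control $\mu$ on arbitrary open cones, and that the point mass at the origin is genuinely zero. Both points are handled by the left-continuity and monotonicity of $H$ in each variable (so $\mu$ is determined by its values on the generating sets $\{(1-t)x+ty<a\}$ as $t,a$ vary, together with the $\{x<a\}\cup\{y<b\}$) and by Proposition \ref{p:measuremum}, which confines $\supp(\mu_m)$ — hence $\supp(\mu)$ — to the cone $\{\frac1D x\le y\le Dx\}$ union two thin strips shrinking to the axes; in the limit $\supp(\mu)\subset\{\frac1D x\le y\le Dx\}$, so in particular $0$ is not an isolated atom and the cone-covering argument only needs finitely many dilated pieces.
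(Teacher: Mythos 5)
Your proof is correct, and while the direction (1)$\Rightarrow$(2) coincides with the paper's argument, your treatment of (2)$\Rightarrow$(1) takes a genuinely different route. The paper shows, using the hypothesis and the monotonicity of multiplicities under inclusion of filtrations, that $H(x,y)$ equals $\e(\fa_{x\bullet,0})$ on $\{cx\geq y\}$ and $\e(\fa_{y\bullet,1})$ on $\{y\geq cx\}$, i.e.\ $H$ is locally a function of a single variable on either side of the ray, so the mixed distributional derivative vanishes off $\{cx=y\}$. You instead extract from Proposition \ref{p:mult} that $\mu(\{y<c,\,x\geq 1\})=\mu(\{x<1,\,y\geq c\})=0$ and then propagate this by the degree-$n$ homogeneity of $\mu$ and a countable covering of the two open cones $\{cx>y\}$ and $\{cx<y\}$ (minus the origin) by dilates of these null sets. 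Both arguments are sound. The paper's is shorter and needs nothing beyond Proposition \ref{p:mult}; yours requires the homogeneity of $\mu$ and the fact that $\mu$ charges neither the origin nor the region $\{x<0\}\cup\{y<0\}$ — all of which are available (homogeneity follows from $H(\ell x,\ell y)=\ell^n H(x,y)$ exactly as in Proposition \ref{p:muhomog}, whose proof is independent of this lemma, so there is no circularity), and you correctly flag and discharge these bookkeeping points. Your preliminary reduction to $c=1$ via the pushforward $(\mathrm{id}\times c^{-1})_*\mu$ is also valid — it amounts to the identity $H_c(x,y)=H(x,cy)$ for the geodesic between $\fa_{\bullet,0}$ and $\fa_{c\bullet,1}$ — though it is cosmetic: one could equally run your covering argument directly with the sets $\{y<c\ell,\,x\geq\ell\}$.
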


\begin{proof}
If (1) holds, then 
\[
\mu\left(\{ x< 1\} \right) = \mu \left(\{ x <1\} \cup \{  y <c \} \right) = \mu\left( \{y< c \}\right).
\]
Proposition \ref{p:mult} then implies (2) holds. 

For the reverse implication,  assume (2) holds. We claim  
\begin{equation*}
H(x,y)= 
\begin{cases}\label{e:Hxy}
\e(\fa_{x\bullet,0} )  \quad & \text{ if } cx\geq y  \\
\e(\fa_{ y\bullet,1} )  \quad & \text{ if } y\geq cx 
\end{cases}
.\end{equation*}
Indeed, if $cx\geq y$, then 
\[
\e(\fa_{x\bullet ,0} ) \leq H(x,y)
\leq
\e(\fa_{x \bullet,0 } \cap \fa_{cx\bullet ,1})
= 
 \e(\fa_{ x\bullet ,0})
 .
\]
Thus, the claim holds when $cx\geq y$ and similar reasoning treats the case when $cx< y$. 
Using the claim, we conclude (1) holds.
\end{proof}

\begin{prop}\label{p:muhomog}
The following statements hold:
\begin{enumerate}
\item The support of $\mu$ is contained in $\{\tfrac{1}{D} x \leq y \leq  D x\}$. 
\item $\mu$ is homogeneous of degree $n$ (i.e. $\mu(c\cdot A) = c^n \mu ( A)$ for any Borel set $A\subset \bR^2$ and real number $c>0$).
\end{enumerate}
\end{prop}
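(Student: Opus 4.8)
The plan is to deduce both claims directly from the measure-theoretic identities in Proposition \ref{p:mult} together with the support information for the approximating measures $\mu_m$ established in Proposition \ref{p:measuremum}, passing to the weak limit via Proposition \ref{p:measuremu}.

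\textbf{Part (1): support of $\mu$.} First I would note that by Proposition \ref{p:measuremum}, for every $m$,
\[
\supp(\mu_m) \subset \{ \tfrac1D x \le y \le D x\} \cup \{ 0 \le x \le \tfrac1m\} \cup \{ 0 \le y \le \tfrac1m\}.
\]
The strategy is to show the extra strips $\{0\le x\le \tfrac1m\}$ and $\{0\le y\le\tfrac1m\}$ carry bounded total mass that escapes to the coordinate axes, which lie outside the cone only at the origin, and that the origin is a $\mu$-null point; alternatively, and more cleanly, one shows directly that $\mu$ vanishes on any open set $U$ disjoint from $\overline{\{\tfrac1D x \le y \le Dx\}}$. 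Fix such a $U$ with compact closure; for $m$ large, $U$ is also disjoint from the $\tfrac1m$-strips, so $\mu_m(U) = 0$ for all large $m$. Since $\mu_m \to \mu$ weakly, for any $\varphi \in C_c(U)$ with $\varphi \ge 0$ we get $\int \varphi\, d\mu = \lim \int \varphi\, d\mu_m = 0$, hence $\mu(U) = 0$. As $U$ was an arbitrary such open set and $\R^2 \setminus \{\tfrac1D x \le y \le Dx\}$ is a countable union of such sets, $\supp(\mu) \subset \{\tfrac1D x \le y \le Dx\}$.

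\textbf{Part (2): homogeneity of degree $n$.} Here I would exploit the scaling behavior of the multiplicity of a filtration: $\e(\fa_{c\bullet}) = c^{-n}\,\e(\fa_\bullet)$, which follows from \eqref{eqn:vol=mult} since $\ell(R/\fa_{cm}) = \ell(R/(\fa_{c\bullet})_m)$ and $(cm)^n/n!$ rescales the normalization. More useful is the identity from Proposition \ref{p:mult}(1), namely $\mu(\{x(1-t)+yt < a\}) = a^n\,\e(\fa_{\bullet,t})$, and Proposition \ref{p:mult}(2), $\mu(\{x<a\}\cup\{y<b\}) = \e(\fa_{a\bullet,0}\cap \fa_{b\bullet,1})$. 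The plan is to verify $\mu(c\cdot A) = c^n\mu(A)$ first on the generating family of sets $\{x(1-t)+yt<a\}$ and $\{x<a\}\cup\{y<b\}$: for instance $c\cdot\{x<a\}\cup\{y<b\} = \{x<ca\}\cup\{y<cb\}$, and $\e(\fa_{ca\bullet,0}\cap\fa_{cb\bullet,1}) = \e((\fa_{a\bullet,0}\cap\fa_{b\bullet,1})_{c\bullet}\text{-type rescaling}) = c^{-n}\cdot(\text{something})$ — one must be slightly careful and instead argue via $\mu(\{x<ca\}\cup\{y<cb\})$ in terms of the degree-$n$ homogeneity of $(a,b)\mapsto \e(\fa_{a\bullet,0}\cap\fa_{b\bullet,1})$, which itself follows because $\fa_{(ca)\bullet,0}\cap\fa_{(cb)\bullet,1}$ evaluated at level $m$ equals $\fa_{a\bullet,0}\cap\fa_{b\bullet,1}$ evaluated at level $m/c$ up to the usual $\lceil\,\cdot\,\rceil$ corrections, which wash out in the limit. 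Then, since the collection $\{\{x(1-t)+yt<a\}\} \cup \{\{x<a\}\cup\{y<b\}\}$ generates (under finite intersections, using $\supp\mu$ inside the cone so that half-planes through the cone are finite intersections of such regions) a $\pi$-system whose generated $\sigma$-algebra contains all Borel sets meeting the cone, and since scaling $A \mapsto c\cdot A$ commutes with the set operations, a standard Dynkin/monotone-class argument (or: both $A\mapsto \mu(cA)$ and $A\mapsto c^n\mu(A)$ are locally finite Borel measures agreeing on a generating $\pi$-system) gives $\mu(cA) = c^n\mu(A)$ for all Borel $A$.

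\textbf{Main obstacle.} The delicate point is Part (2): making rigorous the claim that the half-bounded regions $\{x(1-t)+yt<a\}$ (as $t$ and $a$ vary) generate enough Borel sets to pin down a measure supported on the cone $\{\tfrac1D x\le y\le Dx\}$. One clean way around this is to observe that $\mu$ restricted to the cone is determined by its values on all sets of the form $\{\alpha x + \beta y < a\}$ with $(\alpha,\beta)$ ranging over an open cone of covectors positive on $\{\tfrac1D x\le y\le Dx\}$ — and the lines $x(1-t)+yt = a$ for $t\in[0,1]$ together with $x=a$ and $y=b$ already include such a family — so that differences of these half-planes form a $\pi$-system generating the Borel $\sigma$-algebra of the (closed) cone. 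Homogeneity of each generator's $\mu$-mass then transfers to all Borel sets by uniqueness of the measure determined on a generating $\pi$-system. I expect the bookkeeping with ceilings/floors in translating $\e(\fa_{ca\bullet}\cap\fa_{cb\bullet})$ into $c^{-n}$ times an analogous quantity to be the only computational nuisance, handled exactly as in the proof of Proposition \ref{p:mult} by an $\epsilon$-squeeze.
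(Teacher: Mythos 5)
Your Part (1) has a genuine gap. The support bound from Proposition \ref{p:measuremum} is $\supp(\mu_m)\subset\{\tfrac1D x\le y\le Dx\}\cup\{0\le x\le\tfrac1m\}\cup\{0\le y\le\tfrac1m\}$, and the two strips are full vertical and horizontal strips (all $y$, resp.\ all $x$); as $m\to\infty$ they shrink to the coordinate axes $\{x=0\}$ and $\{y=0\}$, not to the origin. Consequently your ``cleaner'' argument fails: an open set $U$ with compact closure around, say, the point $(0,1)$ is disjoint from the cone $\{\tfrac1D x\le y\le Dx\}$ but meets the strip $\{0\le x\le\tfrac1m\}$ for \emph{every} $m$, so you cannot conclude $\mu_m(U)=0$ for large $m$. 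What the weak convergence actually yields is only $\supp(\mu)\subset\{\tfrac1D x\le y\le Dx\}\cup\{x=0\}\cup\{y=0\}$, and your first sketch, which proposes to show only that the origin is $\mu$-null, does not dispose of the rest of the axes either. The missing step --- the one the paper supplies --- is to kill the axes using Proposition \ref{p:mult}(1) with $t=0$ and $t=1$: since $\{x=0\}\subset\{x<a\}$ for every $a>0$, one gets $\mu(\{x=0\})\le\mu(\{x<a\})=a^n\e(\fa_{\bullet,0})\to0$ as $a\to0^+$, and symmetrically for $\{y=0\}$. With that addition Part (1) is complete.

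Your Part (2) can be made rigorous along the lines you sketch, but the cleanest repair of the $\pi$-system worry is inclusion--exclusion: the sets $\{x<a\}\cap\{y<b\}$ do form a generating $\pi$-system of finite $\mu$-mass, and
\[
\mu(\{x<a\}\cap\{y<b\})=a^n\e(\fa_{\bullet,0})+b^n\e(\fa_{\bullet,1})-\e(\fa_{a\bullet,0}\cap\fa_{b\bullet,1}),
\]
which is homogeneous of degree $n$ in $(a,b)$ because $\e(\fb_{c\bullet})=c^{n}\e(\fb_\bullet)$ for any $\fm$-filtration $\fb_\bullet$ (no $\epsilon$-squeeze with ceilings is needed; this is an exact identity from the definition of the limit). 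The paper's route is shorter still: $H(cx,cy)=c^nH(x,y)$ by the same scaling identity, and the distributional derivative $\mu=-\partial^2H/\partial x\partial y$ of a degree-$n$ homogeneous function is automatically a degree-$n$ homogeneous measure. So your Part (2) is correct in substance but more laborious than necessary, while Part (1) needs the axis computation before it closes.
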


\begin{proof}
By Propositions  \ref{p:measuremum} and \ref{p:measuremu}, $\supp(\mu) \subset 
\{\tfrac{1}{D} x \leq y \leq  D x\} \cup \{ x=0\} \cup \{y =0 \}$. 
Note that 
\[
\mu( \{x =0 \})
= 
\lim_{ a \to 0^+} \mu (\{  0\leq x< a \} )
= 
\lim_{a \to 0^+} 
\e( \fa_{a \bullet,0}) 
=
\lim_{a \to 0^+} 
a^n \e( \fa_{ \bullet,0})
=
0,
\]
where the second equality is by Proposition \ref{p:mult}.
Similar reasoning shows $\mu(\{ y=0 \})$. 
Thus, (1) holds.
Statement (2), follows from the fact that $H$ is homogeneous of degree $n$, meaning $H(cx,cy) = c^n H(x,y)$ for each $c>0$ and $(x,y) \in \bR^2$.
\end{proof}

\subsection{Measure on the segment}
To prove Theorem \ref{thm:main theorem}, it will be convenient to work with a measure induced by $\mu$  on the interval $[0,1]$.

Consider the embedding 
\[
j\colon [0,1] \hookrightarrow \R^2 \quad \text{ defined by  }\quad  j(z) \coloneqq (z,1-z).\]
Note that $j([0,1])$ is the line segment between $(0,1)$ and $(1,0)$ in $\bR^2$.
For  $A\subset [0,1]$,  set
\[
{\rm Cone}(A) \coloneqq \R_{\geq0} \cdot j(A)
.\]
We define a  measure $\widetilde{\mu}$ on $[0,1]$ by setting
\[
\widetilde{\mu}( A) \coloneqq \mu \left(  {\rm Cone}(A) \cap \{ x+ y < 1 \} \right)
\]
for any Borel set $A \subset [0,1]$.
Note that  $\widetilde{\mu}$ is indeed a measure, since $\mu$ is a measure by Proposition \ref{p:measuremu} and $\mu( \{{\bf 0}\} )=0$, which follows from Proposition \ref{p:mult}.
Additionally, $\supp (\widetilde{\mu})\subset \big[\tfrac{1}{D+1}, \tfrac{D}{D+1} \big]$, since $\supp(\mu)\subset  \{ \tfrac{1}{D} x \leq y \leq Dx \}$. 

\begin{lem}\label{l:supptildemu}
For any real number $c>0$, the following statements are equivalent: 
\begin{enumerate}
\item $\supp(\tilde{\mu}) = \tfrac{1}{c+1}$.
\item $\e(\fa_{\bullet, 0}) = \e(\fa_{\bullet,0} \cap \fa_{c\bullet,1}) = \e(\fa_{c\bullet, 1})$.
\end{enumerate}
\end{lem}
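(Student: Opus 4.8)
The plan is to reduce Lemma \ref{l:supptildemu} to the already-established Lemma \ref{l:suppmu} by translating the condition on $\supp(\tilde\mu)$ into the condition on $\supp(\mu)$. The key observation is that $\tilde\mu$ was defined so that $\tilde\mu(A) = \mu({\rm Cone}(A)\cap\{x+y<1\})$, and since $\mu$ is homogeneous of degree $n$ by Proposition \ref{p:muhomog}(2) and supported away from the origin, the support of $\mu$ is a cone: it is the union of the rays through the points of $\supp(\tilde\mu)$ under the map $j(z) = (z,1-z)$.

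\smallskip

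Concretely, first I would verify the elementary correspondence
\[
\supp(\mu) = {\rm Cone}\big(\supp(\tilde\mu)\big) = \bR_{\geq 0}\cdot j\big(\supp(\tilde\mu)\big).
\]
The inclusion ``$\supseteq$'' holds because if $z\in\supp(\tilde\mu)$ then every neighborhood of $j(z)$ meets $\supp(\mu)$, and homogeneity of $\mu$ propagates this along the whole ray through $j(z)$. The inclusion ``$\subseteq$'' holds because $\supp(\mu)\subset\{\tfrac1D x\le y\le Dx\}$ by Proposition \ref{p:muhomog}(1), so every point of $\supp(\mu)$ lies on a ray $\bR_{\geq0}\cdot j(z)$ for a unique $z\in[\tfrac{1}{D+1},\tfrac{D}{D+1}]$, and that $z$ must lie in $\supp(\tilde\mu)$ since a neighborhood of $j(z)$ inside $\{x+y<1\}$ pulls back to a neighborhood of $z$ with positive $\tilde\mu$-mass. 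Next I would note that the single point $\tfrac{1}{c+1}\in[0,1]$ maps under $z\mapsto\bR_{\geq0}\cdot j(z)$ precisely to the line $\{cx = y\}$ (restricted to the first quadrant): indeed $j(\tfrac{1}{c+1}) = (\tfrac{1}{c+1},\tfrac{c}{c+1})$, and the ray through this point is $\{(s, cs) : s\ge 0\} = \{cx=y\}\cap\{x\ge 0\}$. Hence $\supp(\tilde\mu) = \{\tfrac{1}{c+1}\}$ if and only if $\supp(\mu)\subset\{cx=y\}$. Applying Lemma \ref{l:suppmu} then gives the equivalence with condition (2).

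\smallskip

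I do not expect any serious obstacle here; this is essentially a bookkeeping argument converting a statement about a cone measure on $\bR^2$ into a statement about its radial projection onto the segment. The only mild subtlety is making sure the boundary behavior is handled correctly — e.g. that the clause $\{x+y<1\}$ in the definition of $\tilde\mu$ together with the vanishing $\mu(\{\mathbf 0\})=0$ really makes the cone-to-segment correspondence of supports exact rather than off by the origin — but this is exactly what Proposition \ref{p:mult} and Proposition \ref{p:muhomog} were set up to guarantee. So the lemma follows by combining the cone structure of $\supp(\mu)$ (from homogeneity and the support bound) with Lemma \ref{l:suppmu}.
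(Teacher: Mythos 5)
Your argument is correct and follows essentially the same route as the paper: translate the condition $\supp(\widetilde{\mu})=\{\tfrac{1}{c+1}\}$ into $\supp(\mu)\subset\{cx=y\}$ via the definition of $\widetilde{\mu}$, the identification $\bR\cdot j(\tfrac{1}{c+1})=\{cx=y\}$, and the homogeneity of $\mu$ from Proposition \ref{p:muhomog}, and then conclude by Lemma \ref{l:suppmu}. The paper's proof is just a terser version of the same bookkeeping.
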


\begin{proof}
Using that $\R\cdot j \big( \tfrac{1}{c+1}\big)= \{cx=y \}$  and the definition of $\widetilde{\mu}$, we see that
 (1) holds if and only if   ${\rm supp}(\mu\vert_{\{0\leq x+y<1\}}) \subset \{cx=y \}$.
 Since $\mu$ is homogeneous by Proposition \ref{p:muhomog}(2), 
 ${\rm supp} \left( \mu\vert_{\{0\leq x+y<1\}} \right) \subset \{xc=y\}$
 if and only if ${\rm supp}(\mu) \subset \{cx=y\}$.
 By Proposition \ref{l:suppmu}, the latter condition holds if and only if (2) holds.
 \end{proof}

\begin{lem}\label{l:Vformula}
For $t\in [0,1]$,
\[ \e(\fa_{\bullet,t} ) =  \int \left( (1-t)z +t(1-z) \right)^{-n} \, {\rm d} \widetilde{\mu}
.\]
\end{lem}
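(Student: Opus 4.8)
The plan is to express $\e(\fa_{\bullet,t})$ as a push-forward of the measure $\mu$ under the linear functional $(x,y)\mapsto (1-t)x+ty$, and then translate this into an integral over $[0,1]$ against $\widetilde{\mu}$ using the cone structure. Concretely, by Proposition \ref{p:mult}(1), for any $a\in \bR_{\geq 0}$ we have $a^n\e(\fa_{\bullet,t}) = \mu(\{(1-t)x+ty<a\})$. Taking $a=1$ gives $\e(\fa_{\bullet,t}) = \mu(\{(1-t)x+ty<1\})$, so the first step is to rewrite the half-plane $\{(1-t)x+ty<1\}$ in terms of rays through the origin.

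The key geometric observation is that $\bR^2_{\geq 0}\setminus\{\mathbf 0\}$ is swept out by the rays $\bR_{\geq 0}\cdot j(z)$ for $z\in[0,1]$, and since $\supp(\mu)\subset\{\tfrac{1}{D}x\le y\le Dx\}$ we only need to consider $z$ in a compact subinterval of $(0,1)$. A point $r\cdot j(z) = (rz, r(1-z))$ lies in the half-plane $\{(1-t)x+ty<1\}$ precisely when $r\big((1-t)z + t(1-z)\big) < 1$, i.e. when $r < \big((1-t)z+t(1-z)\big)^{-1}$. On the other hand, the point $r\cdot j(z)$ lies in the region $\{x+y<1\}$ (which defines $\widetilde\mu$) precisely when $r < 1$, since $j(z)$ lies on the line $x+y=1$. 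So I would decompose $\{(1-t)x+ty<1\}$ according to which ray a point lies on, apply the disintegration of $\mu$ along rays together with the homogeneity of degree $n$ from Proposition \ref{p:muhomog}(2), and observe that the $\mu$-mass of the truncated ray $\{r\cdot j(z): r<s\}$ is $s^n$ times the $\mu$-mass of $\{r\cdot j(z): r<1\}$.

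Making this rigorous, I would argue as follows. For a Borel set $A\subset[0,1]$, Proposition \ref{p:muhomog}(2) gives $\mu\big(\mathrm{Cone}(A)\cap\{x+y<s\}\big) = s^n\,\widetilde\mu(A)$ for every $s>0$, because scaling $\mathrm{Cone}(A)\cap\{x+y<1\}$ by $s$ yields $\mathrm{Cone}(A)\cap\{x+y<s\}$. Now partition $[0,1]$ into small pieces; on the ray over $z$, the set $\{(1-t)x+ty<1\}$ cuts off the sub-ray $\{x+y<\beta_t(z)\}$ where $\beta_t(z) := \big((1-t)z+t(1-z)\big)^{-1}$ (using $x+y = r$ and $(1-t)x+ty = r/\beta_t(z)$ on that ray). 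Summing/integrating the contributions and using the identity above with $s = \beta_t(z)$ varying, one gets
\[
\e(\fa_{\bullet,t}) = \mu\big(\{(1-t)x+ty<1\}\big) = \int_{[0,1]} \beta_t(z)^n \, \mathrm{d}\widetilde\mu(z) = \int \big((1-t)z + t(1-z)\big)^{-n}\,\mathrm{d}\widetilde\mu(z),
\]
which is the desired formula. The cleanest way to handle the "integrate the contributions" step without fuss is to note that both sides, as functions of the upper cutoff, are determined by the measure $\widetilde\mu$ via the homogeneity identity; alternatively one can verify the formula on cones over intervals $[c,d]\subset(0,1)$ where $\beta_t$ is bounded and monotone, then extend by a monotone class / approximation argument.

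The main obstacle I anticipate is bookkeeping the decomposition of the half-plane $\{(1-t)x+ty<1\}$ into truncated rays and justifying the interchange of integration with the disintegration of $\mu$—in particular, ensuring that the portions of the half-plane outside $\supp(\mu)$ (e.g. near the axes, or the part of the half-plane with $x<0$ or $y<0$ when $t$ is close to $0$ or $1$) contribute nothing, which follows from Proposition \ref{p:muhomog}(1) and $\mu(\{x=0\})=\mu(\{y=0\})=0$. Once the support is confined to the compact angular sector $\{\tfrac1D x\le y\le Dx\}$, where $\beta_t$ is continuous and bounded above and below, the disintegration is elementary and the identity drops out.
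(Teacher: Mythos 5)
Your proposal is correct and follows essentially the same route as the paper: start from $\e(\fa_{\bullet,t})=\mu(\{(1-t)x+ty<1\})$ via Proposition \ref{p:mult}(1), observe that on the ray over $z$ this half-plane cuts off the truncated cone $\{x+y<g_t(z)^{-1}\}$, and use the degree-$n$ homogeneity of $\mu$ together with an approximation over small subintervals of $[0,1]$ (the paper does this with dyadic simple functions and monotone convergence, which is exactly your "verify on cones over intervals, then extend" step). The support considerations you flag are handled in the paper the same way, via Proposition \ref{p:muhomog}.
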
 

\begin{proof}
For simplicity, we assume $ t \leq \tfrac{1}{2}$, which ensures $g_t(z): =(1-t)z +t(1-z)$ is non-decreasing. (The case when $t>\tfrac{1}{2}$ can be proved similarly, but using left-hand approximations below.)
For each $m\geq 1$, consider the simple function
\[
g_{t,m}(z) = \sum_{k=1}^{2^m}  g_{t}(k/2^m)   \cdot 1_{I_{k,m} } 
.\]
where $I_{1,m} \coloneqq  [0,\tfrac{1}{2^m}]$ and $I_{k,m}\coloneqq  \left( \tfrac{k-1}{2^m}, \tfrac{k}{2^m} \right]$ when $1<k\leq 2^m$ . Since $g_{t,m} \geq g_{t,m+1}$ and $g_{t,m}\to g$ pointwise as $m\to \infty$, the monotone convergence theorem implies 
\[
\int g_t(z)^{-n} \, {\rm d} \widetilde{\mu} = \lim_{m \to \infty}
\int g_{t,m}(z)^{-n} \, {\rm d} \widetilde{\mu}. 
\]
(Above we are using that  $g_t(z)^{-n}$ is defined and continuous on $(z,t) \in  \supp (\widetilde{\mu}) \times [0,1]$).)
Now, set $A_{k,m} \coloneqq  g_{t}(k/2^m)^{-1}\cdot  \left(  {\rm Cone}(I_{k,m}) \cap \{x+y < 1\} \right) \subset \bR^2$ and $A_m = \cup_{k=1}^{2^m} A_{k,m}$.
 We compute  
\[
 \int g_{t,m}(z)^{-n} \, {\rm d} \widetilde{\mu} 
 = 
 \sum_{k=1}^{2^m} g_{t}(k/2^m)^{-n} \widetilde{\mu}(I_{k,m}) %\\
 %=
%  \sum_{k=1}^m g_{t}(k/m)^{-n}  {\mu}\left( {\rm Cone}(I_{k,m}) \cap \{x+y < 1\}  \right)
= 
\sum_{k=1}^{2^m} \mu\left( A_{k,m}  \right)
= 
\mu(A_m)
,\]
where the second equality uses Proposition \ref{p:muhomog}.2.
Next, notice that $A_{m} \subset A_{m+1}$ and 
\begin{multline*}
\bigcup_{m \geq 1}
A_m = \bigcup_{m \geq 1} \bigcup_{z\in [0,1] }g_{t,m}(z)^{-1} \cdot ({\rm Cone}(\{z\}) \cap \{ x+y <1 \}) \\
= 
\bigcup_{z\in [0,1]} g_{t}(z)^{-1} \cdot ({\rm Cone}(\{z\}) \cap \{x+y <1 \})
= \mathbb{R}^2_{>0}  \cap \{(1-t) x+ty <1 \}.
\end{multline*}
Combining the previous two computations, we conclude 
\[
\int g_t(z)^{-n} \, {\rm d} \widetilde{\mu}=
\lim_{m\to \infty} \mu(A_m)
= 
\mu \left(  \mathbb{R}^2_{>0} \cap \{ (1-t)x+ ty <1 \} \right)
= 
\e(\fa_{\bullet,t})
,\]
where the final equality is Proposition \ref{p:mult}(1).
\end{proof}

\subsection{Proof of Theorem \ref{thm:main theorem}}

\begin{proof}[Proof of Theorem \ref{thm:main theorem}]
Set $g(z,t) \coloneqq  (1-t)z + t(1-z)$. 
By Lemma \ref{l:Vformula},
\[
E(t) =  \int g(z,t)^{-n} \, {\rm d}\widetilde{\mu}
.\]
Since
$g(z,t) = 0$  if and only if  $z= \frac{t}{2t-1}$,
there exists $\epsilon>0$ such that 
\[
g(z,t) \neq  0 \text{ for all } (z,t) \in    \big [  \tfrac{1}{D+1}-\epsilon,\tfrac{D}{D+1} +\epsilon\big] \times [-\epsilon, 1+\epsilon]
.\]
Since $g(z,t) $ is also smooth on $\bR^2$, $g(z,t)^{-n}$ is smooth and has bounded derivatives on $ (\tfrac{1}{D+1}-\epsilon,\tfrac{D}{D+1}+\epsilon) \times (-\epsilon, 1+\epsilon)$.
Using that ${\rm \supp} {\tilde{\mu} } \subset \big [ \tfrac{1}{D+1},\tfrac{D}{D+1} \big]$,  the Leibniz integral formula implies  $\int g(z,t)^{-n} {\rm d} \widetilde{\mu}$ is smooth on $(-\epsilon,1+\epsilon)$, which  proves (1).  
The  formula additionally implies
\[
E'(t)= n \int (2z-1)g(z,t)^{-n-1}\, {\rm d} \tilde{\mu} \,\,\, \text{ and }   \,\,\,
E''(t) = n(n+1) \int (2z-1)^2g(z,t)^{-n-2} \, {\rm d} \tilde{\mu}.
\]

To prove (2), it suffices to show $\frac{d^2}{dt^2} \left(E(t)^{-1/n} \right)\leq 0$ for all $t\in (0,1)$. 
Note that 
\[
\frac{d^2}{dt^2} \left(E(t)^{-1/n} \right)
=
\frac{(n+1) E'(t)^2- n E(t)E''(t) }{ n^2E(t)^{1/n\,+\,2}  }
.\]
To show the latter quantity is non-positive, we compute
\begin{multline}\label{e:Vcomp}
nE(t) E''(t) = n^2(n+1)
 \left( \int \left| g(z,t)^{-n} \right| \, {\rm d} \tilde{\mu} \right)
\left( \int  \left| (2z-1)^2{g(z,t)^{-n-2}} \right| \, {\rm d} \tilde{\mu} \right) \\
\geq 
n^2 (n+1)
\left(
\int \left| (2z-1) g(z,t)^{-n-1} \right| \, {\rm d} \tilde{\mu}\right)^2
\geq 
(n+1) E'(t)^2
,\end{multline}
where the first inequality is by the Cauchy-Schwarz inequality. 
Therefore,  $\frac{d^2}{dt^2} \left(E(t)^{-1/n} \right) \leq0$ for all $t\in (0,1)$, which proves (2).

To prove (3), note that $E(t)$ is linear 
 if and only if 
 $\frac{d^2}{dt^2} \left(E(t)^{-1/n} \right)=0$ for all $t\in (0,1)$.
 The latter holds if and only if the inequalities in \eqref{e:Vcomp} are all  equalities. 
 The inequalities are equalities iff 
  $(2z-1)g(z,t)^{-(n+2)/2}$ and  $g(z,t)^{-n/2}$ are linearly dependent in $L^{1}(\widetilde{\mu})$.
 Equivalently, $2z-1$ and $g(z,t)$ are linearly dependent in $L^{1} (\widetilde{\mu})$. 
The linear dependence holds exactly when $\widetilde{\mu}$ is supported at a single point. 
By Lemma \ref{l:supptildemu} and Corollary \ref{cor:equiv equal mult}, the  latter  condition is equivalent to the existence of  $c\in \bR$ such that  $\widetilde{\fa}_{\bullet, 0}=\widetilde{\fa}_{c\bullet,1}$. 
\end{proof}

\section{Applications}\label{sec:proof}
In this section, we prove Corollaries  \ref{cor:convexityofvolume}, \ref{cor:uniqueness of minimizer}, and \ref{cor:Minkowski} using results from previous sections.

\subsection{Convexity of the volume of a valuation}

\begin{proof}[Proof of Corollary \ref{cor:convexityofvolume}]
Let $(\fa_{\bullet,t})_{t\in [0,1]}$ denote the geodesic between $\fa_{\bullet,0}\coloneqq \fa_\bullet(v_{\bm{\alpha}})$ and $\fa_{\bullet,1}\coloneqq \fa_{\bullet}(v_{\bm{\beta}})$.
Since $v_{(1-t) \bm{\alpha} + t \bm{\beta}}(f) \geq (1-t) v_{\bm{\alpha}} (f) + tv_{\bm{\beta}} (f)$ for any $f\in \cO_{X,x}$, 
$\fa_{\bullet,t}\subset \fa_\bullet(v_{(1-t)\bm{\alpha} + t \bm{\beta}})$.
We compute 
\begin{multline*}
\vol(v_{(1-t)\bm{\alpha} + t \bm{\beta}})^{-1/n}\coloneqq \e(\fa_\bullet((v_{(1-t)\bm{\alpha} + t \bm{\beta}}))^{-1/n}
            \geq \e(\fa_{\bullet,t})^{-1/n}\\
			\geq (1-t)\e(\fa_{\bullet,0})^{-1/n}+ t \e(\fa_{\bullet,1})^{-1/n}
			=(1-t)\vol(v_0)^{-1/n}+ t \vol(v_1)^{-1/n},
\end{multline*}
where the second inequality is Theorem \ref{thm:main theorem}(2). 
Additionally, by Theorem \ref{thm:main theorem}(3) and  Lemma \ref{lem:valuative ideals sat}, the second inequality is strict unless there exists $c\in \bR_{>0}$ such that $\fa_{\bullet,0}= \fa_{c\bullet,1}$. 
Since $\fa_{\bullet,0}= \fa_{c\bullet,1}$ if and only if $ c\bm{\alpha}_0 =\bm{\alpha}_1$, the result follows. 
\end{proof}

\subsection{Uniqueness of normalized volume minimizer}
Throughout this section, we assume $R$ is essentially of finite type over a field of characteristic 0.

To prove Corollary \ref{cor:uniqueness of minimizer}, we need the following proposition, which will be deduced from \cite[Theorem 3.11]{XZ20b}

\begin{prop}\label{prop:lctconvex}
Let $x\in (X,D)$ be a klt singularity. 
If $\fa_{\bullet,0}$ and $\fa_{\bullet,1}$ are  $\fm$-filtrations, then 
\[
\lct(X,D;\fa_{\bullet,t}) 
\leq 
(1-t) \cdot \lct(X,D;\fa_{\bullet,0})+ t \cdot  \lct(X,D;\fa_{\bullet,1}),
\]
for all $t\in (0,1)$.
%where $(\fa_{\bullet,t})_{t\in [0,1]}$ is the geodesic between $\fa_{\bullet,0}$ and $\fa_{\bullet,1}$.
\end{prop}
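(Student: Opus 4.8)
The plan is to reduce the claimed convexity of the log canonical threshold to the key inequality of log canonical thresholds in \cite[Theorem 3.11]{XZ20b}, applied to the ideals appearing in the geodesic. Recall that for an $\fm$-filtration $\fb_\bullet$, the log canonical threshold is $\lct(X,D;\fb_\bullet) = \lim_{m\to\infty} m \cdot \lct(X,D;\fb_m) = \inf_m m \cdot \lct(X,D; \fb_m)$ (this is standard; it follows from the subadditivity $\fb_{m}\fb_{m'}\subset \fb_{m+m'}$). So it suffices to control the log canonical thresholds of the individual ideals $\fa_{m,t}$ along the geodesic.

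First I would fix $t\in(0,1)$ and a large integer $m$, and unwind the definition of the geodesic ideal at a convenient "level". The defining formula gives
\[
\fa_{\lambda,t} = \sum_{\lambda = (1-t)\mu + t\nu} \fa_{\mu,0}\cap \fa_{\nu,1},
\]
and the main point is that when $\lambda = (1-t)\mu + t\nu$ the ideal $\fa_{\mu,0}\cap \fa_{\nu,1}$ contains (in fact equals, up to the sum) the product-type piece relevant for interpolation of log canonical thresholds. The plan is to show that for rational $t = p/q$ and for suitable $\mu,\nu$ one has a containment like $\fa_{\mu,0}^{\,q-p}\cdot \fa_{\nu,1}^{\,p} \subset \fa_{q\lambda, t}^{\,\text{(something)}}$, or more precisely that $\fa_{q\lambda,t}$ dominates a product of appropriate powers of $\fa_{\bullet,0}$ and $\fa_{\bullet,1}$. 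Then I would invoke \cite[Theorem 3.11]{XZ20b}, which bounds $\lct$ of such a "mixed" ideal by the corresponding convex combination of the $\lct$'s of the factors; combined with the limit formula $\lct(X,D;\fb_\bullet) = \inf_m m\lct(X,D;\fb_m)$ this yields
\[
\lct(X,D;\fa_{\bullet,t}) \le (1-t)\lct(X,D;\fa_{\bullet,0}) + t\,\lct(X,D;\fa_{\bullet,1})
\]
for rational $t$, and then for all $t\in(0,1)$ by continuity or by a direct density argument (or by observing both sides behave well under the interpolation).

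I expect the main obstacle to be the bookkeeping that translates the "sum over $\lambda=(1-t)\mu+t\nu$" definition of the geodesic into the precise form of mixed ideal to which \cite[Theorem 3.11]{XZ20b} applies: one must choose $\mu,\nu$ carefully (clearing denominators when $t=p/q$), verify the relevant containment of ideals, and check that the error terms introduced by rounding $\mu,\nu$ to integers wash out in the limit $m\to\infty$. A secondary subtlety is handling the case where one of $\fa_{\bullet,0}$, $\fa_{\bullet,1}$ has a large log canonical threshold (or where $\lct$ is being computed for a filtration rather than an ideal), but since the statement is an inequality and $\lct$ of a filtration is an infimum of scaled $\lct$'s of ideals, this should pose no real difficulty once the ideal-level inequality is in place. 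The passage from rational $t$ to all $t$ can be handled either by noting the right-hand side is affine in $t$ and the left-hand side is bounded above by it on a dense set, together with upper semicontinuity-type control, or simply by working with rational $t$ throughout and remarking that this suffices for the application in Corollary \ref{cor:uniqueness of minimizer}.
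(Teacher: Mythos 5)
Your overall strategy -- feed the geodesic into \cite[Theorem 3.11]{XZ20b} -- is the same as the paper's, but the specific reduction you propose has a genuine gap at exactly the step you defer as ``bookkeeping.'' Theorem 3.11 of \cite{XZ20b} is not a statement about products of powers of single ideals; it applies to the \emph{sum-of-intersections} filtration $\fc_m=\sum_{i+j=m}\fa_i\cap\fb_j$ built from two graded sequences, and bounds $\lct(X,D;\fc_\bullet)$ by $\lct(X,D;\fa_\bullet)+\lct(X,D;\fb_\bullet)$. Your proposed containment between $\fa_{q\lambda,t}$ and $\fa_{\mu,0}^{\,q-p}\fa_{\nu,1}^{\,p}$ is problematic in both directions: one checks that $\fa_{\mu,0}^{\,q-p}\fa_{\nu,1}^{\,p}\subset\fa_{(q-p)\mu,0}\cap\fa_{p\nu,1}$ lands only in $\fa_{\lambda'',t}$ with $\lambda''=\tfrac{(q-p)^2\mu+p^2\nu}{q}<q\lambda$, so the product does not sit inside $\fa_{q\lambda,t}$; and even if a containment $\fa_{m,t}\supset(\text{product})$ held, monotonicity of $\lct$ would only give a \emph{lower} bound on $\lct(X,D;\fa_{m,t})$, which is useless for the upper bound the proposition asserts. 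A containment in the other direction ($\fa_{m,t}\subset$ product) is false, since an intersection $\fa_{\mu,0}\cap\fa_{\nu,1}$ is much larger than the corresponding product.

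The paper's proof avoids all of this by defining the integer-indexed filtration $\fc_\la=\sum_{i=0}^{\lceil\la\rceil}\bigl(\fa_{(\lceil\la\rceil-i)/(1-t),0}\cap\fa_{i/t,1}\bigr)$, which is precisely the sum-of-intersections filtration attached to the rescaled filtrations $\fa_{\bullet/(1-t),0}$ and $\fa_{\bullet/t,1}$; Theorem 3.11 then gives $\lct(X,D;\fc_\bullet)\le(1-t)\lct(X,D;\fa_{\bullet,0})+t\,\lct(X,D;\fa_{\bullet,1})$ directly, for \emph{arbitrary real} $t$. The remaining work is to show $v(\fc_\bullet)=v(\fa_{\bullet,t})$ for every $v\in\Val_{R,\fm}$ (via explicit inclusions $(\fa_{\mu,0}\cap\fa_{\nu,1})^{\ell}\subset\fc_{\ell m-1}$), so that the two filtrations have the same log canonical threshold. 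This also eliminates your rational-$t$-plus-continuity step, whose final limiting argument you leave unjustified: continuity or upper semicontinuity of $t\mapsto\lct(X,D;\fa_{\bullet,t})$ is not established anywhere in the paper and would itself require proof.
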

	
	\begin{proof}	
We claim $  v(\fc_\bullet)= v(\fa_{ \bullet,t} )$ for all $v\in \Val_{R,\fm}$,
where $\fc_\bullet$ is the $\fm$-filtration defined by   
$\fc_\lambda\coloneqq \sum_{i=0}^{\lceil \la \rceil} (\fa_{(\lceil \la \rceil-i)/(1-t),0 } \cap \fa_{i/t,0})$.	
Assuming the claim holds, 
\begin{multline*}
\lct(X,D;\fa_{\bullet, t} ) 
= 
\lct(X,D;\fc_\bullet)\\
\leq 
\lct(X,D;\fa_{\bullet/(1-t),0}) +  \lct(X,D;\fa_{\bullet/t,1})
= 
(1-t) \lct(X,D;\fa_{\bullet,0}) + t \lct(X,D;\fa_{\bullet,1}),
\end{multline*}
where the inequality is \cite[Theorem 3.11]{XZ20b}.	
It remains to verify the claim.
Since $\fc_{m} \subset \fa_{t,m}$ for each integer $m>0$,   $v(\fc_{\bullet}) \geq v(\fa_{t,\bullet}) $ for all $v\in \Val_{R,\fm}$.
Next fix $\mu, \nu \in \R$ such that $\mu(1-t)+\nu t = m$.
Set $r\coloneqq \mu(1-t) = m - \nu t$. For each $\ell\in \bZ_{>0}$,
 \[
 (\fa_{\mu,0} \cap \fa_{\nu,1})^\ell
 \subset 
 ( \fa_{\ell \mu,0} \cap \fa_{\ell \nu,1})
 =
(\fa_{\ell r/(1-t),1} \cap \fa_{\ell (m-r)/t,1})
\subset
 (\fa_{\lfloor \ell r \rfloor /(1-t),0} \cap \fa_{\left(\ell m -1 - \lfloor \ell r  \rfloor \right)/t,1} )
\subset 
\fc_{\ell m -1 },
\]
Using the previous inclusions, we see that 
\[
v(\fa_{\mu,0} \cap \fa_{\nu,1}) 
=
\lim_{\ell \to \infty}
\frac{v( (\fa_{\mu,0} \cap \fa_{\nu,1})^\ell)}{ \ell} 
\geq
\lim_{\ell\to \infty}
\frac{v(\fc_{\ell m-1}) }{ \ell} 
=
v(\fc_\bullet)
.\]
We can now compute 
\[
v(\fa_{\bullet,t})
=  \lim_{m\to \infty}
\frac{v(\fa_{m,t})}{m}  =  \lim_{m\to \infty} \min_{ \mu(1-t)+\nu t= m}  \Big\{   \frac{v(\fa_{\mu,0} \cap \fa_{\nu,1})}{m} \Big\} \geq v(\fc_\bullet)
.\] 
Therefore, $v(\fa_{\bullet,t})= v(\fc_\bullet)$ as desired.
\end{proof}	

\begin{proof}[Proof of Corollary \ref{cor:uniqueness of minimizer}]
Let $v_0,v_1 \in \Val_{R,\fm}$ be minimizers of $\nvol_{(X,D),x}$.
By rescaling, we may assume $A_{X,\Delta}(v_i) = 1$ and, hence, 
 $\vol(v_i)= \nvol(x,X,D)$ for $i=0,1$. We seek to show $v_0=v_1$.
 	
Consider the geodesic $(\fa_{t,\bullet})_{t\in [0,1]}$ between $\fa_{\bullet,0} \coloneqq  \fa_{\bullet}(v_0)$ and $\fa_{\bullet,1}\coloneqq  \fa_{\bullet}(v_1)$. 
For $t\in (0,1)$,
\begin{multline}\label{eq:lctleq1}
\lct(X,D; \fa_{\bullet,t})
 \leq (1-t)  \lct(X,D;\fa_{\bullet,0}) + t  \lct(X,D;\fa_{\bullet,1}) \\
 \leq (1-t)  A_{X,D}(v_0) + t  A_{X,D}(v_1)=1
,\end{multline}
where the first inequality is Proposition \ref{prop:lctconvex}
and the second uses that $v_i(\fa_{\bullet,i})=1$ by \cite[Lemma 3.5]{Blu18}. 	 
Observe that 
 \begin{multline*}
     \nvol(x,X,D)^{1/n}
\leq
\lct(\fa_{\bullet,t})  \e(\fa_{\bullet,t})^{1/n}
\leq  \e(\fa_{\bullet,t})^{1/n}  \\
\leq \big( (1-t) \e(\fa_{\bullet,0})^{-1/n}+ t\e(\fa_{\bullet,1})^{-1/n}\big)^{-1}
=\nvol(X,D,x)^{1/n}
 \end{multline*}
where there first inequality holds by \cite[Theorem 27]{Liu18}, the second by \eqref{eq:lctleq1}, and the third by Theorem \ref{thm:main theorem}(2). 
Therefore, every inequality above, in particular the third one, is an equality. 
By Theorem \ref{thm:main theorem}(3)  and Lemma \ref{lem:valuative ideals sat}, there exists $c>0$ such that $\fa_{\bullet,0}= \fa_{c\bullet,1}$,
 which translates to $\fa_{\bullet}(cv_0)= \fa_{\bullet}(v_1)$. 
 Therefore,  $cv_0=v_1$.
Since $c=A_{X,D}(cv_0)=A_{X,D}(v_1) = 1$, we see that $v_0=v_1$.
\end{proof}

	\begin{remark}
		\cite[Theorem 27]{Liu18} was originally proved for $\bQ$-coefficients, but the proof works for $\bR$-coefficients with little change. 
	\end{remark}	
	
\subsection{Minkowksi inequality}

	Lastly we prove Corollary \ref{cor:Minkowski}. In the proof, we use the following result.
		
	\begin{proposition}\cite[Theorem 10.3]{Cut21}\label{prop:Minkowski equality implies same divisorial}
Let $\fa_\bullet$ and $\fb_\bullet$ be $\fm$-filtrations with positive multiplicity. If 
$\e(\fa_\bullet\fb_\bullet)^{1/n}=
\e(\fa_\bullet)^{1/n}+\e(\fb_\bullet)^{1/n}$, then  then there exists $c>0$ such that
$v(\fa_\bullet)=c v(\fb_\bullet)$ for all $v\in \DivVal_{R,\fm}$.
\end{proposition}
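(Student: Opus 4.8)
The plan is to recover the statement from the equality case of the Brunn--Minkowski inequality for co-convex bodies, applied to Newton--Okounkov bodies of the two filtrations. First I would reduce to the case where $(R,\fm)$ is complete. Passing to the $\fm$-adic completion affects nothing relevant: $(\fc\fd)\widehat R=\fc\widehat R\cdot\fd\widehat R$ for $\fm$-primary $\fc,\fd$, multiplicities are preserved by Proposition \ref{p:completion-multiplicity}, and $\DivVal_{\widehat R,\widehat\fm}\to\DivVal_{R,\fm}$ is a bijection with $\hat v(\fa_\bullet\widehat R)=v(\fa_\bullet)$, exactly as in the proof of Proposition \ref{p:completion-saturation}; so both the hypothesis and the conclusion are insensitive to completion, and we may assume $R$ is complete, hence excellent. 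Also, since $\fa_\bullet$ and $\fb_\bullet$ have positive multiplicity, Corollary \ref{cor:mult>0 = linear growth} shows they are linearly bounded, which is what makes the Okounkov-body formalism applicable.

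Next I would fix a prime divisor $E$ over $x$ together with a full flag over $x$ adapted to it, producing a $\bZ^n$-valued valuation $\nu\colon{\rm Frac}(R)^\times\to(\bZ^n,\,\mathrm{lex})$ whose first coordinate is $\ord_E$, and form, for any linearly bounded $\fm$-filtration $\fc_\bullet$, the body $\Delta_\nu(\fc_\bullet):=\overline{\bigcup_{m\ge 1}\tfrac1m\nu(\fc_m\setminus 0)}\subset\bR^n_{\ge 0}$. Linear boundedness forces $\Delta_\nu(\fc_\bullet)$ to be a closed, upward-closed convex set whose complement in $\bR^n_{\ge 0}$ is compact and contains a neighbourhood of the origin, and the theory of Okounkov bodies of filtrations of a local ring (as used in the appendix; cf. the work of Cutkosky and of Kaveh--Khovanskii) gives
\[
\e(\fc_\bullet)=n!\cdot\vol\big(\bR^n_{\ge 0}\setminus\Delta_\nu(\fc_\bullet)\big)
\quad\text{and}\quad
\min\{\,x_1:x\in\Delta_\nu(\fc_\bullet)\,\}=\ord_E(\fc_\bullet).
\]
Since $\nu(fg)=\nu(f)+\nu(g)$, one has $\nu(\fa_m\fb_m)\supseteq\nu(\fa_m)+\nu(\fb_m)$ at each level, hence $\Delta_\nu(\fa_\bullet\fb_\bullet)\supseteq\Delta_\nu(\fa_\bullet)+\Delta_\nu(\fb_\bullet)$, and combining these facts yields the chain
\begin{multline*}
\e(\fa_\bullet\fb_\bullet)^{1/n}
\leq \Big(n!\cdot\vol\big(\bR^n_{\ge 0}\setminus(\Delta_\nu(\fa_\bullet)+\Delta_\nu(\fb_\bullet))\big)\Big)^{1/n}\\
\leq \e(\fa_\bullet)^{1/n}+\e(\fb_\bullet)^{1/n},
\end{multline*}
where the first step uses the inclusion just noted and the second is Brunn--Minkowski for the co-convex bodies $\bR^n_{\ge 0}\setminus\Delta_\nu(\fa_\bullet)$ and $\bR^n_{\ge 0}\setminus\Delta_\nu(\fb_\bullet)$.

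Now assume $\e(\fa_\bullet\fb_\bullet)^{1/n}=\e(\fa_\bullet)^{1/n}+\e(\fb_\bullet)^{1/n}$. Then the second inequality above is an equality, so by the rigidity of Brunn--Minkowski for co-convex bodies the sets $\bR^n_{\ge 0}\setminus\Delta_\nu(\fa_\bullet)$ and $\bR^n_{\ge 0}\setminus\Delta_\nu(\fb_\bullet)$ are homothetic; since each has the origin as its unique orthant-corner, with tangent cone $\bR^n_{\ge 0}$, the homothety must be a dilation centred at the origin, i.e. $\Delta_\nu(\fa_\bullet)=c\cdot\Delta_\nu(\fb_\bullet)$ for some $c>0$. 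Comparing volumes of complements gives $c=(\e(\fa_\bullet)/\e(\fb_\bullet))^{1/n}$, which is manifestly independent of the flag. Taking the infimum of the first coordinate on both sides yields $\ord_E(\fa_\bullet)=c\cdot\ord_E(\fb_\bullet)$. Finally, every $v\in\DivVal_{R,\fm}$ equals $\lambda\cdot\ord_E$ for some $\lambda>0$ and some prime divisor $E$ over $x$ that can be realised as the first divisor of an admissible flag (here excellence of the complete ring $R$ is used to put $E$ on a suitable regular model), and since the scalar $c$ does not depend on $E$, we conclude $v(\fa_\bullet)=c\cdot v(\fb_\bullet)$ for all $v\in\DivVal_{R,\fm}$.

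The main obstacle is everything packaged into the second paragraph: setting up rigorously the non-compact, co-convex Newton--Okounkov body of an $\fm$-filtration, proving the multiplicity formula and the additivity $\Delta_\nu(\fa_\bullet\fb_\bullet)\supseteq\Delta_\nu(\fa_\bullet)+\Delta_\nu(\fb_\bullet)$ for filtrations rather than single ideals, and invoking the sharp equality case of Brunn--Minkowski for co-convex bodies; the bookkeeping that every divisorial valuation is the first coordinate of some admissible flag over $x$ is a further point requiring care, especially in positive characteristic. An alternative route, closer to the proof of Proposition \ref{prop:valuative criterion for multiplicities}, is to realise $\e(\fa_\bullet)$, $\e(\fb_\bullet)$, $\e(\fa_\bullet\fb_\bullet)$ as limits of self-intersection numbers of relatively nef divisors on a tower of normal models and to run Teissier's Hodge-index argument, whose equality case forces numerical proportionality of the limiting nef $b$-divisors; there the difficulty shifts to controlling the passage to the limit of models.
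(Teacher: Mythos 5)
First, a point of comparison: the paper does not prove this proposition at all --- it is imported verbatim from \cite[Theorem 10.3]{Cut21} and used as an input to Corollary \ref{cor:Minkowski}. So your proposal is not competing with an in-paper argument; what you have sketched is, in essence, the strategy behind the cited reference itself: attach to each divisorial valuation a rank-$n$ valuation having it as first component, identify $\e(\fc_\bullet)$ with $n!$ times the covolume of the associated Okounkov body, and invoke the equality case of Brunn--Minkowski for coconvex bodies. The skeleton is sound: equality in Minkowski forces the two coconvex regions to be homothetic, the fact that both bodies fill the cone near the apex forces the homothety to be a dilation centred at the origin, the ratio $c=(\e(\fa_\bullet)/\e(\fb_\bullet))^{1/n}$ is visibly independent of the chosen flag, and $\ord_E(\fc_\bullet)$ is recovered as the infimum of the first coordinate over the body.

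The gap is that everything you defer in your second paragraph is the actual content of the theorem, and none of it is supplied by this paper. The proposition is stated for an arbitrary analytically irreducible local domain: no base field is assumed and the residue field need not be algebraically closed or infinite. The appendix cannot be quoted for this: Definition \ref{defn:good valuation} and Lemma \ref{lem:good valuation associated to a qm valuation} work over an algebraically closed field and produce a good valuation adapted to a fixed quasi-monomial valuation; they do not produce, for an arbitrary $\ord_E\in\DivVal_{R,\fm}$, a rank-$n$ valuation with first component $\ord_E$, one-dimensional leaves, and the Izumi-type bound. When the relevant residue field extension is nontrivial the leaves fail to be one-dimensional and the identity $\e(\fc_\bullet)=n!\,\vol\bigl(\bR^n_{\ge0}\setminus\Delta_\nu(\fc_\bullet)\bigr)$ requires correction; moreover regular models containing $E$ need not exist in this generality (mixed characteristic, no resolution), so the flag must be replaced by a composite valuation built from the residue field of $\ord_E$. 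Constructing this package and proving covolume $=$ multiplicity at this level of generality is precisely what \cite{Cut13}, \cite{Cut14}, \cite{Cut21} do, so as a substitute for the citation your argument is circular; in addition, the equality case of the coconvex Brunn--Minkowski inequality is itself an external ingredient (Khovanskii--Timorin, or a truncation argument reducing to the Diskant inequality) that must be cited or proved. Your alternative intersection-theoretic route, modelled on the proof of Proposition \ref{prop:valuative criterion for multiplicities}, runs into exactly the limit-equality analysis you flag and which the paper also does not develop, so retaining the citation to \cite[Theorem 10.3]{Cut21} is the efficient choice.
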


\begin{proof}[Proof of Corollary \ref{cor:Minkowski}]
By \cite[Theorem 3.2]{Cut15},
\begin{equation}\label{eq:Minkowski1}
\e(\fa_\bullet\fb_\bullet)^{1/n}\leq
\e(\fa_\bullet)^{1/n}+\e(\fb_\bullet)^{1/n}
.\end{equation}
Thus, it remains to analyze when the inequality is an equality.

If \eqref{eq:Minkowski1} is an equality, then 
Propositions \ref{prop:Minkowski equality implies same divisorial}
and \ref{prop:equiv filts} imply there exists $c>0$ such that  $\widetilde{\fa}_\bullet = \widetilde{\fb}_{c \bullet}$.
Conversely, assume there exists $c>0$ such that  $\widetilde{\fa}_\bullet= \widetilde{\fb}_{c \bullet}$.
Note that
\[
v(\fa_\bullet \fb_\bullet) 
= 
v(\fa_\bullet) + v(\fb_\bullet)
= 
(c+1) v(\fb_\bullet)
\]
where the first equality uses that $\frac{ v(\fa_m \fb_m) }{m} = \frac{v(\fa_m)}{m} + \frac{v(\fb_m)}{m}$ for all $m\in \bN$ and the second is by Proposition \ref{prop:equiv filts}. 
Therefore, Proposition \ref{prop:equiv filts} implies $\widetilde{\fa_\bullet \fb_\bullet}= \widetilde{\fb}_{(c+1)\bullet}$. We  now compute
\[
\e(\fa_\bullet \fb_\bullet)^{1/n} 
=
\e(\fb_{(c+1)\bullet})^{1/n} 
=
 \e(\fb_{ c \bullet})^{1/n} + \e(\fb_\bullet)^{1/n}
=
\e(\fa_\bullet)^{1/n} + \e(\fb_\bullet)^{1/n}
,\]
where the first and third equality hold by Corollary \ref{cor:equiv equal mult}.
\end{proof}

\section{Relation to global results}\label{sec:global}

In this section, we explain the relationship between the local constructions in this paper and certain global constructions in the K-stability literature.

\medskip
Throughout, let $X$ be an $n$-dimensional normal projective variety over a field $k$ and $L$ be an ample line bundle on $X$. 
The section ring of $(X,L)$ is
\[
R(X,L):= \bigoplus_{m\in \bN} R_m := \bigoplus_{m \in \bN} H^0(X,mL)
.
\]

\subsection{Filtrations and multiplicity}
The analogue of an $\bR$-filtration of a local ring
in the global setting is the following definition  \cite[Section 1.1]{BHJ17}, which plays an important role  in the K-stability literature. 

\begin{defn}
A \emph{filtration} $\cF$ of $R(X,L)$ is the data of vector  subspaces $\cF^\la R_m \subset R_m$ for each $\la \in \bR$ and $m\in \bN$ such that
	\begin{enumerate}
		\item  $\cF^\la R_m \subset \cF^\mu R_m$ when $\la >\mu$,
		\item $\cF^\la R_m = \cF^{\la-\epsilon} R_m$ when $0 <\epsilon\ll1$,
		\item $\cF^\la R_m \cdot \cF^\mu R_n \subset \cF^{\la+\mu}$, and
		\item $\cF^{\la} R_m = R_m$  when $\la \ll 0$, and 
		\item $\cF^{\la} R_m =0 $ when $\la \gg0$.
	\end{enumerate}
A filtration $\cF$ is \emph{linearly bounded} if there exists $C>0$ such that $\cF^{Cm}R_m=0$ for all $m>0$.
\end{defn}

The data of a filtration $\cF$ can be encoded as a norm $\chi: R(X,L)\to \bR\cup \{ +\infty\}$  by  setting
\[
\chi (s) := \max \{ \la \in \bR  \, \vert\, s \in \cF^\la R_m \}
\]
when $s \in R_m$ and   $\chi(\sum_m s_m ):= \min \{ \chi(s_m ) \}$ when $\sum_m s_m \in \oplus_{m} R_m$; see \cite[Section 1.1]{BJ21} for details. Following \emph{loc. cit.}, we write $\mathcal{N}_{\bR}$ for the set of  such norms $\chi: R(X,L) \to\bR \cup \{ +\infty\}$ that arise from linearly bounded $\bR$-filtrations of $R(X,L)$.

\begin{defn}
The \emph{energy} of a linearly bounded filtration 
$\cF$ of $R(X,L)$ is
\[
E (\cF) := \lim_{m \to \infty}  \frac{ \sum_{\la \in \bR}\dim_k \gr_{\cF}^\la R_m  }{m^n / n!},
\]
where $\gr_{\cF}^\la R_m = \cF^\la R_m / \cF^{\la+\epsilon}R_m$ and $0<\epsilon\ll1$. 
The limit in the definition exists as a consequence of \cite[Theorem 5.3]{BHJ17}.
\end{defn} 

The energy measures the size of a filtration  and is an analogue of the multiplicity of an $\bR$-filtration in the global setting.
The invariant appears under different names in the K-stability literature such
as the  energy in \cite{BHJ17} and 
the $S$-invariant in \cite{Xu21}.
For a norm $\chi \in \mathcal{N}_{\bR}$ of $R(X,L)$, $\vol(\chi)$  denotes the energy of the associated filtration  in \cite{BJ21}.

\subsection{Saturation and Rees's theorem}
Rees's theorem for $\bR$-filtrations (Theorem \ref{thm:characterization for sequences with equal volume}) says that two $\bR$-filtrations $\fa_\bullet \subset \fb_\bullet$ have equal volume if and only if their saturations are equal. 
As we will explain, 
an analogue of this result was previously proven by Boucksom and Jonsson in the global  setting.

For a norm $\chi \in \mathcal{N}_{\bR}$, Boucksom and Jonsson introduce the notion of its \emph{ maximal norm} $ \chi^{\max}$ \cite[Definition 6.16]{BJ21}.
Similar to the saturation in Definition \ref{defn:saturation}, it is defined using divisorial valuations.
When ${\rm char}(k)=0$, they prove that two norms
$\chi, \chi' \in \mathcal{N}_{\bR}$ 
with $\chi \leq \chi'$ satisfy $\vol(\chi) = \vol(\chi')$
 if and only if $\chi^{\max}= \chi'^\max$ \cite[Lemma 3.11 and Theorem 6.22]{BJ21}. The proof relies on results from  non-Archimedean pluripotential   developed in \cite{BJ22}.

\subsection{Geodesics}
The notion of a geodesic between two $\bR$-filtrations (Definition \ref{d:geodesic}) is inspired by the following definition in the global setting.

\begin{defn}[\cite{BLXZ21,Reb20}]
For two linearly bounded filtrations $\cF_0$ and $\cF_1$ of $R(X,L)$ and $t\in (0,1)$, we define a filtration $\cF_t$  of $R(X,L)$ by setting 
\[
\cF_t^\la R_m= \sum_{\mu+\nu =\la} \cF_0^\mu R_m \cap \cF_1^\nu R_m
.\]
We call $(\cF_t)_{t\in [0,1]}$ the \emph{geodesic} between $\cF_0$ and $\cF_1$.
\end{defn}

This definition was introduced by the first two authors, Xu, and Zhuang in \cite[Section 3.1.2]{BLXZ21} to prove uniqueness results for certain optimal destabilizations of Fano varieties that arise from limits of K\"ahler--Ricci flow \cite[Section 3]{BLXZ21}. 
Independently, Reboulet introduced an equivalent definition  phrased in the language of  norms on the section ring, rather than filtrations, and used it to define geodesics between metrics on a line bundle in non-Archimedean pluripotential theory \cite{Reb20}.

%This definition was introduced by the first two authors, Xu, and Zhuang in \cite[Section 3.1.2]{BLXZ21} to prove uniqueness results for certain optimal destabilizations of Fano varieties that arise from limits of K\"ahler--Ricci flow \cite[Section 3]{BLXZ21}. 
%Moreover, the measure defined in Section \ref{sec:multandgeodesics} is defined in a similar way to the Duistermaat-Heckman meaure on $\bR^2$ in \emph{loc. cit.}. Thus, our proof for Corollary \ref{cor:uniqueness of minimizer} could be viewed as a local counterpart for the proof of the uniqueness of the $h$-functional in the global setting.
%Independently, Reboulet introduced an equivalent definition  phrased in the language of  norms on the section ring, rather than filtrations, and used it to define geodesics between metrics on a line bundle in non-Archimedean pluripotential theory \cite{Reb20}.

A different and possibly more intuitive way to understand the above definition is in terms of a well-chosen basis. 
By \cite[Lemma 3.1]{AZ20}, there exists a basis $(s_1,\ldots, s_{N_m})$ of $R_m$ that is  compatible with both $\cF_0$ and $\cF_1$. 
Here, `compatible' means that $\cF_j^\la R_m$ is the span of some subset of  $(s_1,\ldots,s_{N_m})$ for each $j \in \{0,1\}$ and $\la \in \bR$.
If we set 
$
\la_{i,j} := \max \{ \la \in \bR\, \vert \, s_i \in  \cF_j^\la R_m \}$,
then a computation shows 
\[
\cF_t^\la R_m := {\rm span}( s_i \vert\, \la_{i,0}(1-t)+ \la_{i,1} t  \geq \la)
.\]
(A similar computation is made in the proof of Proposition \ref{p:measuremum}.)
Imprecisely, this  expression shows that $(\cF_{t})_{t\in [0,1]}$  interpolates between $\cF_0$ and $\cF_1$.

As shown in  \cite[Section 3.2]{BLXZ21} and \cite[Section 4.7]{Reb20b}, various functionals on the space of filtrations are convex along geodesics.
These convexity results are proven in \cite[Section 3.2]{BLXZ21} using a measure on $\bR^2$
(similarly to the proof of Theorem \ref{thm:main theorem}) 
and imply the uniqueness of minimizers of the $h$-functional on the space of valuations on a Fano variety \cite[Section 3.3]{BLXZ21}.
Thus, Theorem \ref{thm:main theorem} and Corollary \ref{cor:uniqueness of minimizer} can be viewed as local counterparts to these global results.

\newpage

\appendix
	
	\section{Okounkov bodies and geodesics}\label{sec:computation}

    %Throughout this appendix, $\mathbbm{k}$ is an algebraically closed characteristic 0 field and $(R,\fm)$ an $n$-dimensional normal local domain essentially of finite type $\mathbbm{k}$ with $R/\fm = \mathbbm{k}$.\Blum{I added that the residue field is $\mathbbm{k}$. I assume we need it (e.g. in Def A.1.1).
    %}
    %We set $X: = \Spec(R)$ and write $x\in X$ for the closed point corresponding to $\fm$.
    %Note that the assumptions on $R$ are more restrictive than in the earlier sections of the paper. 
    
     In this appendix, we give an alternative proof of a weaker version of Theorem \ref{thm:main theorem} (see Theorem \ref{thm:appendix}) using the theory of Okounkov bodies.
     \medskip
     
    Throughout, $X$ is an $n$-dimensional normal variety over an algebraically closed field $\mathbbm{k}$ and $x\in X$ is a closed point. 
    We set $R: = \cO_{X,x}$ and write $\fm\subset R$ for the maximal ideal.
    Note that the assumptions on $R$ are more restrictive than in the earlier sections of this paper. 
    \medskip

    %we consider an $n$-dimensional normal singularity $x\in X$ over an algebraically closed field $\mathbbm{k}$ of characteristic $0$, that is, $X=\Spec R$, where $(R,\fm)$ is an $n$-dimensional normal local domain essentially of finite type over $\mathbbm{k}$ and $x$ is the closed point corresponding to $\fm$.

%	Throughout this section we work with a normal singularity $x\in X$. 
	
		\subsection{Good valuations}
	\begin{definition}\label{defn:good valuation}\cite[Definition 8.3]{KK14}
	    We equip a total order $\preceq$ on $\bZ^n$ respecting addition.
	    Let $v: R\setminus \{0\}\to\bZ^n$ be a valuation.
		We say that $v$ is a \emph{good valuation} if it satisfies the following conditions:
		
		\begin{enumerate}
			\item $v$ has one-dimensional leaves, that is, for any $f,g\in R$ with $v(f)=v(g)$, there exists $\lambda\in \mathbbm{k}$ such that $v(f+\lambda g)>v(f)$. 
			
			\item The value semigroup $\cS=v(R\backslash\{0\})\cup \{0\}$ generates $\bZ^n$ as a group, and its associated cone $C(\cS)$, which is the closure of the convex hull of $\cS$, is a strongly convex cone. The latter is equivalent   to the existence of a linear function $\xi:\bR^n\to \bR$ such that $C(\cS)\backslash\{0\}\subset \xi_{> 0}$.
			
			\item There exist $r_0>0$ and a linear function $\xi$ as above such that 
			\[
				\ord_\fm(f)\ge r_0\xi(v(f))
			\] 
			for any $f\in R\backslash\{0\}$.
			
		\end{enumerate}
	\end{definition}

	For every quasi-monomial valuation $w\in\Val_{X,x}$, we can construct a good valuation $v$ on $X$ associated to $w$ as follows. The construction is similar to \cite[Example 8.5]{KK14}, where the characteristic of $\mathbbm{k}$ is $0$. We provide a proof for the general case here for the reader's convenience.%\liu{Not sure if we need $x$ to be a $k$-rational point or not.}
	%\Blum{I think this is used in Definition A.1.2.}
	
\begin{lemma}\label{lem:good valuation associated to a qm valuation}
		If $w\in \Val_{X,x}$ is a quasi-monomial valuation, 
		then there exist a vector $\bm\alpha\in \bR^n$ and a good $\bZ^n$-valued valuation $v$ on $R$ such that 
		\[
			w(f)=\langle \bm\alpha, v(f) \rangle
		\]
		for any $f\in R\backslash\{0\}$. In addition, we can take $\xi=\langle \bm\alpha,\cdot \rangle$ for condition (3) of a good valuation. 
\end{lemma}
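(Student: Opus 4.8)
The plan is to set up the quasi-monomial valuation $w$ concretely on a log smooth birational model and then "perturb" the monomial weights so that they become rationally independent, producing a $\bZ^n$-valued valuation whose composition with an appropriate linear functional recovers $w$. First I would fix a log smooth birational model $\eta\in(Y,D=D_1+\cdots+D_r)$ of $x\in X$ with regular system of parameters $y_1,\dots,y_r$ at $\eta$, together with a weight vector $\bm\beta\in\bR_{\ge 0}^r\setminus\{0\}$ such that $w=v_{\bm\beta}$ as in Section~\ref{sssec:QM}. The key point is to choose an auxiliary $\bR$-linearly independent tuple: enlarging if necessary, pick real numbers $\alpha_1>\alpha_2>\cdots>\alpha_n>0$ that are linearly independent over $\bQ$ and a linear surjection $\bR^n\to\bR$ (i.e. a choice of $\bm\alpha$) carrying a lattice-monomial valuation $v$ to $w$; concretely, one extends $y_1,\dots,y_r$ to a full regular system of parameters $y_1,\dots,y_n$ of $\cO_{Y,\eta}$ (after possibly passing to a further blowup/localization so that $\dim\cO_{Y,\eta}=n$, which one can always arrange, e.g. by localizing at a closed point of the fiber over $\eta$), and defines $v(f)$ by reading off the $\preceq$-minimal exponent vector in the expansion of $f$ in $\widehat{\cO}_{Y,\eta}\cong k(\eta)[[y_1,\dots,y_n]]$, where $\preceq$ is the total order on $\bZ^n$ induced by $\bm\alpha=(\alpha_1,\dots,\alpha_n)$ via $\bm\gamma\preceq\bm\gamma'\iff \langle\bm\alpha,\bm\gamma\rangle<\langle\bm\alpha,\bm\gamma'\rangle$ (this is a genuine total order precisely because the $\alpha_i$ are $\bQ$-linearly independent).

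Next I would verify the three conditions in Definition~\ref{defn:good valuation}. One-dimensional leaves (1) is immediate from the monomial description: if $v(f)=v(g)=\bm\gamma$, the coefficients of $y^{\bm\gamma}$ in $f$ and $g$ are nonzero elements of the field $k(\eta)$, hence proportional, and subtracting the right multiple kills that term and strictly raises the valuation. For (2), the value semigroup is contained in $\bZ_{\ge0}^n$, which already generates $\bZ^n$ as a group once one notes each $y_i$ (hence each standard basis vector) lies in the image, and the associated cone is contained in the strongly convex cone $\bR_{\ge0}^n$; the separating linear functional can be taken to be $\xi=\langle\bm\alpha,\cdot\rangle$ since $\bm\alpha$ has strictly positive entries, so $\xi>0$ on $\bR_{\ge0}^n\setminus\{0\}$. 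For (3), I would invoke Izumi's inequality: by Lemma~\ref{lem:Izumi ineq}(ii) there is $c>0$ with $v_{\bm\alpha}(f)=\langle\bm\alpha,v(f)\rangle\le c\cdot\ord_\fm(f)$ for all $f\in R$, which is exactly $\ord_\fm(f)\ge r_0\xi(v(f))$ with $r_0=1/c$ and $\xi=\langle\bm\alpha,\cdot\rangle$. Finally, the identity $w(f)=\langle\bm\alpha,v(f)\rangle$ follows because the $\preceq$-minimal exponent vector is, by construction of $\preceq$, the one minimizing $\langle\bm\alpha,\cdot\rangle$, and restricting attention to the exponents supported on $y_1,\dots,y_r$ with the weights inherited from $\bm\beta$ reproduces $w=v_{\bm\beta}$ — so one wants the first $r$ coordinates of $\bm\alpha$ to be a positive rescaling of $\bm\beta$, which costs nothing in the linear-independence step.

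The main obstacle I anticipate is purely characteristic-free bookkeeping rather than a deep difficulty: one must be careful that the monomial valuation defined via the $\preceq$-minimal exponent in the completion $\widehat{\cO}_{Y,\eta}$ is genuinely a valuation on all of $\mathrm{Frac}(R)$ (not merely on $\cO_{Y,\eta}$), which is handled by extending it to the fraction field in the standard way $v(f/g)=v(f)-v(g)$ and checking well-definedness, and that the total order $\preceq$ on $\bZ^n$ "respects addition" — automatic for an order induced by a linear functional. A second subtle point, flagged in the excerpt's remark that \cite[Example 8.5]{KK14} assumed characteristic $0$, is ensuring the simultaneous expansion in $\widehat{\cO}_{Y,\eta}$ and the one-dimensional-leaves property do not secretly use characteristic-$0$ input; but since both only use that $\widehat{\cO}_{Y,\eta}$ is a power series ring over the residue field $k(\eta)$ and that $k(\eta)$ is a field, the argument goes through verbatim in arbitrary characteristic. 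So the proof is essentially a careful assembly: choose $\bQ$-independent positive weights extending a rescaling of $\bm\beta$, define $v$ as the associated lattice-monomial valuation, and read off (1)–(3) together with the displayed identity from the monomial description plus Izumi's inequality.
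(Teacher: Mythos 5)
Your overall strategy (expand $f$ in $\widehat{\cO}_{Y,\eta}\cong k(\eta)[[y_1,\dots,y_n]]$ at a closed point of a log smooth model and take the minimal exponent with respect to a monomial order, then check conditions (1)--(3) via the field structure of $k(\eta)$ and Izumi) is the same as the paper's, but your specific choice of order breaks the key identity $w(f)=\langle\bm\alpha,v(f)\rangle$. You take $\bm\alpha$ with \emph{all} $n$ entries strictly positive and $\bQ$-linearly independent, the first $r$ entries matching the weight vector $\bm\beta$ of $w$, and you order $\bZ^n$ by $\langle\bm\alpha,\cdot\rangle$. Then $v(f)$ is the exponent $\bm\gamma_0$ in the support of $f$ minimizing $\sum_{i\le r}\beta_i\gamma_i+\sum_{i>r}\alpha_i\gamma_i$, whereas $w(f)$ is the minimum of $\sum_{i\le r}\beta_i\gamma_i$ alone. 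These need not agree: one has $\langle\bm\alpha,\bm\gamma_0\rangle=\langle\bm\beta,(\bm\gamma_0)_{\le r}\rangle+\sum_{i>r}\alpha_i(\gamma_0)_i\ge w(f)$, with strict inequality whenever $(\gamma_0)_i>0$ for some $i>r$ (and $\bm\gamma_0$ need not even project to a $\langle\bm\beta,\cdot\rangle$-minimizer). For instance, if the expansion of $f$ is $y_1^2+y_1y_{r+1}$ then $w(f)=\beta_1$ but $\langle\bm\alpha,v(f)\rangle=\min\{2\beta_1,\beta_1+\alpha_{r+1}\}>\beta_1$. No choice of small positive $\alpha_{r+1},\dots,\alpha_n$ repairs this uniformly in $f$, since exponents are unbounded. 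The paper's construction resolves exactly this tension: it takes $\bm\alpha=(\alpha_1,\dots,\alpha_r,0,\dots,0)$, so that $\langle\bm\alpha,v(f)\rangle$ genuinely computes $w(f)$, and then breaks the ties that this degenerate functional no longer resolves by a \emph{second} functional $\bm\alpha'=(0,\dots,0,\alpha'_{r+1},\dots,\alpha'_n)$, i.e.\ it orders $\bZ^n$ via the rank-two valuation $f\mapsto(w(f),u(f))$ with the lexicographic order. This two-step (lexicographic) order is the idea your proposal is missing; as a consequence your verification of condition (2) also changes, since the paper's $\xi=\langle\bm\alpha,\cdot\rangle$ vanishes on part of $\partial\bR_{\ge0}^n$ and strong convexity must be argued on $C(\cS)$ rather than on all of $\bR_{\ge0}^n$.

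Two further gaps. First, you cannot in general choose the first $r$ entries of $\bm\alpha$ to be a positive rescaling of $\bm\beta$ \emph{and} $\bQ$-linearly independent: a common rescaling preserves $\bQ$-linear dependence (e.g.\ $\bm\beta=(1,1)$). The paper first replaces $Y$ by a higher model, as in the proof of \cite[Lemma 3.6]{JM12}, so that $w=v_{\bm\alpha}$ with $\alpha_1,\dots,\alpha_r$ already $\bQ$-linearly independent; your proof needs this reduction as well, and your blowup step only addresses making $\eta$ a closed point. Second, your argument that the value semigroup generates $\bZ^n$ because each $y_i$ "lies in the image" is unjustified: the semigroup is $v(R\setminus\{0\})$ and the parameters $y_i\in\cO_{Y,\eta}$ need not lie in $R$. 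The paper instead invokes the computation in \cite[Section 4]{Cut13} to produce elements $\bm\beta'$ and $\bm\beta'+\bm{e}_i$ of $\cS$ for all $i$.
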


\begin{proof}
Since $w$ is quasi-monomial, there is a birational morphism of varieties $\mu:Y\to X$, a point $\eta\in Y$, and a divisor $D$ on $Y$ that is simple normal crossing (snc) at $\eta$ such that $w\in \QM_{\eta}(Y,D)$.
We may further assume $\eta\in Y$ is a closed point and $D$ is locally defined at $\eta$ 
by a regular system of a parameters $y_1,\ldots, y_n \in \cO_{Y,\eta}$. (Indeed, since the locus where $(Y,D)$ is snc is open, we may replace  $\eta$ with closed point in $\overline{\eta}$ and possibly add components to $D$ so that the latter holds.)
By replacing $Y$ with a higher birational model as in the proof of \cite[Lemma 3.6]{JM12}, we may assume $w=v_{\bm{\alpha}}$, where ${\bm {\alpha}}:= (\alpha_1,\ldots, \alpha_r, 0, \ldots, 0)$ and $\alpha_1,\ldots, \alpha_r$ are linearly independent over $\mathbb{Q}$.

%\Bluminline{I updated the above paragraph so that it works in arbitrary characteristic}

%\alpha_1, \ldots, \alpha_r$ are linearly independent over $\mathbb{Q}$.
%Since the regular locus is open, we can find a closed regular point $y \in Y$ such that $y_1,\ldots, y_r$ extend to a regular system of parameters $y_1,\ldots, y_n \in \cO_{Y,y}$. Hence, $v$

	% \qiinline{Will be completed}
	 %   By definition, there is a birational morphism $\mu:Y\to X$ and a not necessarily closed point $\eta\in Y$ such that the following holds. The local ring $\cO_{Y,\eta}$ is regular and there exists a  regular system of parameters $y_1,\ldots, y_r$ of $\cO_{Y,\eta}$
      %  and $\bm{\alpha}=(\alpha_1,\cdots, \alpha_r)\in \bR_{\geq 0}^r\setminus {\bf 0}$ such that
		%We may assume that $(Y,E)$ is a log smooth birational model\Blum{Since characteristic is no longer 0, we no longer want a log smooth model. We could phrase this more similar to the first paragraph of Section 2.2.2 and not mention the divisor $E$}
		%over $X$,
		%that $\eta$ is the generic point of a stratum $Z=(y_{r+1}=\ldots=y_n=0)$ of $E$, where $(y_1,\ldots,y_n)$ is an algebraic coordinate at some closed point $z\in Z$, and that $w$ is given by the weight vector $\bm\alpha=(\alpha_1,\ldots,\alpha_r,0,\ldots,0)\in\bR^n$, where $\alpha_1,\ldots,\alpha_r>0$ are linearly independent over $\bQ$. Then by definition, 
	%	for a function $f=\sum_{\bm\beta\in\bZ_{>0}^r}c_{\bm\beta}y^{\bm\beta}\in k[\![y_1,\ldots,y^r]\!]\cong \widehat\cO_{Y,\eta}$, we have $w(f)=\min\{\langle \bm\alpha,\bm\beta \rangle\mid c_{\bm\beta}\ne 0\}$. 
		
		Choose $\alpha'_{r+1},\ldots,\alpha'_n>0$ linearly independent over $\bQ$ and define $\bm\alpha'\coloneqq (0,\ldots,0,\alpha'_{r+1},\ldots,\alpha'_n)$. Then we can define a quasi-monomial valuation $u$ on $k[\![y_1,\ldots,y_n]\!]\cong \widehat\cO_{Y,z}$ given by the weight vector $\bm\alpha'$. Define $\bm{u}:\widehat\cO_{Y,z}\backslash\{0\}\to (\bR^2,\le_{\rm lex})$ by
		\[
			\bm{u}(f)\coloneqq (w(f),u(f)). 
		\]
		Then it is easy to check that $\bm{u}$ is an $\bR^2$-valued valuation. Moreover, it induces a well-order $\preceq$ on $\bN^n$ by 
		\[
			\bm\beta_1\preceq \bm\beta_2 \quad \text{ if and only if } \quad  (\langle \bm\alpha,\bm\beta_1 \rangle, \langle \bm\alpha',\bm\beta_1 \rangle)\le_{\rm lex} (\langle \bm\alpha,\bm\beta_2 \rangle, \langle \bm\alpha',\bm\beta_2 \rangle).
		\]
		Now define $v:\widehat\cO_{Y,z}\backslash\{0\}\to (\bN^n,\preceq)$ by $v(f)\coloneqq \min\{\bm\beta\mid c_{\bm\beta}\ne 0\}$. Then $v$ is a $\bZ^n$-valued valuation on $X$ with center $x$. 
		
		Following the calculation in \cite[Section 4]{Cut13}, we see that the value semigroup $\cS$ contains $\bm\beta'$ and $\bm\beta'+\bm{e}_i$ for some $\bm\beta'$ and $1\le i\le n$, where $\bm{e}_i\in\bZ^n$ is the $i$th standard basis vector. 
		Hence $\cS$ generates $\bZ^n$ as a group.
		
		By definition we have $w(f)=\langle \bm\alpha,v(f) \rangle$. Let $\xi\coloneqq\langle \bm\alpha,\cdot \rangle:\bR^n\to \bR$.
  Then we know that $\xi(\bm\beta)\ge \min\{\alpha_i\mid 1\le i\le r\}>0$  for any $0\ne\bm\beta=v(f)\in\cS$. So $C(\cS)\backslash\{0\}\subset \xi_{>0}$ and condition (2) is satisfied. 
		
		By Izumi's inequality Lemma \ref{lem:Izumi ineq}, there exists $r>0$ such that $w(f)\le r\cdot \ord_\fm(f)$ for any $f\in R\backslash\{0\}$. So if we set $r_0\coloneqq r^{-1}$, then 
		\[
			\ord_\fm(f)\ge r_0w(f)=r_0\xi(v(f)).
		\]
		Thus, condition (3) is satisfied. 
		Condition (1) is straightforward to verify. So we conclude $v$ is a good $\bZ^n$-valued valuation. 
	\end{proof}

	\subsection{A volume formula for convex bodies}
	
	The following lemma will be useful in proving the convexity of multiplicities. It is a slight generalization of \cite[Lemma 4.4]{Izm14} which follows from the same argument, so we omit the proof.
	
	\begin{lemma}\cite{Izm14}\label{lem:integration formula for volumes}
		Let $C\subset \bR^n$ be a strongly convex cone of dimension $n$, and $h:\Int(C)\to \bR_{\ge 0}$ a continuous function homogeneous of degree $1$. Then
		\[
		n!\vol(h_{\le 1})=\int_C e^{-h}d\mu,
		\]
		where $h_{\le 1}\coloneqq \{x\in \Int(C)\mid h(x)\le 1\}$ and $\mu$ is the Lebesgue measure on $\bR^n$.  
	\end{lemma}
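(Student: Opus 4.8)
The plan is to prove the identity by the layer-cake (distribution function) method, reducing both sides to a single one-dimensional integral via the homogeneity of $h$; this is essentially the argument of \cite{Izm14}. First I would replace $C$ by $\Int(C)$ throughout: since $C$ is a closed convex set with nonempty interior, $\partial C=C\setminus\Int(C)$ is Lebesgue-null, and $\Int(C)$ is itself stable under scaling by $\R_{>0}$ because $tC=C$ for $t>0$. All sublevel sets $\{x\in\Int(C):h(x)<t\}$ are open (hence Borel), since $h$ is continuous on the open set $\Int(C)$.

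The first key step is the scaling identity: for every $t>0$,
\[
\mu\big(\{x\in \Int(C):h(x)<t\}\big)=t^n\,\mu\big(\{x\in\Int(C):h(x)<1\}\big),
\]
obtained from the bijection $x\mapsto x/t$ of $\{h<t\}$ onto $\{h<1\}$ (using that $C$ is a cone and $h(x/t)=h(x)/t$), which scales Lebesgue measure by $t^{-n}$. I would also note that $\{h<1\}$ and $h_{\le 1}=\{h\le 1\}$ have the same measure: by homogeneity of degree $1$, $\lambda\cdot h_{\le 1}\subset\{h<1\}$ for $0<\lambda<1$ and $\bigcup_{0<\lambda<1}\lambda\cdot h_{\le1}=\{h<1\}$ as an increasing union, so letting $\lambda\to 1^-$ gives that the common value, call it $V$, equals $\vol(h_{\le1})$.

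The second key step is to write $e^{-h(x)}=\int_{h(x)}^{\infty}e^{-t}\,dt$ (valid since $h\ge 0$) and apply Tonelli, which is legitimate because the integrand is nonnegative and $\{(x,t):x\in\Int(C),\ h(x)<t\}$ is open in $\R^{n+1}$. This yields
\begin{align*}
\int_C e^{-h}\,d\mu
&=\int_{\Int(C)}e^{-h}\,d\mu
=\int_0^\infty e^{-t}\,\mu\big(\{x\in\Int(C):h(x)<t\}\big)\,dt\\
&=V\int_0^\infty t^n e^{-t}\,dt
=V\,\Gamma(n+1)=n!\,\vol(h_{\le1}),
\end{align*}
which is the asserted formula. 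The identity holds in $[0,\infty]$, so no separate discussion is needed when $\vol(h_{\le1})=\infty$.

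I do not expect a serious obstacle: the entire content sits in the homogeneity-driven scaling identity, and the remaining work is routine measure-theoretic bookkeeping (measurability of the sublevel sets, justification of Tonelli, and the harmless discrepancies between $\{h<1\}$ and $\{h\le1\}$ and between $C$ and $\Int(C)$). Strong convexity of $C$ is not actually used in the derivation of the identity; it is relevant only in that, combined with positivity of $h$ away from the origin, it guarantees $V<\infty$, which is the situation in the intended applications.
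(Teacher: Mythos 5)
Your proof is correct and complete. The paper itself omits the proof, citing \cite[Lemma 4.4]{Izm14} and asserting it "follows from the same argument"; your layer-cake computation (write $e^{-h(x)}=\int_{h(x)}^{\infty}e^{-t}\,dt$, apply Tonelli, and use the scaling $\mu(\{h<t\})=t^n\mu(\{h<1\})$ coming from homogeneity) is exactly that argument, and you have filled in the routine points the paper leaves implicit (nullity of $\partial C$, $\mu(\{h<1\})=\mu(\{h\le 1\})$, and the case where both sides are infinite). The only stylistic difference from the way the authors use this identity later in the appendix is that they slice by level sets $(h)_{=\lambda}$ in a coarea-type fashion, whereas you slice by sublevel sets, which is measure-theoretically cleaner and requires no regularity of the level hypersurfaces.
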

	
	\subsection{The cutting function induced by a filtration}
	
	To proceed, we fix a quasi-monomial valuation $w\in\Val_{X,x}$ and a good $\bZ^n$-valued valuation $v$ associated to $w$ given by Lemma \ref{lem:good valuation associated to a qm valuation}. Let $\cS=v(R\backslash\{0\})\cup\{0\}\subset \bZ^n$ be the value semigroup of $v$. Let $\fa_{\bullet}$ be a linearly bounded $\fm$-filtration. The goal of this section is to construct a cutting function on the strongly convex cone $C(\cS)$ satisfying the conditions in Lemma \ref{lem:integration formula for volumes}. 
	
	The following lemma is an easy generalization of Izumi's inequality to filtrations. For a family version, see Lemma \ref{lem:Izumi estimate for m-function}.
	
	\begin{lemma}\label{lem:Izumi bound on value semisubgroup}
		With notation as above, for any $m\in\bR_{\ge 0}$, there exists $\bm\beta_0\in \cS$ such that if $f\in R\backslash \{0\}$ satisfies $v(f)\succeq \bm\beta_0$, then $f\in \fa_{>m}$.
	\end{lemma}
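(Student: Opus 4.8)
The plan is to reduce the statement, via condition~(3) in the definition of a good valuation, to a bound on $\ord_\fm$, and then to exploit the strong convexity of the cone $C(\cS)$ to see that only finitely many elements of $\cS$ can have small $\xi$-value.

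First I would record that sufficiently deep powers of $\fm$ lie in the filtration: since $\fa_1$ is $\fm$-primary, choose $N\in\bZ_{>0}$ with $\fm^N\subset\fa_1$; then $\fm^{Nk}=(\fm^N)^k\subset\fa_1^k\subset\fa_k$ for every $k\in\bN$ by the graded property. Hence, if $\ord_\fm(f)\ge N(\lfloor m\rfloor+1)$, then $f\in\fm^{N(\lfloor m\rfloor+1)}\subset\fa_{\lfloor m\rfloor+1}\subset\fa_{>m}$, the last inclusion because $\lfloor m\rfloor+1>m$. On the other hand, condition~(3) of a good valuation (with $\xi=\langle\bm{\alpha},\cdot\rangle$ as provided by Lemma~\ref{lem:good valuation associated to a qm valuation}) gives $r_0>0$ with $\ord_\fm(f)\ge r_0\,\xi(v(f))$ for all $f\in R\setminus\{0\}$. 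So, setting $M\coloneqq N(\lfloor m\rfloor+1)/r_0$, it suffices to find $\bm{\beta}_0\in\cS$ such that $v(f)\succeq\bm{\beta}_0$ implies $\xi(v(f))>M$.

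The crucial observation is that $\Sigma\coloneqq\{\bm{\beta}\in\cS\mid\xi(\bm{\beta})\le M\}$ is finite. Indeed $\cS\subset C(\cS)$, and the slice $\{x\in C(\cS)\mid \xi(x)\le M\}$ is bounded: otherwise, normalizing an unbounded sequence in it and passing to a convergent subsequence would yield a nonzero $y\in C(\cS)$ with $\xi(y)\le 0$, contradicting $C(\cS)\setminus\{0\}\subset\xi_{>0}$ from condition~(2). Thus this slice is compact, hence contains finitely many lattice points, so $\Sigma$ is finite; it is nonempty since $0\in\Sigma$. Let $\bm{\beta}_1\coloneqq\max_{\preceq}\Sigma$, choose any nonzero $\bm{\gamma}\in\cS$ (for instance $\bm{\gamma}=v(g)$ with $g\in\fm\setminus\{0\}$, using that $v$ is centered at $x$), and put $\bm{\beta}_0\coloneqq\bm{\beta}_1+\bm{\gamma}\in\cS$. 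Since $\preceq$ respects addition and $\bm{\gamma}\succ 0$, we get $\bm{\beta}_0\succ\bm{\beta}_1$; hence $v(f)\succeq\bm{\beta}_0$ forces $v(f)\succ\bm{\beta}_1=\max_{\preceq}\Sigma$, so $v(f)\notin\Sigma$, i.e. $\xi(v(f))>M$. Then $\ord_\fm(f)\ge r_0\,\xi(v(f))>r_0M=N(\lfloor m\rfloor+1)$, so $\ord_\fm(f)\ge N(\lfloor m\rfloor+1)$ and $f\in\fa_{>m}$ by the first step.

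The main obstacle I anticipate is the compactness/finiteness step: one must genuinely use the strong convexity of $C(\cS)$ together with positivity of $\xi$ on $C(\cS)\setminus\{0\}$. It is tempting to instead just pick $\bm{\beta}_0\in\cS$ with $\xi(\bm{\beta}_0)$ large (such $\bm{\beta}_0$ exist, e.g. multiples of a nonzero element of $\cS$), but for a general addition-respecting order $\preceq$ the condition $v(f)\succeq\bm{\beta}_0$ need not control $\xi(v(f))$, so routing through the finite set $\Sigma$ seems unavoidable. Everything else is bookkeeping with the semigroup $\cS$ and the order $\preceq$.
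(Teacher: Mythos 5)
Your proof is correct, and its overall skeleton is the same as the paper's: use condition (3) of Definition \ref{defn:good valuation} (i.e.\ Izumi's inequality, Lemma \ref{lem:Izumi ineq}, with $\xi=\langle\bm\alpha,\cdot\rangle$) to convert a lower bound on $\xi(v(f))$ into a lower bound on $\ord_\fm(f)$, and then use $\fm^{d}\subset\fa_1$ together with gradedness of the filtration to conclude $f\in\fa_{>m}$. The one point where you genuinely diverge is the step you yourself flag as the main obstacle. The paper simply picks $\bm\beta_0\in\cS$ with $\xi(\bm\beta_0)\ge rd(\lceil m\rceil+1)$ and asserts that $v(f)\succeq\bm\beta_0$ forces $\xi(v(f))\ge\xi(\bm\beta_0)$; this is legitimate in context because ``notation as above'' fixes the good valuation produced by Lemma \ref{lem:good valuation associated to a qm valuation}, whose total order $\preceq$ is lexicographic in $\left(\langle\bm\alpha,\cdot\rangle,\langle\bm\alpha',\cdot\rangle\right)$ with $\xi$ in the first slot, so $\preceq$ is monotone for $\xi$. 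Your detour through the finiteness of $\Sigma=\{\bm\beta\in\cS\mid\xi(\bm\beta)\le M\}$, obtained from strong convexity of $C(\cS)$ and positivity of $\xi$ on $C(\cS)\setminus\{0\}$, buys independence from this particular choice of order: it proves the lemma for an arbitrary good valuation in the sense of Definition \ref{defn:good valuation}, at the cost of a compactness argument. Both proofs are complete; yours is slightly longer but more robust, and your observation that a naive choice of $\bm\beta_0$ with large $\xi$-value would not suffice for a general addition-respecting order is exactly the point the paper elides by working with its specific order.
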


	\begin{proof}
		By Lemma \ref{lem:Izumi ineq}, there exists $r>0$ such that $\xi(v(f))=w(f)\le r\cdot \ord_\fm(f)$. By assumption, there exists $d\in\bZ_{>0}$ such that $\fm^{d}\subset \fa_1$. So if we choose $\bm\beta_0\in \cS$ such that $\xi(\bm\beta_0)\ge rd(\lceil m\rceil +1)$, then for $v(f)\succeq \bm\beta_0$, we have $\ord_\fm(f)\ge d(\lceil m\rceil +1)$.  Hence,
		\[
			f\in\fm^{d(\lceil m\rceil +1)}\subset \fa_1^{\lceil m \rceil+1}\subset \fa_{>m}. \qedhere
		\]
	\end{proof}
	
	We define a function $m_{\fa_{\bullet}}:\cS\to \bR_{\geq 0}$ associated to $\fa_{\bullet}$ by
	\begin{equation}\label{eqn:definition of m-function}
		m_{\fa_{\bullet}}(\bm\beta)\coloneqq \sup\{m\in \bR_{\geq 0}\mid v^{-1}(\bm\beta)\cap \fa_m\ne \emptyset\}
	\end{equation}
	for any $\bm\beta\in\cS$. Since $\fa_\bullet$ is left continuous, the above supremum is indeed a maximum. Now we choose an element $f_{\fa_{\bullet},\bm\beta}\in v^{-1}(\bm\beta)\cap \fa_{m_{\fa_{\bullet}}(\bm\beta)}$. We will write $m(\bm\beta)$ and $f_{\bm\beta}$ respectively, if there is no chance of ambiguity. 
	
	We first prove some properties of $m(\cdot)$ and construct a function $h$ on $\cS$ using it, which will play a key role in estimating the multiplicities. 
	
	\begin{proposition}\label{prop:volume computing function}
		Let $m=m_{\fa_\bullet}:\cS\to \bR_{\geq 0}$ be the function associated to a linearly bounded $\fm$-filtration $\fa_{\bullet}$ as above. Then the following statements hold.
		\begin{enumerate}
			\item $m$ is superadditive, that is,
			\[
				m(\bm\beta_1+\bm\beta_2)\ge m(\bm\beta_1)+m(\bm\beta_2),
			\]
			for any $\bm\beta_1,\bm\beta_2\in\cS$.
			
			\item There exists $M_2>0$ such that for any $\bm\beta\in\cS$, we have $m(\bm\beta)\le M_2\xi(\bm\beta)$.
			
			\item If $\fa_\bullet=\fa_\bullet(u)$ for some valuation $u\in\Val_{X,x}$, then we have
			\[
				u(f)\le\lim_{k\to\infty} \frac{m(kv(f))}{k},
			\]
			for all $f\in R\setminus \{0 \}$ %\Blum{added the for all part.} 
			and the equality holds if $u=w$.
			
			\item There is a continuous, concave function $h_{\fa_{\bullet}}:\Int(C(\cS))\to \bR_{\ge 0}$ homogeneous of degree $1$, such that for any $\bm\beta\in\cS$,  $h_{\fa_{\bullet}}(\bm\beta)=\lim_{k\to\infty}\frac{m_{\fa_{\bullet}}(k\bm\beta)}{k}$. As before, we will drop $\fa_{\bullet}$ from the notation of $h(\bm\beta)$ if there is no ambiguity.
			
			\item For $\lambda>0$, we have
			\begin{equation}\label{eqn:rescaling of h}
				h_{\fa_{\lambda\bullet}}=h_{\fa_\bullet}/\lambda.
			\end{equation}
		\end{enumerate}
	\end{proposition}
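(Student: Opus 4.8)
The plan is to prove (1)--(5) essentially in order, extracting the limit in (4) from a Fekete-type argument and then passing from the lattice $\cS$ to the cone $C(\cS)$ by standard convexity. Note first that the supremum in \eqref{eqn:definition of m-function} is attained (left-continuity of $\fa_\bullet$), so the elements $f_{\fa_{\bullet},\bm\beta}\in v^{-1}(\bm\beta)\cap\fa_{m(\bm\beta)}$ are available.

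\emph{Parts (1) and (2).} For (1), take $f_i\in v^{-1}(\bm\beta_i)\cap\fa_{m(\bm\beta_i)}$. Since $R$ is a domain, $f_1f_2\ne0$, $v(f_1f_2)=v(f_1)+v(f_2)=\bm\beta_1+\bm\beta_2$, and $f_1f_2\in\fa_{m(\bm\beta_1)}\fa_{m(\bm\beta_2)}\subset\fa_{m(\bm\beta_1)+m(\bm\beta_2)}$, so $v^{-1}(\bm\beta_1+\bm\beta_2)\cap\fa_{m(\bm\beta_1)+m(\bm\beta_2)}\ne\emptyset$, giving $m(\bm\beta_1+\bm\beta_2)\ge m(\bm\beta_1)+m(\bm\beta_2)$. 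For (2), linear boundedness gives $c>0$ with $\fa_\lambda\subset\fm^{\lceil c\lambda\rceil}$ for all $\lambda$; if $f\in v^{-1}(\bm\beta)\cap\fa_m$, then $\ord_\fm(f)\ge cm$, while Izumi's inequality (Lemma \ref{lem:Izumi ineq} applied to $w$), together with $w=\langle\bm\alpha,v(\cdot)\rangle=\xi\circ v$, gives $\ord_\fm(f)\le w(f)/w(\fm)=\xi(\bm\beta)/w(\fm)$. Hence $m\le\xi(\bm\beta)/(c\,w(\fm))$, so $m(\bm\beta)\le M_2\,\xi(\bm\beta)$ with $M_2\coloneqq(c\,w(\fm))^{-1}$; in particular $m$ is finite and $\bR_{\ge0}$-valued.

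\emph{Part (3) and the limit in (4).} Fix $\bm\beta\in\cS$. By (1) the sequence $k\mapsto m(k\bm\beta)$ is super-additive, and by (2) it satisfies $m(k\bm\beta)\le kM_2\xi(\bm\beta)$, so Fekete's lemma shows $\hat m(\bm\beta)\coloneqq\lim_{k\to\infty}m(k\bm\beta)/k=\sup_k m(k\bm\beta)/k$ exists and is finite. For (3), when $\fa_\bullet=\fa_\bullet(u)$ and $f\in R\setminus\{0\}$, the element $f^k$ lies in $v^{-1}(kv(f))\cap\fa_{ku(f)}(u)$, so $m(kv(f))\ge ku(f)$; dividing by $k$ and letting $k\to\infty$ yields $\hat m(v(f))\ge u(f)$. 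If $u=w$, then conversely any $g$ with $v(g)=kv(f)$ satisfies $w(g)=\xi(kv(f))=kw(f)$, so $m(kv(f))=kw(f)$ and the inequality is an equality.

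\emph{Parts (4) and (5).} Passing to the limit in (1) shows $\hat m$ is super-additive on $\cS$, and $m(k\ell\bm\beta)/k=\ell\cdot m(k\ell\bm\beta)/(k\ell)$ shows $\hat m(\ell\bm\beta)=\ell\,\hat m(\bm\beta)$ for $\ell\in\bZ_{>0}$. Hence $h(\bm\beta/k)\coloneqq\hat m(\bm\beta)/k$ is well-defined on $D\coloneqq\bigcup_{k\ge1}\tfrac1k\cS$, which is dense in $C(\cS)$, and $h$ is super-additive and positively homogeneous of degree $1$ on $D$, so $h(t\bm\gamma_1+(1-t)\bm\gamma_2)\ge th(\bm\gamma_1)+(1-t)h(\bm\gamma_2)$ for rational $t\in[0,1]$. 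By (2), $0\le h\le M_2\xi$ on $D$, so $h$ is bounded on compact subsets of $\Int(C(\cS))$; a bounded $\bQ$-concave function on a dense set of rational points of an open convex set extends uniquely to a continuous concave function $h_{\fa_{\bullet}}$ on $\Int(C(\cS))$, which stays homogeneous of degree $1$ by continuity, and $h_{\fa_{\bullet}}(\bm\beta)=\hat m(\bm\beta)=\lim_k m(k\bm\beta)/k$ for $\bm\beta\in\cS$ (reading the value via the continuous extension at $\cS$-points on the boundary of $C(\cS)$). Finally (5) is immediate: $(\fa_{\lambda\bullet})_\mu=\fa_{\lambda\mu}$, so reparametrizing the supremum in \eqref{eqn:definition of m-function} gives $m_{\fa_{\lambda\bullet}}=m_{\fa_{\bullet}}/\lambda$, hence $h_{\fa_{\lambda\bullet}}=h_{\fa_{\bullet}}/\lambda$ on $\cS$ and everywhere by continuity.

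\emph{Main obstacle.} The algebraic part — (1), (2), (3), (5), and the existence of $\hat m$ — is routine. The real work is packaging (4): verifying that $\bigcup_k\tfrac1k\cS$ is dense in $C(\cS)$, that $h$ is well-defined and $\bQ$-concave there, and that a bounded $\bQ$-concave function extends uniquely to a continuous concave function across a dense subset of an open convex set. The bound from (2) is precisely what makes this extension possible, which is why (2) should be established before (4).
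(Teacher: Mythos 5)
Parts (1), (2), (3), (5) and the Fekete argument for the existence of $\hat m(\bm\beta)=\lim_k m(k\bm\beta)/k$ on $\cS$ are correct and essentially identical to the paper's proof (your version of (2) uses the easy direction of Izumi, $\ord_\fm(f)\le w(f)/w(\fm)$, against $\fa_\lambda\subset\fm^{\lceil c\lambda\rceil}$; the paper does the same computation with $M_2=cr$).

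The gap is in part (4), where you diverge from the paper. You define $h$ only on $D=\bigcup_k\tfrac1k\cS$ and appeal to an unproved lemma that a bounded $\bQ$-concave function on a dense subset of an open convex set extends continuously. As stated this is not justified: $\bQ$-convexity of $D$ only produces points of $D$ \emph{between} two given points $x,y\in D$, whereas the standard ``locally bounded $+$ concave $\Rightarrow$ locally Lipschitz'' argument requires a point $z\in D$ \emph{beyond} $y$ on the ray from $x$ through $y$ (so that $y$ is a rational convex combination of $x$ and $z$ with $|z-y|$ bounded below). Such a $z$ has a negative coefficient on $\bm\beta_1$ when written in terms of the semigroup generators, so it is not automatically in $D$; one needs to know that $\cS$ contains every lattice point of a translated copy of $C(\cS)$, equivalently that $D$ contains all rational points of $\Int(C(\cS))$. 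That is exactly Khovanskii's theorem \cite[\S3, Proposition 3]{Kho92}, which is the one external input in the paper's proof of (4) and which you never invoke. The paper uses it differently: it defines $h(\bm\beta)\coloneqq\limsup_k m(\lfloor k\bm\beta\rfloor)/k$ directly at \emph{every} $\bm\beta\in\Int(C(\cS))$ (legitimate since $\lfloor k\bm\beta\rfloor\in\cS$ for $k\gg0$ by Khovanskii), shows the $\limsup$ is a limit by a Fekete argument adapted to the floor function, proves the monotonicity $h(\bm\beta+\bm\beta_1)\ge h(\bm\beta)$, and gets continuity from the sandwich $(1-\epsilon)h(\bm\beta)\le h(\bm\beta')\le(1+\epsilon)h(\bm\beta)$ via homogeneity. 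Your route can be repaired, but only by adding the same Khovanskii input, so the ``extension across a dense set'' framing hides rather than solves the essential difficulty. (A minor shared caveat: $h$ is defined on $\Int(C(\cS))$, so the identity $h(\bm\beta)=\hat m(\bm\beta)$ should be read for $\bm\beta\in\cS\cap\Int(C(\cS))$; your parenthetical about reading boundary values via the continuous extension is not justified, since a concave function on an open cone need not extend continuously to the boundary.)
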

	
	\begin{proof}
		(1) For any $\bm\beta_1,\bm\beta_2\in\cS$, we have
		\[
		f_{\bm\beta_1}f_{\bm\beta_2}\in v^{-1}(\bm\beta_1+\bm\beta_2)\cap \fa_{m(\bm\beta_1)}\fa_{m(\bm\beta_2)}\subset v^{-1}(\bm\beta_1+\bm\beta_2)\cap \fa_{m(\bm\beta_1)+m(\bm\beta_2)}.
		\]
		Hence, $m(\bm\beta_1+\bm\beta_2)\ge m(\bm\beta_1)+m(\bm\beta_2)$. 
		
		(2)	By Lemma \ref{lem:Izumi ineq} there exists $r>0$ such that 
		%Take $r>(\min\{w(g)\mid g\in\fm\})^{-1}$ then it is easy to see that for any $f\in R\backslash\{0\}$, we have 
		$\ord_\fm(f)< rw(f)=r\xi(v(f))$. Hence if we choose $f\in v^{-1}(\bm\beta)$, then we know that $\ord_\fm(f)< r\xi(\bm\beta)$, that is $f\notin \fm^{\lceil r\xi(\bm\beta)\rceil}$. Since $\fa_\bullet$ is linearly bounded by $\fm$, there exists $c>0$ such that $\fa_{ck}\subset \fm^k$ for any $k\in\bZ_{>0}$. In particular, we get $f\notin \fa_{c\lceil r\xi(\bm\beta)\rceil}$. So $m(\bm\beta)\le M_2\xi(\bm\beta)$, where $M_2\coloneqq cr$.
		
		(3) Assume that $\fa_\bullet=\fa_\bullet(u)$. Then for any $k\in\bN$, we have $f^k \in v^{-1}(kv(f))\cap \fa_{ku(f)}(u)$ which implies that 
		\[
		    m(kv(f)) \geq ku(f).
		\]
		Note that $\lim_{k\to\infty} \frac{m(kv(f))}{k}\le M_2$ exists by (1) and (2). Letting $k\to\infty$ we get the inequality. If $u=w$, then by definition we have $m(\bm\beta)=\langle \bm\alpha,\bm\beta \rangle$, which implies the equality immediately. 
		
		(4) Choose finitely generated subsemigroups $\cS_1\subset \cS_2\subset \cdots$ of $\cS$ such that $\cup_{l=1}^\infty \cS_l=\cS$ and each $\cS_l$ generates $\bZ^n$ as a group. By \cite[\S3, Proposition 3]{Kho92}, there exists $\gamma_l\in \cS_l$ such that 
		\[
		(C(\cS_l)+\gamma_l)\cap \bN^n\subset \cS_l. 
		\]
		
		For any $\bm\beta\in\Int(C(\cS_l))$, there exists $k_0$ such that for any $k\ge k_0$, we have
		\[
		\lfloor k\bm\beta \rfloor \in(C(\cS_l)+\gamma_l)\cap \bN^n\subset \cS_l.
		\]
		So we can define 
		\begin{equation}\label{eqn:definition of h}
			h(\bm\beta)\coloneqq \limsup_{k\to\infty}\frac{m(\lfloor k\bm\beta \rfloor)}{k}.
		\end{equation}
		
		We first show that this is indeed a limit. Since $\bm\beta\in\Int(C(\cS_l))$, there exists $d\in \bZ_{>0}$ such that $B(d\bm\beta,3)\subset C(\cS_l)+\gamma_l$, where we use the maximum norm on $\bR^n$. Hence for any $k_1,k_2\ge k_0$, we have
		\[
			\lfloor (k_1+k_2+d)\bm\beta \rfloor -(\lfloor k_1\bm\beta \rfloor + \lfloor k_2\bm\beta \rfloor)\in C(\cS_l)+\gamma_l\subset \cS_l. 
		\]
		By (1) we have
		\[
			m(\lfloor (k_1+k_2+d)\bm\beta \rfloor)\ge m(\lfloor k_1\bm\beta \rfloor)+m(\lfloor k_2\bm\beta \rfloor). 
		\]
		This shows that the function $k\mapsto m(\lfloor (k-d)\bm\beta \rfloor)$ is sup-additive for $k\ge k_0+d$. So by (2) and Fekete's lemma we know that there exists 
		\[
			\lim_{k\to\infty} \frac{m(\lfloor k\bm\beta \rfloor)}{k}=\lim_{k\to\infty} \frac{m(\lfloor (k-d)\bm\beta \rfloor)}{k}\le M_2.
		\] 
		Since the above definition does not depend on $l$, we have a well-defined function $h$ on $\Int(C(\cS))=\cup_{l=1}^\infty \Int(C(\cS_l))$. Clearly $h$ is homogeneous of degree $1$. 
		
		Next we prove the continuity of $h$. It suffices to prove the continuity of $h$ in $\Int(C(\cS_l))$. We first show that for any $\bm\beta,\bm\beta_1\in\Int(C(\cS_l))$, we have 
		\begin{equation}\label{eqn:h is non-decreasing}
			h(\bm\beta+\bm\beta_1)\ge h(\bm\beta).
		\end{equation}
		Indeed, there exists $k_0$ such that for any $k>k_0$, we have $B(k\bm\beta,2)\subset C(\cS_l)+\gamma_l$. Hence
		\[
			\lfloor k(\bm\beta+\bm\beta_1)\rfloor -\lfloor k\bm\beta \rfloor\in \cS_l,
		\]
		and by (1) we get $m(\lfloor (k(\bm\beta+\bm\beta_1 \rfloor))\ge m(\lfloor k\bm\beta \rfloor))$. Dividing by $k$ and letting $k\to \infty$, we get \eqref{eqn:h is non-decreasing}. Now for any $\epsilon\in(0,1)$ and $\bm\beta\in\Int(C(\cS_l))$, there exists $\rho>0$ such that $B(\epsilon\bm\beta,\rho)\subset \Int(C(\cS_l))$. Hence for any $\bm\beta'\in B(\bm\beta,\rho)$, we have $\epsilon\bm\beta\pm (\bm\beta-\bm\beta')\in \Int(C(\cS_l))$. By \eqref{eqn:h is non-decreasing} and the homogeneity of $h$, we get
		\[
			(1-\epsilon)h(\bm\beta)=h((1-\epsilon)\bm\beta)\le h(\bm\beta')\le h((1+\epsilon)\bm\beta)=(1+\epsilon)h(\bm\beta). 
		\]
		Hence, $h$ is continuous on $\Int(C(\cS_l))$. 
		
		The concavity of $h$ follows easily from its continuity, homogeneity and (1). 
		
		(5) By definition we have
		\[
			m_{\fa_\bullet}(\bm\beta)= \lambda\cdot m_{\fa_{\lambda\bullet}}(\bm\beta)
		\]
		for any $\bm\beta\in \cS$. So we have
		\[
			h_{\fa_\bullet}(\bm\beta)=\lim_{m\to\infty}\frac{m_{\fa_\bullet}(k\bm\beta)}{k}=\lambda\cdot h_{\fa_{\lambda\bullet}}(\bm\beta)
		\]
		for any $\bm\beta\in\cS$. Now \eqref{eqn:rescaling of h} follows easily from the homogeneity and continuity of $h$. 
	\end{proof}

	The following proposition asserts that $f_{\bm\beta}$ gives a basis for $\fa_m/\fa_{>m}$.

	\begin{proposition}\label{prop:compatible basis}
		For any $m\in \bR_{\geq 0}$, the set $\{[f_{\bm\beta}]\mid m(\bm\beta)=m\}$ forms a basis for the $\mathbbm{k}$-vector space $\fa_m/\fa_{>m}$. %\liu{It seems that we need a discreteness statement on jumping numbers of $\fa_\bullet$.}
	\end{proposition}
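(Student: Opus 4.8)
The plan is to prove the two halves of the basis statement — linear independence and spanning — using only the one-dimensional-leaves property of $v$ together with one finiteness input extracted from the good-valuation hypotheses. For \emph{linear independence} I would argue directly: suppose $g := \sum_i c_i f_{\bm\beta_i}$ lies in $\fa_{>m}$, where the $\bm\beta_i \in \cS$ are distinct with $m(\bm\beta_i)=m$ and all $c_i\in\mathbbm{k}^\times$. Since $\cS$ sits inside a strongly convex cone, $v$ vanishes on units, so the terms $c_i f_{\bm\beta_i}$ have pairwise distinct valuations $\bm\beta_i$; hence $g\ne 0$ and $v(g)=\bm\beta_j$ where $\bm\beta_j=\min_\preceq\bm\beta_i$. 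But $g\in\fa_{>m}$ means $g\in\fa_\mu$ for some $\mu>m$, so $v^{-1}(\bm\beta_j)\cap\fa_\mu\ne\emptyset$ and $m(\bm\beta_j)\ge\mu>m$, a contradiction. The same observation shows $f_{\bm\beta}\notin\fa_{>m}$ whenever $m(\bm\beta)=m$, so each class $[f_{\bm\beta}]$ is genuinely nonzero.

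The second step is a finiteness lemma: $T:=\{\bm\beta\in\cS : m(\bm\beta)=m\}$ is finite. Choose $N$ with $\fm^N\subseteq\fa_{m+1}\subseteq\fa_{>m}$. For $\bm\beta\in T$ we have $f_{\bm\beta}\notin\fa_{>m}\supseteq\fm^N$, hence $\ord_\fm(f_{\bm\beta})<N$, and condition (3) of a good valuation gives $\xi(\bm\beta)\le \ord_\fm(f_{\bm\beta})/r_0<N/r_0$. Since $C(\cS)$ is strongly convex, the slice $C(\cS)\cap\{\xi\le N/r_0\}$ is compact and so contains only finitely many points of $\cS\subseteq\bZ^n$.

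For \emph{spanning}, let $\bar h\in\fa_m/\fa_{>m}$ be nonzero. The valuations of the representatives of $\bar h$ lying in $\fa_m$ form a nonempty finite subset of $\cS$ (bounded under $\xi$ by $N/r_0$ exactly as above), so there is a $\preceq$-maximal one, call it $\bm\gamma$, realized by some $h^*\in\fa_m\setminus\fa_{>m}$. One checks $m(\bm\gamma)=m$: otherwise there is $g\in v^{-1}(\bm\gamma)\cap\fa_{>m}$, and one-dimensional leaves produces $\lambda$ with $v(h^*-\lambda g)\succ\bm\gamma$, while $h^*-\lambda g$ is still a representative of $\bar h$ in $\fa_m$, contradicting maximality. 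Hence $f_{\bm\gamma}$ is defined, and applying one-dimensional leaves to $h^*$ and $f_{\bm\gamma}$ yields $\lambda\in\mathbbm{k}$ with either $h^*-\lambda f_{\bm\gamma}\in\fa_{>m}$ — in which case $\bar h=\lambda[f_{\bm\gamma}]$ and we are done — or $v(h^*-\lambda f_{\bm\gamma})\succ\bm\gamma$, in which case $\bar h-\lambda[f_{\bm\gamma}]$ is a nonzero class whose maximal representative valuation again lies in $T$ and is strictly larger than $\bm\gamma$. Iterating, this valuation strictly increases within the finite set $T$, so after finitely many steps the process stops with $\bar h$ written as a $\mathbbm{k}$-combination of the classes $[f_{\bm\beta}]$ with $m(\bm\beta)=m$.

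The step I expect to be the main obstacle is the spanning argument: the naive induction "on the valuation of a representative" runs the wrong way, since each cancellation step raises rather than lowers the valuation. What rescues it is precisely the finiteness of $T$ from the second step — it bounds the maximal representative valuations and thereby forces the cancellation procedure to terminate — which is why Izumi's inequality (good-valuation condition (3)) and the strong convexity of $C(\cS)$ are genuinely used here rather than being incidental.
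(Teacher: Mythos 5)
Your proof is correct and rests on the same two pillars as the paper's own argument: the one-dimensional-leaves property to cancel leading terms, and condition (3) of a good valuation together with the strong convexity of $C(\cS)$ to supply the necessary finiteness. The difference is only organizational: where the paper expands a fixed representative $f$ over all of $\cS$ up to a threshold $\bm\beta_0$ and then shows by induction that the terms indexed outside $\fB_m$ land in $\fa_{>m}$, you run a greedy cancellation on the class itself, tracking the maximal valuation of a representative and using the finiteness of $\{\bm\beta \mid m(\bm\beta)=m\}$ to force termination --- a slightly cleaner bookkeeping of the same idea.
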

	
	\begin{proof}
		We may assume that $m$ is a jumping number of $\fa_\bullet$. Let 
		\[
			\fB_m\coloneqq %\{\bm\beta\in\cS\mid f_{\bm\beta}\in \fa_m\backslash \fa_{>m}\}
			\{\bm\beta\in\cS\mid m(\bm\beta)=m\}.
		\]
		
		We first show that the set $\{[f_{\bm\beta}]\}$ is linearly independent. Assume to the contrary that there exist $c_{\bm\beta}\in \mathbbm{k}$, not all zero, such that 
		\[
			g\coloneqq \sum_{\bm\beta\in\fB_m} c_{\bm\beta}f_{\bm\beta}\in \fa_{>m}.
		\]
		Then we have $v(g)=\bm\beta_0\coloneqq \min\{\bm\beta\in\fB_m\mid c_{\bm\beta}\ne0\}$, which implies $m(\bm\beta_0)>m$, a contradiction. 
		
		It remains to prove that for any $f\in\fa_m\backslash\fa_{>m}$, there exist $c_{\bm\beta}\in \mathbbm{k}$ for each $\bm\beta\in\fB_m$ such that 
		\begin{equation}\label{eqn:generation of f_beta}
			f-\sum_{\bm\beta\in\fB_m} c_{\bm\beta}f_{\bm\beta}\in \fa_{>m}. 
		\end{equation}
		By condition (1) of Definition \ref{defn:good valuation}, there exists a unique sequence $\{a_{\bm\beta}\}_{\bm\beta\in \cS}$ such that for any $\bm\beta'\in \cS$, we have
		\[
		w(f-\sum_{\bm\beta\preceq \bm\beta'}a_{\bm\beta}f_{\bm\beta})>\langle \bm\alpha,\bm\beta' \rangle.
		\]
		Let $\bm\beta_0$ be as defined in Lemma \ref{lem:Izumi bound on value semisubgroup}. In particular, we have $\bm\beta\prec \bm\beta_0$ for any $\bm\beta\in \fB_m$. Write
		\[
			f-\sum_{\bm\beta\in\fB_m} a_{\bm\beta}f_{\bm\beta}=(f-\sum_{\bm\beta\prec \bm\beta_0} a_{\bm\beta}f_{\bm\beta})
			+\sum_{\bm\beta\notin\fB_m,\bm\beta\prec \bm\beta_0}a_{\bm\beta}f_{\bm\beta}=:g_1+g_2
		\]
		By the choice of $a_{\bm\beta}$, we know that $v(g_1)\ge \bm\beta_0$. Hence $g_1\in \fa_{>m}$ by the choice of $\bm\beta_0$. 
		
	 \textbf{Claim:} For any $\bm\beta\preceq \bm\beta_0$ with $\bm\beta\notin\fB_m$, if $a_{\bm\beta}\ne 0$, then $m(\bm\beta)> m$. 
		
		We prove the claim by induction. Let $\bm\beta\preceq \bm\beta_0$ and assume that the claim is true for any $\bm\beta'\prec \bm\beta$. Then $f_{\bm\beta'}\in \fa_m$ for $\bm\beta'\prec \bm\beta$, and hence $f-\sum_{\bm\beta'\prec \bm\beta}a_{\bm\beta'}f_{\bm\beta'}\in\fa_m$. Now $a_{\bm\beta}\ne 0$, so we have 
		\[
			v(f-\sum_{\bm\beta'\prec \bm\beta}a_{\bm\beta'}f_{\bm\beta'})=v(a_{\bm\beta}f_{\bm\beta})=\bm\beta.
		\] 
		Hence $v^{-1}(\bm\beta)\cap\fa_m\ne \emptyset$ and  $m(\bm\beta)\ge m$. Since $m(\bm\beta)\ne m$ by assumption, we have strict inequality and the claim is proved. 
		
		By the claim we know that $g_2\in \fa_{>m}$. Thus we conclude that 
		\[
			f-\sum_{\bm\beta\in\fB_m} a_{\bm\beta}f_{\bm\beta}\in\fa_{>m},
		\]
		that is, \eqref{eqn:generation of f_beta} holds with $c_{\bm\beta}=a_{\bm\beta}$. 
	\end{proof}

	\subsection{Multiplicities and Okunkov bodies}
	
	In this section we apply the strategy as in \cite{LM09,KK14} to estimate the multiplicities.
	
	Let $\fa_{\bullet,i}$ be two linearly bounded $\fm$-filtrations for $i=0,1$. Recall that the geodesic $(\fa_{\bullet,t})_{t\in [0,1]}$ between $\fa_{\bullet,0}$ and $\fa_{\bullet,1}$ is defined as
	\[
		\fa_{\lambda,t}\coloneqq \sum_{\lambda=(1-t)\mu+ t\nu} (\fa_{\mu,0}\cap \fa_{\nu,1}).
	\]
	Then $\fa_{\bullet,t}$ is also a linearly bounded $\fm$-filtration.

\begin{comment}	\begin{remark}\label{rmk:valuation case}
		If $\fa_{\bullet,i}=\fa_{\bullet}(v_i)$ for some valuation $v_i\in\Val_{X,x}$, then it is easy to see that for any $m\in \bN$ and any $f\in \fa_{m,t}$, we have
		\[
			(1-t)v_0(f)+tv_1(f)\ge m,
		\]
		but the converse is in general not true. But we have some uniform control in this direction.\liu{modify this later}
	\end{remark}
\end{comment}
	
	\begin{theorem}\label{thm:appendix}
	Let $w\in \Val_{X,x}$ be a quasi-monomial valuation. Denote by $\fa_{\bullet,0}:=\fa_{\bullet}(w)$.
    Let  $\fa_{\bullet,1}$ be a linearly bounded $\fm$-filtration. Let $(\fa_{\bullet,t})_{t\in[0,1]}$ be the geodesic between $\fa_{\bullet,0}$ and $\fa_{\bullet,1}$. 
    The function $E(t) \colon [0,1]\to \bR$ defined by $E(t) \coloneqq  \e(\fa_{\bullet,t})$
    satisfies the following properties:
    \begin{enumerate}
        \item $E(t)$ is smooth; 
        \item $E(t)^{-1/n}$ is concave, meaning 
        \begin{equation*}\label{eqn:app-E^1/n is concave}
            E(t)^{-1/n}\ge (1-t)E(0)^{-1/n}+tE(1)^{-1/n} \quad \text{ for all } t\in [0,1];
        \end{equation*}
        \item Suppose, in addition, $\fa_{\bullet,1} = \fa_{\bullet}(w')$ for some valuation $w'\in \Val_{X,x}$. Then $E(t)^{-1/n}$ is linear if and only if $w' = cw$ for some $c\in \bR_{>0}$.
%$\widetilde{\fa}_{\bullet,0} =\widetilde{\fa}_{c\bullet,1}$ for some $c\in \bR_{>0}$.
    \end{enumerate}
    \end{theorem}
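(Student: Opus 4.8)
The plan is to reduce Theorem~\ref{thm:appendix} to the two–dimensional measure argument of Section~\ref{sec:multandgeodesics}. The three ingredients are: (i) a volume formula expressing $\e$ of a linearly bounded filtration as the Lebesgue volume of a sublevel set of its concave transform on the cone of the good valuation; (ii) the fact that along the geodesic this concave transform interpolates \emph{linearly}; (iii) the observation that, once (i) and (ii) are in place, the situation is literally the one treated in Section~\ref{sec:multandgeodesics}. So first: fix, via Lemma~\ref{lem:good valuation associated to a qm valuation}, a good $\bZ^n$-valued valuation $v$ associated to $w$, with value semigroup $\cS$, cone $C\coloneqq C(\cS)$, and linear functional $\xi=\langle\bm\alpha,\cdot\rangle$. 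For a linearly bounded $\fm$-filtration $\fb_\bullet$, Proposition~\ref{prop:volume computing function} gives the super-additive function $m_{\fb_\bullet}$ on $\cS$ and its continuous, concave, degree-one homogeneous transform $h_{\fb_\bullet}$ on $\Int C$. I would first prove
\[
\e(\fb_\bullet)=n!\,\vol\bigl(\{x\in C: h_{\fb_\bullet}(x)\le 1\}\bigr)=\int_{C}e^{-h_{\fb_\bullet}}\,d\mu .
\]
Here Proposition~\ref{prop:compatible basis} gives $\ell(R/\fb_m)=\#\{\bm\beta\in\cS:m_{\fb_\bullet}(\bm\beta)<m\}$; a standard Okounkov-body lattice-point count (as in \cite{LM09,KK14}, using super-additivity of $m_{\fb_\bullet}$, the bound $m_{\fb_\bullet}\le M_2\xi$, and Khovanskii's sub-semigroup lemma \cite{Kho92} exactly as in the proof of Proposition~\ref{prop:volume computing function}(4)) identifies the limit with $\vol\{h_{\fb_\bullet}\le 1\}$, and the last equality is Lemma~\ref{lem:integration formula for volumes}. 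Since $\fa_{\bullet,0}=\fa_\bullet(w)$ and $w=\xi\circ v$, Proposition~\ref{prop:volume computing function}(3) gives $h_{\fa_{\bullet,0}}=\xi$ (linear), and a two-sided Izumi estimate (Lemma~\ref{lem:Izumi ineq}, Proposition~\ref{prop:volume computing function}(2), and $\fm^{d\bullet}\subseteq\fa_{\bullet,1}$) yields $0<c_1\le c_2$ with $c_1\xi\le h_{\fa_{\bullet,1}}\le c_2\xi$ on $\Int C$.

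The heart of the argument is the identity $m_{\fa_{\bullet,t}}=(1-t)\,m_{\fa_{\bullet,0}}+t\,m_{\fa_{\bullet,1}}$ on $\cS$, hence $h_{\fa_{\bullet,t}}=(1-t)h_{\fa_{\bullet,0}}+t\,h_{\fa_{\bullet,1}}$ on $\Int C$. The inequality ``$\ge$'' is immediate: the distinguished element $f_{\bm\beta,1}\in v^{-1}(\bm\beta)\cap\fa_{m_{\fa_{\bullet,1}}(\bm\beta),1}$ also lies in $\fa_{\xi(\bm\beta),0}$ since $w(f_{\bm\beta,1})=\xi(\bm\beta)$, so
\[
f_{\bm\beta,1}\in\fa_{\xi(\bm\beta),0}\cap\fa_{m_{\fa_{\bullet,1}}(\bm\beta),1}\subseteq\fa_{(1-t)\xi(\bm\beta)+t\,m_{\fa_{\bullet,1}}(\bm\beta),\,t}.
\]
For ``$\le$'', take $g\in v^{-1}(\bm\beta)\cap\fa_{\lambda,t}$ and write $g=\sum_j g_j$ with $g_j\in\fa_{\mu_j,0}\cap\fa_{\nu_j,1}$ and $(1-t)\mu_j+t\nu_j=\lambda$. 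Expand each $g_j$ in the $\fa_{\bullet,1}$-compatible basis $\{f_{\bm\epsilon,1}\}_{\bm\epsilon\in\cS}$ of Proposition~\ref{prop:compatible basis}: since $g_j\in\fa_{\nu_j,1}$, every $\bm\epsilon$ in its support satisfies $m_{\fa_{\bullet,1}}(\bm\epsilon)\ge\nu_j$, and since $g_j\in\fa_{\mu_j,0}=\{f:\xi(v(f))\ge\mu_j\}$ one has $\xi(v(g_j))\ge\mu_j$, where $v(g_j)$ is the $\preceq$-minimal element of the support. As $v(g)=\bm\beta$, some $g_{j_0}$ has $\bm\beta$ in its support, so $m_{\fa_{\bullet,1}}(\bm\beta)\ge\nu_{j_0}$; and $v(g_{j_0})\preceq\bm\beta$ together with the fact that $\xi$ is the primary key of $\preceq$ gives $\mu_{j_0}\le\xi(v(g_{j_0}))\le\xi(\bm\beta)$, whence $\lambda=(1-t)\mu_{j_0}+t\nu_{j_0}\le(1-t)\xi(\bm\beta)+t\,m_{\fa_{\bullet,1}}(\bm\beta)$. (This step crucially uses that $\fa_{\bullet,0}$ is the valuation filtration of $w=\xi\circ v$, so $\fa_{\mu,0}$ is cut out by a condition on $v$ alone — which is exactly why only this weaker version is reachable this way; one may pass to $\widehat R$ to make the basis expansions legitimate, and Lemma~\ref{lem:Izumi estimate for m-function} handles the required uniformity.)

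With the identity in hand, push the Lebesgue measure on $C$ forward along $\Phi\coloneqq(h_{\fa_{\bullet,0}},h_{\fa_{\bullet,1}})=(\xi,h_{\fa_{\bullet,1}})\colon C\to\bR^2$ to a measure $\sigma$ on $\bR^2$. By the volume formula and Step~2, $\sigma$ is homogeneous of degree $n$, supported in the cone $\{(u,v):c_1u\le v\le c_2u\}$ away from the axes, and $n!\,\sigma(\{(1-t)u+tv\le a\})=\e(\fa_{a\bullet,t})$; in particular $E(0)=n!\,\sigma(\{u\le1\})$ and $E(1)=n!\,\sigma(\{v\le1\})$. This is precisely the setting of Section~\ref{sec:multandgeodesics} with $\mu$ replaced by $\sigma$: forming the induced measure $\widetilde\sigma$ on a compact subinterval of $(0,1)$ as in the construction preceding Lemma~\ref{l:Vformula}, one obtains $E(t)=n!\int\bigl((1-t)z+t(1-z)\bigr)^{-n}\,d\widetilde\sigma$, and the proof of Theorem~\ref{thm:main theorem} then applies verbatim: Leibniz's rule gives (1), Cauchy--Schwarz gives (2), and the equality analysis shows $E(t)^{-1/n}$ is linear iff $\widetilde\sigma$ is a point mass iff $\supp\sigma$ is a single ray iff $h_{\fa_{\bullet,1}}=c\,\xi$ for some $c>0$. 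For (3), when $\fa_{\bullet,1}=\fa_\bullet(w')$ the condition $h_{\fa_{\bullet,1}}=c\,\xi$ is equivalent to $w'=cw$: one direction is Proposition~\ref{prop:volume computing function}(5) applied to $\fa_\bullet(cw)=\fa_{(1/c)\bullet}(w)$; for the converse, $h_{\fa_\bullet(w')}=c\xi$ forces $\e(\fa_\bullet(w'))=c^{-n}\e(\fa_\bullet(w))=\e(\fa_\bullet(cw))$ and, by Proposition~\ref{prop:volume computing function}(3), $w'\le cw$, so $\fa_\bullet(cw)\subseteq\fa_\bullet(w')$ with equal multiplicity; Theorem~\ref{thm:characterization for sequences with equal volume} and the fact that valuation filtrations are saturated (Lemma~\ref{lem:valuative ideals sat}) then give $\fa_\bullet(cw)=\fa_\bullet(w')$, i.e.\ $w'=cw$.

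I expect the main obstacle to be the ``$\le$'' half of the interpolation identity in Step~2 — namely showing it holds \emph{on the nose} rather than up to a sublinear error. Getting this requires combining the $\fa_{\bullet,1}$-compatible basis of Proposition~\ref{prop:compatible basis} with the special structure of $\fa_{\bullet,0}=\fa_\bullet(w)$, and carefully tracking how the $v$-value of a sum of pieces controls all of them; the completion and convergence issues attached to the basis expansion are the technical point here. Everything after that — Step~3 — is a formal consequence of the machinery already developed in Section~\ref{sec:multandgeodesics}.
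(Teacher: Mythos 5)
Your proposal is correct and follows the same overall strategy as the paper's appendix: fix a good $\bZ^n$-valued valuation $v$ adapted to $w$, show that the concave transforms satisfy $h_t=(1-t)h_0+th_1$, express $E(t)$ as $\int_{C(\cS)}e^{-h_t}\,d\mu$ via the Okounkov-body count, and conclude by Cauchy--Schwarz. Two execution details differ, both harmlessly. For the key interpolation identity (the paper's Lemma \ref{lem:control for m^t by endpoints}), your ``$\le$'' direction expands each summand $g_j$ in the $\fa_{\bullet,1}$-compatible basis and locates a $j_0$ whose expansion contains $f_{\bm\beta,1}$; the paper instead normalizes $\mu_j=w(g_j)$, merges terms with equal $w$-value, and observes that the strict ordering of the resulting $v$-values forces $v(g)=v(g_1)$. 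Both arguments exploit exactly the same special feature, namely that $\fa_{\mu,0}=\fa_\mu(w)$ is cut out by $\xi\circ v$, and your version needs the same care about infinite basis expansions (the cutoff $\bm\beta_0$ of Lemma \ref{lem:Izumi bound on value semisubgroup}), which you acknowledge. For the endgame, you push Lebesgue measure forward along $(\xi,h_1)$ to land in the setting of Section \ref{sec:multandgeodesics}, whereas the paper differentiates $\int e^{-h_t}\,d\mu$ directly and applies Cauchy--Schwarz on the level sets $(h_t)_{=1}$ via a co-area slicing; these are interchangeable, and your reduction has the mild advantage of reusing the already-verified equality analysis of Theorem \ref{thm:main theorem}. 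Your treatment of part (3) via Theorem \ref{thm:characterization for sequences with equal volume} and Lemma \ref{lem:valuative ideals sat} is a legitimate substitute for the paper's appeal to \cite{LX16}.
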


From now on, we follow the notation of Theorem \ref{thm:appendix}. Let $v$ be a good valuation associated to $w$ given by Lemma \ref{lem:good valuation associated to a qm valuation}.

	\begin{lemma}\label{lem:control for m^t by endpoints}
		Let $\fa_{\bullet,t}$ be as above for $t\in[0,1]$. Let $m^{(t)}=m_{\fa_{\bullet,t}}:\cS\to \bR_{\geq 0}$ be the function defined in \eqref{eqn:definition of m-function}. Then we have
		\[
%			m^{(t)}(\bm\beta)\le (1-t)m^{(0)}(\bm\beta)+tm^{(1)}(\bm\beta)\le m^{(t)}(\bm\beta)+2
        m^{(t)}(\bm\beta)= (1-t)m^{(0)}(\bm\beta)+tm^{(1)}(\bm\beta)
		\]
		for any $t\in [0,1]$ and any $\bm\beta\in\cS$. 
	\end{lemma}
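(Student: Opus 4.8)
The plan is to work with the associated graded structure of the good valuation $v$, in which the filtration $\fa_{\bullet,0}=\fa_\bullet(w)$ is ``monomial''. Since $w(f)=\langle \bm\alpha,v(f)\rangle$, the equality case of Proposition \ref{prop:volume computing function}(3) applied with $u=w$ gives $m^{(0)}(\bm\beta)=\langle \bm\alpha,\bm\beta\rangle$ for every $\bm\beta\in\cS$, so the asserted identity is equivalent to $m^{(t)}(\bm\beta)=(1-t)\langle \bm\alpha,\bm\beta\rangle + t\,m^{(1)}(\bm\beta)$. This is the one place in the appendix where the hypothesis $\fa_{\bullet,0}=\fa_\bullet(w)$ is genuinely used.

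For the inequality $m^{(t)}(\bm\beta)\ge (1-t)\langle \bm\alpha,\bm\beta\rangle + t\,m^{(1)}(\bm\beta)$, I would take a representative $f_{1,\bm\beta}\in v^{-1}(\bm\beta)\cap \fa_{m^{(1)}(\bm\beta),1}$ (which exists by left continuity of $\fa_{\bullet,1}$). Because $v(f_{1,\bm\beta})=\bm\beta$, we have $w(f_{1,\bm\beta})=\langle \bm\alpha,\bm\beta\rangle$, hence $f_{1,\bm\beta}\in\fa_{\langle \bm\alpha,\bm\beta\rangle}(w)=\fa_{m^{(0)}(\bm\beta),0}$. Thus $f_{1,\bm\beta}$ lies in $\fa_{m^{(0)}(\bm\beta),0}\cap \fa_{m^{(1)}(\bm\beta),1}$, which is one of the summands in the definition of $\fa_{\lambda,t}$ for $\lambda=(1-t)m^{(0)}(\bm\beta)+t\,m^{(1)}(\bm\beta)$; since $v(f_{1,\bm\beta})=\bm\beta$, this gives $m^{(t)}(\bm\beta)\ge\lambda$.

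For the reverse inequality I would unwind the definition and then straighten. If $f\in v^{-1}(\bm\beta)\cap\fa_{m,t}$, then $f=\sum_j g_j$ with $g_j\in \fa_{\mu_j}(w)\cap\fa_{\nu_j,1}$ and $(1-t)\mu_j+t\nu_j=m$; since $w(g_j)\ge\mu_j$, each nonzero summand satisfies $g_j\in\fa_{\nu_j,1}$ and $(1-t)w(g_j)+t\nu_j\ge m$, and I will only keep track of this weaker property. Whenever at least two summands attain the minimal value $\bm\gamma_0=\min_j v(g_j)$, I replace all of them by their sum $g'$ (this keeps $f=\sum g_j$): by the one-dimensional leaves property of $v$ one gets $v(g')\succeq\bm\gamma_0$, and since the total order $\preceq$ is compatible with $\langle \bm\alpha,\cdot\rangle$ in the sense that $\bm\delta\succeq\bm\gamma$ implies $\langle \bm\alpha,\bm\delta\rangle\ge\langle \bm\alpha,\bm\gamma\rangle$ (by the construction in Lemma \ref{lem:good valuation associated to a qm valuation}), one has $w(g')\ge\langle \bm\alpha,\bm\gamma_0\rangle$; together with $g'\in\fa_{\nu',1}$ for $\nu'=\min_i\nu_i$ (the minimum over the merged indices) this shows $g'$ still satisfies $(1-t)w(g')+t\nu'\ge m$. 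Each such step strictly decreases the number of summands, so the procedure terminates with a decomposition in which all the $v(g_j)$ are distinct; then the minimal value is attained by a single summand $g_{j_0}$, no cancellation occurs, and $v(g_{j_0})=v(f)=\bm\beta$. Since $g_{j_0}\in\fa_{\nu_{j_0},1}$ and $v(g_{j_0})=\bm\beta$ we get $m^{(1)}(\bm\beta)\ge\nu_{j_0}$, hence $m\le(1-t)w(g_{j_0})+t\nu_{j_0}=(1-t)\langle \bm\alpha,\bm\beta\rangle + t\nu_{j_0}\le(1-t)\langle \bm\alpha,\bm\beta\rangle + t\,m^{(1)}(\bm\beta)$, as desired.

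The main obstacle is the reverse inequality: a priori, summing elements drawn from the various ideals $\fa_{\mu_j}(w)\cap\fa_{\nu_j,1}$ can create cancellation among their $v$-leading terms, so an element of $\fa_{m,t}$ of value $\bm\beta$ need not contain a single summand of value $\bm\beta$. Resolving these collisions is exactly what the one-dimensional leaves property provides; the remaining care lies in checking that the two numerical constraints (membership in $\fa_{\nu',1}$ and the inequality $(1-t)w(\cdot)+t\nu'\ge m$) survive each merging step and that the process terminates, both of which are straightforward here since each merge strictly reduces the number of summands.
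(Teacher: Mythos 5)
Your proof is correct and follows essentially the same route as the paper's: the lower bound comes from observing that any representative of $m^{(1)}(\bm\beta)$ automatically lies in $\fa_{m^{(0)}(\bm\beta),0}$ because $w=\langle\bm\alpha,v(\cdot)\rangle$, and the upper bound from decomposing $f\in v^{-1}(\bm\beta)\cap\fa_{m,t}$ and merging colliding summands until the minimal $v$-value is attained by a single term, which then must equal $\bm\beta$. One cosmetic remark: the inequality $v(g')\succeq\bm\gamma_0$ for a sum is just the ultrametric axiom of the valuation, not the one-dimensional-leaves property, which your argument does not actually need.
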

	
	\begin{proof}
	    Fix an arbitrary element $\bm\beta\in \cS$. For simplicity, denote by $m_t:=m^{(t)}(\bm\beta)$ for $t\in [0,1]$. 
	    
	    We first show that $m_t\geq (1-t)m_0 + tm_1$.
	    By assumption, we have $w(f) = \langle\bm\alpha, v(f)\rangle$ for any $f\in R\setminus\{0\}$. Thus
	    \[
	    m_0 =m^{(0)}(\bm\beta)= \max\{w(f)\mid f\in R\setminus\{0\} \textrm{ and }v(f)=\bm\beta\} = \langle \bm\alpha, \bm\beta\rangle.
	    \]
	    Pick $g\in v^{-1}(\bm\beta)\cap \fa_{m_1,1}$. Then clearly $w(g) = \langle\bm\alpha, v(g)\rangle = \langle\bm\alpha, \bm\beta\rangle = m_0$. Thus $g\in \fa_{m_0, 0}$ which implies that $v^{-1}(\bm\beta)\cap \fa_{m_1,1} \cap \fa_{m_0,0}$ is non-empty. Since $\fa_{m_1,1} \cap \fa_{m_0,0}\subset \fa_{(1-t)m_0+tm_1,t}$, we know that $v^{-1}(\bm\beta)\cap \fa_{(1-t)m_0+tm_1,t}\neq \emptyset$ which implies $m_t\geq (1-t)m_0+tm_1$ by \eqref{eqn:definition of m-function}.
	    
	    Next, we show that $m_t\leq (1-t)m_0+tm_1$. Choose $f\in v^{-1}(\bm\beta) \cap \fa_{m_t, t}$. We may write $f=\sum_{i} f_i$ as a finite sum such that $f_i \in \fa_{\mu_i, 0}\cap \fa_{\nu_i,1}$ where $(1-t)\mu_i +t\nu_i=m_t$ for every $i$. Since $f_i\in \fa_{\mu_i,0}=\fa_{\mu_i}(w)$, we have $w(f_i)\geq \mu_i$. Hence after replacing $(\mu_i, \nu_i)$ by $(w(f_i), t^{-1}(m_t-(1-t)w(f_i))$ the assumption $f_i \in \fa_{\mu_i, 0}\cap \fa_{\nu_i,1}$ still holds. Furthermore, if we have $\mu_i=\mu_j$ for some $i\neq j$ then we may replace $(f_i, f_j)$ by $f_i+f_j$. After finitely many steps of replacements and permutations, we obtain a decomposition $f=\sum_{i=1}^{l} f_i$ such that $f_i\in \fa_{\mu_i,0}\cap \fa_{\nu_i,1}$ where $(1-t)\mu_i + t\nu_i = m_t$ and $\mu_i=w(f_i)$ for every $i$, and $\mu_1<\mu_2<\cdots<\mu_l$. Denote by $\bm\beta_i:= v(f_i)$. Since $\mu_i = \langle\bm\alpha, \bm\beta_i\rangle$, we know that $\bm\beta_1\prec \bm\beta_2\prec\cdots\prec \bm\beta_l$. As a result, we have
	    \[
	    \bm\beta = v(f) = v(\sum_{i=1}^l f_i) = v(f_1)= \bm\beta_1.
	    \]
	    Thus, $f_1\in v^{-1}(\bm\beta)\cap \fa_{\nu_1,1}$, which implies that $m_1\geq \nu_1$ by \eqref{eqn:definition of m-function}. Meanwhile, we have 
	    \[
	    \mu_1=w(f_1)=\langle\bm\alpha,\bm\beta\rangle = m_0.
	    \]Therefore,
	    \[
	     m_t= (1-t)\mu_1 + t\nu_1\leq (1-t)m_0 + tm_1.
	   , \]
	   which completes the proof.
	\end{proof}

\iffalse
	\begin{proof}[Old proof]
		First assume that $m^{(t)}(\bm\beta)=m$, that is, we may choose $f\in v^{-1}(\bm\beta)\cap \fa_{m,t}$. Then by definition there exists $0\le j\le m$ such that $f\in\fa_{j/(1-t),0}\cap \fa_{(m-j)/t,1}$. Hence $m^{(0)}(\bm\beta)\ge j/(1-t)$ and $m^{(1)}(\bm\beta)\ge (m-j)/t$. So we get the first inequality. 
		
		Now assume $(1-t)m^{(0)}(\bm\beta)+tm^{(1)}(\bm\beta)\ge m$ for some $m\in \bN$. We want to show that $m^{(t)}(\bm\beta)\ge m-2$. Assume to the contrary that $m^{(t)}(\bm\beta)=m'< m-2$. Denote $m^{(0)}(\bm\beta)\coloneqq j$,  $m^{(1)}(\bm\beta)\coloneqq k$, and set $j'\coloneqq \lfloor (1-t)j \rfloor$ and $k'\coloneqq m'+1-j'$. Then clearly $\lceil j'/(1-t) \rceil \le j$, so $v^{-1}(\bm\beta)\cap \fa_{j'/(1-t),0}\ne \emptyset$. By assumption we have $v^{-1}(\bm\beta)\cap \fa_{k'/t,1}=\emptyset$\qi{This is not true, so the proof does not work in general}, that is, $k<k'/t$. So we get
		\[
			(1-t)j+tk\le j'+1+k'=m'+2<m,
		\]
		which is a contradiction. Thus the second inequality is proved.  
	\end{proof}
\fi

	As an immediate corollary to Lemma \ref{lem:control for m^t by endpoints}, we get the following result.
	
	\begin{corollary}\label{cor:h_t is linear}
		Let $h_t\coloneqq h_{\fa_{\bullet,t}}:\Int(C(\cS))\to \bR_{>0}$ be defined as in \eqref{eqn:definition of h} for $t\in[0,1]$. Then we have $h_t=(1-t)h_0+th_1$. 
	\end{corollary}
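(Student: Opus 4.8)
The plan is to combine the exact additivity of the cutting functions $m^{(t)}$ along the geodesic, which is Lemma \ref{lem:control for m^t by endpoints}, with the limit description of $h_t$ on the value semigroup from Proposition \ref{prop:volume computing function}(4), and then to propagate the resulting identity from $\cS$ to all of $\Int(C(\cS))$ by continuity and homogeneity. So there is essentially nothing new to prove: the substantive work was done in Lemma \ref{lem:control for m^t by endpoints}.

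First I would fix $\bm\beta\in\cS$ lying in $\Int(C(\cS))$. For every $k\in\bZ_{>0}$ the point $k\bm\beta$ again lies in $\cS$ (as $\cS$ is a sub-semigroup of $\bZ^n$, so in particular the $\lfloor\,\cdot\,\rfloor$-rounding in the definition of $h$ is vacuous here), and Lemma \ref{lem:control for m^t by endpoints} gives
\[
m^{(t)}(k\bm\beta) = (1-t)\, m^{(0)}(k\bm\beta) + t\, m^{(1)}(k\bm\beta).
\]
Dividing by $k$ and letting $k\to\infty$, all three limits exist by Proposition \ref{prop:volume computing function}(4), and we obtain
\[
h_t(\bm\beta) = (1-t)\, h_0(\bm\beta) + t\, h_1(\bm\beta)
\qquad\text{for every }\bm\beta\in\cS\cap\Int(C(\cS)).
\]

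Next I would upgrade this to an identity of functions on $\Int(C(\cS))$. By Proposition \ref{prop:volume computing function}(4) the three functions $h_0,h_1,h_t$ are continuous on $\Int(C(\cS))$ and homogeneous of degree $1$. In the notation of the proof of that proposition, write $\cS=\bigcup_l \cS_l$ with each $\cS_l$ finitely generated and generating $\bZ^n$, and recall the existence of $\gamma_l\in\cS_l$ with $(C(\cS_l)+\gamma_l)\cap\bN^n\subset\cS_l$. Hence every rational point of $\Int(C(\cS_l))$ has a positive integer multiple lying in $\cS_l\subset\cS$, so by homogeneity the displayed identity holds at every rational point of $\Int(C(\cS_l))$. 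Since rational points are dense in $\Int(C(\cS_l))$ and $\bigcup_l \Int(C(\cS_l)) = \Int(C(\cS))$, continuity of $h_0,h_1,h_t$ forces $h_t = (1-t)h_0 + t h_1$ on all of $\Int(C(\cS))$, as claimed.

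I do not expect a genuine obstacle: the only point that needs a word of care is the passage from $\cS$ to the full interior cone, which is handled by the density statement already built into the construction of $h$ in Proposition \ref{prop:volume computing function}(4). If one prefers to avoid reopening that construction, an equivalent shortcut is to note that both sides of $h_t = (1-t)h_0 + t h_1$ are continuous, positively homogeneous of degree $1$, and agree on the (conically) dense subset $\cS\cap\Int(C(\cS))$, which suffices.
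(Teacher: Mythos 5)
Your proposal is correct and follows the same route as the paper, which simply records the corollary as an immediate consequence of Lemma \ref{lem:control for m^t by endpoints}: divide the exact identity for $m^{(t)}(k\bm\beta)$ by $k$, pass to the limit using Proposition \ref{prop:volume computing function}(4), and extend from $\cS$ to $\Int(C(\cS))$ by homogeneity and continuity. Your write-up just makes explicit the density and limit-existence details that the paper leaves implicit.
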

	
	The following lemma can be viewed as an Izumi-type estimate in our setting of filtrations. 
	
	\begin{lemma}\label{lem:Izumi estimate for m-function}
		There exists $M>0$ depending only on $w$ and $\fa_{\bullet,1}$ such that for any $f\in R\backslash\{0\}$ with $\xi(v(f))\ge M$ and $t\in [0,1]$, we have
		\[
		f\in \fa_{M^{-1}\cdot \xi(v(f)),t}.
		\]
	\end{lemma}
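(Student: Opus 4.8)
The plan is to reduce the statement to an inclusion between the two filtrations $\fa_\bullet(w)$ and $\fa_{\bullet,1}$ coming from linear boundedness, and then to read off the conclusion from the defining sum formula of the geodesic. First I would use that, by construction of the good valuation in Lemma \ref{lem:good valuation associated to a qm valuation}, we have $\xi(v(f))=\langle\bm\alpha,v(f)\rangle=w(f)$ for every $f\in R\setminus\{0\}$; thus the hypothesis $\xi(v(f))\ge M$ is just $w(f)\ge M$, and the target ideal $\fa_{M^{-1}\xi(v(f)),t}$ is $\fa_{M^{-1}w(f),t}$. Since $\fa_{\mu,0}\cap\fa_{\nu,1}\subset\fa_{(1-t)\mu+t\nu,t}$ for all $\mu,\nu\in\bR_{>0}$ by definition of the geodesic, and each $\fa_{\bullet,t}$ is decreasing, it suffices to produce $\mu,\nu>0$, chosen uniformly in $t$, with $f\in\fa_{\mu,0}\cap\fa_{\nu,1}$ and $(1-t)\mu+t\nu\ge M^{-1}w(f)$.

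Next I would record the two inputs. By Lemma \ref{lem:Izumi ineq}, the filtration $\fa_\bullet(w)=\fa_{\bullet,0}$ is linearly bounded, so by Lemma \ref{lem:linearlybounded} applied with $\fa_\bullet=\fa_\bullet(w)$ and $\fb_\bullet=\fa_{\bullet,1}$ there is a constant $c>0$, depending only on the Izumi constant of $w$ and on $\fa_{1,1}$ — hence only on $w$ and $\fa_{\bullet,1}$ — such that $\fa_{c\lambda}(w)\subset\fa_{\lambda,1}$ for all real $\lambda\ge1$. Now set $M:=\max\{1,c\}$ and take any $f$ with $w(f)=\xi(v(f))\ge M$. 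Choosing $\mu:=w(f)$ and $\nu:=w(f)/c$, we have $f\in\fa_{w(f)}(w)=\fa_{\mu,0}$ trivially, and since $\nu=w(f)/c\ge1$ the inclusion above gives $f\in\fa_{c\nu}(w)=\fa_{w(f)}(w)\subset\fa_{\nu,1}$.

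Finally I would assemble the estimate: $f\in\fa_{\mu,0}\cap\fa_{\nu,1}\subset\fa_{(1-t)\mu+t\nu,t}$, and
\[
(1-t)\mu+t\nu=w(f)\Big((1-t)+\tfrac{t}{c}\Big)\ge\frac{w(f)}{\max\{1,c\}}=M^{-1}\xi(v(f))\qquad\text{for all }t\in[0,1],
\]
so that monotonicity of $\fa_{\bullet,t}$ yields $f\in\fa_{M^{-1}\xi(v(f)),t}$, as desired. The argument is elementary once Lemmas \ref{lem:Izumi ineq} and \ref{lem:linearlybounded} are in hand; the only delicate point — and I do not expect a genuine obstacle beyond it — is the bookkeeping ensuring that $c$, and hence $M$, is chosen independently of $t$ and of $\fa_{\bullet,0}$ (legitimate since $\fa_{\bullet,0}$ is determined by $w$), together with the uniform lower bound $(1-t)+t/c\ge\min\{1,1/c\}$ used to control the convex combination.
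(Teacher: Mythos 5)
Your proof is correct and follows essentially the same route as the paper's: both reduce to exhibiting $f\in\fa_{\mu,0}\cap\fa_{\nu,1}$ with $(1-t)\mu+t\nu\ge M^{-1}\xi(v(f))$ and then invoke the defining inclusion $\fa_{\mu,0}\cap\fa_{\nu,1}\subset\fa_{(1-t)\mu+t\nu,t}$ of the geodesic, with the comparison to $\fa_{\bullet,1}$ ultimately coming from Izumi's inequality and $\fm^{d}\subset\fa_{1,1}$. The only (harmless) difference is that the paper places $f$ in $\fa_{\lambda,0}\cap\fa_{\lambda,1}$ with equal indices by passing through $\fm^{\lceil r\xi(v(f))\rceil}$ for both filtrations, whereas you exploit $\fa_{\bullet,0}=\fa_{\bullet}(w)$ to take $\mu=w(f)$ exactly and only compare on the $\fa_{\bullet,1}$ side, which is a minor optimization rather than a different method.
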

	
	\begin{proof}
		Recall that we may define $\xi:\bR^n\to \bR$ in Definition \ref{defn:good valuation} by $\xi(v(f))=w(f)$. Moreover, there exists $r>0$ such that $\ord_\fm(f)\ge r\xi(v(f))$. 
		Take $d\in\bZ_{>0}$ such that $\fm^d\subset \fa_{1,i}$ for $i=0,1$. Set $M'\coloneqq 2d\cdot r^{-1}$.
		For any $f\in R$ with $\xi(v(f))\ge M'$, we have $\ord_\fm(f)\ge r\cdot\xi(v(f))\ge 2d$. Hence,
		\[
		f\in \fm^{\lceil r\xi(v(f)) \rceil}\subset \fa_{\lfloor \frac{r\xi(v(f))}{d}\rfloor,i}\subset \fa_{M'^{-1}\cdot \xi(v(f)),i},
		\]
		for $i=0,1$, where we used the inequality $\lfloor\lambda\rfloor\ge \lambda/2$ for $\lambda\ge 2$. Hence when $\lambda\coloneqq M'^{-1}\cdot \xi(v(f))\ge 2$, we have
		\[
		    f\in \fa_{\lambda,0}\cap \fa_{\lambda,1}%= \fa_{\frac{(1-t)\lambda}{1-t},0}\cap \fa_{\frac{t\lambda}{t},1}
		    \subset \fa_{\lambda,t},
		\]
		and the lemma is proved with $M=2M'$.
		      
	\end{proof}
\begin{comment}   Now assume $\lambda=M'^{-1}\cdot \xi(v(f))\ge 3$ and $t\in(0,1)$. Let $j=\lfloor (1-t)\lambda \rfloor$ and $k=\lambda/3  -j$, then
		\[
			\frac{j}{1-t}\le  \frac{(1-t)\lambda}{1-t}= \lambda 
		\]
		and
		\[
			\frac{k}{t}\le \frac{\lambda/3-((1-t)\lambda-1)}{t}=\lambda-\frac{6-2\lambda}{3t}\le \lambda. 
		\]
		Combining with (1), we get
		\[
			f\in \fa_{\lambda,0}\cap \fa_{\lambda,1}\subset \fa_{j/(1-t),0}\cap \fa_{k/t,1}\subset \fa_{\lambda/3,t},
		\]
		so the lemma is proved with $M=3M'$. 
\end{comment}
	
	In view of the above lemma, we define 
	\[
	\Gamma\coloneqq \{(\bm\beta,m)\in\cS\times \bN\mid \xi(\bm\beta)\le Mm\} \quad \text{ and }\quad \Gamma_m\coloneqq \Gamma \cap (\cS\times \{m\}) .
	\]
 For $\bm\beta \in \cS$ and  $t\in[0,1]$, recall that  $m^{(t)}(\bm\beta) := m_{\fa_{m,t}}(\bm\beta)$ and choose an element $f^{(t)}_{\bm\beta}\coloneqq f_{\fa_{m,t},\bm\beta}\in v^{-1}(\bm\beta)\cap \fa_{m^{(t)}(\bm\beta),t}$. Then we can define
	\[
	\Gamma^{(t)}\coloneqq \{(\bm\beta,m)\in\Gamma\mid f^{(t)}_{\bm\beta}\in\fa_{m,t}\} \quad\text{ and }\quad \Gamma^{(t)}_m \coloneqq 
	 \Gamma^{(t)} \cap (\cS\times \{m\}).
	\] We will use these sets to estimate the multiplicities of $\fa_{\bullet,t}$. The following property for $\Gamma^{(t)}$ is one of the main ingredients for our proof of Theorem \ref{thm:appendix}.

	\begin{proposition}\label{prop:colength by counting}
		Let $m\in \bN$. Then 
		\[
			\ell(R/\fa_{m,t})=\#(\Gamma_m\backslash\Gamma^{(t)}_m)
		\]
		for any $t\in[0,1]$. 
	\end{proposition}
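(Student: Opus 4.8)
The plan is to exhibit an explicit $\mathbbm{k}$-basis of $R/\fa_{m,t}$ indexed by $\Gamma_m\setminus\Gamma^{(t)}_m$. Since $R=\cO_{X,x}$ with $x$ a closed point on a variety over the algebraically closed field $\mathbbm{k}$, the residue field is $\mathbbm{k}$, so $\ell(R/\fa_{m,t})=\dim_{\mathbbm{k}}(R/\fa_{m,t})$ and it suffices to count a basis. First I would record the elementary observation that, by the definition of $m^{(t)}(\bm\beta)=m_{\fa_{\bullet,t}}(\bm\beta)$ together with left continuity of the filtration, one has $f^{(t)}_{\bm\beta}\in\fa_{m,t}$ if and only if $m^{(t)}(\bm\beta)\ge m$; equivalently, $[f^{(t)}_{\bm\beta}]\neq 0$ in $R/\fa_{m,t}$ precisely when $m^{(t)}(\bm\beta)<m$. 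The goal then becomes: the set $\mathcal{B}\coloneqq\{\bm\beta\in\cS\mid m^{(t)}(\bm\beta)<m\}$ is finite, $\{[f^{(t)}_{\bm\beta}]\}_{\bm\beta\in\mathcal{B}}$ is a $\mathbbm{k}$-basis of $R/\fa_{m,t}$, and $\mathcal{B}$ coincides with $\Gamma_m\setminus\Gamma^{(t)}_m$. The case $m=0$ is trivial, so I would assume $m\ge 1$.

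For finiteness: if $\xi(\bm\beta)\ge M$, then applying Lemma \ref{lem:Izumi estimate for m-function} to $f^{(t)}_{\bm\beta}\in v^{-1}(\bm\beta)$ gives $f^{(t)}_{\bm\beta}\in\fa_{M^{-1}\xi(\bm\beta),t}$, hence $m^{(t)}(\bm\beta)\ge M^{-1}\xi(\bm\beta)$; treating the cases $\xi(\bm\beta)<M$ and $\xi(\bm\beta)\ge M$ separately (and using $m\ge 1$) shows that $\bm\beta\in\mathcal{B}$ forces $\xi(\bm\beta)\le Mm$. Since $C(\cS)$ is a strongly convex cone on which $\xi$ is positive away from the origin, $\{x\in C(\cS)\mid \xi(x)\le Mm\}$ is compact and contains finitely many lattice points, so $\mathcal{B}$ is finite. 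For the basis property I would run the standard ``diagonalization through the filtration'' argument built on Proposition \ref{prop:compatible basis}: linear independence follows because a nontrivial relation $\sum_{\bm\beta}c_{\bm\beta}f^{(t)}_{\bm\beta}\in\fa_{m,t}$ would, on taking $v$ of the sum and using that the $\bm\beta$ are pairwise distinct, produce some $\bm\beta_0\in\mathcal{B}$ with $v^{-1}(\bm\beta_0)\cap\fa_{m,t}\neq\emptyset$, i.e. $m^{(t)}(\bm\beta_0)\ge m$, a contradiction; spanning follows by repeatedly subtracting the leading term supplied by Proposition \ref{prop:compatible basis}, a process that terminates because the jumping numbers of $\fa_{\bullet,t}$ below $m$ that arise all lie in the finite set $\{m^{(t)}(\bm\beta)\mid \bm\beta\in\mathcal{B}\}$.

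Finally, I would identify $\mathcal{B}$ with $\Gamma_m\setminus\Gamma^{(t)}_m$: by the definitions, $\Gamma_m=\{\bm\beta\in\cS\mid\xi(\bm\beta)\le Mm\}$ and $\Gamma^{(t)}_m=\{\bm\beta\in\Gamma_m\mid f^{(t)}_{\bm\beta}\in\fa_{m,t}\}=\{\bm\beta\in\Gamma_m\mid m^{(t)}(\bm\beta)\ge m\}$, so $\Gamma_m\setminus\Gamma^{(t)}_m=\{\bm\beta\in\cS\mid \xi(\bm\beta)\le Mm,\ m^{(t)}(\bm\beta)<m\}$, which equals $\mathcal{B}$ since the condition $m^{(t)}(\bm\beta)<m$ already forces $\xi(\bm\beta)\le Mm$ by the estimate above. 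Combining, $\ell(R/\fa_{m,t})=\dim_{\mathbbm{k}}(R/\fa_{m,t})=\#\mathcal{B}=\#(\Gamma_m\setminus\Gamma^{(t)}_m)$. The only genuinely substantive ingredient is Proposition \ref{prop:compatible basis} (the existence of a $v$-compatible basis of each graded piece $\fa_{\mu,t}/\fa_{>\mu,t}$), which is already established; the point requiring care is the termination/finiteness bookkeeping — ensuring only finitely many relevant jumping numbers occur below $m$ — which is precisely what the Izumi-type estimate in Lemma \ref{lem:Izumi estimate for m-function} and the strong convexity of $C(\cS)$ from Definition \ref{defn:good valuation} are there to provide.
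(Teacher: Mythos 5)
Your proposal is correct and follows essentially the same route as the paper: use Proposition \ref{prop:compatible basis} to build a basis of $R/\fa_{m,t}$ out of the classes $[f^{(t)}_{\bm\beta}]$ with $m^{(t)}(\bm\beta)<m$, then use the Izumi-type estimate of Lemma \ref{lem:Izumi estimate for m-function} to show every such $\bm\beta$ satisfies $\xi(\bm\beta)\le Mm$, so that this index set is exactly $\Gamma_m\setminus\Gamma^{(t)}_m$. The extra bookkeeping you supply (finiteness of the index set and termination of the leading-term subtraction) is exactly what the paper's appeal to the discreteness of the jumping numbers is doing.
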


	\begin{proof}
		Note that the jumping numbers of the $\fm$-filtration $\fa_{\bullet,t}$ form a discrete set as for any $\mu>\lambda>0$, $\ell(\fa_{\lambda,t}/\fa_{\mu,t})<\infty$. By Proposition \ref{prop:compatible basis}, the quotient ring $R/\fa_{m,t}$ has a basis $\{[f^{(t)}_{\bm\beta}]\mid f^{(t)}_{\bm\beta}\notin \fa_{m,t}\}$. Note that if $\xi(v(f^{(t)}_{\bm\beta}))\ge Mm$ for some $\bm\beta\in\cS$, then $f^{(t)}_{\bm\beta}\in \fa_{M^{-1}\cdot \xi(v(f^{(t)}_{\bm\beta})),t}\subset \fa_{m,t}$ by Lemma \ref{lem:Izumi estimate for m-function}. Hence for any $\bm\beta$ satisfying $f^{(t)}_{\bm\beta}\notin\fa_{m,t}$, we have $\xi(\bm\beta)=\xi(v(f^{(t)}_{\bm\beta}))\le Mm$, that is, $\bm\beta\in \Gamma_m$. Thus 
		\[
			\Gamma_m\backslash\Gamma^{(t)}_m=\{\bm\beta\mid f^{(t)}_{\bm\beta}\notin \fa_{m,t}\},
		\]
		and the proposition follows.
	\end{proof}

	We now check that the $\Gamma$ and $\Gamma^{(t)}$ are semigroups satisfying the conditions (2.3-5) of \cite{LM09}.
	
	\begin{lemma}\label{lem:LM conditions}
		Let $t\in[0,1]$. Then
		
		\begin{enumerate}
			\item $\Gamma$ and $\Gamma^{(t)}$ are semigroups,
			
			\item $\Gamma_0=\Gamma^{(t)}_0=\{0\}$,
			
			\item  there exist finitely many vectors $(v_j,1)$ spanning a semigroup $B\subset \bN^{n+1}$ such that 
			\[
			\Gamma^{(t)}\subset \Gamma\subset B,
			\]
			and
			\item if we replace $M$ by a proper multiple of it in the definition of $\Gamma$, then $\Gamma^{(t)}$ and $\Gamma$ contain a set of generators of $\bZ^{n+1}$ as a group.
		\end{enumerate}
	\end{lemma}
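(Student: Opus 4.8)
The plan is to verify conditions (1)--(4) in turn, each time reducing to results already proved in the appendix. The observation I would use throughout is that for $(\bm\beta,m)\in\Gamma$ one has $f^{(t)}_{\bm\beta}\in\fa_{m,t}$ if and only if $m\le m^{(t)}(\bm\beta)$: this is immediate from the definition \eqref{eqn:definition of m-function} of $m^{(t)}$ together with the left continuity of $\fa_{\bullet,t}$ (which makes the supremum there a maximum attained by $f^{(t)}_{\bm\beta}$). Consequently
\[
\Gamma^{(t)}=\{(\bm\beta,m)\in\Gamma \mid m\le m^{(t)}(\bm\beta)\},
\]
independently of the chosen representatives. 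For (1): $\Gamma$ is closed under addition because $\xi$ is linear and $\cS$ is a semigroup; for $\Gamma^{(t)}$, if $(\bm\beta_i,m_i)\in\Gamma^{(t)}$ then the sum lies in $\Gamma$ and $m_1+m_2\le m^{(t)}(\bm\beta_1)+m^{(t)}(\bm\beta_2)\le m^{(t)}(\bm\beta_1+\bm\beta_2)$ by the super-additivity of Proposition~\ref{prop:volume computing function}(1), applicable since $\fa_{\bullet,t}$ is linearly bounded. For (2): by condition (2) of Definition~\ref{defn:good valuation}, $C(\cS)\setminus\{0\}\subset\xi_{>0}$, so $\bm\beta=0$ is the only element of $\cS$ with $\xi(\bm\beta)\le 0$; hence $\Gamma_0=\{0\}$ and then $\Gamma^{(t)}_0=\{0\}$ since $\{0\}\subset\Gamma^{(t)}_0\subset\Gamma_0$.

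For (3): the inclusion $\Gamma^{(t)}\subset\Gamma$ is immediate, so the real content is constructing $B$. First I would fix $\delta>0$ with $\xi(\bm x)\ge\delta\sum_j x_j$ for all $\bm x\in C(\cS)$, which exists by compactness of $\{\bm x\in C(\cS)\mid\sum_j x_j=1\}$ and positivity of $\xi$ there. Set $T\coloneqq\lceil M/\delta\rceil$ and let $B$ be the semigroup generated by the finite set of level-one vectors $\{(\bm\gamma,1)\mid\bm\gamma\in\{0,1,\dots,T\}^n\}$. Since the cube $[0,T]^n$ has the integer decomposition property (decompose coordinate-wise), $B=\{(\bm\beta,m)\in\bN^n\times\bN\mid 0\le\beta_j\le Tm\text{ for all }j\}$. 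Now if $(\bm\beta,m)\in\Gamma$ then $\sum_j\beta_j\le\xi(\bm\beta)/\delta\le Mm/\delta\le Tm$, so $(\bm\beta,m)\in B$; thus $\Gamma^{(t)}\subset\Gamma\subset B$.

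For (4): Lemma~\ref{lem:Izumi estimate for m-function} gives $m^{(1)}(\bm\beta)\ge M^{-1}\xi(\bm\beta)$ whenever $\xi(\bm\beta)\ge M$, while $m^{(0)}(\bm\beta)=\xi(\bm\beta)$ by Proposition~\ref{prop:volume computing function}(3); combined with Lemma~\ref{lem:control for m^t by endpoints} this yields $m^{(t)}(\bm\beta)\ge M^{-1}\xi(\bm\beta)$ for all $t\in[0,1]$ once $\xi(\bm\beta)\ge M$. Hence, after replacing $M$ by $4M$ in the definition of $\Gamma$, any $\bm\beta\in\cS$ with $\xi(\bm\beta)$ sufficiently large yields pairs $(\bm\beta,m)$ lying in $\bigcap_{t\in[0,1]}\Gamma^{(t)}$ for at least two consecutive integers $m$, which produces $(0,1)$ in the group generated. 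Choosing finitely many $\bm\sigma_i\in\cS$ that generate $\bZ^n$ and all have $\xi(\bm\sigma_i)\ge 4M$ (obtained by adding a large multiple of a fixed element of $\cS$, with a minor adjustment via two consecutive multiples to avoid a common-divisor obstruction) then gives, together with $(0,1)$, a $\bZ^{n+1}$-generating set contained in every $\Gamma^{(t)}$, hence in $\Gamma$ as well.

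The step I expect to be the main obstacle is (3), specifically the requirement from \cite{LM09} that the enveloping semigroup $B$ be generated \emph{at level one} rather than merely finitely generated — a property the value semigroup $\cS$ itself need not have. The cube $[0,T]^n$ is introduced precisely to sidestep this, using that the level-one slice of $\mathrm{Cone}(\Gamma)$ is bounded, which in turn rests on strong convexity of $C(\cS)$ and positivity of $\xi$. The counting bookkeeping in (4) is routine once the Izumi-type estimate of Lemma~\ref{lem:Izumi estimate for m-function} is in hand to control $m^{(t)}$ uniformly in $t$.
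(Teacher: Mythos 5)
Your proof is correct, and parts (3) and (4) take a genuinely different route from the paper's. Parts (1) and (2) coincide with the paper (which simply invokes super-additivity of $m^{(t)}$ and declares (2) clear); your preliminary observation that $\Gamma^{(t)}=\{(\bm\beta,m)\in\Gamma\mid m\le m^{(t)}(\bm\beta)\}$ is exactly what the paper uses implicitly, and it is worth making explicit since it shows $\Gamma^{(t)}$ does not depend on the chosen representatives $f^{(t)}_{\bm\beta}$. For (3), the paper chooses $b$ with $b\cdot\min\{\alpha_i\mid 1\le i\le r\}>M$ and takes $B$ generated by the level-one cube $[0,b]^n$; your compactness argument producing $\delta>0$ with $\xi\ge\delta\sum_j x_j$ on $C(\cS)$ is more robust, since the paper's bound, read literally with $\xi=\langle\bm\alpha,\cdot\rangle$ and $\alpha_i=0$ for $i>r$, only directly controls the first $r$ coordinates of $\bm\beta$, whereas your bound (which uses the full strength of $C(\cS)\setminus\{0\}\subset\xi_{>0}$ from Definition \ref{defn:good valuation}(2)) controls all of them. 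For (4), the paper exploits the concrete elements $\bm\beta'$ and $\bm\beta'+\bm e_i$ of $\cS$ supplied by Cutkosky's computation in the proof of Lemma \ref{lem:good valuation associated to a qm valuation}, placing them all at levels $m$ and $m+1$ after doubling $M$; you instead use only the abstract fact that $\cS$ generates $\bZ^n$, manufacture $(0,1)$ from two consecutive levels over a single deep $\bm\beta$, and push a generating set of $\cS$ far into the cone via consecutive multiples of a fixed element. Both are valid; yours is more self-contained relative to the axioms of a good valuation, the paper's is shorter. One small simplification: Lemma \ref{lem:Izumi estimate for m-function} is already stated uniformly in $t$, so $m^{(t)}(\bm\beta)\ge M^{-1}\xi(\bm\beta)$ follows directly without routing through $m^{(0)}$, $m^{(1)}$ and Lemma \ref{lem:control for m^t by endpoints} (and the interpolation alone would only yield $\min(1,M^{-1})\,\xi(\bm\beta)$ unless one also notes $M$ may be enlarged to be at least $1$).
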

	
	\begin{proof}
		(1) This follows from the sup-additivity of $m(\cdot)$, Proposition \ref{prop:volume computing function}(1).
		
		(2) This is clear. 
		
		(3) Choose $b\in \bN$ such that $b\cdot\min\{\alpha_i\mid 1\le i\le r\}>M$. Then it is easy to see that $\Gamma$ is contained in the semigroup $B$ generated by $\{(\beta_1,\ldots,\beta_n,1)\mid 0\le \beta_i\le b\}$. 
		
		(4) It suffices to show that $\Gamma^{(t)}$ generates $\bZ^{n+1}$. To this end, recall that as in the proof of Lemma \ref{lem:good valuation associated to a qm valuation}, $\cS$ contains some $\bm\beta'$ and $\bm\beta'+\bm e_i$ for $1\le i\le n$. We may assume that $\xi(\bm\beta')\ge 2M$. Let $m\coloneqq \lfloor M^{-1}\cdot\xi(\bm\beta') \rfloor-1>0$. Then by Lemma \ref{lem:Izumi estimate for m-function} we have
		\[
		    f_{\bm\beta'}\in\fa_{M^{-1}\cdot\xi(\bm\beta'),t}\subset \fa_{m+1,t}\subset \fa_{m,t}
		\]
		and
		\[
		    f_{\bm\beta'+\bm e_i}\in\fa_{M^{-1}\cdot\xi(\bm\beta'+\bm e_i),t}\subset \fa_{m,t}.
		\]
		If we replace $M$ by $M'=2M$ in the definition of $\Gamma$, then
		\[
		    \xi(\bm\beta')=M'\cdot \frac{M^{-1}\cdot\xi(\bm\beta')}{2}\le M'\cdot \lfloor M^{-1}\cdot\xi(\bm\beta') \rfloor =M'm,
		\]
		where we used the inequality $\lfloor \lambda \rfloor\ge \lambda/2$ for $\lambda\ge 2$. Now by definition, $\Gamma^{(t)}$ contains the vectors $(\bm\beta',m)$, $(\bm\beta'+\bm e_i,m)$, $1\le i\le n$ and $(\bm\beta',m+1)$, which implies that $\Gamma^{(t)}$ generates $\bZ^{n+1}$ as a group.
	\end{proof}

	From now on we assume that $\Gamma$ and $\Gamma^{(t)}$ satisfy the conditions of Lemma \ref{lem:LM conditions}. As in \cite[(2.1)]{LM09}, we define $\Delta$ as the closed convex hull of $\bigcup_m \Gamma_m/m$, and define $\Delta^{(t)}$ as the closed convex hull of $\bigcup_m \Gamma_m^{(t)}/m$ for $t\in[0,1]$. Then $\Delta^{(t)}\subset \Delta$. 
	By Lemma \ref{lem:LM conditions}, Proposition \ref{prop:colength by counting} and \cite[Proposition 2.1]{LM09}, we have the following result.

	\begin{corollary}\label{cor:Okunkov body type estimate}
		With notation as above, then
		\[
				\e(\fa_{\bullet,t})=n!(\vol(\Delta)-\vol(\Delta^{(t)}))
		\]			
		for $t\in[0,1]$.
	\end{corollary}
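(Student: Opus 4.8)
The plan is to deduce the formula by assembling Proposition \ref{prop:colength by counting}, Lemma \ref{lem:LM conditions}, and the Okounkov body volume estimate of \cite[Proposition 2.1]{LM09}.

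First I would record that for each $m\in\bN$ the set $\Gamma_m=\{(\bm\beta,m)\mid \bm\beta\in\cS,\ \xi(\bm\beta)\le Mm\}$ is finite: since $\xi$ is positive on $C(\cS)\setminus\{0\}$ and $C(\cS)$ is strongly convex, the region $\{x\in C(\cS)\mid \xi(x)\le Mm\}$ is bounded, and $\cS\subset\bZ^n$, so the slice is a bounded set of lattice points. As $\Gamma^{(t)}_m\subset\Gamma_m$, Proposition \ref{prop:colength by counting} gives
\[
\ell(R/\fa_{m,t})=\#(\Gamma_m\setminus\Gamma^{(t)}_m)=\#\Gamma_m-\#\Gamma^{(t)}_m
\]
for every $m\in\bN$ and $t\in[0,1]$.

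Next, by Lemma \ref{lem:LM conditions} (with $M$ replaced by the appropriate multiple, as has been assumed) both $\Gamma$ and $\Gamma^{(t)}$ satisfy the hypotheses of \cite[Proposition 2.1]{LM09}. Applying that proposition to each of the two semigroups yields
\[
\lim_{m\to\infty}\frac{\#\Gamma_m}{m^n}=\vol(\Delta)\qquad\text{and}\qquad\lim_{m\to\infty}\frac{\#\Gamma^{(t)}_m}{m^n}=\vol(\Delta^{(t)}).
\]
Combining this with the previous display and the definition $\e(\fa_{\bullet,t})=\lim_{m\to\infty}\ell(R/\fa_{m,t})/(m^n/n!)$ gives
\[
\e(\fa_{\bullet,t})=n!\lim_{m\to\infty}\frac{\#\Gamma_m-\#\Gamma^{(t)}_m}{m^n}=n!\bigl(\vol(\Delta)-\vol(\Delta^{(t)})\bigr),
\]
which is the asserted identity, valid for all $t\in[0,1]$ since every ingredient holds uniformly for such $t$.

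The corollary is essentially a bookkeeping step, so there is no serious obstacle here; the one point requiring care is that the semigroups $\Gamma$ and $\Gamma^{(t)}$ genuinely satisfy the conditions needed for \cite[Proposition 2.1]{LM09} — in particular that they generate $\bZ^{n+1}$ as a group, which is exactly why $M$ was enlarged — and this is precisely the content of Lemma \ref{lem:LM conditions}.
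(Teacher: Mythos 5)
Your argument is correct and is exactly the route the paper takes: the corollary is stated in the paper as an immediate consequence of Lemma \ref{lem:LM conditions}, Proposition \ref{prop:colength by counting}, and \cite[Proposition 2.1]{LM09}, and you have simply spelled out the bookkeeping (finiteness of the slices $\Gamma_m$, the difference of counting functions, and the two applications of the Okounkov-body limit). Nothing further is needed.
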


	Next, we show that the $\Gamma^{(t)}$ can be characterized by the function $h_t$ defined as in Corollary \ref{cor:h_t is linear}.
	
	\begin{proposition}\label{prop:cut by hyperplane}
		Let $h_t\coloneqq h_{\fa_{\bullet,t}}$ for $t\in[0,1]$, where $h$ is defined by \eqref{eqn:definition of h}. Then
		\begin{enumerate}
			\item $\Int(\Delta^{(t)})=\Int(\Delta)\cap (h_t)_{>1}$, and
			
			\item $\Int(\Delta)\backslash \Delta^{(t)}=\Int(C(\cS))\cap (h_t)_{<1}$. 
		\end{enumerate}
	\end{proposition}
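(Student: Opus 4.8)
The plan is to prove (1) and (2) by comparing the defining data of the Okounkov-type bodies $\Delta^{(t)}, \Delta$ with the function $h_t$, using the sup-additivity from Proposition \ref{prop:volume computing function}(1) together with the identity $h_t(\bm\beta) = \lim_{k\to\infty} m^{(t)}(k\bm\beta)/k$ on $\cS$ (and its continuous/homogeneous extension to $\Int(C(\cS))$). Recall that $\Delta^{(t)}$ is the closed convex hull of $\bigcup_m \Gamma^{(t)}_m/m$, where $(\bm\beta,m)\in\Gamma^{(t)}_m$ iff $\bm\beta\in\cS$, $\xi(\bm\beta)\le Mm$, and $f^{(t)}_{\bm\beta}\in\fa_{m,t}$; the latter condition is precisely $m^{(t)}(\bm\beta)\ge m$, since $f^{(t)}_{\bm\beta}$ realizes the maximal level $m^{(t)}(\bm\beta)$. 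So $(\bm\beta,m)\in\Gamma^{(t)}$ iff $\bm\beta\in\cS$, $\xi(\bm\beta)\le Mm$, and $m\le m^{(t)}(\bm\beta)$.

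First I would establish the easy inclusion for (1): if $\bm\beta\in\Int(C(\cS))$ with $h_t(\bm\beta)>1$, I want to show $\bm\beta\in\Int(\Delta^{(t)})$. Since $h_t(\bm\beta)=\lim_k m^{(t)}(\lfloor k\bm\beta\rfloor)/k > 1$, for large $k$ we get $m^{(t)}(\lfloor k\bm\beta\rfloor) \ge k$, and (enlarging $M$ harmlessly, or using $\xi(\bm\beta)\le M h_t(\bm\beta)$ which follows from Proposition \ref{prop:volume computing function}(2) applied to $m^{(t)}$ — one needs the Izumi bound $m^{(t)}\le M_2\xi$, which holds uniformly in $t$ by Lemma \ref{lem:Izumi estimate for m-function}) the pair $(\lfloor k\bm\beta\rfloor, k)$ lies in $\Gamma^{(t)}$, so $\lfloor k\bm\beta\rfloor/k \in \Delta^{(t)}$. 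Letting $k\to\infty$ gives $\bm\beta\in\Delta^{(t)}$; to get the interior, perturb $\bm\beta$ slightly — by continuity of $h_t$ on $\Int(C(\cS))$, a whole neighborhood of $\bm\beta$ satisfies $h_t>1$ and lies in $\Int(C(\cS))$ (hence in $\Int(\Delta)$, since $\Delta$ has full dimension there), and the above shows each such point lies in $\Delta^{(t)}$, so $\bm\beta\in\Int(\Delta^{(t)})$. Conversely, if $\bm\beta\in\Int(\Delta^{(t)})$, I would write $\bm\beta$ as a convex combination of points $\bm\beta_j/m_j$ with $(\bm\beta_j,m_j)\in\Gamma^{(t)}$, i.e. $m^{(t)}(\bm\beta_j)\ge m_j$. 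Using sup-additivity of $m^{(t)}$ (Proposition \ref{prop:volume computing function}(1)) and concavity/homogeneity of $h_t$, together with $h_t(\bm\beta_j)\ge m^{(t)}(\bm\beta_j)/1 \ge \dots$ — more precisely $h_t(\bm\beta_j) = \lim_k m^{(t)}(k\bm\beta_j)/k \ge m^{(t)}(\bm\beta_j) \ge m_j$ by super-additivity — one gets $h_t(\bm\beta_j/m_j)\ge 1$, and concavity of $h_t$ gives $h_t(\bm\beta)\ge 1$; the strict inequality on the interior follows because $\bm\beta$ is an interior point and $h_t$ is not locally constant near the boundary region of $\Delta$ (or: $h_t$ strictly increases along rays into the interior, by \eqref{eqn:h is non-decreasing} and homogeneity). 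This is essentially the standard Okounkov-body computation as in \cite{LM09,KK14}, transplanted to this filtration setting.

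For (2), I would combine (1) with $\Delta^{(t)}\subset\Delta$ and the fact that the Okounkov body $\Delta$ of $R$ itself is $\Delta = \Int(C(\cS))$ up to closure (since $\Gamma$ essentially fills out $C(\cS)$ in each slice once $m$ is large — this uses $\xi(\bm\beta)\le Mm$ is a mild constraint and $m^{(t)}$ plays no role for $\Gamma$). Concretely: $\Int(\Delta)\setminus\Delta^{(t)}$ consists of points of $\Int(\Delta)=\Int(C(\cS))$ that are not in $\Delta^{(t)}$; by (1), a point of $\Int(\Delta)$ with $h_t<1$ is not in the interior of $\Delta^{(t)}$, and I must also rule out such a point lying on $\partial\Delta^{(t)}$ — here I would argue that $\partial\Delta^{(t)}\cap\Int(\Delta) \subset (h_t)_{=1}$, again from the convex-geometric description, so $\Int(\Delta)\cap(h_t)_{<1}$ avoids $\Delta^{(t)}$ entirely. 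The reverse inclusion $\Int(\Delta)\setminus\Delta^{(t)} \subset (h_t)_{<1}$ follows since $h_t\ge 1$ on $\Delta^{(t)}$'s interior by (1) and by closure/continuity $h_t\ge 1$ on $\Int(\Delta)\cap\Delta^{(t)}$.

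The main obstacle I anticipate is the careful handling of boundary behavior: showing that the strict inequalities $(h_t)_{>1}$ and $(h_t)_{<1}$ match the topological interior and complement exactly, rather than up to a measure-zero boundary set. This requires knowing $h_t$ is strictly monotone along rays into $\Int(C(\cS))$ (which does follow from \eqref{eqn:h is non-decreasing} plus homogeneity, since $h_t((1+\epsilon)\bm\beta) = (1+\epsilon)h_t(\bm\beta) > h_t(\bm\beta)$ when $h_t(\bm\beta)>0$, and $h_t>0$ on $\Int(C(\cS))$ because $\fa_{\bullet,t}$ is linearly bounded so $m^{(t)}$ is bounded below by a positive multiple of $\xi$ up to lower-order terms — this positivity should be extracted from $\fm^d\subset\fa_{1,t}$). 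For the application in the rest of the appendix only the volume identity matters, so even a slightly lossy version ("up to boundary") of Proposition \ref{prop:cut by hyperplane} suffices when combined with Corollary \ref{cor:Okunkov body type estimate} and Lemma \ref{lem:integration formula for volumes}; but I would aim for the clean statement as above.
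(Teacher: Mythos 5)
Your treatment of part (1) is essentially the paper's argument and is fine: one inclusion via concavity of $h_t$ together with $h_t(\bm\beta)\ge m^{(t)}(\bm\beta)$ on $\cS$ (so $h_t\ge 1$ on $\Delta^{(t)}\cap\Int(C(\cS))$, upgraded to $>1$ on the interior by homogeneity), and the other by producing semigroup points $\lfloor k\bm\beta\rfloor\in\Gamma^{(t)}_k$ for large $k$ and passing to the limit. (Two small points: the parenthetical ``lies in $\Int(C(\cS))$, hence in $\Int(\Delta)$'' is not a valid implication, though harmless since you may start from $\bm\beta\in\Int(\Delta)$; and writing an interior point of the \emph{closed} convex hull as a finite convex combination of points of $\bigcup_m\Gamma^{(t)}_m/m$ needs the standard fact $\Int(\overline{\mathrm{conv}(S)})=\Int(\mathrm{conv}(S))$ for full-dimensional convex hulls.)

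Part (2), however, contains a genuine error. You assert that ``$\Delta=\Int(C(\cS))$ up to closure'' because ``$\xi(\bm\beta)\le Mm$ is a mild constraint.'' This is false: after rescaling by $m$ the constraint does not disappear but becomes $\xi\le M$, so $\Delta=C(\cS)\cap\xi_{\le M}$ is a \emph{bounded} truncation of the cone, a proper subset of $C(\cS)$. (If $\Delta$ were all of $C(\cS)$ it would have infinite volume and Corollary \ref{cor:Okunkov body type estimate} would be vacuous.) Consequently your argument for (2) only establishes $\Int(\Delta)\setminus\Delta^{(t)}=\Int(\Delta)\cap(h_t)_{<1}$ and omits the actual content of the statement: one must show that every $\bm\beta\in\Int(C(\cS))$ with $h_t(\bm\beta)<1$ already lies in $\Int(\Delta)$, equivalently that $h_t>1$ on $\Int(C(\cS))\setminus\Delta$. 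This is where the uniform Izumi-type estimate of Lemma \ref{lem:Izumi estimate for m-function} is really needed: it gives $m^{(t)}(\bm\beta)\ge M^{-1}\xi(\bm\beta)$ uniformly in $t$, hence $h_t\ge M^{-1}\xi$ on $\Int(C(\cS))$, so $\xi(\bm\beta)>M$ forces $h_t(\bm\beta)>1$. You invoke that lemma only for the bookkeeping constraint in $\Gamma^{(t)}$, not at the step where it is indispensable; without it the claimed equality in (2) is unproved.
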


	\begin{proof}
		(1) First we show that $\Int(\Delta^{(t)})\subset \Int(\Delta)\cap(h_t)_{>1}$. It suffices to show that for any $\bm\beta\in\Delta^{(t)}\cap \Int(C(\cS))$ we have $h_t(\bm\beta)\ge 1$. Recall that $\Delta^{(t)}$ is the closed convex hull of $\cup_m \Gamma^{(t)}_m/m$. Hence by the concavity of $h_i$ (Proposition \ref{prop:volume computing function}(4)), it suffices to show that for any $(\bm\beta,m)\in\Gamma^{(t)}\cap \Int(C(\cS))$ we have $h_t(\bm\beta)\ge m$. But this follows immediately from \eqref{eqn:definition of h} and the definition of $\Gamma^{(t)}$.
		
		To prove the converse, assume that $\bm\beta\in \Int(\Delta)\cap (h_t)_{>1}$. Let $\epsilon\coloneqq \frac{h_t(\bm\beta)-1}{3}>0$. Applying the proof of Proposition \ref{prop:volume computing function}(4), we know that there exists $m_1$ such that for any $m\ge m_1$, there exists $\bm\beta_m\in\Gamma_m$ with $\lim_{m} \frac{\bm\beta_m}{m}=\bm\beta$. Since $h_t$ is continuous, we may choose $m_2\ge m_1$ such that for any $m\ge m_2$, we have $h_t(\frac{\bm\beta_m}{m})>1+2\epsilon$. By \eqref{eqn:definition of h}, for each $m\ge m_2$, we may choose $k_m$ such that $(1+\epsilon)m^{(i)}(k_m\bm\beta_m)\ge h_i(\bm\beta_m)$ for $i=0,1$.
    Then for $m\ge m_2$, we have
		\[
			m^{(t)}(k_m\bm\beta_m)\ge  \frac{1}{1+\epsilon}h_t(k_m\bm\beta_m)
			\ge \frac{1+2\epsilon}{1+\epsilon}k_mm> k_mm.
		\]
		This shows that $k_m\bm\beta_m\in\Gamma^{(t)}_{k_mm}$, hence
		\[
		\bm\beta=\lim_{m\to\infty} \frac{k_m\bm\beta_m}{k_mm}\in \Delta^{(t)}.
		\]

		(2) By (1) we have $\Int(\Delta)\backslash\Delta^{(t)}=\Int(\Delta)\cap (h_t)_{<1}$, so it suffices to show that for any $\bm\beta\in \Int(C(\cS))\backslash \Delta$, we have $h_t(\bm\beta)>1$. By definition we have 
		\[
			\Delta=C(\Gamma)\cap (\bR^n\times\{1\})=C(\cS)\cap \xi_{\le M}.
		\]
		Hence by Proposition \ref{prop:volume computing function} and Lemma \ref{lem:Izumi estimate for m-function}, we have
		\begin{align*}
		    h_t(\bm\beta)=&\lim_k \frac{m^{(t)}(\lfloor k\bm\beta \rfloor)}{k}\ge \lim_k\frac{M^{-1}\cdot \xi(\lfloor k\bm\beta \rfloor)}{k}\\
		        =&M^{-1}\cdot\xi(\bm\beta)>1,
		\end{align*}
		where in the first inequality and the second equality, we used the fact that for any $k\in\bZ_{>0}$,
		\[
		    k\xi(\bm\beta)\ge \xi(\lfloor k\bm\beta \rfloor)=\langle \bm\alpha,\lfloor k\bm\beta \rfloor \rangle\ge k\xi(\bm\beta)-\|\bm\alpha\|_{1}.
	    \] 
	    The proof is finished.  
	\end{proof}
	
\begin{comment}
	\begin{corollary}\label{cor:equal h implies equal w}
	    If $u$ is a divisorial valuation and $h_0=h_1$, then we have $u(\fa_{\bullet,0})=u(\fa_{\bullet,1})$.\liu{The old proof assumed that $w$ can be chosen independent of $\fa_{\bullet,i}$, which is no longer true. Thus this corollary may not be needed.}
	\end{corollary}
	
	\begin{proof}
	    If $u$ is divisorial, then let $a_t\coloneqq \min\{x_1\mid (x_1,\ldots,x_n)\in\Delta^{(t)}\}$. We claim that $a_t=u(\fa_{\bullet,t})$. Indeed, by construction for the $\Gamma_m^{(t)}$, we know that
	    \[
	        a_{m,t}\coloneqq \min\{x_1\mid (x_1,\ldots,x_n)\in\Gamma_m^{(t)}\}=\min\{u(f)\mid f\in\fa_{m,t}\}=u(\fa_{m,t}).
	    \]
	    So by the definiton of $\Delta^{(t)}$, we get
	    \[
	        a_t=\inf_{m}\frac{a_{m,t}}{m}=\inf_m\frac{u(\fa_{m,t})}{m}=u(\fa_{\bullet,t}).
	    \]
	    
	    Now if $h_0=h_1$, then by Proposition \ref{prop:cut by hyperplane} we know that $\Delta^{(0)}=\Delta^{(1)}$. So by the claim we get
	    \[
	        u(\fa_{\bullet,0})=a_0=a_1=u(\fa_{\bullet,1}),
	    \]
	    as desired.
	\end{proof}
\end{comment}
	
	Next we prove Theorem \ref{thm:appendix}.
	
	\begin{proof}[Proof of Theorem \ref{thm:appendix}]
		By Lemma \ref{lem:integration formula for volumes}, Corollary \ref{cor:Okunkov body type estimate} and Proposition \ref{prop:cut by hyperplane}, we have
		\begin{equation}\label{eqn:volume formula}
			E(t)=n!\vol(\Delta\backslash\Delta^{(t)})=n!\vol(\Int(C(\cS))\cap (h_t)_{<1})=\int_{C(\cS)} e^{-h_t}d\mu.
		\end{equation}
		By Corollary \ref{cor:h_t is linear} we have $h_t=(1-t)h_0+th_1$, hence $\frac{d}{dt}h_t=h_1-h_0$. By Proposition \ref{prop:volume computing function}(4), we can differentiate under the integral sign, and hence $E(t)$ is a smooth function. Thus (1) is proved.

		To prove \eqref{eqn:app-E^1/n is concave}, we use a slight variant of the formula \eqref{eqn:volume formula}. Let $h\coloneqq h_1-h_0$. Then by homogeneity we have
		\begin{align*}
			E''(t)=&\int_{C(\cS)} h^2e^{-h_t}d\mu\\
				=&\int_0^\infty d\lambda\int_{C(\cS)\cap (h_t)_{=\lambda}}h^2e^{-h_t}d\mu_{n-1}\\
				=&\int_0^\infty \lambda^{n+1}e^{-\lambda}d\lambda \int_{C(\cS)\cap (h_t)_{=1}}h^2d\mu_{n-1}\\
				=&(n+1)!\int_{C(\cS)\cap (h_t)_{=1}}h^2d\mu_{n-1},
		\end{align*}
		where $\mu_{n-1}$ is the Lebesgue measure on $\bR^{n-1}$. Similarly we have
		\[
			E'(t)=n!\int_{C(\cS)\cap (h_t)_{=1}}hd\mu_{n-1}
		\]
		and
		\[
			E(t)=(n-1)!\int_{C(\cS)\cap (h_t)_{=1}}d\mu_{n-1}.
		\]
		Hence by Cauchy-Schwarz we have
		\begin{equation}\label{eqn:C-S}
			E''(t)E(t)\ge \frac{n+1}{n}E'(t)^2\ge E'(t)^2,
		\end{equation}
		which implies \eqref{eqn:app-E^1/n is concave}. %and \eqref{eqn:E is log convex}.  
		
		To prove (3), first assume that the equality in \eqref{eqn:app-E^1/n is concave} holds. By\eqref{eqn:C-S} and Cauchy-Schwarz we know that $h_1-h_0=C_t$ is a constant on $(h_t)_{=1}$. In particular, $h_1-h_0=C_0$ on $(h_0)_{=1}$. Since $h_1$ and $h_0$ are $1$-homogeneous and positive on $C(\cS)$, we know that $h_0=ch_1$, where $c:=1/(C_0+1)>0$. By Proposition \ref{prop:volume computing function}(5), we know that
		\begin{equation}\label{eqn:h is proportional}
			h_0=h_{\fa_{\bullet,0}}=h_{\fa_{c^{-1}\bullet,1}}.
		\end{equation}
		By Lemma \ref{lem:integration formula for volumes}, Corollary \ref{cor:Okunkov body type estimate} and Proposition \ref{prop:cut by hyperplane} we have 
		\[
			\vol(w)=\e(\fa_{\bullet,0})=\e(\fa_{c^{-1}\bullet,1})=\vol(cw').%=\lambda^n\e(\fa_{\bullet}).
		\]
		Moreover, for any $f\in R\backslash\{0\}$, let $\bm\beta:=v(f)$. Then by Proposition \ref{prop:volume computing function}(3) we have $h_0(\bm\beta)=w(f)$ and $h_{\fa_{c^{-1}\bullet,1}}(\bm\beta)\ge cw'(f)$. Combined with \eqref{eqn:h is proportional} we get $cw'(f)\le w(f)$. So by \cite[Proposition 2.7]{LX16}, we conclude that $w=c w'$.
		
		Conversely, if $w=cw'$, then for any $\lambda\in\bR_{>0}$, by a direct calculation we have
		\[
		    \fa_{\lambda,t}=\sum_{(1-t)\mu+t\nu=\lambda}\fa_{\mu}(cw')\cap\fa_{\nu}(w')=\fa_{f(t,\lambda)}(w'),
		\]
		where $f(t)=\frac{\lambda}{t+c(1-t)}$. So 
		$E(t)=(\frac{1}{t+c(1-t)})^n\vol(w')$, and the proof is finished. 
	\end{proof}

%    \begin{comment}		In particular, $\lambda=\sqrt[n]{\e(\fa_{\bullet,0})/\e(\fa_{\bullet,1})}$ is a constant independent on the choice of $h$. Moreover, by Corollary \ref{cor:equal h implies equal w}, we know that $u(\fa_{\bullet,0})=u(\fa_{\lambda\bullet,1})$ for any divisorial $u$. \liu{This is based on the old proof which does not work. Need a new argument, or only address the case of valuations.} So by Proposition \ref{prop:equiv filts} we conclude that $\widetilde\fa_{\bullet,0}=\widetilde\fa_{\lambda\bullet,1}$, that is, $\fa_{\bullet,0}$ and $\fa_{\bullet,1}$ are equivalent up to rescaling.  
%	    Similarly, if the equality in \eqref{eqn:E is log convex} hold, then Cauchy-Schwarz forces $h_0=h_1$. Hence by the above argument and Theorem \ref{thm:characterization for sequences with equal volume} we get 
%		\[
%			\widetilde\fa_{\bullet,0}=\widetilde\fa_{\bullet,1}.
%		\]
%		
%		By Lemma \ref{lem:valuative ideals sat}, any valuative ideal is saturated, hence the second part of (3) follows immediately from the first part.
%		\end{comment}

\end{document}